\numberwithin{equation}{section}
\providecommand{\abs}[1]{\left\vert#1\right\vert}
\providecommand{\norm}[1]{\left\Vert#1\right\Vert}
\providecommand{\pnorm}[2]{\left\Vert#1\right\Vert_{L^{#2}}}
\providecommand{\Rn}[1]{\mathbb{R}^{#1}}
\providecommand{\csubset}{\subset\subset}
\def\nab{\nabla}
\def\dt{\partial_t}
\def\dr{\partial_r}
\def\dx{\partial_x}
\def\dz{\partial_z}
\def\hal{\frac{1}{2}}
\def\ep{\varepsilon}
\def\tep{\tilde{\ep}}
\def\td{\tilde{\delta}}
\def\B{\mathcal{B}}
\def\Z{\mathcal{Z}}
\def\W{\mathcal{W}}
\def\V{\mathcal{V}}
\def\D{\mathcal{D}}
\def\E{\mathcal{E}}
\def\F{\mathcal{F}}
\def\({\left(}
\def\){\right)}
\def\SX{\mathfrak{X}}
\def\SN{\mathfrak{N}}
\def\SC{\mathfrak{C}}
\def\sir{\frac{\sigma}{\rho_0}}
\def\rest{\hskip 1pt{\hbox to 10.8pt{\hfill\vrule height 7pt width 0.4pt depth 0pt\hbox{\vrule height 0.4pt
width 7.6pt depth 0pt}\hfill}}}
\def\evalu{\hskip 1pt{\hbox to 2pt{\hfill \vrule height -6pt width 0.4pt depth0pt}}}
\DeclareMathOperator{\diverge}{div}
\newtheorem{lem}{Lemma}[section]
\newtheorem{cor}[lem]{Corollary}
\newtheorem{prop}[lem]{Proposition}
\newtheorem{thm}[lem]{Theorem}
\newtheorem{remark}[lem]{Remark}
\title{Instability theory of the Navier-Stokes-Poisson equations}
\author{Juhi Jang\footnote{Supported by NSF Grant DMS-0908007}\, and Ian Tice\footnote{Supported by an NSF
Postdoctoral Research Fellowship}}
\begin{document}

\maketitle

\begin{abstract}
The stability question of the Lane-Emden stationary gaseous star configurations is an interesting problem arising in astrophysics.  We establish both linear and nonlinear dynamical instability results for the Lane-Emden solutions in the framework of the Navier-Stokes-Poisson system with adiabatic exponent $6/5 < \gamma < 4/3$. 
\end{abstract}

\section{Introduction and formulation} 

One of the simplest fundamental hydrodynamical models to describe the motion of self-gravitating  viscous gaseous stars is the compressible Navier-Stokes-Poisson system, which can be written in Eulerian coordinates as follows: 
\begin{equation}\label{eulerian_equations}
 \begin{cases}
  \dt \varrho + \diverge(\varrho \mathbf{u}) = 0  \\
  \dt(\varrho  \mathbf{u}) + \diverge(\varrho \mathbf{u} \otimes \mathbf{u}) + \diverge{S} = -\varrho \nab \Phi  \\
   \Delta \Phi = 4\pi \varrho,
 \end{cases}
\end{equation}
where $(\mathbf{x},t)\in \Rn{3}\times \Rn{+}$, $\varrho(\mathbf{x},t) \ge 0$ is the density, $\mathbf{u}(\mathbf{x},t) \in \Rn{3}$ is  the velocity vector field of the gas,  $\Phi(\mathbf{x},t) \in \Rn{}$ is the potential function of the self-gravitational force, and the stress tensor $S$ is given by 
\begin{equation}
 S = P I_{3\times 3} - \ep \left(\nab \mathbf{u} +\nab \mathbf{u}^t - \frac{2}{3} (\diverge{\mathbf{u}}) I_{3\times 3} \right) -\delta(\diverge{\mathbf{u}})I_{3\times 3}, 
\end{equation}
where $P$ is the pressure of the gas, $\ep >0$ is the shear viscosity, $\delta \ge 0$ is the bulk viscosity, and  $\nab \mathbf{u}^t$ denotes the transpose of $\nab \mathbf{u}$.  We consider polytropic gases for which the equation of state is given by 
\begin{equation}
 P = P(\varrho) = K \varrho^\gamma,
 \end{equation}
where $K$ is an entropy constant and $\gamma>1$ is an adiabatic exponent. Values of $\gamma$ have their own physical significance \cite{Ch}; for instance, $\gamma=5/3$ corresponds to a monatomic gas, $\gamma=7/5$ a diatomic gas, and $\gamma\rightarrow 1^+$ for heavier molecules. 

In the simplest setting, which we consider, solutions to \eqref{eulerian_equations} are spherically symmetric.  For  $r=\abs{\mathbf{x}}$, this allows us to write
\begin{equation}
 \mathbf{u}(\mathbf{x},t) = u(r,t) \frac{\mathbf{x}}{r} \text{ for } u:[0,\infty)\times [0,\infty) \rightarrow \Rn{}
\end{equation}
and 
\begin{equation}
 \varrho(\mathbf{x},t) = \varrho(r,t).
\end{equation}
The equations \eqref{eulerian_equations} then reduce to the pair
\begin{equation}\label{eulerian_radial_1}
  \dt \varrho + u \dr \varrho + \frac{\varrho}{r^2} \dr(r^2 u) =0 
\end{equation}
and
\begin{equation}\label{eulerian_radial_2}
 \varrho(\dt u + u \dr u) + \dr P = -\frac{4\pi \varrho}{r^2}\int_0^r  \varrho(s,t) s^2ds + \dr \left(\frac{(4\ep/3 + \delta)}{r^2} \dr(r^2 u) \right).
 \end{equation}
Stationary solutions $\varrho=\varrho_0(r)$ and $u=0$, which correspond to non-moving gaseous spheres in hydrostatic equilibrium, satisfy the following equation for $P_0 = K \varrho_0^\gamma$:
\begin{equation}\label{LE}
\dr P_0 + \frac{4\pi \varrho_0}{r^2}\int_0^r  \varrho_0(s,t)s^2 ds =0.
\end{equation}
This equation can be solved by transforming it into the well-known Lane-Emden equation \cite{Ch}. The solutions to \eqref{LE} can be characterized by the values of $\gamma$ in the following fashion \cite{lin}: for given finite total mass $M>0$, if $\gamma\in(6/5,2)$, there exists at least one compactly supported solution $\varrho_0$. For $\gamma\in(4/3,2)$, every solution is compactly supported and unique. If $\gamma=6/5$, the unique solution admits an analytic expression, and it has infinite support. On the other hand, for $\gamma\in(1,6/5)$, there are no solutions with finite total mass. 

The stability of the Lane-Emden steady star configurations has been a question of great interest,
and it has been conjectured by astrophysicists that stationary solutions for $\gamma<4/3$ are unstable. The linear stability theory of the above stationary solutions was studied in \cite{lin} in  the inviscid case, namely the Euler-Poisson system, by studying the eigenvalue problem associated to the linearized Euler-Poisson system: any stationary solution is linearly stable when $\gamma\in(4/3,2)$ and unstable when $\gamma\in(1,4/3)$.  In accordance with the linear stability theory, a nonlinear stability for $\gamma>4/3$  was established in \cite{Rein} by using a variational approach. In the case  $\gamma=4/3$, the analysis of  \cite{DLTY} identified an instability in which any small perturbation can cause part of the system to go off to infinity.  In \cite{J1}, a nonlinear instability of the Lane-Emden steady star for $\gamma=6/5$ was proved based on the bootstrap argument, as pioneered in \cite{guo_strauss}.  It is worthwhile to mention that the stability question for the Euler-Poisson system with $6/5 < \gamma < 4/3$  remains an outstanding open problem. 

The same stability question can also be asked in the presence of viscosity. There have been interesting studies on the stabilization effect of viscosity in the Navier-Stokes-Poisson system   for $\gamma>4/3$ under various assumptions \cite{DZ,ZF}.  On the other hand, up to our knowledge, no rigorous stability theories are available for $\gamma<4/3$, the instability regime in the inviscid case. In this regime for viscous gaseous stars, a particularly interesting problem is to investigate whether or not the viscosity would dominate the gravitational force and stabilize the whole system.  The purpose of this article is to establish the instability theory of the Lane-Emden steady stars whose dynamics are governed by the Navier-Stokes-Poisson system for $6/5<\gamma<4/3$.   

We now formulate the problem. We begin by introducing a vacuum free boundary.

\subsection{Vacuum free boundary}

When $\gamma>6/5$, letting $R>0$ be the radius of the steady star, it is well-known \cite{lin} that 
\begin{equation}\label{stat_asymp}
 \varrho_0(r) \sim (R-r)^{1/(\gamma-1)} \text{ for } r \text{ near }R.
\end{equation}
This boundary behavior near vacuum causes a degeneracy in the equations \eqref{eulerian_radial_1} and \eqref{eulerian_radial_2}, and it is not trivial to deal with such a degeneracy even for the local-in-time existence question; we refer, for instance, to \cite{J2,NOM,OM} and also \cite{JM1,JM2} for the compressible Euler case.  It turns out that in order to capture boundary behavior such as \eqref{stat_asymp} in the dynamical setting, one has to consider a free boundary problem associated to \eqref{eulerian_radial_1} and \eqref{eulerian_radial_2} as in \cite{J2,NOM,OM}. We are interested in the evolution of  compactly supported stars with a free boundary where the star meets vacuum.  This is implemented by assuming there is a radius $R = R(t)>0$ so that 
\begin{equation}
 \varrho(r,t) >0 \text{ for }r\in[0,R(t)) \text{ and } \varrho(R(t),t)=0.
\end{equation}
At the free boundary we impose the kinematic condition 
\begin{equation}
 \frac{d}{dt}R(t) = u(R(t),t),
\end{equation}
as well as the continuity of the normal stress, $S \nu =0$ at the surface $r=R(t)$.  The latter condition reduces to
\begin{equation}
 P - \frac{4\ep}{3}\left(\dr u - \frac{u}{r}\right) - \delta\left( \dr u + \frac{2u}{r}\right) = 0 \text{ for } r=R(t), t\ge 0.
\end{equation}
Note that $P(R(t),t) = K\varrho^\gamma(R(t),t) = 0$, so this can be reduced to a relationship between $\dr u $ and $u$ at $r = R(t)$.  Finally, in order for $\mathbf{u} = u(r,t) \mathbf{x}/r$ to be continuous, we require $u(0,t) = 0$ for $t\ge 0$. 

Since the boundary $R(t)$ is free to move in time in Eulerian coordinates, it is convenient to introduce Lagrangian coordinates so that the boundary becomes fixed. Following the framework used in \cite{J2,NOM,OM}, we study our instability problem in  Lagrangian mass coordinates. 

\subsection{Formulation in Lagrangian mass coordinates}

We now reformulate the problem in Lagrangian mass coordinates.  We set
\begin{equation}
 x(r,t) = \int_0^r 4\pi s^2 \varrho(s,t) ds = \int_{B(0,r)} \varrho(y,t) dy
\end{equation}
for the mass contained in an Eulerian ball of radius $r$ at time $t$.  Note that
\begin{equation}
 \dr x(r,t) = 4 \pi r^2 \varrho(r,t)
\end{equation}
and that
\begin{multline}
 \dt x(r,t) = \int_{B(0,r)} \dt \varrho(y,t) dy = - \int_{B(0,r)} \diverge(\varrho \mathbf{u}) dy \\
= -\int_{\partial B(0,r)} \varrho \mathbf{u}\cdot \mathbf{\nu} = - 4 \pi r^2 \varrho(r,t) u(r,t).
\end{multline} 
In particular, this implies that $\dt x(R(t),t) =0$, which means that the total mass $M>0$ is preserved in time.  The  domain of $x$ is then $[0,M]$.  Switching to Lagrangian mass coordinates $(x,t)\in[0,M] \times [0,\infty)$ and letting the unknowns be 
\begin{equation}
 \rho(x,t) = \varrho(r,t) \text{ and } v(x,t) = u(r,t),
\end{equation}
we get the equations
\begin{equation}\label{lagrangian_1}
 \dt \rho + 4 \pi \rho^2 \dx (r^2 v) =0
\end{equation}
and 
\begin{equation}\label{lagrangian_2}
\dt v + 4\pi r^2 \dx P + \frac{x}{r^2} = 16 \pi^2 r^2 \dx((4\ep/3+\delta)\rho \dx(r^2 v)).
\end{equation}
In Lagrangian coordinates our boundary conditions reduce to
\begin{equation}\label{bc0}
 v(0,t) = 0, \; \rho(M,t) =0,
\end{equation}
and 
\begin{equation}\label{bc}
  P - \frac{4\ep}{3}\left(4\pi r^2 \rho \dx v  - \frac{v}{r}\right) - \delta\left( 4\pi r^2 \rho \dx v + \frac{2v}{r}\right) = 0  \text{ at } x=M \text{ for all }t\ge 0.
\end{equation}
In each of these equations, we have written
\begin{equation}
 r(x,t) = \left(\frac{3}{4\pi} \int_0^x \frac{dy}{\rho(y,t)} \right)^{1/3}.
\end{equation}
A simple computation, employing \eqref{lagrangian_1}, shows that $\dt r(x,t) = v(x,t)$.

A stationary solution $\rho = \rho_0(x)$, $v =0$, $P_0 = K \rho_0^\gamma$ to \eqref{lagrangian_1} and \eqref{lagrangian_2} satisfies the equation
\begin{equation}\label{steadyL}
 4\pi r_0^2(x)\dx P_0(x) +  \frac{x}{ r_0^2(x)}=0
\end{equation}
where
\begin{equation}\label{r0_def}
 r_0(x) = \left(\frac{3}{4\pi} \int_0^x \frac{dy}{\rho_0(y)} \right)^{1/3}.
\end{equation}
This is the Lagrangian version of \eqref{LE}. We denote such a Lane-Emden solution in Lagrangian mass coordinates by $\rho_0$ with pressure $P_0 = K \rho_0^\gamma$. 
 In Lagrangian $x$ coordinates, the boundary behavior \eqref{stat_asymp} is expressed as follows 
\begin{equation}\label{LEL}
 \rho_0(x) \sim (M-x)^{1/\gamma} \text{ for } x \text{ near } M,
\end{equation}
which can be also seen from \eqref{steadyL}. 

The existence and uniqueness of strong solutions to the vacuum free boundary problem of the Navier-Stokes-Poisson system \eqref{lagrangian_1} and \eqref{lagrangian_2} featuring the behavior \eqref{LEL} of Lane-Emden solutions was established in \cite{J2} when $\delta={2\ep}/{3}>0$. The same methodology can be applied to our current setting as long as $\ep>0$ and $\delta>0$, and we will take those strong solutions for granted in proving our nonlinear instability result.  We remark that a well-posedness result in our energy space can be also proved based on our new a priori energy estimates for the fully nonlinear Navier-Stokes-Poisson system, described in Section \ref{4}. 

\subsection{Main results}

Throughout the paper we assume that 
\begin{equation}\label{parameter_assumptions}
 \ep>0, \; \delta >0, \; K >0, \; \text{and } 6/5 < \gamma < 4/3
\end{equation}
are all fixed.  Note that although the only physical requirement on the bulk viscosity is $\delta \ge 0$, the assumption $\delta>0$ is critical for both our linear and nonlinear analysis.  We will also write $M, R>0$ for the mass and radius of a stationary solution to \eqref{steadyL}.

To state the main results, we first write the system in a perturbation form. For small perturbed solutions $\sigma:=\rho-\rho_0$ and $v$ around the steady states satisfying \eqref{steadyL}, the Navier-Stokes-Poisson system \eqref{lagrangian_1} and \eqref{lagrangian_2} can be written as follows: 
\begin{equation}\label{PNSP_0}
\begin{split}
\dt\sigma +4\pi\rho^2\dx(r^2 v)&=0\\
\dt v +4\pi r^2\dx P- 4\pi r_0^2 \dx P_0 + \frac{x}{r^2}-\frac{x}{r_0^2}&= 16 \pi^2 r^2 \dx((4\ep/3+\delta)\rho \dx(r^2 v))
\end{split}
\end{equation} 
with boundary conditions \eqref{bc0} and \eqref{bc}. 

Our first main result concerns the existence of the largest growing mode of the linearized Navier-Stokes-Poisson system around Lane-Emden solutions, which shows a linear instability in the sense of Lin's stability criteria \cite{lin}. 

\begin{thm}\label{thm1.1} 
Suppose \eqref{parameter_assumptions}.  There exist $\lambda>0$ and $\sigma(x),v(x)$ so that $\sigma(x)e^{\lambda t}$ and $v(x) e^{\lambda t}$ solve the linearized Navier-Stokes-Poisson system \eqref{linearized_1} and \eqref{linearized_2} with the linearized boundary conditions \eqref{linearized_30} and \eqref{linearized_3}.  Moreover, this growing mode yields the largest possible growth rate to the linearized system. 
\end{thm}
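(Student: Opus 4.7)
The plan is to recast the existence of a growing mode as a variational problem using a modified Rayleigh quotient, following the approach developed for viscous free-boundary problems. Inserting the normal-mode ansatz $\sigma(x) e^{\lambda t}$, $v(x) e^{\lambda t}$ into the linearization of \eqref{PNSP_0} about $(\rho_0, 0)$, the first (continuity) equation gives the pointwise relation $\sigma = -(4\pi \rho_0^2/\lambda)\, \dx(r_0^2 v)$. Substituting this into the linearized momentum equation and pairing with $v$ in $L^2(dx)$ produces a quadratic operator pencil of the form
\begin{equation*}
\lambda^2 J(v) + \lambda D(v) + E(v) = 0,
\end{equation*}
where $J(v) = \int_0^M v^2\,dx$ is the inertia, $D(v) \ge 0$ is the quadratic form coming from the viscous term (morally $\int (4\ep/3 + \delta) \rho_0 (\dx(r_0^2 v))^2$ weighted against $r_0^2$), and $E(v)$ collects the linearized pressure and self-gravity contributions; after eliminating $\sigma$, the Newtonian gravity term becomes a quadratic form in $v$ via the $x$-integrated structure inherited from the mass variable.

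For each parameter $s > 0$ I will study the modified minimization problem
\begin{equation*}
\mu(s) := \inf\left\{ E(v) + s\, D(v) \,:\, v \in \SX,\; J(v) = 1 \right\},
\end{equation*}
where $\SX$ is a weighted Sobolev space adapted to the vacuum degeneracy $\rho_0 \sim (M-x)^{1/\gamma}$ from \eqref{LEL} and encoding the linearized boundary conditions \eqref{linearized_30} and \eqref{linearized_3}. The direct method produces a minimizer $v_s$: $D$ is coercive in the weighted first-order seminorm, while $E$ is lower order relative to $D$ through a Hardy-type inequality against the vacuum weight, giving precompactness of minimizing sequences. Next I verify that $\mu$ is continuous, nondecreasing in $s$, and satisfies $\mu(s) \to +\infty$ as $s \to \infty$ (by the $sD$ term swamping $E$), and critically that $\mu(s) < 0$ for small $s > 0$. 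The last property requires producing an explicit test function $v_\star \in \SX$ with $E(v_\star) < 0$; this is possible exactly because $\gamma < 4/3$, and is the Lagrangian version of Lin's instability criterion \cite{lin} (the steady ODE \eqref{steadyL} together with the Lane-Emden scaling forces the destabilizing sign on a suitably chosen radial perturbation once $\gamma < 4/3$).

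Setting $\Psi(s) := \mu(s) + s^2$ then gives a continuous, strictly increasing function with $\Psi(s) < 0$ for small $s$ and $\Psi(s) \to +\infty$, so a unique $\lambda \in (0,\infty)$ exists with $\Psi(\lambda) = 0$. The associated minimizer $v = v_\lambda$ satisfies the Euler-Lagrange equation, which is precisely the reduced linearized momentum equation at growth rate $\lambda$; reconstructing $\sigma = -(4\pi \rho_0^2/\lambda)\, \dx(r_0^2 v)$ recovers the full growing mode $(\sigma, v)e^{\lambda t}$ of \eqref{linearized_1}-\eqref{linearized_2}. Maximality is then automatic: for any competing growth rate $\tilde\lambda > 0$ with eigenfunction $\tilde v$ normalized by $J(\tilde v) = 1$, the pencil relation gives $E(\tilde v) + \tilde\lambda D(\tilde v) = -\tilde\lambda^2$, so $\mu(\tilde\lambda) \le -\tilde\lambda^2$, i.e., $\Psi(\tilde\lambda) \le 0 = \Psi(\lambda)$, and monotonicity of $\Psi$ forces $\tilde\lambda \le \lambda$.

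The hard part, I expect, is analytic rather than conceptual: designing the space $\SX$ so that (i) $D$ is coercive and the natural boundary contributions at $x = M$ are absorbed without loss, given the interaction of the vacuum weight with both the pressure and viscous quadratic forms, (ii) $\SX$ embeds compactly into $L^2(dx)$ (a weighted Hardy-Rellich type bound), and (iii) the test function $v_\star$ making $E$ negative can be exhibited concretely using only the qualitative asymptotics of $\rho_0$ near $x=M$ and the sign of $4/3 - \gamma$. Once these ingredients are in place, the remainder of the argument is the standard monotone-parameter scheme, and the output $\lambda$ is the desired largest growth rate.
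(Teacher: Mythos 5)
Your existence construction matches the paper's up to reparametrization: both introduce a relaxed parameter $s$, minimize a quadratic form $E + sD$ over the unit $J$-sphere in a weighted Sobolev space, show that $\mu(s)$ is continuous, strictly increasing, and negative near $s=0$ by exhibiting a test function (which works precisely because $\gamma < 4/3$), and locate $\lambda$ by solving $\mu(s) = -s^2$. The paper takes $\phi(x) = \int_0^x \sigma/\rho_0^2\,dy$ as the unknown and passes to Eulerian coordinates $z = r_0(x)$ so that the boundary asymptotics at $z=0$ and $z=R$ become classical ODE estimates (Lemma \ref{gen_asymp}); your formulation in $v$ is related by $\phi \propto r_0^2 v$, and your fixed-point equation $\mu(s) + s^2 = 0$ is identical to the paper's $\sqrt{-\mu(s)} = s$. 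So the existence half is essentially the paper's proof in different clothing.

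The maximality step is where your argument has a genuine gap. You check only that no other eigenpair $(\tilde\lambda, \tilde v)$ with $\tilde\lambda > 0$ real can beat $\lambda$, using the pencil relation; complex eigenvalues are harmless here (the real pencil $\tilde\lambda^2 J + \tilde\lambda D + E = 0$ with $J, D \ge 0$ forces complex conjugate roots to have negative real part), but that is not the point. The theorem's maximality claim, made precise in Theorem \ref{lin_estimates}, is that \emph{no solution} of the linearized initial-value problem can grow faster than $e^{\lambda t}$, and $\mathcal{L}$ is not self-adjoint, so the top of its point spectrum does not by itself control the growth of the semigroup $e^{t\mathcal{L}}$. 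The bound $\norm{e^{t\mathcal{L}}} \lesssim e^{\lambda t}$, i.e.\ \eqref{mild_recast}, is exactly what the subsequent nonlinear Duhamel/bootstrap argument requires. The paper obtains it by deriving an exact energy evolution identity for the second-order reformulation (Proposition \ref{lin_en_ev}), combining it with the variational inequality \eqref{gm_02} --- which your minimization \emph{does} produce, since $\mu(\lambda) = -\lambda^2$ gives $E(\theta) + \lambda D(\theta) \ge -\lambda^2 J(\theta)$ for all admissible $\theta$ --- and closing with a Gronwall argument to get a sharp exponential estimate valid for arbitrary initial data. You have the correct variational ingredient but have not carried out the energy/Gronwall step that upgrades eigenvalue maximality to the required growth bound for general solutions.
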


The precise statement of Theorem \ref{thm1.1} with the estimates is given in Theorem \ref{growing_mode} and Theorem \ref{lin_estimates}.  Our second main result establishes the fully nonlinear dynamical instability of the Lane-Emden solutions to the Navier-Stokes-Poisson system.  In the statement of the theorem, for any given $\iota >0$ and $\theta >\iota$, we write 
\begin{equation}\label{T_iota}
 T^{\iota}:=\frac{1}{\lambda} \ln \frac{\theta}{\iota},
\end{equation}
where $ \lambda$ is the sharp linear growth rate obtained in Theorem \ref{thm1.1}. 

\begin{thm}\label{thm1.2} 
Suppose \eqref{parameter_assumptions}.   There exist function spaces $X$ and $Y$ as well as constants $\theta>0$ and $C>0$ such that for any sufficiently small $\iota>0$, there exist solutions $(\sigma^{\iota}(t), v^{\iota}(t))$ to \eqref{PNSP_0} for $t \in [0,T)$ with $T > T^\iota$ so that
\begin{equation*}
\norm{(\sigma^\iota (0),v^\iota (0))}_Y\le C\iota, \text{  but } \sup_{0\leq t
\leq T^{\iota}} \norm{(\sigma^\iota(t), v^\iota(t))}_X \ge \theta.
\end{equation*}
\end{thm}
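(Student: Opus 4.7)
The plan is to execute the Guo--Strauss bootstrap scheme (as in \cite{guo_strauss,J1}) to promote the sharp linear growing mode from Theorem \ref{thm1.1} into nonlinear dynamical instability. Write \eqref{PNSP_0} schematically as $\dt U = L U + N(U)$ with $U = (\sigma,v)$, where $L$ is the linearization around the Lane--Emden profile $(\rho_0,0)$ and $N$ collects all superlinear contributions arising from expanding $4\pi r^2 \dx P - 4\pi r_0^2 \dx P_0$, $x/r^2 - x/r_0^2$, and the viscous term about $r_0$ and $\rho_0$, using $r-r_0$ expressed in terms of $\sigma$ via \eqref{r0_def}. Theorem \ref{thm1.1} supplies an eigenpair $LU_\star = \lambda U_\star$, $U_\star = (\sigma_\star,v_\star)$, with $\lambda$ the sharp growth rate, so any Duhamel estimate against the linear flow in the relevant norm is dominated by $e^{\lambda t}$.

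Choose initial data $(\sigma^\iota(0), v^\iota(0)) = \iota U_\star$, normalized so $\|U_\star\|_X = 1$ and $\|U_\star\|_Y \le C$; this yields $\|(\sigma^\iota(0), v^\iota(0))\|_Y \le C\iota$. Local existence of a strong solution on some $[0, T_{\max})$ follows from the discussion after \eqref{LEL} (via \cite{J2} and the a priori estimates of Section \ref{4}). The technical heart is a closed nonlinear energy inequality of the form
\begin{equation*}
\|U(t)\|_Y^2 + \int_0^t \D(U(s))\,ds \le C\|U(0)\|_Y^2 + 2\lambda\int_0^t \|U(s)\|_Y^2\,ds + C\int_0^t \|U(s)\|_Y^{2+\alpha}\, ds,
\end{equation*}
valid while a low-order norm $\|U\|_X$ stays below a fixed threshold $\eta_0$. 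A Gronwall iteration, combined with the sharp rate $\lambda$ from Theorem \ref{thm1.1}, then gives $\|U(t)\|_Y \le C\iota e^{\lambda t}$ on this regime.

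Now pick $\theta>0$ small (to be chosen), and set $T_\star := \sup\{t \le T^\iota : \|U(s)\|_X \le 2\theta \text{ for all } s \le t\}$. Writing $U(t) = \iota e^{\lambda t} U_\star + W(t)$, where $W(t) = \int_0^t e^{(t-s)L} N(U(s))\, ds$ by Duhamel, the quadratic nature of $N$ and the high-norm bound give
\begin{equation*}
\|W(t)\|_X \le C \int_0^t e^{\lambda(t-s)} \|U(s)\|_Y^2\, ds \le C\iota^2 e^{2\lambda t} \quad \text{on } [0, T_\star].
\end{equation*}
At $t = T^\iota$ one has $\iota e^{\lambda T^\iota} = \theta$, so $\|\iota e^{\lambda T^\iota} U_\star\|_X = \theta$ while $\|W(T^\iota)\|_X \le C\theta^2$. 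Choosing $\theta$ small enough that $C\theta^2 < \theta/2$ both closes the bootstrap ($T_\star = T^\iota$, so in particular the solution persists through $T^\iota$) and, by the triangle inequality, delivers $\|U(T^\iota)\|_X \ge \theta/2$; an harmless relabeling of $\theta$ then yields the stated conclusion.

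The main obstacle is the construction of the pair $(X,Y)$ together with the nonlinear energy inequality displayed above, in the presence of both the vacuum degeneracy $\rho_0(x) \sim (M-x)^{1/\gamma}$ from \eqref{LEL} and the Robin-type boundary condition \eqref{bc}. The correct weights near $x=M$ are forced by $\rho_0$ and must simultaneously: (i) make the linear energy coercive modulo the finite-dimensional unstable subspace (so the spectral picture of Theorem \ref{thm1.1} governs the growth), (ii) support Hardy/Sobolev-type embeddings that dominate $N(U)$, including the nonlocal dependence of $r$ on $\rho$ through \eqref{r0_def} and the $\rho_0^{\gamma-2}$ singularity emerging from $\dx P$, and (iii) close under the bootstrap so that control of $\|U\|_X$ feeds back into improved smallness for the nonlinearities. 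Striking this balance, and in particular arranging $\|\cdot\|_X$ to be compatible with the growing mode $U_\star$ while remaining weak enough to admit a favorable Duhamel estimate for $W$, is the genuine technical crux of the argument.
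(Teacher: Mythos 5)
Your overall strategy -- bootstrap from the sharp linear growth rate, Duhamel against the linear flow, a threshold $\theta$ chosen so the quadratic error stays subdominant at $T^\iota$ -- is the same architecture the paper uses. However, there are two concrete steps where your proposal, as written, would fail within the paper's framework.

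First, the choice of initial data $(\sigma^\iota(0), v^\iota(0)) = \iota U_\star$ does not work. The nonlinear energy functional $\E$ (equation \eqref{total_energy}) contains up to three time derivatives of $v$ and $\sigma/\rho_0$, and these higher time derivatives at $t=0$ are determined from the data through the equations. For the solution to exist in the energy space, the data must satisfy the nonlinear compatibility conditions \eqref{da_4} at the boundary $x=M$ up to order $j=3$. The linear growing mode satisfies only the \emph{linear} compatibility conditions $\SC(\mathcal{L}^j \SX_0)=0$, whereas the nonlinear conditions involve $\SN(\cdot)$ and $\SN_\B(\cdot)$, so $\iota U_\star$ is incompatible for $j\ge 1$. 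The paper resolves this (Lemma \ref{data_analysis}) by an implicit function theorem argument, producing data of the form $\iota U_\star + \iota^2 \bar{\mathfrak{Y}}(\iota)$ that satisfy compatibility and remain within $O(\iota)$ of $\iota U_\star$. Without this correction, the claim ``local existence of a strong solution $\ldots$ follows'' does not; the local theory in the energy space $\E$ requires compatible data.

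Second, your Duhamel formula $W(t) = \int_0^t e^{(t-s)L} N(U(s))\,ds$ implicitly treats the boundary condition as homogeneous, but the nonlinear problem has the inhomogeneous boundary condition $\mathcal{B}(w) = N_\B$ at $x=M$ (see \eqref{bc} versus \eqref{linearized_3}). The linear solution operator $e^{t\mathcal{L}}$ is only defined for the homogeneous boundary problem \eqref{mild_1}--\eqref{mild_2}, so variation of parameters cannot be applied directly. The paper introduces a boundary corrector $\psi(x,t) = -\tfrac{N_\B(t)}{3\delta} r_0^3(x)$ (Lemma \ref{boundary_transform}, using $\delta>0$ crucially), passes to $\bar{w} = w - \psi$, and only then applies Duhamel; this produces the extra $\psi^\iota$ and $N_\B$ terms in the representation \eqref{duhamel_2} and in the linear estimate of Theorem \ref{mild_estimates}. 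This corrector is also the reason the paper must close its energy estimates at a higher order than one might naively expect. Your proposal names the boundary condition as a crux but does not supply the device needed to handle it, and without it the Duhamel estimate for $W$ does not hold.

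A smaller omission: the low-order energy $\E^0$ includes the term $\E^{0,r}$ tracking $1 - r_0/r$, which is not directly controlled by the Duhamel estimate for $(\sigma,w)$; the paper handles it separately (Step 4 of the proof of Theorem \ref{thm}) by integrating $\dt(r_0/r) = -(v/r)(r_0/r)$ against the $\|\cdot\|_0$ bound already obtained. Your proposal should account for this component of $X$ as well.
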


The precise statement of Theorem \ref{thm1.2} is given in Theorem \ref{thm} and the spaces $X$ and $Y$ will be clarified in Section \ref{4} and \ref{5}. 

\begin{remark} Our results show that regardless of how large the viscosity parameters $\ep, \delta$ are, and no matter how small smooth initial perturbed data are taken to be, there is no stabilization of the system.  We conclude from this that all Lane-Emden steady star configurations for $6/5 < \gamma < 4/3$ are unstable, regardless of viscosity. 
\end{remark}

\begin{remark} The escape time $T^\iota$ is determined through \eqref{T_iota} by the linear growth rate $\lambda$.  We note that the instability occurs before the possible breakdown or any collapse of strong solutions.  We also remark that the instability occurs in the $X$ norm, which can be characterized by the instantaneous physical energy  
\begin{equation*}
\begin{split}
 &\int_0^M\(\hal |v|^2 +\frac{1}{\gamma-1}\frac{P}{\rho} - \frac{x}{r}\)dx \text{ in Lagrangian mass coordinates, or}\\
&\int_{\Rn{3}}\(\hal \varrho |\mathbf{u}|^2 +\frac{1}{\gamma-1}P\) d\mathbf{x} -\hal \int_{\Rn{3}\times \Rn{3}} \frac{\varrho(\mathbf{x})\varrho(\mathbf{y})}{|\mathbf{x}-\mathbf{y}|} d\mathbf{x}d\mathbf{y} \text{ in Eulerian coordinates.}
\end{split}
\end{equation*}
Of course, this is not a coincidence: the Lane-Emden solutions for $\gamma<4/3$ do not minimize the physical energy functional  and thus one might expect some kind of instability.  Note that they do minimize for $\gamma>4/3$ (see for instance \cite{J1,Rein}). 
\end{remark}

The presence of viscosity and the nonlinear boundary condition \eqref{bc} for the Navier-Stokes-Poisson system make the problem distinguishable and interesting not only from a physical point of view, but also from a mathematical point of view.  What follows now are some of the main mathematical difficulties we encounter in analyzing the system, and a brief discussion of our methods for resolving them.

The proof of Theorem \ref{thm1.1} is based on a variational analysis of equations obtained by linearizing \eqref{PNSP_0}.  The main difficulty that arises in constructing growing-mode solutions is that, due to the viscous terms, the growth rate (eigenvalue) appears in the problem with two different homogeneities.  This breaks the natural variational structure used in \cite{lin} to construct growing modes in the inviscid case.  To get around this difficulty, we employ a technique introduced in \cite{guo_tice}: we introduce a relaxed parameter that allows us to remove one of the eigenvalue homogeneities, study the resulting modified eigenvalue problem (which has a nice variational structure), and finally return to the original formulation through a fixed point argument.  While the solutions constructed in this manner are definitely growing modes, it is not clear a priori that they grow at the largest possible rate.  To verify this, we carry out a careful analysis, paying particular attention to the boundary behavior of the growing mode, which will be crucially used in the subsequent nonlinear bootstrap argument. 

The proof of Theorem \ref{thm1.2} is based on a bootstrap argument from linear instability to  nonlinear dynamical instability.  Passing from a linearized instability to  nonlinear instability requires much effort in the PDE context since the spectrum of the linear part is fairly complicated and the unboundedness of the nonlinear part usually yields a loss in derivatives. In order to get around these difficulties and to find the right space $Y$, we employ careful nonlinear energy estimates for the whole system so that: first, the nonlinear estimates can be closed; and second, their interplay with the linear analysis can complete the argument.  For this particular problem, the space $Y$ is minimally chosen so that the viscosity disturbance near the vacuum boundary can be controlled within $Y$.  We note that  in Lagrangian  mass coordinates, the continuity equation interacts well with the viscosity term, which allows us to derive nice estimates for  $\sigma/\rho_0$ and its temporal and spatial derivatives.  This plays an important role in  closing our nonlinear energy estimates.

Another delicate and important issue is the nonlinear boundary condition \eqref{bc}.  In order to carry out higher-order energy estimates that require integration by parts, we can only employ  differential operators that respect the boundary conditions, namely temporal derivatives.  This forces us to carefully use the structure of the equations in order to gain bounds on spatial derivatives.  A second difficulty with the boundary arises because we use Duhamel's principle to study the nonlinear problem with the linearized evolution operator.  The linearized boundary condition is homogeneous, but the nonlinear boundary condition is certainly not.  This forces us to introduce a corrector function that removes the boundary inhomogeneity.  While the construction of this function is not particularly delicate, the regularity required to do so dictates that we close our energy estimates at a higher order than we would otherwise.

The paper  proceeds as follows.  The first half is devoted to the development of the linear theory and the proof of Theorem \ref{thm1.1}.  In Section \ref{2}, we formulate a variational problem to find a growing-mode solution to the linearized Navier-Stokes-Poisson system.  In Section \ref{lin_est_sec}, we show that our growing-mode solution grows at the largest possible rate. In the second half of the paper, we carry out our nonlinear analysis.  In Section 4 we derive high-order nonlinear energy inequalities.  Based on the linear growth and the nonlinear estimates, we then prove the bootstrap argument and Theorem \ref{thm1.2} in Section 5.


\section{Construction of a growing mode solution to the linearized equations}\label{2}

\subsection{Linearization around a stationary solution}

We now linearize the equations in Lagrangian mass coordinates around the stationary solution $v=0$, $\rho=\rho_0$, $r=r_0$ (as defined by \eqref{r0_def}).  We will write $\sigma$ for the linearized density, and (by abuse of notation) $v$ for the linearized velocity.  Then the linearized equations are given by
\begin{equation}\label{linearized_1}
 \dt \sigma + 4 \pi \rho_0^2 \dx (r_0^2 v) =0
\end{equation}
and
\begin{equation}\label{linearized_2}
\dt v + 4 \pi r_0^2 \dx \tilde{P} + \frac{x}{\pi r_0^5}\int_0^x \frac{\sigma(y,t)}{\rho^2_0(y)}dy = 16 \pi^2 r_0^2 \dx((4\ep/3+\delta)\rho_0 \dx(r_0^2 v)), 
\end{equation}
where we have written $\tilde{P} = \gamma K \rho_0^{\gamma-1} \sigma.$  The linearized boundary conditions are
\begin{equation}\label{linearized_30}
 v(0,t) = 0, \; \sigma(M,t) =0,
\end{equation}
and 
\begin{equation}\label{linearized_3}
  \tilde{P} - \frac{4\ep}{3}\left(4\pi r_0^2 \rho_0 \dx v  - \frac{v}{r_0}\right) - \delta\left( 4\pi r_0^2 \rho_0 \dx v + \frac{2v}{r_0}\right) = 0  \text{ at } x=M \text{ for all }t\ge 0.
\end{equation}
Again, we can view \eqref{linearized_3} as a boundary condition only for $v$ since  $\tilde{P} = \gamma K  \rho_0^{\gamma-1} \sigma =0$ at $x=M$ for each $t \ge 0$.

It will often be useful for us to analyze a variant of this system, where we analyze the unknowns $\sigma$ and $w:= r_0^2 v$.  For these unknowns, the equations \eqref{linearized_1}--\eqref{linearized_3} become
\begin{equation}\label{first_order_1}
\begin{split}
&\dt \sigma + 4 \pi \rho_0^2 \dx w =0 \\ 
& \dt w + 4 \pi r_0^4 \dx (\gamma K \rho_0^{\gamma-1} \sigma) - 4 r_0 \dx P_0 \int_0^x \frac{\sigma(y,t)}{\rho_0^2(y)}dy = 16 \pi^2 r_0^4 \dx \left[ \left(\frac{4 \ep}{3} + \delta\right)  \rho_0 \dx w\right] 
\end{split}
\end{equation}
along with the boundary conditions
\begin{equation}\label{first_order_2}
 \frac{w}{r_0^2}(0,t) = \sigma(M,t) =0 \text{ and } \frac{4\ep}{3} \left(4 \pi r_0^3 \rho_0 \dx \left(\frac{w}{r_0^3} \right) \right) +
\delta(4\pi \rho_0 \dx w) =0 \text{ at } x=M.
\end{equation}

\subsection{Growing mode solution}

We want to construct a growing mode solution to the linearized equations.  We do so by looking for a solution of the form
\begin{equation}\label{growing_ansatz}
 \sigma(x,t) = \sigma(x) e^{\lambda t} \text{ and } v(x,t) = v(x) e^{\lambda t}
\end{equation}
for some $\lambda >0$.  If we can find such a solution, then we say the solution is a growing mode since $\abs{e^{\lambda t}} \rightarrow \infty$ as $t \rightarrow \infty$.   Plugging the ansatz \eqref{growing_ansatz} into the linearized equations  \eqref{linearized_1}--\eqref{linearized_3} and eliminating the time exponentials, we arrive at a pair of equations for $\sigma(x)$ and $v(x)$:
\begin{equation}\label{growing_1}
 \lambda \sigma + 4 \pi \rho_0^2 \dx (r_0^2 v) =0
\end{equation}
and
\begin{equation}\label{growing_2}
\lambda v + 4 \pi r_0^2 \dx \tilde{P} + \frac{x}{\pi r_0^5}\int_0^x \frac{\sigma(y)}{\rho^2_0(y)}dy = 16 \pi^2 r_0^2 \dx((4\ep/3+\delta)\rho_0 \dx(r_0^2 v)), 
\end{equation}
along with boundary conditions
\begin{equation}\label{growing_3}
 v(0) = \sigma(M) =0 \text{ and }
  - \frac{4\ep}{3}\left(4\pi r_0^2 \rho_0 \dx v  - \frac{v}{r_0}\right) - \delta\left( 4\pi r_0^2 \rho_0 \dx v + \frac{2v}{r_0}\right) = 0  \text{ at } x=M.
\end{equation}

Our main result of this section establishes the existence of such a growing mode.

\begin{thm}\label{growing_mode}
There exist $\lambda >0$ and $\sigma,v:(0,M) \to \Rn{}$  that solve \eqref{growing_1}--\eqref{growing_3} and satisfy  the following.
\begin{enumerate}
 \item $\sigma$ and $v$ are smooth on $(0,M)$ and satisfy \eqref{growing_1}--\eqref{growing_2} classically for $x \in (0,M)$.  
 \item It holds that
\begin{equation}\label{gm_01}
 \limsup_{x\to 0} \frac{\abs{v(x)}}{r_0(x)} + \limsup_{x\to 0}  \abs{ \sigma(x)} + \limsup_{x\to 0}   \abs{ \dx(r_0^2 v)(x)}   < \infty.
\end{equation}
In particular, $v(0) =0$.

\item Let $\mathfrak{D}$ denote the linear operator $\mathfrak{D} =\rho_0 \dx$.  Then $\mathfrak{D}^k v$ and $\mathfrak{D}^k (\sigma / \rho_0)$ have well-defined traces at $x=M$ for every integer $k \ge 0$.  In particular, $\sigma(M) =0$.

\item $\lambda >0$ satisfies the following variational characterization:
\begin{multline}\label{gm_02}
  \lambda  \int_0^M  \left(\delta \rho_0 \abs{\dx  \theta}^2  + \frac{4\ep}{3} \rho_0 \abs{r_0^3 \dx\left(\frac{ \theta}{r_0^3} \right) }^2  \right) dx \\
+ \int_0^M \left( \frac{\gamma P_0 \rho_0}{2} \abs{\dx \theta}^2 + \frac{ \dx P_0}{2 \pi r_0^3} \abs{\theta}^2 \right) dx  
\ge - \lambda^2 \int_0^M \frac{\abs{ \theta }^2}{16 \pi^2 r_0^4} dx
\end{multline}
for every $\theta$ satisfying $\sqrt{\rho_0} \dx \theta \in L^2((0,M))$ and $\theta/(r_0^2 \sqrt{\rho_0}) \in L^2((0,M))$.  Note that for such $\theta$, it holds that $\theta/(r_0^3 \sqrt{\rho_0}) \in L^2((0,M))$, which means that all of the integrals in \eqref{gm_02} are well-defined.

\item It holds that
\begin{multline}\label{gm_03}
\int_0^M \left(\abs{\frac{\sigma}{\rho_0}}^2 + r_0^2 \abs{\dx \frac{\sigma}{\rho_0}}^2 \right) dx
\\ 
+ \int_0^M \left(\frac{\abs{r_0^2 v}^2}{r_0^6 \rho_0} + \rho_0 \abs{\dx (r_0^2 v)}^2 + r_0^2 \abs{\dx(\rho_0 \dx (r_0^2 v))}^2\right) dx < \infty.
\end{multline}

\end{enumerate}

\end{thm}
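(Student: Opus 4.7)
The plan is to adapt the relaxation and fixed-point strategy introduced in \cite{guo_tice}. First I would use the continuity equation \eqref{growing_1} to eliminate $\sigma = -(4\pi/\lambda)\rho_0^2\dx w$, where $w := r_0^2 v$, and then multiply the momentum equation \eqref{growing_2} by $\lambda$. This reduces the problem to a single second-order ODE of schematic form $\lambda^2 \mathbb{M}[w] + \lambda \mathbb{V}[w] + \mathbb{L}[w] = 0$, whose associated quadratic forms can be read directly from \eqref{gm_02}: the positive inertial form $\mathbb{M}[\theta] = \int_0^M |\theta|^2/(16\pi^2 r_0^4)\,dx$, the positive viscous dissipation form $\mathbb{V}[\theta] = \int_0^M(\delta\rho_0|\dx\theta|^2 + (4\ep/3)\rho_0|r_0^3\dx(\theta/r_0^3)|^2)\,dx$, and the pressure--gravity form $\mathbb{L}[\theta] = \int_0^M((\gamma P_0\rho_0/2)|\dx\theta|^2 + ((\dx P_0)/(2\pi r_0^3))|\theta|^2)\,dx$, which is \emph{indefinite} for $6/5<\gamma<4/3$ (the content of Lin's instability criterion, since $\dx P_0 < 0$). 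The essential obstacle is that the eigenvalue $\lambda$ appears with two different homogeneities, so the naive Rayleigh quotient is not a standard linear eigenvalue problem.

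Second, I would introduce the relaxation: for each fixed $s > 0$, replace $\lambda$ in the viscous term by the independent parameter $s$ and study
\begin{equation*}
\mu(s) := \sup_{\theta \in \mathcal{H}} \frac{-\mathbb{L}[\theta] - s\,\mathbb{V}[\theta]}{\mathbb{M}[\theta]},
\end{equation*}
where $\mathcal{H}$ is the weighted Sobolev space of $\theta$ with $\sqrt{\rho_0}\,\dx\theta \in L^2((0,M))$ and $\theta/(r_0^2\sqrt{\rho_0}) \in L^2((0,M))$, as in \eqref{gm_02}. Lin's criterion supplies a test function with $-\mathbb{L}[\theta] > 0$, so $\mu(0) > 0$; on the other hand $\mu(s) \to 0$ as $s \to \infty$ since the viscous term dominates. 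For each $s$ in the positivity range, the direct method---using a weighted Rellich-type compact embedding on $\mathcal{H}$ adapted to the vacuum asymptotics \eqref{LEL}---produces a maximizer $\theta_s$, and a standard continuity and monotonicity argument shows that $s \mapsto \mu(s)$ is continuous and strictly decreasing. An intermediate value argument then yields a unique $s_\ast > 0$ with $\mu(s_\ast) = s_\ast^2$. Setting $\lambda := s_\ast$, $w := \theta_{s_\ast}$, $v := w/r_0^2$, and $\sigma := -(4\pi/\lambda)\rho_0^2 \dx w$ produces a solution of \eqref{growing_1}--\eqref{growing_3}, and the variational characterization (4) follows by rearranging the defining inequality $\mu(s_\ast) \ge (-\mathbb{L}[\theta] - s_\ast \mathbb{V}[\theta])/\mathbb{M}[\theta]$ at $s = s_\ast$.

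Third, I would establish the regularity and boundary properties (1)--(3) together with the integrability bound (5). Interior smoothness follows from classical ODE regularity, since the coefficients of the Euler--Lagrange equation are smooth and uniformly elliptic on compact subsets of $(0,M)$. The behavior at $x = 0$ in \eqref{gm_01} follows by expanding the equation near the regular origin using $r_0(x) \sim x^{1/3}$ and $\rho_0(0) > 0$; the continuity equation then forces $v(0) = 0$. The traces at $x = M$ for $\mathfrak{D}^k v$ and $\mathfrak{D}^k(\sigma/\rho_0)$ come from an inductive Hardy-type propagation: the continuity equation gives $\sigma/\rho_0 = -(4\pi/\lambda)\rho_0 \dx w$, and iterated use of the momentum equation together with the stress-free boundary condition at $x = M$ algebraically expresses each higher $\mathfrak{D}$-derivative in terms of lower ones with the correct vacuum scaling. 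The weighted bound \eqref{gm_03} is then read off from the $\mathcal{H}$-norm of $w$ plus one additional energy estimate obtained by testing against a weighted multiplier. The most delicate step will be the precise construction of $\mathcal{H}$ and the verification of the Hardy-type controls needed to make all three quadratic forms well-defined, bounded, and compactly coupled on it against the vacuum degeneracy $\rho_0 \sim (M-x)^{1/\gamma}$; once this functional-analytic infrastructure is secured, the remaining arguments are largely routine.
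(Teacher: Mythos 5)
Your proposal follows essentially the same route as the paper: both use the Guo--Tice relaxation device (replace the $\lambda$ multiplying the viscous form by an independent parameter $s$, extremize the resulting $s$-family of Rayleigh quotients, establish monotonicity and continuity of $s\mapsto\mu(s)$, and close with an intermediate-value fixed point $\lambda(s)=s$), and both derive the variational inequality \eqref{gm_02} directly from the defining extremal property of $\mu$ at the fixed point. The only substantive divergence is that you propose working in the Lagrangian mass variable $x$ throughout, whereas the paper first reduces to $\phi(x)=\int_0^x\sigma/\rho_0^2\,dy$ (a constant multiple of your $w=r_0^2 v$) and then changes variables to $z=r_0(x)$, in which the operator is uniformly elliptic up to the vacuum endpoint $z=R$ (since $\td+\gamma P_0$ is bounded below there) and the singular endpoint $z=0$ becomes a classical regular-singular point handled by Lin's ODE asymptotics lemma; this buys a cleaner compactness lemma and a direct route to the boundary traces in item (3). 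Your sketch is compatible with that and should work, but the Hardy-type compactness and the origin asymptotics you describe as ``largely routine'' are precisely what the paper's coordinate change is designed to make transparent, so this is worth flagging as the place where extra care is needed if you remain in $x$-coordinates.
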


The proof of Theorem \ref{growing_mode} will be completed in Section \ref{grow_proof_sec}.  Throughout the rest of the section we develop the tools needed in the proof.  First we reformulate \eqref{growing_1}--\eqref{growing_3} to involve a single unknown function, $\phi$.  The resulting problem for $\phi$ does not possess a standard variational structure since $\lambda$ appears both linearly and quadratically.  To construct a solution using variational methods (required for proving \eqref{gm_02}, which is essential for the linear estimates of Section \ref{lin_est_sec}), we employ the technique of Guo-Tice \cite{guo_tice}, which proceeds as follows.  We modify the problem by replacing the linear appearance of $\lambda$ by an arbitrary parameter $s>0$.  The resulting family (every $s>0$) of problems is amendable to solution by the constrained minimization of an energy functional, and for a range of $s$ we show that $\lambda = \lambda(s) >0$.  We then study the behavior of $\lambda(s)$ as a function of $s$ and show that it is possible to find a unique fixed point so that $\lambda(s)=s>0$.  This then yields the desired solution $\phi$,  which in turn yields the solution to \eqref{growing_1}--\eqref{growing_3}.

We begin by reducing to the study of a single unknown by introducing the function
\begin{equation}\label{phi_def}
 \phi(x) := \int_0^x \frac{\sigma(y)}{\rho^2_0(y)}dy.
\end{equation}
We may then use \eqref{growing_1}--\eqref{growing_3} to  compute
\begin{equation}\label{phi_swap}
  v = -\frac{\lambda}{4\pi r_0^2} \phi,\;
  \sigma = \rho_0^2 \dx \phi, \;
\text{and } 
 \dx \tilde{P} = \dx (\gamma \rho_0 P_0 \dx \phi),
\end{equation}
where $P_0 = K \rho_0^\gamma$.  Using these and replacing in \eqref{growing_2}, we arrive at a second-order equation for $\phi$:
\begin{equation}\label{phi_form_1}
 -\dx \left( \left( \frac{4 \lambda  \ep}{3} + \lambda \delta + \gamma P_0\right) \rho_0 \dx \phi   \right) + \frac{\dx P_0}{\pi r_0^3} \phi = -\frac{\lambda^2}{16\pi^2 r_0^4} \phi.
\end{equation}
The corresponding boundary conditions are
\begin{equation}\label{phi_form_2}
 \frac{\phi}{r_0^2}(0)  =0 \text{ and } \frac{4\ep}{3} \lambda \left(4 \pi r_0^3 \rho_0 \dx \left(\frac{\phi}{r_0^3} \right) \right) +
\delta \lambda (4\pi \rho_0 \dx \phi) =0 \text{ at } x=M.
\end{equation}

\subsection{Modification of the problem}

Note that  Theorem \ref{growing_mode} is phrased in Lagrangian mass coordinates.  This is because we will use these coordinates in our nonlinear analysis later in the paper.  However,  constructing the solution to \eqref{phi_form_1}--\eqref{phi_form_2} is somewhat easier if we make a change of variables back to the Eulerian radial coordinates associated to the stationary solution.  To avoid confusion with the Eulerian radial coordinate for the nonlinear problem, we will call our new variable $z = r_0(x)$, where $r_0$ is given by \eqref{r0_def}.  If $x \in (0,M)$ for $M$ the mass of the stationary star, then $z \in (0,R)$ for $R>0$ its radius.   We will write $\varrho_0(z) = \rho_0(x)$ for the stationary density, $P_0 = K \varrho^\gamma$, and  $\varphi(z) = \phi(x)$ for the new unknown in $z$ coordinates.  Then 
\begin{equation}
 \dx = \frac{1}{4\pi z^2 \varrho_0} \dz. 
\end{equation}
In these coordinates, the equations \eqref{phi_form_1} becomes
\begin{equation}\label{linearized}
 -\dz \left( \left( \frac{4\lambda \ep}{3} + \lambda \delta + \gamma P_0 \right) \frac{\dz \varphi}{z^2}   \right) + 4 \frac{\dz P_0}{z^3} \varphi = -\frac{\lambda^2 \varrho_0}{z^2} \varphi.
\end{equation}
For the boundary condition at $z=R$ we use \eqref{phi_form_2} to see that
\begin{equation}\label{linearized_bc}
 \lambda \delta  \frac{\dz \varphi(R)}{R^2}  +  \frac{4 \lambda \ep}{3} \left( \frac{\dz \varphi(R)}{R^2} - 3 \frac{\varphi(R)}{R^3}\right) = 0.
\end{equation}
At $z=0$ we enforce the boundary condition $\varphi(0)=0$.  Once we have a solution in hand, we will show that, in fact, $\varphi(z)/z^2 \rightarrow 0$ as $z \rightarrow 0$, which allows us to switch back to the boundary condition $(\phi/r_0^2)(0) =0$.

There is a difficulty in viewing \eqref{linearized}--\eqref{linearized_bc} in a variational or Sturm-Liouville framework because of the appearance of $\lambda$ with two different homogeneities. To get around this issue, we temporarily modify the problem in order to restore the variational structure.  Ultimately we will undo the modification and return to the proper formulation.

Fix $s>0$ and define
\begin{equation}
 \tep  = s \ep \text{ and } \td  = s \delta.
\end{equation}
Instead of \eqref{linearized}, we will analyze the equation
\begin{equation}\label{modified}
 -\dz \left( \left( \frac{4\tep}{3} + \td + \gamma P_0 \right) \frac{\dz \varphi}{z^2}   \right) + 4 \frac{\dz  P_0}{z^3} \varphi = -\frac{\lambda^2 \varrho_0}{z^2} \varphi
\end{equation}
for arbitrary $s>0$.  We couple this equation to the boundary conditions $\varphi(0)=0$ and
\begin{equation}\label{modified_bc}
  \td  \frac{\dz \varphi(R)}{R^2}  +  \frac{4 \tep}{3} \left( \frac{\dz \varphi(R)}{R^2} - 3 \frac{\varphi(R)}{R^3}\right) = 0.
\end{equation}

Modifying the problem in this way  restores the variational structure.  Indeed, in \eqref{modified} the $\lambda^2$ term can be viewed as an eigenvalue.  Thinking of the principal eigenvalue $\lambda$ as a function of $s$, i.e. $\lambda= \lambda(s)$, we will show that it is possible to choose $s$ so that $\lambda(s)>0$ and $s = \lambda(s),$ which returns us to the original problem and yields a growing-mode solution.

\subsection{Constrained minimization}

In order to construct solutions to \eqref{modified}--\eqref{modified_bc}, we will employ a constrained minimization.  To begin, we define the function space on which the energy functionals will be defined.  For $\tau >0$ we define the weighted Sobolev space $H^1_\tau((0,R))$ as the completion of $\{ u\in C^\infty([0,R])  \;\vert\; u(0)=0  \}$  with respect to the norm
\begin{equation}
\norm{u}^2_{H^1_\tau} = \int_0^R \frac{\abs{u'(z)}^2 + \abs{u(z)}^2}{z^\tau}dz,
\end{equation}
where $' = d/dz$.  This weighted Sobolev space possesses the same sort of embedding (continuous and compact) properties as the usual space $H^1$.  Since these results embeddings are not widely available in the literature, we record them in the following lemma.

\begin{lem}\label{sob_embeds}
The following hold.
\begin{enumerate}
 \item   For $u\in H^1_\tau((0,R))$, we have the inequalities
\begin{equation}
 \sup_{0\le z \le R} \abs{u(z)z^{-(\tau+1)/2}} \le \frac{1}{\sqrt{1+\tau}}\left(  \int_0^R \frac{\abs{u'(z)}^2}{z^\tau}dz \right)^{1/2}
\end{equation}
and
\begin{equation}\label{s_em_0}
 \int_0^R \frac{\abs{u(z)}^2}{z^{\tau+2}} dz \le \frac{4}{(1+\tau)^2} \int_0^R \frac{\abs{u'(z)}^2}{z^\tau}dz.
\end{equation}

\item Let $0\le \alpha <1$.  We have the compact embedding $H^1_\tau((0,R))\csubset L^2_{\tau+1+\alpha}((0,R))$, where the latter space is the weighted $L^2$ space with norm
\begin{equation}
\norm{u}_{L^2_{\tau+1+\alpha}}^2 =  \int_0^R \frac{\abs{u(z)}^2}{z^{\tau+1+\alpha}}dz.
\end{equation}
\end{enumerate} 
\end{lem}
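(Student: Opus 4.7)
The overall plan is to prove both estimates in part~(1) for smooth $u\in C^\infty([0,R])$ with $u(0)=0$ and then extend by density to $H^1_\tau((0,R))$. The pointwise bound is immediate from the fundamental theorem of calculus: writing $u(z) = \int_0^z u'(t)\,dt$ and splitting $u'(t) = t^{\tau/2}\cdot t^{-\tau/2} u'(t)$ inside Cauchy--Schwarz gives
\[
|u(z)|^2 \le \left(\int_0^z t^\tau\,dt\right)\left(\int_0^z \frac{|u'(t)|^2}{t^\tau}\,dt\right) \le \frac{z^{\tau+1}}{\tau+1}\int_0^R \frac{|u'(t)|^2}{t^\tau}\,dt,
\]
so dividing by $z^{\tau+1}$ and taking the supremum recovers the first inequality.

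For the weighted Hardy inequality \eqref{s_em_0} I would integrate by parts on the truncated interval $(\epsilon,R)$ using the identity $\frac{d}{dz}\left(-\frac{1}{(\tau+1)z^{\tau+1}}\right) = \frac{1}{z^{\tau+2}}$, producing
\[
(\tau+1)\int_\epsilon^R \frac{|u|^2}{z^{\tau+2}}\,dz = \frac{|u(\epsilon)|^2}{\epsilon^{\tau+1}} - \frac{|u(R)|^2}{R^{\tau+1}} + 2\int_\epsilon^R \frac{u\,u'}{z^{\tau+1}}\,dz.
\]
Dropping the non-positive boundary term at $R$, applying Cauchy--Schwarz to the cross term, and invoking Young's inequality with weight $(\tau+1)/2$ to absorb one factor into the left-hand side yields
\[
\int_\epsilon^R \frac{|u|^2}{z^{\tau+2}}\,dz \le \frac{2|u(\epsilon)|^2}{(\tau+1)\epsilon^{\tau+1}} + \frac{4}{(\tau+1)^2}\int_0^R \frac{|u'|^2}{z^\tau}\,dz.
\]
The crux is then sending $\epsilon \to 0$: applying the pointwise estimate just proved on the subinterval $(0,\epsilon)$ gives $|u(\epsilon)|^2/\epsilon^{\tau+1} \le (\tau+1)^{-1}\int_0^\epsilon |u'(t)|^2 t^{-\tau}\,dt$, and the right-hand side tends to $0$ by absolute continuity of the Lebesgue integral. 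Monotone convergence on the left then yields \eqref{s_em_0}, which by density extends to all of $H^1_\tau((0,R))$.

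For part~(2) I would run a standard diagonal compactness argument. Fix a bounded sequence $(u_n)\subset H^1_\tau((0,R))$ with $\|u_n\|_{H^1_\tau}\le B$. On any interval $[\delta,R]$ with $\delta>0$, the weight $z^{-\tau}$ is bounded above and below, so $(u_n)$ is bounded in the usual $H^1([\delta,R])$, and Rellich--Kondrachov extracts a subsequence converging in $L^2([\delta,R])$, hence in $L^2_{\tau+1+\alpha}([\delta,R])$. Diagonalizing along $\delta_k = R/k$ produces a subsequence, still called $(u_n)$, that converges in $L^2_{\tau+1+\alpha}([\delta,R])$ for every $\delta>0$. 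The tail near the origin is controlled uniformly using part~(1):
\[
\int_0^\delta \frac{|u_n(z)|^2}{z^{\tau+1+\alpha}}\,dz \le \frac{B^2}{\tau+1}\int_0^\delta z^{-\alpha}\,dz = \frac{B^2}{(\tau+1)(1-\alpha)}\,\delta^{1-\alpha},
\]
which tends to $0$ as $\delta\to 0$ precisely because $\alpha<1$. Splitting $\|u_n-u_m\|_{L^2_{\tau+1+\alpha}}^2$ into the $[0,\delta]$ and $[\delta,R]$ pieces then shows the subsequence is Cauchy and hence convergent in $L^2_{\tau+1+\alpha}((0,R))$.

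I do not anticipate a serious obstacle, as the lemma is essentially a weighted analogue of the classical Hardy inequality and Rellich--Kondrachov theorem. The one technical point requiring care is the vanishing of the boundary term at $z=0$ in the Hardy integration by parts: for $u$ in the completion the smoothness-based expansion $u(z)=O(z)$ need not hold, and even for smooth $u$ the quantity $|u(\epsilon)|^2/\epsilon^{\tau+1}$ does not vanish from first principles when $\tau\ge 1$. Reapplying the pointwise bound of part~(1) on the shrinking interval $(0,\epsilon)$ circumvents this entirely. The hypothesis $\alpha<1$ is likewise essential and sharp: it is exactly what renders $\int_0^\delta z^{-\alpha}\,dz$ finite and arbitrarily small as $\delta\to 0$.
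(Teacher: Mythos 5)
Your proof is correct, and the overall architecture matches the paper's: the pointwise bound via the fundamental theorem of calculus and Cauchy--Schwarz, a weighted Hardy-type inequality for the $L^2$ bound, and for compactness a combination of interior compactness plus a uniform tail estimate near $z=0$ that exploits $\alpha<1$. The one place you take a genuinely different route is the Hardy inequality \eqref{s_em_0}: the paper simply invokes the classical Hardy inequality
\[
\left(\int_0^\infty\Bigl(\int_0^z|f(t)|\,dt\Bigr)^p\frac{dz}{z^{b+1}}\right)^{1/p}\le\frac{p}{b}\left(\int_0^\infty|f(z)|^p z^{p-b-1}\,dz\right)^{1/p}
\]
with $f=u'\chi_{(0,R)}$, $p=2$, $b=\tau+1$, whereas you prove it from scratch by integrating by parts on $(\epsilon,R)$, absorbing the cross term with Young's inequality, and controlling the boundary contribution $|u(\epsilon)|^2/\epsilon^{\tau+1}$ via the pointwise bound from part~(1) and absolute continuity of the Lebesgue integral before letting $\epsilon\to0$. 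This buys self-containment at the cost of a slightly longer argument; both give the sharp constant $4/(\tau+1)^2$. For compactness, the paper extracts the interior subsequence via the compact embedding $H^1((z_0,R))\csubset C^0((z_0,R))$ and then bounds the weighted $L^2$ norm on $(z_0,R)$ using $L^\infty$, while you extract via Rellich--Kondrachov directly into $L^2([\delta,R])$ and diagonalize over $\delta_k=R/k$; these are equivalent, and in both cases the essential point is that the uniform tail bound $\int_0^\delta z^{-\alpha}\,dz=\delta^{1-\alpha}/(1-\alpha)\to0$, which requires $\alpha<1$, lets one promote interior subsequential convergence to Cauchyness in the full weighted norm.
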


\begin{proof}
We begin with the inequalities in item $1$.  By approximation we may assume that $u$ is smooth and $u(0)=0$.  Then
\begin{multline}
 \abs{u(z)} = \abs{u(z) - u(0)} \le \int_0^z \abs{u'(t)}dt \le \left( \int_0^z t^\tau dt \right)^{1/2}  \left( \int_0^z \frac{\abs{u'(t)}^2}{t^\tau}dt \right)^{1/2} \\
 \le \left( \frac{z^{\tau+1}}{\tau+1} \right)^{1/2}  \left( \int_0^R \frac{\abs{u'(t)}^2}{t^\tau}dt \right)^{1/2},  
\end{multline}
which yields the first inequality.  To get the second, we recall an inequality due to G. H. Hardy:
\begin{equation}
 \left(\int_0^\infty \left(\int_0^z \abs{f(t)}dt \right)^p \frac{dz}{z^{b+1}}   \right)^{1/p} \le \frac{p}{b} \left(\int_0^\infty \abs{f(z)}^p z^{p-b-1} dz \right)^{1/p}
\end{equation}
for $1\le p < \infty$ and $0 < b < \infty$.  Then $\abs{u(z)}\le \int_0^z \abs{u'(t)}dt$ implies that
\begin{equation}
\int_0^R \frac{\abs{u(z)}^2}{z^{\tau+2}} dz \le \int_0^\infty \left(\int_0^z \abs{u'(t)} dt\right)^2 \frac{dz}{z^{\tau+2}}.
\end{equation}
Applying Hardy's inequality to the right side with $f=u' \chi_{(0,R)}$,  $b = \tau+1$, and $p=2$ yields
\begin{equation}
 \int_0^R \frac{\abs{u(z)}^2}{z^{\tau+2}} dz \le   \frac{4}{(\tau+1)^2} \int_0^R \frac{\abs{u'(z)}^2}{z^\tau}dz,
\end{equation}
which is the desired inequality.

We now prove the compactness result.  Assume that $\norm{u_n}_{H^1_\tau} \le C$ for $n\in \mathbb{N}$.  Fix $\kappa>0$.  We claim that there exists a subsequence $\{u_{n_i}\}$ so that 
\begin{equation}
 \sup_{i,j}\norm{u_{n_i} - u_{n_j}}_{L^2_{\tau+1+\alpha}} \le \kappa.
\end{equation}
To prove the claim, let $z_0\in(0,R)$ be chosen so that
\begin{equation}
 z_0^{1-\alpha} \frac{C^2 }{(1+\tau)(1-\alpha)} \le \frac{\kappa}{2}.
\end{equation}
Then since the subinterval $(z_0,R)$ avoids the singularity of $1/z^{\tau}$,  $u_n \vert_{(z_0,R)}$ is uniformly bounded in $H^1((z_0,R))$.  By the compact embedding $H^1((z_0,R)) \csubset C^0((z_0,R))$ we may extract a subsequence $\{u_{n_i}\}$ that converges in $L^\infty((z_0,R))$.  We are free to restrict the subsequence to sufficiently large values of $i$ so that 
\begin{equation}
 \norm{u_{n_i} - u_{n_j}}^2_{L^\infty((z_0,R))} \le \frac{\kappa {z_0}^{\tau+1+\alpha}}{2(R-z_0)}  \text{ for all }i,j.
\end{equation}
Then along this subsequence we can apply the first inequality in item $1$ to get 
\begin{multline}
 \int_0^R \frac{\abs{u_{n_i}(z) - u_{n_j}(z)}^2}{z^{\tau+1+\alpha}}dz =  \int_0^{z_0} \frac{\abs{u_{n_i}(z) - u_{n_j}(z)}^2}{z^{\tau+1+\alpha}}dz
+  \int_{z_0}^R \frac{\abs{u_{n_i}(z) - u_{n_j}(z)}^2}{z^{\tau+1+\alpha}}dz  \\
\le \frac{C^2}{1+\tau} \int_0^{z_0}  \frac{dz}{z^\alpha} +  \frac{R-z_0}{{z_0}^{\tau+1+\alpha}} \norm{u_{n_i} - u_{n_j}}^2_{L^\infty((z_0,R))} \le \kappa, 
\end{multline}
which proves the claim.  Now we may use the claim with $\kappa = 1/k$, $k\in \mathbb{N}$ and employ a standard diagonal argument to extract a subsequence converging in $L^2_{\tau+1+\alpha}((0,R))$.
\end{proof}

\begin{remark}
 The inequality \eqref{s_em_0} implies that we can take the norm on $H^1_\tau$ to be 
\begin{equation}
 \norm{u}^2_{H^1_\tau} = \int_0^R \frac{\abs{u'(z)}^2}{z^\tau}dz.
\end{equation}
\end{remark}

We can now define the energy functionals to use in the constrained minimization.  Let
\begin{equation}
 E(\varphi) = \int_0^R   \left( \td + \gamma P_0 \right) \frac{\abs{\dz \varphi}^2}{z^2}  + \frac{4\tep}{3z^2} \abs{ \dz \varphi - 3 \frac{\varphi}{z}  }^2   + 4 \frac{\dz P_0}{z^3} \abs{\varphi}^2 
\end{equation}
and
\begin{equation}
 J(\varphi) = \int_0^R \frac{\varrho_0}{z^2} \abs{\varphi}^2.
\end{equation}
By \eqref{s_em_0} in Lemma \ref{sob_embeds}, both $E$ and $J$ are well-defined on the space $H^1_2((0,R))$.  Note, though, that $E$ is not positive definite since $\dz P_0<0$.  Define the set 
\begin{equation}
 \mathcal{A} := \{ \varphi \in H^1_2((0,R)) \;\vert\; J(\varphi) = 1\}.
\end{equation}
We will build solutions to \eqref{modified} by minimizing $E$ over $\mathcal{A}$.  First we show that such a minimizer exists.

\begin{prop}\label{min_exist}
 $E$ achieves its infimum on the set $\mathcal{A}$.
\end{prop}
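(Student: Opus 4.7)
The plan is to apply the direct method of the calculus of variations. The only real obstacle is that $E$ is not positive definite, since $\dz P_0 < 0$ on $(0,R)$; the strategy is to absorb this indefinite term into a sufficiently large multiple of $J$, thereby obtaining simultaneously a uniform lower bound for $E$ on $\mathcal{A}$ and $H^1_2$-coercivity of minimizing sequences. The key ingredient is the pointwise comparison $|\dz P_0|/z \le C_{\ast}\,\varrho_0$ on $(0,R)$, which follows directly from the stationary equation \eqref{LE} written in the Eulerian variable $z$: $\dz P_0(z) = -(4\pi\varrho_0(z)/z^2)\int_0^z s^2 \varrho_0(s)\,ds$, so
\[
\frac{|\dz P_0(z)|}{z\,\varrho_0(z)} = \frac{4\pi}{z^3}\int_0^z s^2\varrho_0(s)\,ds,
\]
and the right-hand side is bounded by $(4\pi/3)\varrho_0(0)$ as $z\to 0$ and by $M/R^3$ as $z\to R$, with continuous dependence in between. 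With $C_0 := 4C_{\ast}$ one has $C_0\varrho_0/z^2 + 4\dz P_0/z^3 \ge 0$, hence
\[
E(\varphi) + C_0 J(\varphi) \;\ge\; \int_0^R \td\,\frac{|\dz\varphi|^2}{z^2}\,dz \;=\; \td\,\norm{\varphi}^2_{H^1_2}.
\]
On $\mathcal{A}$ this yields $E\ge -C_0$, and any minimizing sequence $\{\varphi_n\}\subset\mathcal{A}$ satisfies $\td\,\norm{\varphi_n}^2_{H^1_2}\le E(\varphi_n)+C_0$, which is eventually bounded.

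After passing to a subsequence, $\varphi_n \rightharpoonup \varphi$ weakly in $H^1_2((0,R))$. Fix any $\alpha\in[0,1)$; by Lemma \ref{sob_embeds} the embedding $H^1_2\csubset L^2_{3+\alpha}$ is compact, so $\varphi_n \to \varphi$ strongly in $L^2_{3+\alpha}$. On $(0,R]$ the weights $\varrho_0/z^2$ and $|\dz P_0|/z^3$ are dominated by a constant multiple of $z^{-(3+\alpha)}$ (using the uniform boundedness of $\varrho_0$ for the first and the comparison from the previous paragraph for the second), so by Cauchy--Schwarz both $J(\varphi_n)\to J(\varphi)$ and $\int_0^R (\dz P_0/z^3)|\varphi_n|^2\,dz \to \int_0^R (\dz P_0/z^3)|\varphi|^2\,dz$. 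In particular $J(\varphi)=1$, so $\varphi\in\mathcal{A}$.

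It remains to pass to the limit in the two nonnegative quadratic forms appearing in $E$, namely $\int_0^R (\td + \gamma P_0)|\dz\varphi|^2/z^2\,dz$ and $(4\tep/3)\int_0^R|\dz\varphi - 3\varphi/z|^2/z^2\,dz$. Both are convex and continuous on $H^1_2$ (for the shear contribution one uses that $\dz\varphi/z$ and $\varphi/z^2$ both belong to $L^2((0,R))$ by Lemma \ref{sob_embeds} to expand the integrand and bound each resulting quadratic form), hence weakly lower semicontinuous. Combining lower semicontinuity of these terms with the strong convergence of the indefinite term gives $E(\varphi)\le \liminf_{n\to\infty} E(\varphi_n) = \inf_\mathcal{A} E$, so $\varphi\in\mathcal{A}$ attains the infimum. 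The main difficulty is the pointwise comparison established in the first paragraph: without it the indefinite term could in principle consume the viscosity-induced coercivity and both the lower bound and the bounded minimizing sequences would fail. This comparison is a structural feature of the Lane-Emden background, combining the smoothness of $\varrho_0$ at the origin with its controlled vanishing at the vacuum boundary.
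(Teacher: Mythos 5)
Your proof is correct, and it takes a genuinely different route from the paper for the coercivity step. The paper bounds $\dz P_0/\varrho_0$ in $L^\infty$ (using the vacuum asymptotic $\varrho_0 \sim (R-z)^{1/(\gamma-1)}$ near $z=R$), then splits the indefinite integral at a small $z_0$ and treats the inner piece by Hardy's inequality and the outer piece by the constraint $J(\varphi)=1$; the smallness of $z_0$ is what allows absorption into the viscous term. You instead observe that the Lane--Emden equation itself yields the pointwise identity $|\dz P_0(z)|/(z\varrho_0(z)) = (4\pi/z^3)\int_0^z s^2\varrho_0(s)\,ds$, which is manifestly bounded on $[0,R]$ (limit $(4\pi/3)\varrho_0(0)$ at $z=0$, value $M/R^3$ at $z=R$, continuous in between), giving $|\dz P_0|/z \le C_\ast\varrho_0$ globally. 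This lets you absorb the entire indefinite term into $C_0 J$ in one step, with no domain splitting and no need for the boundary asymptotic of $\varrho_0$. It is arguably cleaner, and it more transparently isolates the structural feature of the Lane--Emden background that makes coercivity work. You then spell out the direct-method details (weak limit in $H^1_2$, strong convergence in $L^2_{3+\alpha}$ via the compact embedding of Lemma \ref{sob_embeds}, strong continuity of $J$ and of the indefinite term under that embedding since both weights are $\lesssim z^{-(3+\alpha)}$ on $(0,R]$, weak lower semicontinuity of the two nonnegative quadratic forms), which the paper compresses to ``standard direct methods''; your expansion of the shear form and use of the Hardy estimate to justify its $H^1_2$-continuity are correct. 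One small wording nit: the right-hand side of your displayed identity is not ``bounded by $(4\pi/3)\varrho_0(0)$'' as $z\to 0$; that value is its limit, and boundedness follows from continuous extension to $z=0$. The conclusion is unaffected.
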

\begin{proof}
To begin, we show that $E$ is coercive on $\mathcal{A}$, which amounts to controlling the last term in $E$.  Recall that by \eqref{stat_asymp},  $\varrho_0(z) \sim (R-z)^{1/(\gamma-1)}$ for $z$ near $R$.  This implies that 
\begin{equation}
 \frac{\dz P_0}{\varrho_0} = \gamma K \varrho_0^{\gamma-2} \dz \varrho_0 = \frac{\gamma K}{\gamma-1} \dz(\varrho_0^{\gamma-1})
\end{equation}
is bounded near $z=R$.  Since $\varrho_0$ and $P_0 = K \varrho_0^\gamma$ are smooth and bounded below away from  $z=R$, this implies that 
\begin{equation}
 \norm{\frac{\dz P_0}{\varrho_0}}_{L^\infty((0,R))} < \infty.
\end{equation}
Then for any $z_0\in(0,R)$ we may bound
\begin{multline}
 \int_0^R \abs{\dz P_0}\frac{\abs{\varphi}^2}{z^3} =  \int_0^{z_0} \abs{\dz P_0}\frac{z \abs{\varphi}^2}{z^{4}} + \int_{z_0}^R \frac{\abs{\dz P_0}}{z\varrho_0 }\frac{\varrho_0 \abs{\varphi}^2}{z^2}  \\
\le z_0 \norm{\dz P_0}_{L^\infty} \int_0^{z_0} \frac{ \abs{\varphi}^2}{z^{4}} 
+ \frac{1}{z_0} \norm{\frac{\dz P_0}{\varrho_0}}_{L^\infty}  \int_{z_0}^R  \frac{\varrho_0 \abs{\varphi}^2}{z^2}  \\
\le z_0 \frac{4}{9} \norm{\dz P_0}_{L^\infty} \int_0^R  \frac{ \abs{\dz \varphi}^2}{z^{2}}
+ \frac{1}{z_0} \norm{\frac{\dz P_0}{\varrho_0}}_{L^\infty}.
\end{multline}
For the second inequality we have used Lemma \ref{sob_embeds} and the fact that $\varphi\in \mathcal{A}$.  Then by choosing $z_0$ sufficiently small, we have that 
\begin{equation}
 E(\varphi) \ge -C_z + \int_0^R \left(\frac{\td}{2}  + \gamma P_0   \right)\frac{\abs{\dz \varphi}^2}{z^2} + \frac{4\tep}{3z^2} \abs{ \dz \varphi - 3 \frac{\varphi}{r}  }^2  
\end{equation}
for a constant $C_z>0$ depending on the choice of $z$, which immediately yields the  desired coercivity since $\td >0$.

With the coercivity in hand, we may deduce the existence of a minimizer by using the standard direct methods, employing Lemma \ref{sob_embeds} for compactness.

\end{proof}

Since a minimizer exists, we can now define the function $\mu:(0,\infty)\rightarrow \Rn{}$ by
\begin{equation}\label{mu_def}
 \mu(s) = \inf_{\varphi \in \mathcal{A}} E(\varphi;s),
\end{equation}
where we have written $E(\varphi) = E(\varphi;s)$ to emphasize the dependence of $E$ on the parameter $s>0$, i.e.
\begin{equation}
E(\varphi;s) = s \int_{0}^{R} \delta \frac{\abs{\dz \varphi}^2}{z^2} +  \frac{4 \ep}{3z^2} \abs{ \dz \varphi - 3 \frac{\varphi}{z}  }^2  + \int_0^R \gamma P_0 \frac{\abs{\dz \varphi}^2}{z^2} + 4 \frac{\dz P_0}{z^3}  \abs{\varphi}^2. 
\end{equation}

The minimizer we have constructed satisfies Euler-Lagrange equations of the form \eqref{modified}.

\begin{prop}\label{e_l_eqns}
Let $\varphi \in \mathcal{A}$ be the minimizer of $E$ constructed in Proposition \ref{min_exist}.  Let $\mu := E(\varphi)$.  Then $\varphi$ is smooth on $(0,R]$ and satisfies
\begin{equation}\label{eigen_coupled_1}
 -\dz \left( \left( \frac{4\tep}{3} + \td + \gamma P_0 \right) \frac{\dz \varphi}{z^2}   \right) + 4 \frac{\dz  P_0}{z^3} \varphi = \frac{\mu \varrho_0}{z^2} \varphi
\end{equation}
along with the boundary conditions $\varphi(0) = 0$ and 
\begin{equation}\label{eigen_bc}
  \td  \frac{\dz \varphi(R)}{R^2}  +  \frac{4 \tep}{3} \left( \frac{\dz \varphi(R)}{R^2} - 3 \frac{\varphi(R)}{R^3}\right) = 0.
\end{equation}
\end{prop}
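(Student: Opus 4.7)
The plan is to apply the method of Lagrange multipliers to the constrained minimization defining $\mu$. For each test function $\psi \in H^1_2((0,R))$, I would form the normalized perturbation $\varphi_t := (\varphi+t\psi)/\sqrt{J(\varphi+t\psi)}$, which lies in $\mathcal{A}$ for $|t|$ sufficiently small because $J(\varphi) = 1 > 0$. The minimality of $\varphi$ then yields $\frac{d}{dt}E(\varphi_t)|_{t=0} = 0$; computing this derivative via the quadratic structure of $E$ and $J$ and simplifying gives the weak identity
\begin{equation*}
B(\varphi, \psi) = \mu \langle \varphi, \psi\rangle_J \qquad \text{for all } \psi \in H^1_2((0,R)),
\end{equation*}
where $B$ is the symmetric bilinear form associated to $E$ and $\langle \cdot, \cdot\rangle_J$ is the inner product associated to $J$. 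Taking $\psi = \varphi$ and invoking the constraint $J(\varphi) = 1$ identifies the Lagrange multiplier as exactly the minimum value $\mu = E(\varphi)$.

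Next, I would upgrade this weak form to the classical equation \eqref{eigen_coupled_1} and establish smoothness on $(0,R]$. Testing against $\psi \in C_c^\infty((0,R))$ produces the distributional version of \eqref{eigen_coupled_1} on the open interval. On any compact subinterval of $(0, R]$ the coefficients are smooth and the leading coefficient $(4\tep/3 + \td + \gamma P_0)/z^2$ is uniformly positive. Crucially, the positivity of the bulk viscosity $\td > 0$ keeps this leading coefficient non-degenerate even at $z = R$, where $P_0$ vanishes. Standard ODE regularity then promotes $\varphi$ to a classical $C^\infty$ solution on $(0,R]$; no regularity is claimed at $z = 0$ because of the singular weight there.

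Finally, I would read off the Robin condition at $z = R$ by testing against $\psi \in C^\infty([0,R])$ that vanishes near $z = 0$ but has $\psi(R)$ arbitrary. Integrating by parts in $B(\varphi,\psi)$ produces the pointwise equation (which cancels with $\mu \langle \varphi, \psi\rangle_J$ thanks to \eqref{eigen_coupled_1}) plus a single boundary contribution at $z = R$ of the form
\begin{equation*}
\left[(\td + \gamma P_0(R))\frac{\dz\varphi(R)}{R^2} + \frac{4\tep}{3}\left(\frac{\dz\varphi(R)}{R^2} - 3\frac{\varphi(R)}{R^3}\right)\right]\psi(R) = 0.
\end{equation*}
Since $P_0(R) = 0$ and $\psi(R)$ can be chosen arbitrarily, this gives \eqref{eigen_bc}. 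The remaining condition $\varphi(0) = 0$ is already built into the function space $H^1_2((0,R))$.

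The main obstacle will be the careful management of the singular weight $1/z^2$ at $z = 0$ throughout. The weak identity must be tested using functions that accommodate (or avoid) this singularity, and I would need to verify that the compactly-supported test functions used for interior regularity and the test functions supported up to $z=R$ used for the boundary condition together extract all the required information from the weak identity. The strict positivity $\td > 0$ from \eqref{parameter_assumptions} is the key analytic input: it rescues the leading coefficient at the vacuum boundary $z = R$ and thereby permits smoothness all the way up to $R$, which in turn legitimizes the pointwise trace reading that produces \eqref{eigen_bc}.
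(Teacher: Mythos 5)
Your proposal is correct and follows essentially the same path as the paper: compute the first variation on the constraint manifold to obtain the weak Euler--Lagrange identity with Lagrange multiplier $\mu = E(\varphi)$, derive interior regularity via testing against $C_c^\infty((0,R))$ and elliptic bootstrapping (noting $\td>0$ keeps the leading coefficient non-degenerate at $z=R$), and read off the Robin condition at $z=R$ by testing against functions supported up to $R$ and using $P_0(R)=0$. The only cosmetic difference is that you normalize $(\varphi+t\psi)/\sqrt{J(\varphi+t\psi)}$ directly, while the paper uses the two-parameter variation $\varphi + t\varphi_0 + \tau\varphi$ together with the implicit function theorem to remain on $\mathcal{A}$; both are standard and equivalent.
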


\begin{proof}
 Fix $\varphi_0 \in H_2^1((0,R))$.   Define
\begin{equation}
j(t,\tau)= J(\varphi+t\varphi_0 + \tau \varphi) 
\end{equation}
and note that $ j(0,0) = 1$.  Moreover, $j$ is smooth and
\begin{equation}
 \frac{\partial j}{\partial t}(0,0) = 2 \int_{0}^R   \varrho_0   \frac{\varphi_0 \varphi }{z^2}  
\text{ and }
 \frac{\partial j}{\partial \tau}(0,0) = 2 \int_{0}^R   \varrho_0  \frac{\varphi^2}{z^2}  =2.
\end{equation}
So, by the inverse function theorem, we can solve for $\tau = \tau(t)$ in a neighborhood of $0$ as a $C^1$ function of $t$ so that $\tau(0)=0$ and $j(t,\tau(t))=1$.  We may differentiate the last equation to find
\begin{equation}
 \frac{\partial j}{\partial t}(0,0) + \frac{\partial j}{\partial \tau}(0,0) \tau'(0) = 0,
\end{equation}
and hence that
\begin{equation}
 \tau'(0) = -\hal \frac{\partial j}{\partial t}(0,0) = - \int_{0}^R   \varrho_0  \frac{ \varphi_0 \varphi}{z^2}.
\end{equation}

Since $\varphi$ is a minimizer over $\mathcal{A}$,  we then have
\begin{equation}
 0 = \left. \frac{d}{dt}\right\vert_{t=0} E(\varphi +t\varphi_0 + \tau(t) \varphi),
\end{equation}
which implies that 
\begin{multline}
0= \int_0^R \frac{\td + \gamma P_0}{z^2} \dz \varphi (\dz \varphi_0 + \tau'(0) \dz \varphi  ) 
+ \int_0^R 4\frac{\dz P_0}{z^3}\varphi(\varphi_0 + \tau'(0) \varphi) \\
+ \int_0^R \frac{4 \tep}{3 z^2}\left(\dz \varphi - 3 \frac{\varphi}{z} \right)\left(\dz \varphi_0 - 3 \frac{\varphi_0}{z}   + \tau'(0) \left( \dz \varphi  -3 \frac{\varphi}{z}\right)   \right).
\end{multline}
Rearranging and plugging in the value of $\tau'(0)$, we may rewrite this equation as
\begin{multline}\label{eigenvalue_form}
\mu \int_{0}^R   \frac{\varrho_0}{z^2}   \varphi_0 \varphi = 
\int_0^R \frac{\td + \gamma P_0}{z^2} \dz \varphi \dz \varphi_0    
+ \int_0^R 4\frac{\dz P_0}{z^3}\varphi \varphi_0  \\
+ \int_0^R \frac{4 \tep}{3 z^2}\left(\dz \varphi - 3 \frac{\varphi}{z} \right)\left(\dz \varphi_0 - 3 \frac{\varphi_0}{z}     \right),
\end{multline}
where the eigenvalue is $\mu = E(\varphi)$.

By making variations with $\varphi_0$ compactly supported in $(0,R)$ we find that $\varphi$ satisfies the equation \eqref{eigen_coupled_1} in a weak sense in $(0,R)$.  Standard bootstrapping arguments then show that $\varphi \in H^k((z_0,R))$  for all $k\ge 0$ and $0 < z_0 < R$, and hence $\varphi$ is smooth in $(0,R]$.  This implies that the equations are also classically satisfied.  Since $\varphi \in H^2((R/2,R))$, the traces of $\varphi, \dz \varphi$ are well-defined at the endpoint $z = R$.  Making variations with respect to arbitrary $\varphi_0 \in C_c^\infty((0,R])$, we find that the boundary condition \eqref{eigen_bc} is satisfied.  The condition  $\varphi(0)=0$  is satisfied by virtue of Lemma \ref{sob_embeds}.  

\end{proof}

We now want to show that the minimizers, which are solutions to \eqref{eigen_coupled_1}, satisfy the asymptotic condition $\abs{\varphi(z)}/z^2 \rightarrow 0$ as $z \rightarrow 0$.  As a preliminary step we record an asymptotic result for solutions to a more generic ODE.

\begin{lem}[Proposition A.1 of \cite{lin}]\label{gen_asymp}
Suppose that $\psi(\tau)$ solves
\begin{equation}
 \psi''(\tau) + (\alpha \tau^{-1} + g(\tau)) \psi'(\tau) + \tau^{-1} f(\tau) \psi(\tau) =0 \text{ on } 0 < \tau < \tau_0
\end{equation}
where $' = d/d\tau$, and $f,g \in C^0([0,\tau_0])$.   If $\alpha <0$, then  
either $\psi(0)\neq 0$ or
\begin{equation}
 \abs{\psi(\tau)} \le \frac{C}{\tau^{\alpha -1}} \text{ and } \abs{\psi'(\tau)} \le \frac{C}{\tau^\alpha}.
\end{equation}
\end{lem}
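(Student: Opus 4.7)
The plan is to treat $\tau = 0$ as a regular singular point of the ODE and carry out a Frobenius-type construction. After multiplying through by $\tau$, the equation becomes
$$\tau\psi'' + (\alpha + \tau g(\tau))\psi' + f(\tau)\psi = 0,$$
which is Fuchsian at $\tau = 0$. The indicial polynomial $r(r-1) + \alpha r = r(r+\alpha-1)$ has roots $r_1 = 0$ and $r_2 = 1-\alpha$, and because $\alpha < 0$ we have $r_2 > 1$, so both roots correspond to solutions that are bounded as $\tau \to 0^+$.

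First, I would construct two linearly independent solutions on $(0,\tau_0)$. For the larger root, substitute $\psi = \tau^{1-\alpha} w$; the resulting ODE for $w$ admits a bounded solution with $w(0) = 1$, built by recasting it as a Volterra integral equation on a short interval $[0,\tau_1]$ and applying the Banach fixed-point theorem to a ball in $C^0([0,\tau_1])$—the $C^0$ regularity of $f,g$ plus the strict positivity of $1-\alpha$ make the iteration map a contraction. This yields $\psi_2(\tau) = \tau^{1-\alpha} w(\tau)$ with the pointwise bounds $|\psi_2(\tau)| \le C\tau^{1-\alpha}$ and $|\psi_2'(\tau)| \le C\tau^{-\alpha}$ near $0$. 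For the smaller root, I would rewrite the equation as $(\tau^{\alpha}\psi')' = -\tau^{\alpha} g\psi' - \tau^{\alpha-1} f \psi$, integrate twice to obtain an integral equation whose solution $\psi_1$ satisfies $\psi_1(0) = 1$ (again by Volterra iteration), or alternatively construct $\psi_1$ by reduction of order from $\psi_2$ using the Wronskian formula.

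Second, once the two linearly independent solutions $\psi_1,\psi_2$ are in hand, any solution of the original ODE on $(0,\tau_0)$ can be written as $\psi = A\psi_1 + B\psi_2$. If $A \neq 0$, then $\psi(\tau) \to A\psi_1(0) = A \neq 0$ as $\tau \to 0$, so the first alternative holds. If instead $A = 0$, then $\psi = B\psi_2$, and the bounds on $\psi_2$ immediately yield $|\psi(\tau)| \le C/\tau^{\alpha-1}$ and $|\psi'(\tau)| \le C/\tau^{\alpha}$, which is the second alternative.

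The main obstacle is the exceptional case in which $1-\alpha$ happens to be a positive integer, where the standard Frobenius procedure may produce a logarithmic correction of the form $c\psi_2\log\tau$ in $\psi_1$. Since $1-\alpha > 0$, however, this correction still vanishes at $\tau = 0$, so $\psi_1(\tau) \to 1$ and the dichotomy above is preserved; any stray $\tau^{1-\alpha}|\log\tau|$ factor can be absorbed by $\tau^{1-\alpha-\varepsilon}$ with a slightly smaller exponent without affecting the stated bounds, which depend only on the leading indicial exponent. The convergence of the Volterra iterations themselves is routine, controlled by Gronwall-type estimates that rely only on the continuity of $f$ and $g$ on $[0,\tau_0]$ and the sign of $\alpha$.
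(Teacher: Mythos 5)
Your argument is essentially sound, but it is worth noting that the paper does not actually prove this lemma: the paper's ``proof'' is a one-line citation to Proposition A.1 of Lin \cite{lin}, with the remark that Lin's argument for $\alpha>2$ also covers $\alpha<0$. So you are supplying a self-contained construction for something the paper outsources. Your route (indicial analysis, the substitution $\psi=\tau^{1-\alpha}w$ to build the rapidly-vanishing solution, reduction of order to build the bounded one, and then the linear-algebraic dichotomy) is a perfectly standard way to do this. In fact the substitution you use is a clean way to see the paper's claim: under $\psi=\tau^{1-\alpha}w$ the equation becomes $w''+\bigl((2-\alpha)/\tau+g\bigr)w'+\bigl((1-\alpha)g+f\bigr)w/\tau=0$, which has the same form with the friction coefficient $2-\alpha>2$, so Lin's $\alpha>2$ result applies verbatim to $w$ and the lemma follows. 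You might consider stating that connection explicitly, since it makes the paper's remark transparent.

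Two details deserve more care. First, your primary suggestion for constructing $\psi_1$ — integrate $(\tau^\alpha\psi')'=-\tau^\alpha g\psi'-\tau^{\alpha-1}f\psi$ twice from $0$ — does not converge when $\alpha\le-1$: with $\psi_1(0)=1$ the integrand behaves like $\tau^{\alpha-1}$, which is not integrable at the origin. Your fallback, reduction of order from $\psi_2$, is the one that actually works uniformly in $\alpha<0$; since $W/\psi_2^2\sim\tau^{\alpha-2}$ and $\alpha-2\ne-1$, the reduction-of-order integral is $\sim\tau^{\alpha-1}/(1-\alpha)$ and gives $\psi_1(0)=1/(1-\alpha)\ne 0$ with no logarithm. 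Second, the Volterra map for $w$ is not itself a contraction in the sup norm on a short interval: the term $\tau^{\alpha-2}\int_0^\tau s^{1-\alpha}h\,w\,ds$ contributes a factor $\|h\|_\infty/(2-\alpha)$ that does not shrink as $\tau_1\to0$. The fixed point exists because the \emph{iterated} map, or the map in a suitable exponentially weighted norm, is a contraction — the standard Volterra/Gronwall argument — but the proof should say so rather than assert contraction of the map itself. Finally, the worry about logarithmic corrections at integer exponent gaps is an artifact of analytic Frobenius theory and does not arise with merely $C^0$ coefficients; the reduction-of-order formula already settles the behavior of $\psi_1$ directly, so this passage can simply be dropped.
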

\begin{proof}
The case $\alpha >2$ is the content of Proposition A.1 of \cite{lin}, but the proof of the proposition also shows the result when $\alpha <0$.  
\end{proof}

Next we use this lemma to establish the asymptotics at $z=0$ for solutions to \eqref{eigen_coupled_1}.

\begin{lem}\label{soln_asymp}
Suppose $\varphi$ is a solution of \eqref{eigen_coupled_1}.  Then $\abs{\varphi(z)} \le C z^3$ and $\abs{\varphi'(z)}\le C z^2$ near $z=0$.
\end{lem}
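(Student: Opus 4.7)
The plan is to rewrite equation \eqref{eigen_coupled_1} in the form required by Lemma \ref{gen_asymp} and apply that lemma with $\alpha = -2$, using the boundary condition $\varphi(0) = 0$ to exclude the first alternative in its conclusion.

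To implement this, I would set $A(z) := \frac{4\tep}{3} + \td + \gamma P_0(z)$ and expand the outer $\dz$ in \eqref{eigen_coupled_1}. Since $A(0) = \frac{4\tep}{3} + \td + \gamma P_0(0) > 0$, one can divide through by $-A(z)/z^2$ on a neighborhood of $z = 0$ to obtain
\begin{equation*}
 \varphi'' + \left(-\frac{2}{z} + \frac{A'(z)}{A(z)}\right) \varphi' + \frac{1}{z}\left(-\frac{4 \dz P_0(z)}{A(z)} + \frac{\mu \varrho_0(z)\, z}{A(z)}\right)\varphi = 0,
\end{equation*}
which exactly matches the hypothesis of Lemma \ref{gen_asymp} with $\alpha = -2$, $g(z) = A'(z)/A(z)$, and $f(z) = -4\dz P_0(z)/A(z) + \mu \varrho_0(z)\, z/A(z)$.

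The routine step is then to verify that $f, g \in C^0([0,\tau_0])$ for some small $\tau_0 > 0$. Since $A$ is smooth and bounded away from $0$ near the origin, and since spherical symmetry of the Lane-Emden profile forces $\dz \varrho_0(0) = 0$ (so that $\dz P_0 = \gamma K \varrho_0^{\gamma-1} \dz \varrho_0$ is continuous, and in fact vanishes, at the origin), this verification is immediate.

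With those continuity checks in hand, Lemma \ref{gen_asymp} applied with $\alpha = -2 < 0$ yields the dichotomy $\varphi(0) \ne 0$ or else $\abs{\varphi(z)} \le C z^{1-\alpha} = C z^3$ and $\abs{\varphi'(z)} \le C z^{-\alpha} = C z^2$. The boundary condition $\varphi(0) = 0$ rules out the first branch, delivering the claimed bounds. The only conceptual work is the form-matching to Lemma \ref{gen_asymp}; once the coefficients are identified there is no real obstacle.
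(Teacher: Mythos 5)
Your proposal matches the paper's argument essentially line for line: you expand the divergence-form operator, normalize by the positive coefficient $A(z)$ (the paper calls it $X$), identify $\alpha=-2$, check continuity of $g=A'/A$ and $f=-4\dz P_0/A+\mu\varrho_0 z/A$, invoke Lemma \ref{gen_asymp}, and exclude the $\varphi(0)\neq 0$ branch from membership in $H^1_2$ via Lemma \ref{sob_embeds}. The observation that $\dz P_0(0)=0$ is a small additional remark not needed by the lemma (continuity suffices), but it does no harm.
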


\begin{proof}
We begin by rewriting \eqref{eigen_coupled_1}.  Define $X = \tep + \td + \gamma P_0 = \tep + \td + \gamma K \varrho_0^\gamma$ and $X_0 = \tep + P_0 = \tep + K \varrho_0^\gamma$.  Then \eqref{eigen_coupled_1} is equivalent to the equation
\begin{equation}\label{s_a_1}
 \varphi'' + \left(\frac{X'}{X} - \frac{2}{z} \right) \varphi'  - 4 \frac{X_0'}{zX}\varphi   = -\mu \frac{\varrho_0}{X}\varphi. 
\end{equation}

Note that $X'/X$, $X_0'/X$, and $\varrho_0/X$ are all continuous at $z=0$, so we may apply Lemma \ref{gen_asymp} with $\alpha = -2$ to deduce that either $\varphi(0)\neq 0$ or $\abs{\varphi(z)} \le C z^3$ and $\abs{\varphi'(z)}\le C z^2$ near $z=0$.  By Lemma \ref{sob_embeds}, the former condition cannot hold, so the latter conditions must be the case.

\end{proof}

\subsection{Properties of the eigenvalue $\mu(s)$}

We are ultimately concerned with finding $\mu = - \lambda^2$ for some $\lambda>0$.  This requires us to work in a range of $s$ so that $\mu(s)<0$.  Our next result shows that $\mu(s)<0$ for $s$ sufficiently small.

\begin{lem}\label{neg_inf}
There exist constants $C_1,C_2>0$ depending on $\gamma, K$ so that 
\begin{equation}
 \mu(s) \le s C_1 - C_2.
\end{equation}
In particular, $\mu(s) <0$ for $s$ sufficiently small. 
\end{lem}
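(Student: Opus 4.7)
The plan is to exhibit a single explicit test function $\varphi_\ast \in \mathcal{A}$ for which $E(\varphi_\ast; s)$ can be bounded above by an expression of the form $sC_1 - C_2$, with $C_2 > 0$ coming precisely from the assumption $\gamma < 4/3$. Since $\mu(s) = \inf_{\varphi \in \mathcal{A}} E(\varphi; s)$, this will give the lemma.

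The natural candidate is $\varphi(z) = z^3$. It lies in $H^1_2((0,R))$ with $\varphi(0)=0$, so it is admissible. It has the key algebraic property
\begin{equation*}
\dz \varphi - 3\frac{\varphi}{z} = 3z^2 - 3z^2 = 0,
\end{equation*}
so the shear-viscosity contribution to $E(\varphi; s)$ vanishes identically, and only the bulk-viscosity term survives in the $s$-part:
\begin{equation*}
s\int_0^R \delta \frac{|\dz \varphi|^2}{z^2}\,dz = 9 \delta s \int_0^R z^2\, dz = 3\delta s R^3.
\end{equation*}

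For the $s$-independent part, I would compute
\begin{equation*}
\int_0^R \gamma P_0 \frac{|\dz \varphi|^2}{z^2}\,dz = 9\gamma \int_0^R z^2 P_0\, dz,
\end{equation*}
and use integration by parts, together with $P_0(R)=0$ and $\varphi(0)=0$, to rewrite the gravitational term:
\begin{equation*}
\int_0^R 4 \frac{\dz P_0}{z^3}|\varphi|^2 \, dz = \int_0^R 4 z^3\, \dz P_0\, dz = -12 \int_0^R z^2 P_0\, dz.
\end{equation*}
Summing these gives $(9\gamma - 12)\int_0^R z^2 P_0\, dz$, which is strictly negative since $\gamma < 4/3$ and $P_0 > 0$ on $(0,R)$.

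Finally, I would normalize. The quantity $c_0 := J(z^3) = \int_0^R \varrho_0 z^4\, dz$ is a finite positive constant depending only on $\gamma$ and $K$ (through the Lane-Emden profile). Setting $\varphi_\ast := z^3/\sqrt{c_0}$ ensures $\varphi_\ast \in \mathcal{A}$, and
\begin{equation*}
\mu(s) \le E(\varphi_\ast; s) = \frac{1}{c_0}\Bigl[3 \delta R^3 s + (9\gamma - 12)\int_0^R z^2 P_0\, dz\Bigr] =: s C_1 - C_2,
\end{equation*}
with $C_1, C_2 > 0$ as claimed. There is no real obstacle here; the only thing worth highlighting is that the choice $\varphi = z^3$ is precisely what makes the shear-viscosity term drop out, which is essential because otherwise that term would contribute an $O(s)$ piece with coefficient one could not control by $\delta$ alone. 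The sign condition $9\gamma - 12 < 0$ is exactly the well-known inviscid instability threshold $\gamma < 4/3$.
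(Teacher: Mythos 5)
Your proof is correct and follows essentially the same route as the paper: you choose the test function $\varphi(z)=z^3$, integrate $4\dz P_0\,z^3$ by parts to produce the coefficient $9\gamma - 12 < 0$, and normalize by $J(z^3)$. The paper phrases the normalization via the Rayleigh quotient $\mu(s)=\inf E(\varphi;s)/J(\varphi)$ rather than dividing the test function by $\sqrt{c_0}$, but this is the same computation.
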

\begin{proof}
Via the homogeneity of $E$ and $J$ we may write
\begin{equation}
 \mu(s) = \inf\{ E(\varphi;s) / J(\varphi) \;\vert\; \varphi\in H^1_2((0,R)) \}.
\end{equation}
The structure of $E$ points to a natural choice for a test function: $\varphi(z) = z^3$.  Using this, integration by parts reveals that
\begin{equation}
 E(\varphi;s) = \int_0^R \left( \td + \gamma P_0\right) 9 z^2 + 4 \dz P_0 z^3 =  \int_0^R 9 \delta s z^2   + (9\gamma-12) P_0 z^2.
\end{equation}
Since $\gamma < 4/3$ it holds that $(9\gamma -12) <0$.   This implies  that
\begin{equation}
 \mu(s) \le \frac{E(\varphi;s)}{J(\varphi)} = s C_1 - C_2, 
\end{equation}
where
\begin{equation}\label{C1_def}
 C_1 = \left(    3 \delta R^3 \right)   \left(\int_0^R \varrho_0(z) z^4 dz\right)^{-1} >0
\end{equation}
and 
\begin{equation}\label{C2_def}
 C_2 = \left( \int_0^R (12 -9\gamma) K \varrho_0^\gamma(z) z^2 dz \right)\left(\int_0^R \varrho_0(z) z^4 dz\right)^{-1} >0.
\end{equation}

\end{proof}

The next proposition proves some crucial monotonicity and continuity properties of $\mu(s)$ for $s>0$.

\begin{prop}\label{eigen_lip}
The following hold.
\begin{enumerate}
\item $\mu(s)$ is strictly increasing in $s$.

\item There exist constants $C_3,C_4>0$ so that  
\begin{equation}\label{e_l_00}
 \mu(s) \ge -C_3 + s C_4.
\end{equation}

\item $\mu \in C^{0,1}_{loc}((0,\infty))$, and in particular $\mu \in C^{0}((0,\infty))$.

\end{enumerate}

\end{prop}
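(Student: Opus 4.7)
The three parts rest on decomposing $E(\varphi;s) = sA(\varphi) + B(\varphi)$, where
\begin{equation*}
A(\varphi) := \int_0^R \left(\delta \frac{\abs{\dz\varphi}^2}{z^2} + \frac{4\ep}{3z^2}\abs{\dz\varphi - 3\varphi/z}^2\right) dz \ge 0
\end{equation*}
isolates the viscous part and $B(\varphi)$ collects the $\gamma P_0$ and $\dz P_0$ terms. Since $s \mapsto E(\varphi;s)$ is affine for each $\varphi$, the infimum $\mu(s)$ is concave in $s$, which already anticipates local Lipschitz regularity. The plan is to extract two uniform bounds on the constraint set $\mathcal{A}$---a positive lower bound for $A$ and a finite lower bound for $B$---and derive all three assertions by testing minimizers against each other.

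For the first bound, Lemma \ref{sob_embeds} combined with $J(\varphi) = 1$ yields
\begin{equation*}
1 = \int_0^R \varrho_0 \frac{\abs{\varphi}^2}{z^2}\, dz \le \norm{\varrho_0}_{L^\infty} R^2 \int_0^R \frac{\abs{\varphi}^2}{z^4}\, dz \le \frac{4 R^2 \norm{\varrho_0}_{L^\infty}}{9}\int_0^R \frac{\abs{\dz\varphi}^2}{z^2}\, dz,
\end{equation*}
so $A(\varphi) \ge \delta c_A$ on $\mathcal{A}$ with $c_A := 9/(4R^2\norm{\varrho_0}_{L^\infty})$. For the second, I would rewrite the only worrisome term as
\begin{equation*}
4\int_0^R \dz P_0 \frac{\abs{\varphi}^2}{z^3}\, dz = 4\int_0^R \frac{\dz P_0}{\varrho_0 z}\cdot \varrho_0 \frac{\abs{\varphi}^2}{z^2}\, dz,
\end{equation*}
so that if $\norm{\dz P_0/(\varrho_0 z)}_{L^\infty((0,R))}$ is finite, then $J(\varphi) = 1$ gives $B(\varphi) \ge -C_3$ with $C_3 := 4\norm{\dz P_0/(\varrho_0 z)}_{L^\infty}$. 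Verifying the finiteness of this weighted norm is the main technical point: near $z = R$ it follows from the boundedness of $\dz P_0/\varrho_0$ already established in the proof of Proposition \ref{min_exist}; near $z = 0$ I would invoke the radial Lane-Emden identity $\dz P_0(z) = -(4\pi \varrho_0(z)/z^{2})\int_0^z \varrho_0(s)s^2\, ds$, whose asymptotics $\dz P_0(z) \sim -\tfrac{4\pi}{3}\varrho_0(0)^2 z$ and $\varrho_0(0) > 0$ together make $\dz P_0/(\varrho_0 z)$ bounded on a neighborhood of the origin.

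With these two bounds in place, Parts 1 and 2 fall out by testing minimizers. For $s_1 < s_2$ with $\varphi_2$ achieving $\mu(s_2)$,
\begin{equation*}
\mu(s_1) \le E(\varphi_2; s_1) = \mu(s_2) - (s_2 - s_1) A(\varphi_2) \le \mu(s_2) - (s_2 - s_1)\delta c_A,
\end{equation*}
which gives strict monotonicity; and evaluating at any minimizer $\varphi_s$, $\mu(s) = sA(\varphi_s) + B(\varphi_s) \ge s\delta c_A - C_3$ verifies \eqref{e_l_00} with $C_4 := \delta c_A$. For Part 3, cross-testing the minimizers $\varphi_1, \varphi_2$ for $s_1 < s_2$ yields
\begin{equation*}
(s_2 - s_1)A(\varphi_2) \le \mu(s_2) - \mu(s_1) \le (s_2 - s_1)A(\varphi_1),
\end{equation*}
so it suffices to bound $A(\varphi_s)$ uniformly for $s$ in a compact subinterval $[a,b] \subset (0,\infty)$. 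The decomposition, together with the upper bound from Lemma \ref{neg_inf} and the lower bound $B(\varphi_s) \ge -C_3$, gives $sA(\varphi_s) = \mu(s) - B(\varphi_s) \le (sC_1 - C_2) + C_3$, whence $A(\varphi_s) \le C_1 + C_3/a$ throughout $[a,b]$. This delivers a Lipschitz constant $C_1 + C_3/a$ on $[a,b]$, hence $\mu \in C^{0,1}_{loc}((0,\infty))$ and in particular $\mu \in C^0((0,\infty))$.
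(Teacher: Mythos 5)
Your proof is correct and follows the same route as the paper: the affine decomposition $E(\varphi;s) = sA(\varphi) + B(\varphi)$ (the paper's $E_1,E_0$) and cross-testing of minimizers for both monotonicity and Lipschitz continuity. The only real differences are refinements: you make the constants $\inf_{\mathcal{A}} A = \delta c_A$ and $\inf_{\mathcal{A}} B \ge -C_3$ explicit (via Hardy's inequality on $\mathcal{A}$ and the Lane-Emden identity for $\dz P_0/(\varrho_0 z)$ near $z=0$), which lets you prove strict monotonicity directly with the quantitative gap $(s_2-s_1)\delta c_A$ rather than by the paper's contradiction argument that $E_1(\varphi_{s_2})=0$ forces $\varphi_{s_2}=0$, and you bound $A(\varphi_s)$ on $[a,b]$ via Lemma \ref{neg_inf} rather than via a fixed test function $\psi\in\mathcal{A}$.
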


\begin{proof}
We begin by establishing two bits of notation.  According to Proposition \ref{min_exist}, for each $s \in (0,\infty)$ we can find   $\varphi_s \in \mathcal{A}$ so that
\begin{equation}
 E(\varphi_\alpha;s) = \inf_{\varphi \in \mathcal{A}} E(\varphi;s) = \mu(s).
\end{equation}
Next, we decompose $E$ according to 
\begin{equation}\label{e_l_1}
E(\varphi;s) = E_0(\varphi) + s E_1(\varphi) 
\end{equation}
for 
\begin{equation}
 E_0(\varphi) := \int_{0}^{R} \gamma P_0 \frac{\abs{\dz \varphi}^2}{z^2} + 4 \frac{\dz P_0}{z^3}  \abs{\varphi}^2
\end{equation}
and 
\begin{equation}
 E_1(\varphi) := \int_{0}^{R} \delta \frac{\abs{\dz \varphi}^2}{z^2} + \frac{4\ep}{3z^2} \abs{\dz \varphi - 3 \frac{\varphi}{z}}^2  \ge 0.
\end{equation}
The non-negativity of $E_1$ implies that $E$ is non-decreasing in $s$  with $\varphi \in \mathcal{A}$ kept fixed.

To prove the first assertion, note that if  $s_1, s_2  \in (0,\infty)$ with $s_1 \le s_1$  then the minimality of $\varphi_{s_i}$ and  the non-negativity of $E_1$  imply that
\begin{equation}
 \mu(s_1) = E(\varphi_{s_1};s_1) \le E(\varphi_{s_2};s_1) \le  E(\varphi_{s_2};s_2) = \mu(s_2).
\end{equation}
This shows that $\mu$ is non-decreasing in $s$.  Suppose by way of contradiction that $\mu(s_1)=\mu(s_2)$.  Then the last inequality implies that  
\begin{equation}
 s_1  E_1(\varphi_{s_2}) = s_2  E_1(\varphi_{s_2}),
\end{equation}
which means that $E_1(\varphi_{s_2})=0$.  The vanishing of $E_1(\varphi_{s_2})$ implies that $\varphi_{s_2}=0$, which is impossible since $\varphi_{s_2}\in \mathcal{A}$.  Hence equality cannot be achieved, and $\mu$ is strictly increasing in $s$.

Now note that \eqref{e_l_1} and the non-negativity of $E_1$ imply that
\begin{equation}
\mu(s) \ge  \inf_{\varphi\in \mathcal{A}}E_0(\varphi)  + s  \inf_{\varphi\in \mathcal{A}} E_1(\varphi).
\end{equation}
The proof of Lemma \ref{neg_inf} shows that 
\begin{equation}\label{e_l_9}
-C_3:= \inf_{\varphi\in \mathcal{A}}E_0(\varphi) < 0,
\end{equation}
and it is a simple matter to see 
\begin{equation}
 C_4 := \inf_{\varphi\in \mathcal{A}} E_1(\varphi) >0.
\end{equation}
The second assertion follows.

Now fix  $Q = [a,b] \csubset (0,\infty)$, and fix any  $\psi \in \mathcal{A}$.  Again by the non-negativity of $E_1$ and the minimality of $\varphi_s$  we deduce that
\begin{equation}
 E(\psi;b) \ge E(\psi;s) \ge  E(\varphi_s;s) \ge  a E_1(\varphi_s)  -C_3
\end{equation}
for all $s \in Q$.  
This implies that there exists a constant $0<C = C(a,b,\psi,\gamma,K) < \infty$ so that
\begin{equation}\label{e_l_2}
 \sup_{s \in Q} E_1(\varphi_s)  \le C.
\end{equation}

Let $s_1,s_2 \in Q$.  Using the minimality of $\varphi_{s_1}$ compared to $\varphi_{s_2}$, we know that
\begin{equation}\label{e_l_3}
 \mu(s_1) = E(\varphi_{s_1};s_1) \le E(\varphi_{s_2};s_1),
\end{equation}
but from our decomposition \eqref{e_l_1}, we may bound
\begin{equation}\label{e_l_4}
E(\varphi_{s_2};s_1)
\le  E(\varphi_{s_2};s_2) + \abs{s_1 - s_2} E_1(\varphi_{s_2})  \\
= \mu(s_2)+ \abs{s_1 - s_2} E_1(\varphi_{s_2}). 
\end{equation}
Chaining these two inequalities together and employing \eqref{e_l_2}, we find that
\begin{equation}
\mu(s_1)
\le \mu(s_2) + C  \abs{s_1- s_2}.
\end{equation}
Reversing the role of the indices $1$ and $2$ in the derivation of this inequality gives the same bound with $s_1$ switched with $s_2$.  We deduce that 
\begin{equation}
\abs{\mu(s_1) - \mu(s_2)} \le  C \abs{s_1 - s_2},
\end{equation}
which proves item $3$.

\end{proof}

Now we know that the eigenvalue $\mu(s)$ is negative as long as $s<C_2/C_1$ and that $\mu$ is continuous on $(0,\infty)$.  We can then define the non-empty open set
\begin{equation}
 \Omega = \mu^{-1}((-\infty,0)) \subset (0,\infty),
\end{equation}
on which we can calculate $\lambda(s) = \sqrt{-\mu(s)} >0$.  

It turns out that the set $\Omega$ is sufficiently large to find $s>0$ so that $\lambda(s) = s$.  This inversion will then allow us to solve the original growing-mode equations. 

\begin{prop}\label{inversion}
There exists a unique $s \in \Omega$ so that $\lambda(s)=\sqrt{-\mu(s)}>0$ and $\lambda(s) = s$.
\end{prop}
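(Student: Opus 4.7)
The plan is to reduce the existence and uniqueness of the fixed point $\lambda(s)=s$ to an application of the intermediate value theorem combined with strict monotonicity. The key facts I will use are assembled already in Lemma \ref{neg_inf} and Proposition \ref{eigen_lip}: the strict monotonicity of $\mu$, its continuity, and the two-sided linear bounds
\begin{equation*}
-C_3 + s C_4 \le \mu(s) \le s C_1 - C_2.
\end{equation*}

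First I would characterize the set $\Omega$. The upper bound $\mu(s)\le sC_1-C_2$ forces $\mu(s)<0$ for $s<C_2/C_1$, while the lower bound $\mu(s)\ge -C_3+sC_4$ forces $\mu(s)>0$ once $s>C_3/C_4$. Because $\mu$ is continuous and strictly increasing on $(0,\infty)$, the preimage $\Omega=\mu^{-1}((-\infty,0))$ is an open interval of the form $(0,s_*)$ with $0<s_*<\infty$ and $\mu(s_*)=0$. On this interval the function $\lambda(s):=\sqrt{-\mu(s)}$ is well-defined, continuous, and (because $\mu$ is strictly increasing) strictly decreasing.

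Next I would define the continuous function
\begin{equation*}
F(s):=\lambda(s)-s \quad \text{for } s\in(0,s_*)
\end{equation*}
and study its endpoint behavior. As $s\to s_*^-$, monotonicity and continuity of $\mu$ give $\mu(s)\to 0^-$, hence $F(s_*^-)=0-s_*=-s_*<0$. As $s\to 0^+$, the upper bound yields $\limsup_{s\to 0^+}\mu(s)\le -C_2<0$, so $\liminf_{s\to 0^+}\lambda(s)\ge\sqrt{C_2}>0$ and therefore $F(0^+)>0$. The intermediate value theorem then produces some $s\in(0,s_*)=\Omega$ with $F(s)=0$, i.e.\ $\lambda(s)=s>0$, which is the desired fixed point.

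For uniqueness, I would simply observe that $\lambda$ is strictly decreasing on $\Omega$ while $s\mapsto s$ is strictly increasing, so $F$ is strictly decreasing and can vanish at most once. I do not anticipate a serious obstacle here: all of the analytic content (existence of minimizers, monotonicity, linear bounds, continuity of $\mu$) has already been done in the preceding lemmas, and this proposition is purely a scalar one-variable argument. The only subtle point worth double-checking is that the limit $\mu(0^+)$ is strictly negative; this is guaranteed by the upper bound $\mu(s)\le sC_1-C_2$, which forces $\lambda(0^+)\ge\sqrt{C_2}>0$ and thereby makes the sign change of $F$ across $(0,s_*)$ genuine.
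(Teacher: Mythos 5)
Your proposal is correct and takes essentially the same route as the paper: both characterize $\Omega$ as an interval $(0,s_*)$ using the two-sided bounds on $\mu$, observe that $\lambda$ is continuous and strictly decreasing there, and then apply the intermediate value theorem together with strict monotonicity to obtain a unique fixed point. The only cosmetic difference is that the paper works with $\Psi(s) = s/\lambda(s)$ (strictly increasing from $0$ to $+\infty$) rather than your $F(s) = \lambda(s)-s$ (strictly decreasing with a sign change); the two are interchangeable.
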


\begin{proof}
According to Propositions \ref{neg_inf} and \ref{eigen_lip}, we know that $\mu(s) < 0$ for $s\in [0,C_2/C_1)$.  Moreover, the lower bound \eqref{e_l_00} implies that $\mu(s) \rightarrow +\infty$ as $s  \rightarrow \infty$.  This implies the existence of $s_0 \in (0,\infty)$ so that $\Omega = (0,s_0)$, which means that $\lambda(s_0)=0$.  Define the function $\Psi:(0,s_0) \rightarrow (0,\infty)$ by $\Psi(s) = s/\lambda(s)$.  The monotonicity and continuity properties of $\mu$ are inherited by $\Psi$, i.e. $\Psi$ is continuous on $(0,s_0)$ and strictly increases from $0$ to $+\infty$ as $s \rightarrow s_0$.  As such, we may apply the intermediate value theorem to find a unique $s\in(0,s_0)$ so that $\Psi(s) = 1$.  For this $s$ we then have that $s = \lambda(s)$, the desired result.
\end{proof}

\subsection{Proof of Theorem \ref{growing_mode}}\label{grow_proof_sec}

We now combine our above analysis to deduce the existence of a solution $\varphi$, $\lambda >0$ to \eqref{linearized}--\eqref{linearized_bc}.

\begin{thm}\label{growing_solution}
There exists $\lambda>0$ and $\varphi \in H^1_2((0,R))$, smooth on $(0,R]$, that solves \eqref{linearized} along with the boundary condition \eqref{linearized_bc}. The solution satisfies the asymptotics $\abs{\varphi(z)}\le C z^3$ and $\abs{\dz \varphi(z)} \le C z^2$  as $z \to 0$.
\end{thm}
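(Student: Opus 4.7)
The plan is to assemble the ingredients already developed in the previous subsections: the constrained minimizer furnished by Proposition~\ref{min_exist}, its characterization as a classical solution of the Euler--Lagrange system by Proposition~\ref{e_l_eqns}, the fixed point in $s$ produced by Proposition~\ref{inversion}, and the boundary-behavior estimate at $z=0$ from Lemma~\ref{soln_asymp}. Each step has done the heavy lifting; the remaining work is essentially bookkeeping to translate from the modified problem back to the original one.

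Concretely, I would first invoke Proposition~\ref{inversion} to extract the unique $s^\ast\in\Omega$ with $\lambda:=\sqrt{-\mu(s^\ast)}>0$ and $\lambda=s^\ast$. With this $s^\ast$ fixed, set $\tep=s^\ast\ep=\lambda\ep$ and $\td=s^\ast\delta=\lambda\delta$, and let $\varphi:=\varphi_{s^\ast}\in\mathcal{A}$ be a minimizer of $E(\,\cdot\,;s^\ast)$ from Proposition~\ref{min_exist}. Proposition~\ref{e_l_eqns} then delivers both the regularity claim (smoothness on $(0,R]$) and the fact that $\varphi$ satisfies the modified eigenvalue equation \eqref{eigen_coupled_1} with eigenvalue $\mu(s^\ast)=-\lambda^2$, together with the boundary identity \eqref{eigen_bc}. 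Substituting $\tep=\lambda\ep$, $\td=\lambda\delta$, and $\mu(s^\ast)=-\lambda^2$ into \eqref{eigen_coupled_1} converts it literally into \eqref{linearized}; the same substitution turns \eqref{eigen_bc} into \eqref{linearized_bc} (noting that the common factor of $\lambda$ in \eqref{linearized_bc} is inessential since $\lambda>0$). The condition $\varphi\in H^1_2((0,R))$ with $\varphi(0)=0$ is automatic from the minimization space.

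For the pointwise asymptotics, I would appeal directly to Lemma~\ref{soln_asymp}, which applies once we know $\varphi$ solves \eqref{eigen_coupled_1} classically on $(0,R)$. This yields $|\varphi(z)|\le Cz^3$ and $|\dz\varphi(z)|\le Cz^2$ as $z\to 0$, completing the statement.

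The main conceptual obstacle has actually already been surmounted in the setup: the mismatch between the two homogeneities in $\lambda$ (present both linearly in the viscous coefficients and quadratically as the eigenvalue) was precisely what forced the Guo--Tice relaxation, and the continuity and monotonicity of $\mu(s)$ in Proposition~\ref{eigen_lip} were exactly what made the fixed-point argument in Proposition~\ref{inversion} possible. So the only genuine verification at this stage is to check that the parameter substitution $\tep=\lambda\ep$, $\td=\lambda\delta$ indeed reproduces \eqref{linearized}--\eqref{linearized_bc} from \eqref{eigen_coupled_1}--\eqref{eigen_bc}, which is direct.
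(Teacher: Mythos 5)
Your proposal matches the paper's proof step for step: invoke Proposition~\ref{inversion} to find $s=\lambda(s)>0$, take the corresponding minimizer with the regularity and Euler--Lagrange identities from Propositions~\ref{min_exist} and~\ref{e_l_eqns}, observe that the substitution $\tep=\lambda\ep$, $\td=\lambda\delta$, $\mu=-\lambda^2$ converts \eqref{eigen_coupled_1}--\eqref{eigen_bc} into \eqref{linearized}--\eqref{linearized_bc}, and read off the asymptotics from Lemma~\ref{soln_asymp}. No discrepancy.
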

\begin{proof}
Combining Proposition \ref{e_l_eqns} and Proposition \ref{inversion}, we see that there exists a solution to \eqref{modified} and \eqref{modified_bc} for $\lambda(s)= \sqrt{-\mu(s)}>0$, satisfying $ s= \lambda(s).$  This implies that the solution is actually a solution to \eqref{linearized} and \eqref{linearized_bc}.  The asymptotics at $z=0$ follow from Lemma \ref{soln_asymp}.
\end{proof}

\begin{remark}
 It is actually possible to get a lower bound for the size of $\lambda>0$ in terms of $\delta, \varrho_0,   \gamma$, and $R$.  Indeed, from Lemma \ref{neg_inf} it holds that $\lambda^2 + \lambda C_1 - C_2 \ge 0$, and hence
\begin{equation}
 \lambda \ge \frac{-C_1 + \sqrt{C_1^2 + 4C_2} }{2} > 0.
\end{equation}
Here the constants $C_1,C_2 >0$ are determined explicitly in terms of $\delta, \varrho_0, \gamma$, and $R$ according to \eqref{C1_def} and \eqref{C2_def}.
\end{remark}

An immediate consequence of Theorem \ref{growing_solution} is the existence of a solution to \eqref{phi_form_1}--\eqref{phi_form_2}.

\begin{cor}\label{phi_soln}
There exists $\lambda>0$ and $\phi(x)= \varphi(r_0(x))$, smooth on $(0,M)$, that solves \eqref{phi_form_1}--\eqref{phi_form_2}.  The solution satisfies  
\begin{equation}\label{phis_01}
 \limsup_{x\to 0} \frac{\abs{\phi(x)}}{r_0^3(x)} + \limsup_{x\to 0}  \abs{\dx \phi(x)}  < \infty.
\end{equation}
Let $\mathfrak{D}$ denote the linear operator $\mathfrak{D} \phi(x) = \rho_0(x) \dx \phi(x)$.  The solution satisfies the property that $\mathfrak{D}^k \phi$ has a well-defined trace at $x=M$ for every integer $k\ge 0$. 
\end{cor}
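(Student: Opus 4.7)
The plan is to simply pull back the solution $\varphi$ from Theorem \ref{growing_solution} through the change of variables $z = r_0(x)$ and verify that every claim transports cleanly to Lagrangian coordinates. Concretely, I would define $\phi(x) := \varphi(r_0(x))$ and take as $\lambda$ the same growth rate produced by Theorem \ref{growing_solution}.

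First, I would verify the ODE and boundary conditions. Since $\rho_0(x) > 0$ on $[0,M)$, the map $x \mapsto r_0(x)$ is a smooth diffeomorphism from $(0,M)$ onto $(0,R)$ with $\dx r_0 = 1/(4\pi r_0^2 \rho_0)$, equivalently $\dx = (4\pi r_0^2 \rho_0)^{-1} \dz$. Because \eqref{linearized} was derived from \eqref{phi_form_1} precisely by applying this identity, substituting back gives \eqref{phi_form_1} immediately, and smoothness of $\phi$ on $(0,M)$ follows from smoothness of $\varphi$ on $(0,R]$ together with smoothness of $r_0$ on $(0,M]$. At $x = M$ we have $r_0(M) = R$, so the Eulerian boundary condition \eqref{linearized_bc} is a scalar rearrangement of the Lagrangian boundary condition \eqref{phi_form_2}; the equivalence of the two $\lambda$-dependent linear combinations of $\varphi(R)$ and $\dz \varphi(R)$ is exactly the computation originally used to pass from \eqref{phi_form_2} to \eqref{linearized_bc}, now read in reverse.

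Second, I would read off the asymptotics at $x=0$ from the asymptotics $|\varphi(z)| \le C z^3$ and $|\dz \varphi(z)| \le C z^2$ of Theorem \ref{growing_solution}. Since $\rho_0(0) > 0$, the definition \eqref{r0_def} gives $r_0(x) \sim (3 x / (4 \pi \rho_0(0)))^{1/3}$ near $x=0$, and in particular $r_0$ is smooth and positive on $(0,M]$. Then
\[
\frac{|\phi(x)|}{r_0^3(x)} = \frac{|\varphi(r_0(x))|}{r_0^3(x)} \le C
\]
and, using $\dx \phi = (4\pi r_0^2 \rho_0)^{-1} (\dz \varphi)(r_0(x))$,
\[
|\dx \phi(x)| \le \frac{C r_0^2(x)}{4 \pi r_0^2(x) \rho_0(x)} = \frac{C}{4\pi \rho_0(x)},
\]
which is bounded as $x \to 0$ because $\rho_0$ is bounded away from $0$ near the center. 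This gives \eqref{phis_01}.

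Finally, for the trace statement at $x=M$, the key observation is that the operator $\mathfrak{D} = \rho_0 \dx$ transforms into a purely Eulerian operator under the change of variables:
\[
\mathfrak{D} \phi(x) = \rho_0(x) \dx \phi(x) = \frac{1}{4\pi r_0^2(x)} (\dz \varphi)(r_0(x)),
\]
so that $\mathfrak{D}$ acts on functions of $x$ in the same way the operator $L := (4\pi z^2)^{-1} \dz$ acts on functions of $z$. By induction, $\mathfrak{D}^k \phi(x) = (L^k \varphi)(r_0(x))$. Since $\varphi$ is smooth on a neighborhood of $z = R$ (by Proposition \ref{e_l_eqns}) and $R > 0$, $L^k \varphi$ is continuous at $z = R$ for every $k$, which yields a well-defined trace of $\mathfrak{D}^k \phi$ at $x = M$. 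There is no real obstacle here; the only thing to be careful about is distinguishing the well-behaved boundary $x = M$ (where $r_0 = R > 0$ but $\rho_0 = 0$) from the coordinate singularity $x = 0$ (where $r_0 = 0$ but $\rho_0 > 0$), and ensuring that smoothness claims on each side are only used on the appropriate side.
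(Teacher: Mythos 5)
Your proposal is correct and follows essentially the same route as the paper: define $\phi = \varphi \circ r_0$, verify that \eqref{linearized}--\eqref{linearized_bc} pull back to \eqref{phi_form_1}--\eqref{phi_form_2}, read off \eqref{phis_01} from the asymptotics $|\varphi(z)| \le Cz^3$, $|\dz\varphi(z)| \le Cz^2$ of Theorem \ref{growing_solution}, and observe $\mathfrak{D}\phi = (4\pi r_0^2)^{-1}(\dz\varphi)(r_0(\cdot))$ to iterate the trace at $x=M$. Your reformulation of the iteration via the operator $L = (4\pi z^2)^{-1}\dz$ is a clean way to package the same computation the paper performs term-by-term.
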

\begin{proof}
All of the conclusions, except those concerning $\mathfrak{D}$ follow directly from Theorem \ref{growing_solution}.  When $k=0$,  the trace $\mathfrak{D}^0 \phi(M) = \phi(M)$ is well-defined since $\varphi(R) = \varphi(r_0(M))$ is well-defined.  Note that
\begin{equation}
 \mathfrak{D}\phi(x)= \rho_0(x) \dx \phi(x) = \frac{\dz \varphi(r_0(x))}{4 \pi r_0^2(x)} \Rightarrow \mathfrak{D}\phi(M) = \frac{\dz \varphi(R)}{4 \pi R^2},
\end{equation}
so that $\mathfrak{D}\phi(M)$ is well-defined.  We may argue similarly, using the fact that $\dz^k \varphi(R)$ is well-defined for all $k\ge 0$, to deduce that $\mathfrak{D}^k\phi(M)$ is well-defined for all $k\ge 0$ as well.

\end{proof}

Now, with Corollary \ref{phi_soln} in hand, we are ready to present the 

\begin{proof}[Proof of Theorem \ref{growing_mode}]

Let $\lambda>0$ and $\phi(x)$  be the solution to \eqref{phi_form_1}--\eqref{phi_form_2} given in by Corollary \ref{phi_soln}.  Let us then define $v$ and $\sigma$ according to 
\begin{equation}\label{gm_1}
 v = -\frac{\lambda}{4\pi r_0^2} \phi \text{ and } \sigma = \rho_0^2 \dx \phi.
\end{equation}
Using these definitions of $v$ and  $\sigma$ in conjunction with the properties of $\phi$ recorded in Corollary \ref{phi_soln}, we easily deduce the first three items of the theorem. 

To prove the variational characterization of the fourth item, we return to the variational characterization of $\lambda$ in $z = r_0(x)$ coordinates.  According to Theorem \ref{growing_solution}, $\lambda>0$ satisfies 
\begin{multline}
 \lambda \int_0^R   \left( \delta    \frac{\abs{\dz \vartheta}^2}{z^2}  + \frac{4\ep}{3z^2} \abs{ \dz \vartheta - 3 \frac{\vartheta}{z}  }^2  \right) dz 
+\int_0^R  \left( \gamma P_0 \frac{\abs{\dz \vartheta}^2}{z^2}  + 4 \frac{\dz P_0}{z^3} \abs{\vartheta}^2\right) dz \\
\ge - \lambda^2 \int_0^R \frac{\varrho_0}{z^2} \abs{\vartheta}^2 dz
\end{multline}
for every $\vartheta \in H^1_2((0,R))$.  Then the variational characterization in \eqref{gm_02} follows by making a change of coordinates $\theta(x) = \vartheta(z)= \vartheta(r_0(x))$.  Note that $\vartheta \in H^1_2((0,R))$ if and only if $\sqrt{\rho_0} \dx \theta \in L^2((0,M))$ and $\theta/(r_0^2 \sqrt{\rho_0}) \in L^2((0,M))$.  Also, changing coordinates in \eqref{s_em_0} of Lemma \ref{sob_embeds} shows that $\theta/(r_0^3 \sqrt{\rho_0}) \in L^2((0,M))$, which means that all of the integrals in \eqref{gm_02} are well-defined.

We now turn to the proof of \eqref{gm_03}.  Using the inclusion $\varphi \in H^1_2((0,R))$, the above analysis implies that $\sqrt{\rho_0} \dx \phi, \phi/(r_0^3 \sqrt{\rho_0}) \in L^2((0,M))$.  From this and equation \eqref{phi_form_1} we may then deduce that
\begin{equation}
 \int_0^M \left( \frac{\abs{\phi}^2}{r_0^6 \rho_0} + \rho_0 \abs{\dx \phi}^2 + r_0^2 \abs{\dx(\rho_0 \dx \phi) }^2  \right) dx < \infty.
\end{equation}
This and \eqref{gm_1} then imply \eqref{gm_03}.

\end{proof}

\section{Linear estimates}\label{lin_est_sec}

Due to the indirect way in which we constructed growing mode solutions in Section \ref{2}, it is not immediately obvious that the $\lambda>0$ of Theorem \ref{growing_mode} is the largest possible growth rate.  However, because of  the inequality \eqref{gm_02}, we can show that no solution to the linearized problem \eqref{linearized_1}--\eqref{linearized_3} can grow in time at a rate faster than $e^{\lambda t}$.  Hence the growing mode constructed in Theorem \ref{growing_mode}  actually does grow in time at the fastest possible rate.  The proof of this result and its implications for solutions to the inhomogeneous linearized problem  are our the subject of  this section.

\subsection{Estimates in the second-order formulation}

First, we will prove estimates for solutions to the following second-order problem.
\begin{equation}\label{phi_linear_1}
 - \frac{\dt^2 \phi}{16 \pi^2 r_0^4} = \frac{\dx P_0}{\pi r_0^3} \phi - \dx \left[  
\left(\frac{4\ep}{3} + \delta \right)\rho_0 \dx \dt \phi  + \gamma P_0 \rho_0 \dx \phi \right] \text{ for } x \in (0,M)
\end{equation}
with boundary conditions 
\begin{equation}\label{phi_linear_2}
\phi(0,t) = 0 \text{ and } \frac{4\ep}{3} \left(4 \pi r_0^3 \rho_0 \dx \left(\frac{\phi}{r_0^3} \right) \right) +
\delta(4\pi \rho_0 \dx \phi) =0 \text{ at } x=M,
\end{equation}
and initial conditions $\phi(x,0)$ and $\dt \phi(x,0)$ given.  We will assume throughout that $\phi$ satisfies $\sqrt{\rho_0} \dx \phi \in L^2((0,M))$ and $\phi/(r_0^2 \sqrt{\rho_0}) \in L^2((0,M))$.

Solutions to this linear problem obey an energy evolution equation related to the inequality \eqref{gm_02}.  We record this now.

\begin{prop}\label{lin_en_ev}
Suppose $\phi$ is a solution to \eqref{phi_linear_1}--\eqref{phi_linear_2}.  Then 
\begin{multline}
  \dt \int_0^M \frac{\abs{\dt \phi}^2}{32 \pi^2 r_0^4}dx  + \int_0^M  \left(\delta \rho_0 \abs{\dx \dt \phi}^2  + \frac{4\ep}{3} \rho_0 \abs{r_0^3 \dx\left(\frac{\dt \phi}{r_0^3} \right) }^2 \right) dx
\\
= -\dt \int_0^M \left( \frac{\gamma P_0 \rho_0}{2} \abs{\dx \phi}^2 + \frac{ \dx P_0}{2 \pi r_0^3} \abs{\phi}^2 \right) dx.
 \end{multline}
\end{prop}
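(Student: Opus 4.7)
The plan is to multiply the evolution equation \eqref{phi_linear_1} by $\dt \phi$ and integrate in $x$ over $(0,M)$. On the left, $-\dt^2\phi \cdot \dt\phi = -\frac{1}{2}\dt |\dt\phi|^2$, so the LHS immediately becomes $-\dt\int_0^M |\dt\phi|^2/(32\pi^2 r_0^4)\,dx$. On the right, the gravitational term yields $\dt \int_0^M \frac{\dx P_0}{2\pi r_0^3}|\phi|^2\,dx$ directly. For the divergence term, I would integrate by parts in $x$. The boundary contributions at $x=0$ vanish because $\phi(0,t)=0$ forces $\dt\phi(0,t)=0$, and at $x=M$ the pressure contribution $\gamma P_0 \rho_0 \dx \phi \cdot \dt\phi$ vanishes since $P_0(M)=K\rho_0^\gamma(M)=0$. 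The remaining interior contribution immediately gives the time-derivative of $\int_0^M \frac{\gamma P_0 \rho_0}{2}|\dx \phi|^2\,dx$ from the pressure piece and $\int_0^M(4\ep/3+\delta)\rho_0 |\dx \dt \phi|^2\,dx$ from the viscous piece, plus the surviving boundary term $-(4\ep/3+\delta)\rho_0 \dx\dt\phi \cdot \dt\phi\,|_{x=M}$.

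The key step is then to convert the bulk-form viscous dissipation $(4\ep/3)\rho_0|\dx\dt\phi|^2$, together with that surviving boundary contribution, into the shear-form expression $(4\ep/3)\rho_0|r_0^3\dx(\dt\phi/r_0^3)|^2$ that appears in the stated identity. For this, I would exploit the Lagrangian-mass identity $\dx r_0 = 1/(4\pi r_0^2 \rho_0)$, which gives the algebraic relation
\begin{equation*}
  r_0^3 \dx\left(\frac{\dt\phi}{r_0^3}\right) = \dx \dt\phi - \frac{3\,\dt\phi}{4\pi r_0^3 \rho_0}.
\end{equation*}
Squaring this identity and multiplying by $\rho_0$, the difference between the two forms is
\begin{equation*}
\frac{4\ep}{3}\rho_0\Bigl(|r_0^3\dx(\dt\phi/r_0^3)|^2 - |\dx\dt\phi|^2\Bigr) = -\frac{2\ep\, \dt\phi \, \dx\dt\phi}{\pi r_0^3} + \frac{3\ep |\dt\phi|^2}{4\pi^2 r_0^6 \rho_0}.
\end{equation*}
An integration by parts on the first term, again using $\dx r_0 = 1/(4\pi r_0^2 \rho_0)$ and $\dt\phi(0,t)=0$, shows that its interior contribution exactly cancels the second term, leaving precisely the boundary quantity $-\ep |\dt\phi|^2/(\pi r_0^3)\,|_{x=M}$.

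It remains to check that this boundary quantity matches the boundary leftover from the first integration by parts. Rewriting the boundary condition \eqref{phi_linear_2} using the same algebraic identity produces $(4\ep/3+\delta)\rho_0 \dx\phi = \ep \phi/(\pi r_0^3)$ at $x=M$; applying this with $\dt\phi$ in place of $\phi$ yields $(4\ep/3+\delta)\rho_0 \dx\dt\phi \cdot \dt\phi\,|_{x=M} = \ep|\dt\phi|^2/(\pi r_0^3)\,|_{x=M}$, which is exactly what is needed for the cancellation. Collecting all contributions and rearranging produces the stated energy identity.

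The main obstacle is bookkeeping the delicate cancellation in the third paragraph: the conversion from $|\dx\dt\phi|^2$ to $|r_0^3\dx(\dt\phi/r_0^3)|^2$, the boundary term produced by the first integration by parts, and the boundary term produced by the second integration by parts must all balance, and this balance hinges simultaneously on the specific form of the viscous boundary condition \eqref{phi_linear_2} and on the Lagrangian relation $\dx r_0 = 1/(4\pi r_0^2 \rho_0)$. Provided $\phi$ is regular enough for all traces at $x=M$ to make sense (which is compatible with the hypothesis $\sqrt{\rho_0}\dx\phi \in L^2$, $\phi/(r_0^2\sqrt{\rho_0})\in L^2$ together with the equation), the manipulations are justified and the identity follows.
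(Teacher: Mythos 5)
Your proposal is correct and follows the same strategy as the paper's own (one-sentence) proof: multiply \eqref{phi_linear_1} by $\dt\phi$, integrate, integrate by parts, apply the boundary conditions, and perform algebraic manipulations. You have simply carried out the bookkeeping the paper calls ``simple algebra,'' and the cancellation between the bulk and shear forms of the viscous dissipation, the boundary term from the first integration by parts, and the boundary term from the second is exactly where the boundary condition \eqref{phi_linear_2} is used.
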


\begin{proof}
Multiply \eqref{phi_linear_1} by $\dt \phi$ and integrate over $x \in (0,M)$.  An integration by parts, an application of the boundary conditions \eqref{phi_linear_2}, and some simple algebra yield the desired equality.
\end{proof}

We can use this and the variational characterization of $\lambda$ given in Theorem \ref{growing_mode} to deduce some estimates of the following norms:
\begin{equation}
 \norm{\phi}_1^2 := \int_0^M \frac{\abs{ \phi}^2}{16 \pi^2 r_0^4} dx
\end{equation}
\begin{equation}
 \norm{\phi}_2^2 := \int_0^M  \left( \delta \rho_0 \abs{\dx  \phi}^2  + \frac{4\ep}{3} \rho_0 \abs{r_0^3 \dx \left( \frac{\phi}{r_0^3} \right)}^2 \right) dx
\end{equation}
\begin{equation}
 \norm{\phi}_3^2 := \int_0^M  \gamma P_0 \rho_0 \abs{\dx \phi}^2  dx.
\end{equation}

Our estimates for these norms are the subject of the next result.

\begin{thm}\label{lin_estimates}
Let $\phi$ solve \eqref{phi_linear_1}--\eqref{phi_linear_2}.  Then we have the following estimates:
\begin{equation}\label{lin_es_01}
 \norm{\phi(t)}_1^2 + \int_0^t \norm{\phi(s)}_2^2 ds \le e^{2 \lambda t} \norm{\phi(0)}_1^2 + \frac{K_1}{2\lambda} (e^{2\lambda t}-1),
\end{equation}
\begin{equation}\label{lin_es_02}
 \frac{1}{\lambda}\norm{\dt \phi(t)}_1^2 + \norm{\phi(t)}_2^2 \le e^{2\lambda t} \left(   2 \lambda \norm{\phi(0)}_1^2 + K_1 \right),
\end{equation}
and 
\begin{equation}\label{lin_es_03}
 \hal \norm{\phi(t)}_3^2 \le K_0 + C_0 \left[ e^{2 \lambda t} \norm{\phi(0)}_1^2 + \frac{K_1}{2\lambda} (e^{2\lambda t}-1) \right]. 
\end{equation}
Here
\begin{equation}
 K_0 = \norm{\dt \phi(0)}_1^2 + \hal \norm{\phi(0)}_3^2,  K_1 = \frac{2 K_0}{\lambda} + 2 \norm{\phi(0)}_2^2, 
\end{equation}
and
\begin{equation}
C_0 = 2 \sup_{x \in (0,M)} \frac{x}{r_0^3(x)} <\infty.
\end{equation}
\end{thm}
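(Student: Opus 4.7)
My plan is to combine two complementary ingredients: the energy identity of Proposition \ref{lin_en_ev}, which tracks the evolution of $\hal\norm{\dt\phi}_1^2+E_P(\phi)$ where $E_P(\phi):=\hal\norm{\phi}_3^2+\int_0^M\frac{\dx P_0}{2\pi r_0^3}\abs{\phi}^2\,dx$; and the variational inequality from Theorem \ref{growing_mode}(4), which supplies the sharp reverse estimate $-E_P(\phi)\le\lambda\norm{\phi}_2^2+\lambda^2\norm{\phi}_1^2$ valid for every admissible $\phi$.

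First I integrate the energy identity from $0$ to $t$. Since the second integrand in $E_P$ is pointwise nonpositive (as $\dx P_0\le 0$), the initial datum $\hal\norm{\dt\phi(0)}_1^2+E_P(\phi(0))$ is bounded above by $K_0$, and one obtains
\[ \hal\norm{\dt\phi(t)}_1^2+E_P(\phi(t))+\int_0^t\norm{\dt\phi(s)}_2^2\,ds \le K_0.\]
Adding the variational inequality applied to $\theta=\phi(t)$ absorbs the sign-indefinite $E_P(\phi(t))$ on the left and yields the working bound
\[ \hal\norm{\dt\phi(t)}_1^2+\int_0^t\norm{\dt\phi(s)}_2^2\,ds \le K_0+\lambda\norm{\phi(t)}_2^2+\lambda^2\norm{\phi(t)}_1^2,\]
which will be the main workhorse of the proof.

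To prove \eqref{lin_es_01} I introduce the Lyapunov quantity $F(t):=\norm{\phi(t)}_1^2+\int_0^t\norm{\phi(s)}_2^2\,ds$ and aim at a Gronwall inequality of the form $F'(t)\le 2\lambda F(t)+K_1$. Differentiating in time gives $F'(t)=2\langle\phi,\dt\phi\rangle_1+\norm{\phi(t)}_2^2$, where $\langle\cdot,\cdot\rangle_1$ denotes the inner product associated with $\norm{\cdot}_1$; applying a weighted Cauchy--Schwarz to $\langle\phi,\dt\phi\rangle_1$ and inserting the working bound to control $\norm{\dt\phi(t)}_1^2$ yields the desired differential inequality, from which Gronwall delivers \eqref{lin_es_01}. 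The estimate \eqref{lin_es_02} then follows by rearranging the working bound to isolate $\tfrac{1}{\lambda}\norm{\dt\phi(t)}_1^2+\norm{\phi(t)}_2^2$ on the left and substituting the bound on $\norm{\phi(t)}_1^2$ obtained from \eqref{lin_es_01}.

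For \eqref{lin_es_03} I use the defining identity $\hal\norm{\phi}_3^2=E_P(\phi)+\int_0^M\frac{-\dx P_0}{2\pi r_0^3}\abs{\phi}^2\,dx$: the integrated energy identity from Step~1 gives $E_P(\phi(t))\le K_0$, while the Lane--Emden identity $-\dx P_0=x/(4\pi r_0^4)$ rewrites the remaining integral as $\int_0^M\frac{2x}{r_0^3}\cdot\frac{\abs{\phi}^2}{16\pi^2 r_0^4}\,dx\le C_0\norm{\phi(t)}_1^2$, and inserting \eqref{lin_es_01} completes the bound. The principal technical obstacle is recovering the sharp rate $e^{2\lambda t}$ in \eqref{lin_es_01}: a crude Cauchy--Schwarz on $\langle\phi,\dt\phi\rangle_1$ combined with the working bound produces a worse rate such as $3\lambda$, and the proof must carefully tune the AM--GM weights so that the time-integrated dissipation $\int_0^t\norm{\dt\phi(s)}_2^2\,ds$ present in the working bound is absorbed against the cumulative norm $\int_0^t\norm{\phi(s)}_2^2\,ds$ entering the Lyapunov function $F$.
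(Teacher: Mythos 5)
Your proposal follows essentially the same route as the paper's proof: integrate the energy identity of Proposition~\ref{lin_en_ev}, discard the sign-favorable initial $\dx P_0$ term, invoke the variational characterization~\eqref{gm_02}, and run Gronwall on the Lyapunov quantity $F(t) = \norm{\phi(t)}_1^2 + \int_0^t\norm{\phi(s)}_2^2\,ds$. The mechanism you allude to as ``tuning the AM--GM weights'' is made precise by the paper's auxiliary estimate~\eqref{lin_es_4}, a time-FTC plus Cauchy--Schwarz bound for $\norm{\phi(t)}_2^2$ that trades the dissipation integral $\int_0^t\norm{\dt\phi(s)}_2^2\,ds$ appearing in your working bound for the cumulative term $\int_0^t\norm{\phi(s)}_2^2\,ds$ inside $F$; this is the only ingredient your sketch leaves implicit.
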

\begin{proof}

We integrate the result of Proposition \ref{lin_en_ev} in time from $0$ to $t$ to see that
\begin{multline}\label{lin_es_1}
  \int_0^M \frac{\abs{\dt \phi(t) }^2}{32 \pi^2 r_0^4} dx + \int_0^t \int_0^M  \left(\delta \rho_0 \abs{\dx \dt \phi(s)}^2  + \frac{4\ep}{3} \rho_0 \abs{r_0^3 \dx\left(\frac{\dt \phi(s)}{r_0^3} \right) }^2\right) dx ds
\\
= K_0 + \int_0^M \frac{ \dx P_0}{2 \pi r_0^3} \abs{\phi(0)}^2 dx - \int_0^M \left( \frac{\gamma P_0 \rho_0}{2} \abs{\dx \phi(t)}^2 + \frac{ \dx P_0}{2 \pi r_0^3} \abs{\phi(t)}^2\right) dx.
\end{multline}
Note that since $\dx P_0 = -x/(4\pi r_0^4)$, we have that
\begin{equation}\label{lin_es_1_2}
 \int_0^M \frac{ \dx P_0}{2 \pi r_0^3} \abs{\phi(0)}^2 dx = -\int_0^M \frac{x}{8\pi^2 r_0^7}\abs{\phi(0)}^2 dx \le 0.
\end{equation}
The variational characterization of $\lambda$ given in \eqref{gm_02} of Theorem \ref{growing_mode} allows us to estimate
\begin{multline}\label{lin_es_2}
 - \hal \int_0^M \left( \frac{\gamma P_0 \rho_0}{2} \abs{\dx \phi(t)}^2 + \frac{ \dx P_0}{2 \pi r_0^3} \abs{\phi(t)}^2\right) dx \\
- \frac{\lambda}{2}  \int_0^M  \left(\delta \rho_0 \abs{\dx  \phi(t)}^2  + \frac{4\ep}{3} \rho_0 \abs{r_0^3 \dx\left(\frac{ \phi(t)}{r_0^3} \right) }^2  \right) dx
\le \frac{\lambda^2}{2} \int_0^M \frac{\abs{ \phi(t) }^2}{16 \pi^2 r_0^4} dx.
\end{multline}
We may then combine \eqref{lin_es_1}--\eqref{lin_es_2} to see that
\begin{multline}
\int_0^M \frac{\abs{\dt \phi(t) }^2}{32 \pi^2 r_0^4} dx  + \int_0^t \int_0^M  \left(\delta \rho_0 \abs{\dx \dt \phi(s)}^2  + \frac{4\ep}{3} \rho_0 \abs{r_0^3 \dx\left(\frac{\dt \phi(s)}{r_0^3} \right) }^2\right) dx ds  \\
\le K_0 + \frac{\lambda^2}{2} \int_0^M \frac{\abs{ \phi(t) }^2}{16 \pi^2 r_0^4}dx 
+ \frac{\lambda}{2}  \int_0^M  \left(\delta \rho_0 \abs{\dx  \phi(t)}^2  + \frac{4\ep}{3} \rho_0 \abs{r_0^3 \dx\left(\frac{ \phi(t)}{r_0^3} \right) }^2 \right) dx,
\end{multline}
which we may rewrite as
\begin{equation}\label{lin_es_3}
 \hal \norm{\dt \phi(t)}_1^2 + \int_0^t \norm{\dt \phi(s)}_2^2 ds \le K_0 +\frac{\lambda^2}{2}\norm{\phi(t)}_1^2 + \frac{\lambda}{2} \norm{\phi(t)}_2^2.
\end{equation}

Integrating in time and using Cauchy's inequality,  we may bound
\begin{multline}\label{lin_es_4}
 \lambda \norm{\phi(t)}^2_2 = \lambda \norm{\phi(0)}^2_2 +\lambda \int_0^t 2 \langle \phi(s),\dt \phi(s) \rangle_2 ds\\
 \le \lambda \norm{\phi(0)}^2_2 +  \int_0^t \norm{\dt \phi(s)}_2^2  ds  + \lambda^2 \int_0^t \norm{\phi(s)}^2_2 ds .
\end{multline}
On the other hand
\begin{equation}
\lambda \dt \norm{\phi(t)}^2_1 = \lambda 2\langle \dt \phi(t), \phi(t) \rangle_1 \le \lambda^2 \norm{\phi(t)}_1^2 + \norm{\dt \phi(t)}^2_1.
\end{equation}
We may combine these two inequalities with \eqref{lin_es_3} to derive the differential inequality
\begin{equation}
 \dt   \norm{\phi(t)}^2_1 +  \norm{\phi(t)}^2_2 \le K_1 + 2\lambda \norm{\phi(t)}^2_1 + 2\lambda \int_0^t \norm{\phi(s)}^2_2 ds
\end{equation}
for $K_1$ as defined in the hypotheses.  An application of Gronwall's lemma then shows that
\begin{equation}\label{lin_es_5}
 \norm{\phi(t)}^2_1 + \int_0^t \norm{\phi(s)}^2_2 ds \le e^{2\lambda t}\norm{\phi(0)}^2_1 + \frac{K_1}{2\lambda} (e^{2\lambda t}-1)
\end{equation}
for all $t\ge 0$, which is the bound \eqref{lin_es_01}.

To derive the estimate \eqref{lin_es_02},  we return to \eqref{lin_es_3} and plug in \eqref{lin_es_4} and \eqref{lin_es_5} to see that
\begin{equation}
 \frac{1}{\lambda} \norm{\dt \phi(t)}_1^2 +   \norm{\phi(t)}^2_2 \le K_1 + \lambda \norm{\phi(t)}_1^2 +  2\lambda \int_0^t \norm{\phi(s)}^2_2 ds \le  e^{2\lambda t} \left( 2\lambda \norm{\phi(0)}^2_1 + K_1 \right).
\end{equation}
Finally, for \eqref{lin_es_03} we return to \eqref{lin_es_1} and employ \eqref{lin_es_1_2} to see that
\begin{equation}\label{lin_es_6}
 \hal \norm{\phi(t)}_3^2 \le K_0 - \int \frac{ \dx P_0}{2 \pi r_0^3} \abs{\phi(t)}^2
= K_0 + \int \frac{x}{8\pi^2 r_0^7}\abs{\phi(t)}^2.
\end{equation}
Since L'Hospital's theorem implies that
\begin{equation}
 \lim_{x \to 0} \frac{x}{r_0^3(x)} = \lim_{x \to 0} \frac{4\pi \rho_0(x)}{3} = \frac{4 \pi \rho_0(0)}{3} < \infty, 
\end{equation}
we may deduce that
\begin{equation}\label{lin_es_7}
 \sup_{x \in (0,M)} \frac{x}{r_0^3(x)} < \infty.
\end{equation}
The estimate \eqref{lin_es_03} then follows directly from \eqref{lin_es_6}, \eqref{lin_es_7}, and the estimate of $\norm{\phi(t)}_1^2$ in \eqref{lin_es_01}.

\end{proof}

\subsection{Estimates in the first-order formulation}

Now consider $\sigma$ and $w$ to be solutions to the first-order linear system \eqref{first_order_1} with boundary conditions \eqref{first_order_2} and initial conditions $\sigma(x,0)$ and $w(x,0)$.  A simple calculation shows that if we apply $\dt$ to the second equation in \eqref{first_order_1} and then eliminate $\dt \sigma$ by using the first equation in \eqref{first_order_1}, then we arrive at the second-order formulation \eqref{phi_linear_1}--\eqref{phi_linear_2} for $\phi = w$.  Then Theorem \ref{lin_estimates} yields various estimates for $w = \phi$.  We now seek to rewrite these estimates for $w$ and to use them to derive a similar estimate for $\sigma$.

\begin{thm}\label{first_order_estimates}
Let $\sigma,w$ solve the linear system \eqref{first_order_1}--\eqref{first_order_2}.  Then
\begin{multline}\label{foe_0}
\int_0^M \frac{\abs{w(t)}^2}{r_0^4}dx + \int_0^M \gamma K \rho_0^{\gamma-1} \abs{\frac{\sigma(t)}{\rho_0}}^2 dx    \\
+ 16\pi^2 \int_0^M \left( \delta \rho_0 \abs{\dx  w(t)}^2  + \frac{4\ep}{3} \rho_0 \abs{r_0^3 \dx \left( \frac{w(t)}{r_0^3} \right)}^2 \right) dx  
\le Ce^{2 \lambda t} \left[ 
\int_0^M  \frac{\abs{w(0)}^2}{r_0^4} dx    \right. \\
+ \int_0^M \left( \gamma K \rho_0^{\gamma-1} \abs{\frac{\sigma(0)}{\rho_0}}^2 
+\gamma K \rho_0^{\gamma+1} \abs{\dx w(0)}^2 
+  \frac{\abs{\dt w(0)}^2}{r_0^4} \right) dx
\\ \left.
+  \int_0^M \left( \delta \rho_0 \abs{\dx  w(0)}^2  + \frac{4\ep}{3} \rho_0 \abs{r_0^3 \dx \left( \frac{w(0)}{r_0^3} \right)}^2 \right) dx  \right].
\end{multline}
\end{thm}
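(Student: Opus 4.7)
The plan is to exploit the fact that, with $\phi := w$, the pair $(\sigma,w)$ solving \eqref{first_order_1}--\eqref{first_order_2} produces a $\phi$ that satisfies the second-order problem \eqref{phi_linear_1}--\eqref{phi_linear_2}. I would verify this by differentiating the second equation of \eqref{first_order_1} in $t$, substituting $\dt\sigma = -4\pi\rho_0^2 \dx w$ from the first equation, and using the boundary condition $w(0,t)=0$ to collapse $\int_0^x \dx w\,dy = w(x)$. The boundary conditions \eqref{first_order_2} translate verbatim into \eqref{phi_linear_2}, so Theorem \ref{lin_estimates} applies with $\phi=w$.

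Theorem \ref{lin_estimates} then immediately supplies bounds of the form $Ce^{2\lambda t}$ (times initial-data constants) on $\norm{w(t)}_1^2$, $\norm{w(t)}_2^2$, and $\norm{w(t)}_3^2$. Unpacking these norms yields precisely the first term $\int|w(t)|^2/r_0^4\,dx$ and the viscous term $16\pi^2\int(\delta\rho_0|\dx w(t)|^2 + (4\ep/3)\rho_0|r_0^3\dx(w(t)/r_0^3)|^2)\,dx$ on the LHS of \eqref{foe_0}. Moreover, the constants $K_0$ and $K_1$ (built from $\norm{w(0)}_1^2$, $\norm{\dt w(0)}_1^2$, $\norm{w(0)}_2^2$, and $\norm{w(0)}_3^2$) expand into exactly the four $w(0)$- and $\dt w(0)$-integrals on the RHS of \eqref{foe_0}.

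The main obstacle is controlling the $\sigma(t)$ contribution $A(t) := \int_0^M \gamma K\rho_0^{\gamma-1}|\sigma(t)/\rho_0|^2\,dx$. A naive reconstruction $\sigma(t) = \sigma(0) - 4\pi\rho_0^2\int_0^t \dx w\,ds$ followed by Cauchy-Schwarz in $s$ generates a stray factor of $t$, producing $te^{2\lambda t}$ rather than the required $e^{2\lambda t}$. To avoid this, I would differentiate $A$ directly using $\dt\sigma = -4\pi\rho_0^2\dx w$ and the time-independence of $\rho_0$,
\begin{equation*}
\dt A = -8\pi\int_0^M \gamma K\rho_0^{\gamma-1}\sigma\,\dx w\,dx,
\end{equation*}
and apply Cauchy-Schwarz with the weight split $\rho_0^{\gamma-1} = \rho_0^{(\gamma-3)/2}\cdot\rho_0^{(\gamma+1)/2}$ to obtain $|\dt A| \le 8\pi\sqrt{A(t)}\sqrt{B(t)}$ with $B(t) := \norm{w(t)}_3^2$. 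This is equivalent to $|\dt\sqrt{A}| \le 4\pi\sqrt{B}$, and since the estimate \eqref{lin_es_03} yields $\sqrt{B(s)} \le Ce^{\lambda s}\sqrt{[\mathrm{IC}]}$, integration in $s$ and squaring produce $A(t) \le Ce^{2\lambda t}(A(0) + [\mathrm{IC}])$, with $A(0)$ matching the $\sigma(0)$ integral on the RHS of \eqref{foe_0}. The edge case $A(0)=0$, where $\sqrt{A}$ fails to be differentiable at $t=0$, can be handled by regularizing $A \to A + \varepsilon$ and passing to the limit, or by bootstrapping $F(t) := \sup_{s\in[0,t]}A(s)$ via the quadratic formula. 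Summing the three bounds yields \eqref{foe_0}.
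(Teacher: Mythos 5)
Your proposal is correct and takes essentially the same route as the paper. The paper also reduces to the second-order problem with $\phi = w$, applies Theorem \ref{lin_estimates}, and then controls $\sigma$ by introducing a weighted norm $\norm{\cdot}_4$ with $\norm{\dt\sigma}_4^2 = \int \gamma K\rho_0^{\gamma+1}\abs{\dx w}^2dx$ and using $\dt\norm{\sigma(t)}_4 \le \norm{\dt\sigma(t)}_4 \le \sqrt{CZ_0}\,e^{\lambda t}$ followed by time integration; this is exactly your $\abs{\dt\sqrt{A}} \le 4\pi\sqrt{B}$ estimate in different notation, and the edge case you flag is absorbed there by the same Lipschitz-continuity argument you sketch.
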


\begin{proof}
We switch to the second-order formulation for $\phi = w$.  Then the estimates \eqref{lin_es_01}-- \eqref{lin_es_03} of Theorem \ref{lin_estimates} imply that
\begin{multline}\label{foe_1}
\int_0^M \frac{\abs{w(t)}^2}{r_0^4}dx + \int_0^M \gamma K \rho_0^{\gamma+1} \abs{\dx w(t)}^2 dx
\\
+ 16 \pi^2 \int_0^M \left( \delta \rho_0 \abs{\dx  w(t)}^2  + \frac{4\ep}{3} \rho_0 \abs{r_0^3 \dx \left( \frac{w(t)}{r_0^3} \right)}^2 \right) dx  
\le Ce^{2 \lambda t} \left[ \int_0^M \frac{\abs{w(0)}^2}{r_0^4}dx \right. \\ 
+ \int_0^M \left( \gamma K \rho_0^{\gamma+1} \abs{\dx w(0)}^2  
 +   \frac{\abs{\dt w(0)}^2}{r_0^4} \right) dx 
\\ \left. 
+ \int_0^M \left(  \delta \rho_0 \abs{\dx  w(0)}^2  
+ \frac{4\ep}{3} \rho_0 \abs{r_0^3 \dx \left( \frac{w(0)}{r_0^3} \right)}^2 \right)dx \right].
\end{multline}
Let us call the term in the brackets on the right side of this equation $\mathcal{Z}_0$.  Since $\dt \sigma = -4\pi \rho_0^2 \dx w$, we then have that 
\begin{equation}\label{foe_2}
 \int_0^M \gamma K \rho_0^{\gamma+1} \abs{\dx w(t)}^2 dx = \int_0^M \frac{\gamma K \rho_0^{\gamma-1}}{16 \pi^2 \rho_0^2} \abs{\dt \sigma(t)}^2 dx := \norm{\dt \sigma(t)}_4^2.
\end{equation}
We then have that
\begin{equation}
 \dt \norm{\sigma(t)}_4 \le \norm{\dt \sigma(t)}_4 \le \sqrt{C \mathcal{Z}_0} e^{\lambda t}.
\end{equation}
Integrating this in time, we then find that
\begin{equation}\label{foe_3}
 \norm{\sigma(t)}_4 \le \norm{\sigma(0)}_4 + \frac{\sqrt{C \mathcal{Z}_0}}{\lambda} (e^{\lambda t}-1) \le C e^{\lambda t} \sqrt{ \norm{\sigma(0)}_4^2 +  \mathcal{Z}_0 }.
\end{equation}
The estimate \eqref{foe_0} then follows directly from \eqref{foe_1}, \eqref{foe_2}, and \eqref{foe_3}.

\end{proof}

\subsection{Estimates for the inhomogeneous first-order problem}

Consider the linear operators
\begin{equation}
\mathcal{L}_1 w = 4 \pi \rho_0^2 \dx w
\end{equation}
\begin{equation}
 \mathcal{L}_2 \sigma = 4 \pi r_0^4 \dx( \gamma K \rho_0^{\gamma-1} \sigma) - 4 r_0 \dx P_0 \int_0^x \frac{\sigma(y)}{\rho_0^2(y)}dy
\end{equation}
\begin{equation}
 \mathcal{L}_3 w = -16 \pi^2 r_0^4 \dx \left[\left(\frac{4\ep}{3}  + \delta \right)\rho_0 \dx w \right],
\end{equation}
and the corresponding matrix of operators
\begin{equation}
 \mathcal{L} = 
\begin{pmatrix}
 0 & \mathcal{L}_1 \\
 \mathcal{L}_2 & \mathcal{L}_3
\end{pmatrix}.
\end{equation}
We also  consider the boundary operator
\begin{equation}\label{bndry_op}
 \mathcal{B}(w) = -\frac{4\ep}{3} \left(4 \pi r_0^3 \rho_0 \dx\left( \frac{w}{r_0^3}\right) \right) - \delta (4 \pi \rho_0 \dx w).
\end{equation}
Notice that the first-order equations \eqref{first_order_1}--\eqref{first_order_2} are equivalent to the equation
\begin{equation}\label{mild_1}
 \dt
\begin{pmatrix}
 \sigma \\ w
\end{pmatrix}
+ \mathcal{L}
\begin{pmatrix}
 \sigma \\ w
\end{pmatrix}
= \begin{pmatrix}
 0 \\ 0
\end{pmatrix}
\end{equation}
with homogeneous boundary conditions
\begin{equation}\label{mild_2}
\frac{w}{r_0^2}(0,t) = \sigma(M,t)=0 \text{ and } \mathcal{B}(w) =0 \text{ at }x=M.
\end{equation}
Let us denote $e^{t\mathcal{L}}$ for the solution operator to \eqref{mild_1}--\eqref{mild_2}, i.e.
\begin{equation}\label{soln_op_def}
 e^{t \mathcal{L}} 
\begin{pmatrix}
 \sigma(0) \\ w(0)
\end{pmatrix}
= \begin{pmatrix}
 \sigma(t) \\ w(t)
\end{pmatrix},
\end{equation}
where $\sigma$ and $w$ solve \eqref{mild_1}--\eqref{mild_2} with initial data $\sigma(0)$ and $w(0)$.

Suppose now that $\sigma$ and $w$ solve the inhomogeneous problem
\begin{equation}\label{mild_form_in_1}
\dt
\begin{pmatrix}
 \sigma \\ w
\end{pmatrix}
+ \mathcal{L}
\begin{pmatrix}
 \sigma \\ w
\end{pmatrix}
= \begin{pmatrix}
 N_1 \\ N_2
\end{pmatrix} 
\end{equation}
along with the boundary conditions
\begin{equation}\label{mild_form_in_2}
\frac{w}{r_0^2}(0,t) = \sigma(M,t)=0 \text{ and } \mathcal{B}(w) = N_{\mathcal{B}} \text{ at }x=M.
\end{equation}
Here we assume that $N_1 = N_1(x,t)$, $N_2 = N_2(x,t)$, but that $N_{\mathcal{B}} = N_{\mathcal{B}}(t)$, i.e. the boundary inhomogeneity only depends on time.  In order to use the linear theory we have developed, we must rewrite this as system with homogeneous boundary conditions.  To accomplish this we will utilize the following lemma.

\begin{lem}\label{boundary_transform}
 Let 
\begin{equation}\label{psi}
 \psi(x,t) = -\frac{N_{\mathcal{B}}(t)}{3\delta} r_0^3(x).
\end{equation}
Then for each $t$, $\psi(t)$ satisfies $\mathcal{L}_3 \psi(t) =0$ for $x \in (0,M)$ and  $\mathcal{B}\psi(t) = N_{\mathcal{B}}(t)$ at $x=M$.  Also, $\mathcal{L}_1 \psi(t)= -N_{\mathcal{B}}(t) \rho_0(x)/\delta$. 
\end{lem}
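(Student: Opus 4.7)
The lemma is a direct verification based on the fact that $\psi$ has been engineered so that $\psi/r_0^3$ and $\rho_0 \dx \psi$ are both constant in $x$ (for fixed $t$). The plan is to first work out the elementary identity $\dx r_0^3 = 3/(4\pi \rho_0)$, which follows immediately from the definition $r_0(x) = \left(\frac{3}{4\pi}\int_0^x \frac{dy}{\rho_0(y)}\right)^{1/3}$ by direct differentiation, and then plug it into each of the three claims.

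From $\dx r_0^3 = 3/(4\pi \rho_0)$ I obtain
\begin{equation*}
\dx \psi(x,t) = -\frac{N_{\mathcal{B}}(t)}{3\delta} \dx r_0^3(x) = -\frac{N_{\mathcal{B}}(t)}{4\pi \delta \rho_0(x)}.
\end{equation*}
The statement $\mathcal{L}_1 \psi = -N_{\mathcal{B}} \rho_0/\delta$ follows immediately by multiplying by $4\pi \rho_0^2$. For $\mathcal{L}_3 \psi$, I note that $\rho_0 \dx \psi = -N_{\mathcal{B}}(t)/(4\pi \delta)$ is independent of $x$, so $\dx[(\tfrac{4\ep}{3}+\delta)\rho_0 \dx \psi] = 0$, giving $\mathcal{L}_3 \psi = 0$.

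For the boundary computation, the key simplification is that $\psi/r_0^3 = -N_{\mathcal{B}}(t)/(3\delta)$ is a constant in $x$, so $\dx(\psi/r_0^3) = 0$ and the first (viscosity) term in $\mathcal{B}(\psi)$ vanishes identically. It remains to evaluate
\begin{equation*}
\mathcal{B}(\psi) = -\delta(4\pi \rho_0 \dx \psi) = -\delta \cdot 4\pi \rho_0 \cdot \left(-\frac{N_{\mathcal{B}}(t)}{4\pi \delta \rho_0}\right) = N_{\mathcal{B}}(t)
\end{equation*}
at $x = M$ (and in fact everywhere). There is no real obstacle here — the proof is a short computation — the only thing worth flagging is that the construction genuinely requires $\delta > 0$ (consistent with the standing assumption \eqref{parameter_assumptions}), since $\psi$ is defined with $\delta$ in the denominator; this is why the paper needs $\delta>0$ rather than the merely physical $\delta \ge 0$.
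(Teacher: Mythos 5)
Your proof is correct and simply spells out the direct computations that the paper's one-line proof ("The results follow from simple computations") leaves implicit; the key observations—that $\psi/r_0^3$ and $\rho_0\dx\psi$ are constant in $x$—are exactly what the construction exploits. The remark that $\delta>0$ is needed here is a nice aside but is already part of the standing assumption \eqref{parameter_assumptions}.
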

\begin{proof}
The results follow from simple computations.
\end{proof}

With this $\psi$ in hand, we can reformulate \eqref{mild_form_in_1}--\eqref{mild_form_in_2} so that the resulting problem has homogeneous boundary conditions.  Let $w = \psi + \bar{w}$.  Then Lemma \ref{boundary_transform} implies that
\begin{equation}\label{mild_form_1}
\dt
\begin{pmatrix}
 \sigma \\ \bar{w}
\end{pmatrix}
+ \mathcal{L}
\begin{pmatrix}
 \sigma \\ \bar{w}
\end{pmatrix}
= \begin{pmatrix}
 N_1 \\ N_2
\end{pmatrix} 
+ 
\begin{pmatrix}
 -\mathcal{L}_1 \psi \\ - \dt \psi 
\end{pmatrix}
=  
\begin{pmatrix}
 N_1 + N_{\mathcal{B}} \rho_0/\delta \\ N_2 + \dt N_{\mathcal{B}} r_0^3/(3\delta)
\end{pmatrix}
\end{equation}
along with the boundary conditions
\begin{equation}\label{mild_form_2}
\frac{\bar{w}}{r_0^2}(0,t) = \sigma(M,t)=0 \text{ and } \mathcal{B}(\bar{w}) = 0 \text{ at }x=M.
\end{equation}
Employing the  variation of parameters, we can then solve \eqref{mild_form_1}--\eqref{mild_form_2} via
\begin{equation}
 \begin{pmatrix}
 \sigma(t) \\ \bar{w}(t)
\end{pmatrix} 
= e^{t \mathcal{L}} 
\begin{pmatrix}
 \sigma(0) \\ \bar{w}(0)
\end{pmatrix}
+ \int_0^t e^{(t-s) \mathcal{L}} 
\begin{pmatrix}
 N_1(s) \\ N_2(s)
\end{pmatrix}ds
+ \frac{1}{\delta} \int_0^t e^{(t-s) \mathcal{L}} 
\begin{pmatrix}
N_{\mathcal{B}}(s) \rho_0 \\  \dt N_{\mathcal{B}}(s) r_0^3/3
\end{pmatrix}ds.
\end{equation}
We can then go back to $w = \psi + \bar{w}$:
\begin{multline}\label{duhamel}
 \begin{pmatrix}
 \sigma(t) \\ w(t)
\end{pmatrix} 
= e^{t \mathcal{L}} 
\begin{pmatrix}
 \sigma(0) \\ \bar{w}(0)
\end{pmatrix}
- 
\frac{1}{\delta}
\begin{pmatrix}
 0 \\ N_{\mathcal{B}}(t)  r_0^3 /3
\end{pmatrix}
\\
+ \int_0^t e^{(t-s) \mathcal{L}} 
\begin{pmatrix}
 N_1(s) \\ N_2(s)
\end{pmatrix}ds
+ \frac{1}{\delta} \int_0^t e^{(t-s) \mathcal{L}} 
\begin{pmatrix}
N_{\mathcal{B}}(s) \rho_0 \\  \dt N_{\mathcal{B}}(s) r_0^3/3
\end{pmatrix}ds.
\end{multline}

Now let us define a norm for the pair $\sigma, w$ given by 
\begin{multline}\label{norm_0_def}
 \norm{
\begin{pmatrix}
 \sigma \\ w
\end{pmatrix}
}_0^2
:=
\hal \int_0^M \gamma K \rho_0^{\gamma-1} \abs{\frac{\sigma}{\rho_0}}^2 dx
+ \hal \int_0^M \frac{\abs{w}^2}{r_0^4} dx  \\
+ \hal  \int_0^M 16\pi^2 \left( \delta \rho_0 \abs{\dx  w}^2  + \frac{4\ep}{3} \rho_0 \abs{r_0^3 \dx \left( \frac{w}{r_0^3} \right)}^2 \right) dx.  
\end{multline}
We also define 
\begin{equation}\label{frak_E_def}
 \mathfrak{E}(\sigma,w) : = \norm{\begin{pmatrix}
 \sigma \\ w
\end{pmatrix}
}_0^2
+ \hal \int_0^M \gamma K \rho_0^{\gamma-1} \abs{\frac{\dt \sigma}{\rho_0}}^2 dx
+ \hal \int_0^M \frac{\abs{\dt w}^2}{r_0^4} dx.
\end{equation}
We can then recast the result of Theorem \ref{first_order_estimates} as 
\begin{equation}\label{mild_recast}
 \norm{ e^{t \mathcal{L}}
\begin{pmatrix}
 \sigma(0) \\ w(0)
\end{pmatrix}
}_0^2 
\le Ce^{2 \lambda t} \mathfrak{E}(\sigma(0),w(0)).
\end{equation}
Using these quantities and estimate \eqref{mild_recast}, we can record estimates for solutions to \eqref{mild_form_in_1}--\eqref{mild_form_in_2}.

\begin{thm}\label{mild_estimates}
Suppose that $\sigma$ and $w$ solve the inhomogeneous linear problem \eqref{mild_form_in_1}--\eqref{mild_form_in_2}.   Let $\psi$ be given by Lemma \ref{boundary_transform} and $\bar{w} =w  - \psi$.  Let $\norm{\cdot}_0$ and $\mathfrak{E}(\cdot,\cdot)$ be given by \eqref{norm_0_def} and \eqref{frak_E_def}, respectively.
Then
\begin{multline}\label{me_01}
 \norm{
\begin{pmatrix}
 \sigma(t) \\ w(t)
\end{pmatrix}
- e^{t \mathcal{L}}
\begin{pmatrix}
 \sigma(0) \\ \bar{w}(0)
\end{pmatrix}
}_0 
\le 
\int_0^t C e^{\lambda(t-s)} \sqrt{\mathfrak{E}(N_1(s),N_2(s))} ds
\\
+ \frac{C}{\delta} \abs{N_{\mathcal{B}}(t)} 
+ \frac{C}{\delta} \int_0^t e^{\lambda(t-s)} \left( \abs{N_{\mathcal{B}}(s)} + \abs{\dt N_{\mathcal{B}}(s)} + \abs{\dt^2 N_{\mathcal{B}}(s)} \right)ds.
\end{multline}

\end{thm}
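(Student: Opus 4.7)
The plan is to start from the Duhamel representation \eqref{duhamel}, subtract the free-evolution term $e^{t\mathcal{L}}(\sigma(0),\bar w(0))^T$ so that the left-hand side of \eqref{me_01} becomes exactly the sum of the three forcing contributions, and then estimate each of them separately in the $\norm{\cdot}_0$ norm via the triangle inequality. Two of these contributions are time integrals of $e^{(t-s)\mathcal{L}}$ applied to explicit data, on which we can directly invoke the homogeneous bound \eqref{mild_recast}. The third is the pointwise corrector $-(1/\delta)(0,N_{\mathcal{B}}(t)r_0^3/3)^T$, which is handled by a direct computation of its $\norm{\cdot}_0$ norm.

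First I would treat the pointwise corrector term. Using \eqref{norm_0_def} with $\sigma\equiv 0$ and $w = N_{\mathcal{B}}(t) r_0^3/3$, one has $w/r_0^3 = N_{\mathcal{B}}(t)/3$ constant in $x$, so $\dx(w/r_0^3)=0$, while $\dx w = N_{\mathcal{B}}(t) r_0^2 \dx r_0 = N_{\mathcal{B}}(t)/(4\pi\rho_0)$. The surviving pieces of $\norm{\cdot}_0$ reduce to $|N_{\mathcal{B}}(t)|^2$ times finite constants depending only on $\delta,\ep,\rho_0,r_0,M$; these are finite because $r_0$ is bounded and $1/\rho_0$ has an integrable singularity at $x=M$ (as $\rho_0 \sim (M-x)^{1/\gamma}$ with $1/\gamma<1$). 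Division by $\delta$ outside yields the isolated $\frac{C}{\delta}|N_{\mathcal{B}}(t)|$ term in \eqref{me_01}.

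Next, for the Duhamel integral $\int_0^t e^{(t-s)\mathcal{L}}(N_1(s),N_2(s))^T\,ds$, I move the norm inside, apply \eqref{mild_recast} at each fixed $s$ with initial data $(N_1(s),N_2(s))$, and get precisely $\int_0^t C e^{\lambda(t-s)}\sqrt{\mathfrak{E}(N_1(s),N_2(s))}\,ds$, the first term on the right of \eqref{me_01}. For the remaining integral $\frac{1}{\delta}\int_0^t e^{(t-s)\mathcal{L}}(N_{\mathcal{B}}(s)\rho_0,\,\dt N_{\mathcal{B}}(s) r_0^3/3)^T\,ds$, the same application of \eqref{mild_recast} gives $\frac{C}{\delta}\int_0^t e^{\lambda(t-s)}\sqrt{\mathfrak{E}(N_{\mathcal{B}}(s)\rho_0,\,\dt N_{\mathcal{B}}(s)r_0^3/3)}\,ds$.

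The core computation, and the one deserving most care, is the evaluation of $\mathfrak{E}$ on this last pair. Because the spatial factors $\rho_0$ and $r_0^3/3$ are time-independent, each of the six integrands in \eqref{frak_E_def} factors as a function of $t$ (namely one of $N_{\mathcal{B}}, \dt N_{\mathcal{B}}, \dt^2 N_{\mathcal{B}}$) squared, times a purely spatial integral. As in the first step, the spatial integrals are finite: the terms with $w=\dt N_{\mathcal{B}} r_0^3/3$ contribute $|\dt N_{\mathcal{B}}|^2$ (using again that $\dx(w/r_0^3)=0$ and $\dx w$ is controlled by $1/\rho_0$, which is integrable), the terms with $\sigma = N_{\mathcal{B}}\rho_0$ contribute $|N_{\mathcal{B}}|^2$, and the $\dt w$ contribution in $\mathfrak{E}$ produces $|\dt^2 N_{\mathcal{B}}|^2$. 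Hence $\sqrt{\mathfrak{E}(N_{\mathcal{B}}(s)\rho_0,\dt N_{\mathcal{B}}(s) r_0^3/3)}\le C(|N_{\mathcal{B}}(s)|+|\dt N_{\mathcal{B}}(s)|+|\dt^2 N_{\mathcal{B}}(s)|)$, giving the last term of \eqref{me_01}. The main (and essentially only) obstacle is verifying the integrability of these weighted spatial quantities near $x=M$, which is secured by the Lane-Emden boundary behavior \eqref{LEL}; once that is in hand the three estimates combine to give \eqref{me_01}.
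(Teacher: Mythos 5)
Your proposal is correct and follows the same route as the paper's proof: write the solution via the Duhamel formula \eqref{duhamel}, isolate the free-evolution term, estimate the pointwise corrector $-(0,N_{\mathcal{B}}(t)r_0^3/3)^T/\delta$ directly in $\norm{\cdot}_0$, and apply the homogeneous decay estimate \eqref{mild_recast} to the two Duhamel integrals. The computation showing $\mathfrak{E}(N_{\mathcal{B}}\rho_0,\dt N_{\mathcal{B}}r_0^3/3)\le C(|N_{\mathcal{B}}|^2+|\dt N_{\mathcal{B}}|^2+|\dt^2 N_{\mathcal{B}}|^2)$ using $\dx(r_0^3)=3/(4\pi\rho_0)$ and the integrability of $1/\rho_0$ near $x=M$ is exactly the content of the paper's \eqref{me_2}--\eqref{me_3}.
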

\begin{proof}
From the above analysis, we know that $\sigma$ and $w$ are given by \eqref{duhamel}, where $e^{t\mathcal{L}}$ is the homogeneous  solution operator given by \eqref{soln_op_def}.  Hence \eqref{mild_recast} implies that
\begin{multline}\label{me_1}
 \norm{
\begin{pmatrix}
 \sigma(t) \\ w(t)
\end{pmatrix}
- e^{t \mathcal{L}}
\begin{pmatrix}
 \sigma(0) \\ \bar{w}(0)
\end{pmatrix}
}_0 
\le 
\int_0^t C e^{\lambda(t-s)} \sqrt{\mathfrak{E}(N_1(s),N_2(s))} ds \\
+
\frac{1}{ \delta}  
\norm{
\begin{pmatrix}
 0 \\ N_{\mathcal{B}}(t)  r_0^3 /3
\end{pmatrix}
}_0
+ \frac{1}{\delta} \int_0^t C e^{\lambda(t-s)} \sqrt{\mathfrak{E}(N_{\mathcal{B}}(s) \rho_0 , \dt N_{\mathcal{B}}(s) r_0^3/3) } ds.
\end{multline}
Then, since $N_{\mathcal{B}}(t)$ is only a function of time, not of $x$, we can easily estimate
\begin{equation}\label{me_2}
 \norm{
\begin{pmatrix}
 0 \\ N_{\mathcal{B}}(t)  r_0^3 /3
\end{pmatrix}
}_0 \le C \abs{N_{\mathcal{B}}(t)}
\end{equation}
and
\begin{equation}\label{me_3}
 \sqrt{\mathfrak{E}(N_{\mathcal{B}}(s) \rho_0 , \dt N_{\mathcal{B}}(s) r_0^3/3) } \le C \left( \abs{N_{\mathcal{B}}(s)} + \abs{\dt N_{\mathcal{B}}(s)} + \abs{\dt^2 N_{\mathcal{B}}(s)} \right),
\end{equation}
where $C>0$ in \eqref{me_2}--\eqref{me_3} is a constant depending on various (finite) integrals of $\rho_0$ and $r_0$.  The estimate \eqref{me_01} then follows by combining \eqref{me_1}--\eqref{me_3}.

\end{proof}

\section{Nonlinear energy estimates}\label{4}

\subsection{Definitions}

We are interested in small perturbations $\sigma$, $v$ around the stationary solution $\rho=\rho_0,$ $r=r_0$, and $v=0$.  In particular, we assume that 
\begin{equation}\label{assumption1}
\frac{9}{10}\rho_0\,\le\,\rho_0+\sigma\,\le\, \frac{11}{10}\rho_0.  
\end{equation}
This assumption will be justified later  when we close the nonlinear energy estimates.  For such small solutions, the Navier-Stokes-Poisson system \eqref{lagrangian_1} and \eqref{lagrangian_2} can be written as follows: 
\begin{equation}\label{PNSP}
\begin{split}
\dt\sigma +4\pi\rho^2\dx(r^2 v)&=0\\
\dt v +4\pi r^2\dx P- 4\pi r_0^2 \dx P_0 + \frac{x}{r^2}-\frac{x}{r_0^2}&= 16 \pi^2 r^2 \dx((4\ep/3+\delta)\rho \dx(r^2 v)).
\end{split}
\end{equation} 
The dynamics of $r$ are determined by 
\begin{equation}\label{r}
r(x,t) = \( \frac{3}{4\pi} \int_0^x \frac{dy}{\rho_0(y)+\sigma(y,t) }\)^{1/3}  \;\text{ and  }\; \dt r(x,t)=v(x,t).
\end{equation}
 It turns out that it is convenient to analyze $\sigma/\rho_0$ rather than $\sigma$ itself, so we rewrite the continuity equation as 
\begin{equation}\label{contE}
\frac{\rho_0}{\rho}\dt\left( \frac{\sigma}{\rho_0}\right)+4\pi\rho \dx (r^2v)=0.
\end{equation}
We will also rewrite the momentum equation.  To do so, we first note that 
\begin{equation}
\begin{split}
4\pi r^2\dx P- 4\pi r_0^2 \dx P_0 + \frac{x}{r^2}-\frac{x}{r_0^2}
=4\pi r^2 \dx(P-P_0) + x \( \frac{1}{r^2}-\frac{r^2}{r_0^4} \),
\end{split}
\end{equation}
and then note that for small perturbations satisfying \eqref{assumption1}, $P-P_0= K(\rho^\gamma-\rho_0^\gamma)$ can be written as 
\begin{equation}\label{expP}
P-P_0=\rho_0^\gamma \left\{ K\gamma\frac{\sigma}{\rho_0} + a_\ast \( \frac{\sigma}{\rho_0} \)^2 \right\}
\end{equation}
where $a_\ast$ is the smooth bounded remainder from the Taylor's Theorem.  We then rewrite the momentum equation as 
\begin{equation}\label{momE}
\dt v +4\pi r^2\dx \left\{ K\gamma \rho_0^\gamma \frac{\sigma}{\rho_0}+a_\ast \rho_0^\gamma \(\frac{\sigma}{\rho_0} \)^2 \right\} + x \( \frac{1}{r^2}-\frac{r^2}{r_0^4} \)=\mathcal{V},
\end{equation}
where $\mathcal{V}:=  16 \pi^2 r^2 \dx((4\ep/3+\delta)\rho \dx(r^2 v))$. We give an equivalent expression for  $\mathcal{V}$ so that it appreciates the boundary condition \eqref{bc} in energy estimates:
\begin{equation}\label{visc}
\V = 16 \pi^2 r^2 \dx \W + \frac{4\ep}{3} 12 \pi r^2 \dx \left( \frac{v}{r}\right),
\end{equation}
where
\begin{equation}
 \W =\delta \rho \dx(r^2 v) + \frac{4\ep}{3} \rho r^3 \dx\left(\frac{v}{r} \right) 
\end{equation}
satisfies $\W(M) = 0$ because of the boundary condition \eqref{bc}.  We use $\nu$ to denote the minimal viscosity coefficient: 
\begin{equation}\label{nu}
\nu:= \min \left\{\delta, \frac{4\ep}{3} \right\}.
\end{equation}

We now define instant energy functionals for $\sigma$ and $v$.  In what follows, all of the integrals are understood to be over the interval $[0,M]$. 
\begin{equation*}\label{energies_1}
\begin{split}
\mathcal{E}^0
&:= \hal \int \abs{v}^2 dx 
+ \hal \int \frac{K \gamma \rho_0^{\gamma-1}}{(1 + \frac{\sigma}{\rho_0})^2} \abs{\frac{\sigma}{\rho_0}}^2 dx 
+\hal \int \nu \abs{1-\frac{r_0}{r}}^2 dx :=\mathcal{E}^{0,v}+\mathcal{E}^{0,\sigma}+\mathcal{E}^{0,r}\\
\mathcal{E}^1
&:= \hal \left[\delta \int 16\pi^2 \rho \abs{\dx(r^2v)}^2 dx +\frac{4\ep}{3} \int 16 \pi^2 \rho r^6 \abs{\dx \(\frac{v}{r} \)}^2dx \right] \\
&\quad+ \hal \int \(\delta+\frac{4\ep}{3}\) 16 \pi^2 r^4 \frac{1}{1+\sir}  \abs{\dx \(\frac{\sigma}{\rho_0}\) }^2 dx :=\mathcal{E}^{1,v}+\mathcal{E}^{1,\sigma}\\
\mathcal{E}^2&:= \hal \int \abs{\dt v}^2 dx 
+ \hal \int \frac{K\gamma \rho_0^{\gamma-1}}{(1+\sir)^2} \abs{\dt\(\sir\)}^2 dx  :=\mathcal{E}^{2,v} + \mathcal{E}^{2,\sigma} \\
\mathcal{E}^3& := \hal\left[ \delta \int 16\pi^2 \rho \abs{\dx(r^2\dt v)}^2dx + \frac{4\ep}{3} \int 16 \pi^2 \rho r^6 \abs{\dx \( \frac{\dt v}{r} \)}^2 dx \right] \\
\mathcal{E}^{4}& := \hal \int \(\delta+\frac{4\ep}{3}\) 4\pi\rho_0  \abs{\dx \( r^4 \dx \( \frac{\sigma}{\rho_0} \) \)}^2 dx 
\end{split}
\end{equation*}
The corresponding dissipations are given by 
\begin{equation*}
\begin{split}
\mathcal{D}^0 & := \delta \int 16\pi^2 \rho \abs{\dx(r^2v)}^2 dx + \frac{4\ep}{3} \int 16 \pi^2 \rho r^6 \abs{\dx \( \frac{v}{r} \)}^2 dx  \\
\mathcal{D}^1& := \int \abs{\dt v}^2 dx + \int 16\pi^2 K\gamma r^4 \rho_0^\gamma \abs{\dx \( \sir \)}^2 dx :=\mathcal{D}^{1,v} + \mathcal{D}^{1,\sigma}\\
\mathcal{D}^2& := \delta \int 16\pi^2 \rho \abs{\dx(r^2\dt v)}^2dx + \frac{4\ep}{3} \int 16\pi^2 \rho r^6 \abs{\dx \( \frac{\dt v}{r} \)}^2 dx   \\
\mathcal{D}^3&:= \int \abs{\dt^2 v}^2 dx \\
\mathcal{D}^{4}&:= \int 4\pi K\gamma r^2 \rho \rho_0^\gamma \abs{\dx \( r^4\dx\(\sir\)\)}^2 dx.
\end{split}
\end{equation*}
In addition, we introduce various bootstrapped and auxiliary energies and dissipations (denoted with subscript $b$ and $a$, respectively) that can be controlled with the above instant energies and dissipations: 
\begin{equation}
\begin{split}
\mathcal{E}^{0,r}_b & := \int  \frac{\nu}{\rho} \abs{1-\frac{r_0}{r}}^2dx \\
\mathcal{E}^{1,\sigma}_b& := \hal \int \(\delta+\frac{4\ep}{3}\) 16 \pi^2 \frac{r^2}{\rho}  \abs{\dx\( \frac{\sigma}{\rho_0} \)}^2 dx \\ 
\mathcal{D}^{1,\sigma}_b &:=\int 16\pi^2 K\gamma r^2 \rho_0^{\gamma-1} \abs{\dx\(\sir\)}^2 dx 
\end{split}
\end{equation}
\begin{equation}
\begin{split}
\mathcal{E}^{3,\sigma}_a &:= \int \(\delta+\frac{4\ep}{3}\)^2 16\pi^2 \frac{r^2}{\rho} \abs{ \dx \dt \(\frac{\sigma}{\rho_0}\)}^2 dx \\
\mathcal{E}^{3,v}_{a_1}& :=\int  \frac{r^2}{\rho} \abs{\dx( \rho \dx(r^2v))}^2 dx ; \quad
\mathcal{E}^{3,v}_{a_2}  = \int  \rho r^6 \abs{\dx\(\rho r^3 \dx\(\frac{v}{r}\)\)}^2 dx
\end{split}
\end{equation}
\begin{equation}
\begin{split}
\mathcal{E}^{4}_{a_1}&:= \int \(\delta+\frac{4\ep}{3}\) 4\pi\rho_0   \abs{\dx\( r^4 \dt\dx \(\frac{\sigma}{\rho_0} \) \)}^2 dx \\
\mathcal{E}^4_{a_2} &:=\int 16\pi^2\rho_0 \abs{\dx (r^4\dx (\rho\dx (r^2v)))}^2 dx.
\end{split}
\end{equation}

Finally, we introduce some bootstrap energies that depend on a parameter $\beta \in \Rn{}$:
\begin{equation}
  \mathcal{E}^{0,\sigma}_\beta  := \int \frac{\rho_0^{\beta+1}}{\rho}\abs{\sir}^2dx ; \quad \mathcal{E}^{2,\sigma}_\beta  :=\int  \frac{\rho_0^{\beta+1}}{\rho} \abs{\dt\(\sir\) }^2dx. 
\end{equation}

For the proof of our instability in Section \ref{5}, we will need to invoke higher order energy functionals and dissipations, which we define now.   For $i=2$ and $3$, let
\begin{equation}
\begin{split}
\mathcal{E}^{1+2i}& := \hal\int \abs{\dt^i v}^2 dx +\hal \int \frac{K\gamma \rho_0^{\gamma-1}}{(1+\sir)^2} \abs{\dt^i\(\sir\)}^2 dx\\
\mathcal{D}^{1+2i}& := \delta \int 16\pi^2 \rho \abs{\dx(r^2\dt^iv)}^2 dx +\frac{4\ep}{3}\int 16\pi^2 \rho r^6 \abs{\dx \( \frac{\dt^i v}{r} \)}^2dx \\
\mathcal{E}^{2+2i}&:=\hal \left[ \delta \int 16\pi^2 \rho \abs{\dx(r^2\dt^i v)}^2 dx +\frac{4\ep}{3}\int 16\pi^2 \rho r^6 \abs{\dx \(\frac{\dt^i v}{r}\)}^2dx \right]\\
\mathcal{D}^{2+2i}&:=\int \abs{\dt^{i+1}v}^2dx.
\end{split}
\end{equation}
Next we define bootstrapped energies and auxiliary energies for $i=2$ and $3$
\begin{equation}
\begin{split}
\mathcal{E}^{1+2i,\sigma}_{-1}& := \int \frac{1}{\rho_0}\abs{\dt^i\(\sir\)}^2dx\;;\;  
\mathcal{E}^{1+2i,v}_{a} := \int r^4 \abs{\dx(\rho \dx(\dt^{i-1}[r^2 v]))}^2dx\\
\mathcal{E}^{2+2i,\sigma}_{a}&:= \int  \(\delta+\frac{4\ep}{3} \)^2 16\pi^2r^2 \abs{\dx\dt^i \(\sir\) }^2 dx.
\end{split}
\end{equation}
The then define the total energy by 
\begin{equation}\label{total_energy}
\begin{split}
\mathcal{E}:=& \sum_{i=0}^8 \mathcal{E}^i +\mathcal{E}^{0,r}_b + \mathcal{E}^{1,\sigma}_b + \mathcal{E}^{0,\sigma}_{-1} + \mathcal{E}^{2,\sigma}_{-1}
+ \mathcal{E}^{5,\sigma}_{-1} + \mathcal{E}^{7,\sigma}_{-1} \\
& + \mathcal{E}^{3,\sigma}_a + \mathcal{E}^{3,v}_{a_1} + \mathcal{E}^{3,v}_{a_2} + \mathcal{E}^{4}_{a_1} + \mathcal{E}^{4}_{a_2} + \mathcal{E}^{5,\sigma}_a + \mathcal{E}^{6,\sigma}_{a} + \mathcal{E}^{7,\sigma}_a  + \mathcal{E}^{8,\sigma}_{a}.
\end{split}
\end{equation} 

Throughout the rest of the section, we assume that 
\begin{multline}\label{assumption2}
\pnorm{\sir}{\infty} + \pnorm{ \dt \left(\sir\right)}{\infty} +\pnorm{ \dt^2 \left(\sir\right)}{\infty} + \pnorm{ \dt^3 \left(\sir\right)}{\infty} 
+ \pnorm{1- \frac{r_0}{r}}{\infty} \\ + \pnorm{\rho r^3 \dx\(\frac{v}{r}\)}{\infty} + \pnorm{\rho r^3\dx\left(\sir\right)}{\infty}+ \pnorm{\frac{v}{r}}{\infty} + \pnorm{\frac{\dt v}{r}}{\infty} +\pnorm{\frac{\dt^2 v}{r}}{\infty}  \le  \theta_1
\end{multline}
for sufficiently small constant $\theta_1$, where the norm $\pnorm{\cdot}{\infty}$ is over the spatial region $[0,M]$.  The validity of this assumption within the total energy $\mathcal{E}$ will be justified in Lemma \ref{lem4.6}. 

Since $r$ is determined through an integral of $\sigma$ as in \eqref{r}, for small perturbations satisfying \eqref{assumption1} we may use Taylor's Theorem to write $\frac{r_0}{r}$  as 
\begin{equation}\label{r_0r}
\frac{r_0}{r}= 1+\frac{1}{4\pi r_0^3}\int_0^x \frac{\sigma}{\rho_0^2} dy + \frac{c_1}{r_0^3} \int_0^x \frac{1}{\rho_\ast}\(\sir\)^2dy + \frac{c_2}{r_0^6}\( \int_0^x \frac{\sigma}{\rho_0^2} dy\)^2
\end{equation}
where $\rho_\ast/\rho_0\sim 1$ is a bounded smooth function of $\sir$.  Hence the $1-r_0/r$ estimate (up to a constant) in \eqref{assumption2} can actually be guaranteed by the smallness of the other terms in \eqref{assumption2}.

The  relation \eqref{r_0r} will be useful in various places.  We now record a couple other useful identities.

\textit{Dynamics of $\frac{r_0}{r}$}: From \eqref{r} we have that
\begin{equation}\label{rr}
\begin{split}
\dt\(\frac{r_0}{r}\) &=-\frac{r_0v}{r^2}=-\(\frac{v}{r}\) \(\frac{r_0}{r}\)\\
\dx\( \frac{r_0}{r} \) &=\frac{1}{4\pi\rho_0 r_0^2r}-\frac{r_0}{4\pi\rho r^4}=\frac{1-(\frac{r_0}{r})^3+\frac{\sigma}{\rho_0}}{4\pi\rho r_0^2r}.
\end{split}
\end{equation}

\textit{Some useful inequalities and identities:} For any $v$ (not just solutions), 
\begin{equation}
\begin{split}\label{ID}
&\frac{v}{r}=\frac{4\pi}{3}\left \{\rho \dx(r^2v)-\rho r^3\dx\(\frac{v}{r}\) \right\} \;\Rightarrow\; \frac{v^2}{\rho r^2}\le \frac{32\pi^2}{9}\left \{\rho \abs{\dx(r^2v)}^2 + \rho r^6 \abs{\dx\( \frac{v}{r} \)}^2 \right\}\\
&\rho\dx(r v^2)= \rho\dx \left[(r^2v)^2 \cdot \frac{1}{r^3}\right] = 2\frac{v}{r}\rho\dx(r^2v) -\frac{3}{4\pi}\frac{v^2}{ r^2}\\
&\quad\quad\quad\;\;\, = \frac{v}{r} \left\{ \rho\dx(r^2v) + \rho r^3\dx\(\frac{v}{r}\) \right\}.  \end{split}
\end{equation}

\subsection{Estimates}

Throughout the rest of the section, we use $C$ to denote a generic constant that may differ from line to line, and $\eta$ to denote a sufficiently small fixed constant which will be determined later.  The constants $C$ are allowed to depend on $\eta$, which presents no trouble in our ultimate analysis since first we will fix an $\eta$, which then fixes the constants. 

In the following series of lemmas, we provide the energy inequalities for ${\mathcal{E}}$. 
We start with $\mathcal{E}^0$ and $\mathcal{D}^0$. 

\begin{lem}\label{lem4.1}
\begin{equation}\label{E^0}
\frac{d}{dt}\mathcal{E}^0 +\mathcal{D}^0 \leq C(1+ \theta_1)\mathcal{E}^0+ \hal\mathcal{D}^0
\end{equation}
\end{lem}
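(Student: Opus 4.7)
The plan is to multiply the momentum equation \eqref{momE} by $v$ and integrate over $x\in[0,M]$, using the continuity equation \eqref{contE} together with the free-boundary data to produce $\frac{d}{dt}(\mathcal{E}^{0,v}+\mathcal{E}^{0,\sigma}) + \mathcal{D}^0$ up to errors absorbable into $C(1+\theta_1)\mathcal{E}^0 + \hal\mathcal{D}^0$, and then to add $\frac{d}{dt}\mathcal{E}^{0,r}$ to both sides and control it in the same spirit.

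For the viscous contribution $\int v\V\,dx$, I use the decomposition \eqref{visc} and integrate $\int v \cdot 16\pi^2 r^2 \dx\W\,dx$ by parts; the boundary vanishes since $r(0)=0$ at the left and $\W(M)=0$ at the right. The resulting $-16\pi^2\int\dx(r^2 v)\,\W\,dx$ yields $-\delta \cdot 16\pi^2 \int \rho |\dx(r^2v)|^2\,dx$ plus a cross-term weighted by $\rho r^3$; after substituting $\dx(r^2 v) = r^3\dx(v/r) + 3v/(4\pi\rho r)$ from \eqref{ID}, the cross-term produces the second dissipation piece $\frac{4\ep}{3}\cdot 16\pi^2\int \rho r^6|\dx(v/r)|^2\,dx$ together with a stray $-16\pi\ep\int v r^2\dx(v/r)\,dx$ that cancels exactly against the remaining contribution $16\pi\ep r^2\dx(v/r)$ of $\V$, so $\int v\V\,dx = -\mathcal{D}^0$. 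For the pressure term $\int 4\pi r^2 v\dx(P-P_0)\,dx$, integration by parts again has no boundary contribution since $P(M)=P_0(M)=0$ (both densities vanish at $x=M$). Using continuity to rewrite $4\pi\dx(r^2 v) = -\dt\sigma/\rho^2$ and comparing with a direct calculation of $\frac{d}{dt}\mathcal{E}^{0,\sigma}$, the principal terms match and leave a residual of the form $\int \rho_0^\gamma (\sir)^2\, G(\sir)\,\dx(r^2 v)\,dx$ for some bounded $G$; this residual collects both the quadratic coefficient $a_\ast(\sir)^2$ from \eqref{expP} and the mismatch factor $(\sir)/(1+\sir)$ coming from the difference between $\rho^{-2}$ and $\rho_0/\rho^2$. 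Cauchy--Schwarz against the dissipation, together with the pointwise bound $\rho_0 \le C\rho_0^{\gamma-1}$ (valid since $\rho_0$ is bounded and $\gamma<2$) and the smallness $|\sir|\le\theta_1$ from \eqref{assumption2}, bounds the residual by $C\theta_1(\mathcal{E}^0 + \mathcal{D}^0)$.

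It remains to handle the gravity integral and $\frac{d}{dt}\mathcal{E}^{0,r}$. The factorization $1/r^2 - r^2/r_0^4 = (r_0^4-r^4)/(r^2 r_0^4)$ combined with boundedness of $x/r_0^3$ on $[0,M]$ yields the pointwise bound $|x(1/r^2 - r^2/r_0^4)| \le C|1 - r_0/r|$; Cauchy--Schwarz and $\|1-r_0/r\|_{L^2}^2 \le (2/\nu)\mathcal{E}^{0,r}$ then give a contribution of size $C\mathcal{E}^0$. For $\frac{d}{dt}\mathcal{E}^{0,r}$, I use $\dt(1-r_0/r) = r_0 v/r^2$ from \eqref{rr}, exploit the identity for $v/r$ in \eqref{ID} to obtain $\|v/r\|_{L^2}^2 \le C\mathcal{D}^0/\nu$, and apply Young's inequality to split this off as $\eta\mathcal{D}^0 + C\mathcal{E}^{0,r}$. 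Collecting all contributions and taking $\theta_1$ and $\eta$ small enough that the total $\mathcal{D}^0$ coefficient on the right is at most $\hal$ delivers the claimed inequality. The main technical obstacle is the algebraic bookkeeping in the pressure step: I need to verify that the discrepancy between the IBP output and $\frac{d}{dt}\mathcal{E}^{0,\sigma}$ is genuinely $O((\sir)^2)$, so that the $\theta_1$ smallness from \eqref{assumption2} provides the decay needed to absorb the $\sqrt{\mathcal{D}^0}$ factor produced by Cauchy--Schwarz.
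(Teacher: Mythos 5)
Your proposal is correct and follows the same outline as the paper's proof: multiply the momentum equation by $v$, integrate, integrate by parts, identify $\frac{d}{dt}(\E^{0,v}+\E^{0,\sigma})$ and $-\D^0$ exactly, and then control the gravity term and $\frac{d}{dt}\E^{0,r}$. Your detailed unpacking of the cancellation that makes $\int v\,\V\,dx = -\D^0$ is a correct expansion of what the paper compresses into one line via \eqref{visc} and $\W(M)=0$.

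Where you depart technically from the paper is in how the two error terms are absorbed. For the pressure residual, the paper keeps it in the form $\int\frac{(K\gamma+(1+\sir)a_\ast)\rho_0^{\gamma-1}}{(1+\sir)^3}\abs{\sir}^2\dt(\sir)\,dx$ and uses the $L^\infty$ bound on $\dt(\sir)$ from \eqref{assumption2} to bound it directly by $C(1+\theta_1)\E^0$, with no contribution to the dissipation side; you instead trade $\dt(\sir)$ for $\dx(r^2v)$ and Cauchy--Schwarz against $\D^0$, using the $L^\infty$ bound on $\sir$, arriving at $C\theta_1(\E^0+\D^0)$. For the gravity integral, the paper performs a $\rho$-weighted Cauchy--Schwarz and uses \eqref{ID} to split it as $\frac{2}{9}\D^0 + C\E^{0,r}$; you instead observe the pointwise estimate $\abs{x(1/r^2 - r^2/r_0^4)}\le C\abs{1-r_0/r}$ (valid since $x/r_0^3$ is bounded and $r\sim r_0$) and do a plain $L^2$ Cauchy--Schwarz to get $C\E^0$ outright, which is cleaner. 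The net consequence is that the paper's route produces a fixed $\frac{4}{9}\D^0$ coefficient on the right, while yours produces $\eta + C\theta_1$, so you additionally need $\theta_1$ small to get the $\frac12\D^0$ of the statement; since $\theta_1$ is taken small in the bootstrap anyway, this is harmless. One tiny inexactness: the pointwise fact you actually need for the pressure residual is $\rho_0^\gamma\le C$ (to compare $\rho_0^{2\gamma-1}$ with the $\rho_0^{\gamma-1}$ weight in $\E^{0,\sigma}$), not $\rho_0\le C\rho_0^{\gamma-1}$; both hold because $\rho_0$ is bounded, so the conclusion is unaffected.
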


\begin{proof}
Multiply \eqref{momE} by $v$ and integrate to get 
\begin{multline}
\hal\frac{d}{dt}\int \abs{v}^2 dx\underbrace{ -\int 4\pi \dx(r^2 v) \left\{K\gamma\rho_0^\gamma \frac{\sigma}{\rho_0} +a_\ast\rho_0^\gamma \( \frac{\sigma}{\rho_0} \)^2 \right\} dx}_{(i)} + \underbrace{\int v\frac{x(r_0^4-r^4)}{r^2r_0^4} dx}_{(ii)} \\
= \underbrace{\int v \mathcal{V}dx}_{(iii)}.
\end{multline}
For $(i)$ we use \eqref{contE} to see that 
\begin{equation}
\begin{split}
(i) &= \int \frac{\rho_0}{\rho^2} \dt\( \frac{\sigma}{\rho_0}\) \left\{ K\gamma \rho_0^\gamma \frac{\sigma}{\rho_0} +a_\ast\rho_0^\gamma \(\frac{\sigma}{\rho_0}\)^2 \right\} dx \\
&=\frac{1}{2}\frac{d}{dt} \int \frac{K\gamma\rho_0^{\gamma-1}}{(1+\frac{\sigma}{\rho_0})^2} \abs{\frac{\sigma}{\rho_0}}^2 dx 
+ \int \frac{(K\gamma+(1+\sir)a_\ast)\rho_0^{\gamma-1}}{(1+\sir)^3} \abs{\sir}^2\dt\(\sir\)dx.
\end{split}
\end{equation}
However, 
\begin{equation}
\abs{\int \frac{(K\gamma+(1+\sir)a_\ast)\rho_0^{\gamma-1}}{(1+\sir)^3} \abs{\sir}^2\dt\(\sir\)dx }\le  C(1+\theta_1) \E^0.
\end{equation}
For $(ii)$, the Cauchy-Schwarz inequality yields 
\begin{equation}
\begin{split}
\abs{(ii)} &\le \nu\int \frac{\abs{v}^2}{\rho r^2} dx+ \frac{1}{\nu} \int \rho  \abs{\frac{x}{r_0^4}(r^2+r_0^2)(r+r_0)}^2 \abs{1-\frac{r_0}{r}}^2 dx \\
&\le  \frac29\mathcal{D}^0+C \mathcal{E}^{0,r},
\end{split}
\end{equation}
where we have used \eqref{ID} at the second inequality. From \eqref{visc} and the boundary condition $\W(M,t)=0$, we get 
\begin{equation}
(iii)=- \delta \int 16\pi^2 \rho \abs{\dx(r^2v)}^2dx -\frac{4\ep}{3}\int 16\pi^2 \rho r^6 \abs{\dx\(\frac{v}{r}\)}^2dx = -\D^0. 
\end{equation}
Next, from \eqref{rr}, 
\begin{multline}
\frac{\nu}{2}\frac{d}{dt}\int \abs{1-\frac{r_0}{r}}^2dx= -\nu\int \frac{v}{r}\frac{r_0}{r} \(1-\frac{r_0}{r}\)dx \\ 
\le \nu \int \frac{\abs{v}^2}{\rho r^2}dx +\nu\int \rho \abs{\frac{r_0}{r}}^2 \abs{1-\frac{r_0}{r}}^2 dx
\le \frac29\mathcal{D}^0+C\mathcal{E}^{0,r}.
\end{multline}
The desired estimate then follows by combining these estimates.
\end{proof}\

With Lemma \ref{lem4.1} in hand, we can bootstrap to control $\sir$ with an improved weight.  Multiply \eqref{contE} by $\rho_0^\beta \sir$ and integrate to get 
\begin{equation}
\int \frac{\rho_0^{\beta+1}}{\rho} \sir \dt \(\sir\) dx=-\int\rho^{\hal} \rho_0^{\beta}\sir \cdot  4\pi \rho^{\hal}\dx(r^2v) dx.
\end{equation}
Thus 
\begin{multline}
\hal\frac{d}{dt}\int \frac{\rho_0^{\beta+1}}{\rho} \abs{\sir}^2 dx \le \frac{C}{\eta}\int 16\pi^2\rho \abs{\dx(r^2v)}^2 dx + \eta \int \rho\rho_0^{2\beta}\abs{\sir}^2 dx \\
 -\hal\int \frac{\rho_0^{\beta+1}}{\rho^2}\dt\sigma \abs{\sir}^2 dx,
\end{multline}
which means that
\begin{equation}\label{E^0b}
\frac{d}{dt}\mathcal{E}^{0,\sigma}_\beta \le \frac{C}{\eta} \mathcal{D}^0 + \eta \mathcal{E}^{0,\sigma}_{2\beta+1}+C\theta_1\mathcal{E}^{0,\sigma}_\beta.
\end{equation}

Next we consider $\mathcal{E}^1$ and $\mathcal{D}^1$. 

\begin{lem}\label{lem4.2}
We have that
\begin{equation}\label{E^1}
\frac{d}{dt}\mathcal{E}^1 +\mathcal{D}^1\leq (\eta+C\theta_1)\mathcal{E}^1 +\hal\mathcal{D}^1+  C(\mathcal{E}^0+\mathcal{E}^{0,\sigma}_0)+q\mathcal{E}^{2,v},
\end{equation}
where 
\begin{equation}
q:=q_1+q_1 := \pnorm{ 16\pi^2 \( K\gamma +\frac{4}{K\gamma}a_\ast^2\(\sir\)^2  \) \rho_0^\gamma}{\infty}  + \pnorm{\frac{\(1+\sir\)}{\eta\(\delta+\frac{4\ep}{3}\)} }{\infty}
\end{equation}
is bounded due to \eqref{assumption2}.
\end{lem}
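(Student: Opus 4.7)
The plan is to test the momentum equation \eqref{momE} against $\dt v$ and integrate over $[0,M]$. The diagonal $|\dt v|^2$ term immediately contributes $\mathcal{D}^{1,v}$ to the LHS; the viscosity, pressure, and force contributions must then be converted to $-\frac{d}{dt}\mathcal{E}^1$ plus controllable error terms.

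For the viscosity $\int \V\dt v\,dx$, I would use the equivalent form \eqref{visc}, $\V = 16\pi^2 r^2 \dx\W + 16\pi\ep\,r^2\dx(v/r)$, whose point is that $\W(M,t)=0$. An IBP in the $\W$-piece thus has no boundary contribution (also using $r(0,t)=0$), yielding $-16\pi^2\int \W\dx(r^2\dt v)\,dx$. I then apply the commutation identity $\dx(r^2\dt v) = \dt\dx(r^2v) - 2\dx(rv^2)$ (from $\dt r=v$) and the algebraic identity $\dx(r^2v) = r^3\dx(v/r)+\frac{3v}{4\pi\rho r}$ from \eqref{ID} to expand $\dt\dx(r^2v)$; after careful bookkeeping this produces a total time derivative equal to $-\frac{d}{dt}\mathcal{E}^{1,v}$ plus a cross term $-16\pi\ep\int r^2\dx(v/r)\dt v\,dx$ that exactly cancels the second piece of the decomposition of $\V$. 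The surviving residues (involving $\dt\rho$, $v^2/r^2$, and $\dx(rv^2)$) are cubic in the perturbation and controlled by $C\theta_1\mathcal{E}^1+\eta\mathcal{D}^{1,v}+C\mathcal{E}^0$ using \eqref{assumption2} and \eqref{ID}.

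For the pressure $-\int 4\pi r^2\dx\tilde P\,\dt v\,dx$ with $\tilde P$ as in \eqref{expP}, I distribute $\dx$. The piece where $\dx$ hits $\rho_0^\gamma$ is combined with the force term via the hydrostatic balance $K\gamma\rho_0^{\gamma-1}\dx\rho_0 = -x/(4\pi r_0^4)$; Cauchy--Schwarz then bounds the combination by $\eta\mathcal{D}^{1,v}+C(\mathcal{E}^0+\mathcal{E}^{0,\sigma}_0)$. The dominant piece $-4\pi K\gamma\int r^2\rho_0^\gamma\dx(\sigma/\rho_0)\dt v\,dx$ is split by $|ab|\le\tfrac12 a^2+\tfrac12 b^2$ with $a^2=16\pi^2 K\gamma r^4\rho_0^\gamma|\dx(\sigma/\rho_0)|^2$ (the $\mathcal{D}^{1,\sigma}$ integrand) and $b^2 = K\gamma\rho_0^\gamma|\dt v|^2$, yielding $\tfrac12\mathcal{D}^{1,\sigma}+q_1\mathcal{E}^{2,v}$ with $q_1$ as defined in the lemma. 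The quadratic correction from $a_\ast\rho_0^\gamma(\sigma/\rho_0)^2$ is handled identically, generating the $\frac{4}{K\gamma}a_\ast^2(\sigma/\rho_0)^2\rho_0^\gamma$ contribution inside $q_1$; an alternative weighting balancing $\delta+4\ep/3$ against $1+\sigma/\rho_0$ to match $\mathcal{E}^{1,\sigma}$ produces the $2\eta\mathcal{E}^{1,\sigma}+q_2\mathcal{E}^{2,v}$ bound for the remaining cross terms.

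The derivative $\frac{d}{dt}\mathcal{E}^{1,\sigma}$ on the LHS is obtained by direct differentiation of $\mathcal{E}^{1,\sigma}$, with $\dt\dx(\sigma/\rho_0)$ expressed through continuity \eqref{contE}; the spurious pieces arising from $\dt r^4$ and $\dt[1/(1+\sigma/\rho_0)]$ are of order $C\theta_1\mathcal{E}^1$ by \eqref{assumption2}. Combining all contributions, absorbing $\eta\mathcal{D}^{1,v}$ and $\tfrac12\mathcal{D}^{1,\sigma}$ into $\tfrac12\mathcal{D}^1$, and collecting the $q\mathcal{E}^{2,v}$ source terms yields \eqref{E^1}. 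The main obstacle is the bookkeeping in the viscosity step: verifying the cross-term cancellation that allows $\mathcal{E}^{1,v}$ (with its separated $\delta$ and $4\ep/3$ coefficients) to emerge from the IBP of $\W\dx(r^2\dt v)$ is delicate and relies essentially on \eqref{ID}. Without this cancellation one would be left with the coarser combined-coefficient energy $(\delta+4\ep/3)\rho|\dx(r^2v)|^2$, failing to match the target $\mathcal{E}^{1,v}$.
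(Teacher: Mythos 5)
Your proposal is correct and follows essentially the same route as the paper's proof: Step 1 tests \eqref{momE} against $\dt v$, uses the $\W$-form \eqref{visc} together with $\W(M,t)=0$ to integrate the viscosity by parts, and bounds the pressure contribution by Young's inequality to produce $\tfrac12\D^{1,\sigma}+q_1\E^{2,v}$; Step 2 converts the viscosity in \eqref{momE} into a $\dt\dx(\sigma/\rho_0)$ term via continuity (the paper's equation \eqref{dxSig}) and tests against $4\pi r^2\dx(\sigma/\rho_0)$ to recover $d\E^{1,\sigma}/dt$ and $\D^{1,\sigma}$, with the $\dt v$ residual giving $q_2\E^{2,v}$. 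You give a bit more explicit detail than the paper on the viscosity bookkeeping — in particular you correctly identify the cross-term cancellation between the piece $-16\pi\ep\int r^2\dx(v/r)\dt v\,dx$ produced by the IBP of $\int \W\dx(r^2\dt v)$ (via the identity $\dx(r^2\dt v)=r^3\dx(\dt v/r)+3\dt v/(4\pi\rho r)$) and the residual $+16\pi\ep\int\dt v\,r^2\dx(v/r)\,dx$ from the decomposition \eqref{visc}; this cancellation is exactly what allows the separated-coefficient energy $\E^{1,v}$ to emerge, and the paper leaves it implicit. The only slight inaccuracies are cosmetic: the paper does not literally combine the $\dx\rho_0^\gamma$ piece of the pressure with the gravitational force, but estimates them separately (yielding $\tfrac14\D^{1,v}+C\E^{0,\sigma}_0$ and $\tfrac14\D^{1,v}+C\E^{0,r}$ respectively); and what you call ``direct differentiation of $\E^{1,\sigma}$'' is more precisely realized in the paper by testing the rewritten momentum equation \eqref{dxSig} rather than literally differentiating the functional. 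Neither deviation affects the validity of the argument.
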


\begin{proof} 

We divide the proof into steps.

\textbf{Step 1} -- $\E^{1,v}$ and $\D^{1,v}$
 
Multiply \eqref{momE} by $\dt v$ and integrate to get 
\begin{multline}
\int |\dt v|^2 dx \underbrace{+\int 4\pi r^2 \dt v \dx \left\{ K\gamma\rho_0^\gamma \frac{\sigma}{\rho_0} +a_\ast\rho_0^\gamma \(\frac{\sigma}{\rho_0}\)^2 \right\} dx}_{(iv)} \\
+ \underbrace{\int \dt v\frac{x(r_0^4-r^4)}{r^2r_0^4} dx}_{(v)}
-\underbrace{\int \dt v \mathcal{V}dx}_{(vi)}=0.
\end{multline}
For $(iv)$ we first expand
\begin{equation}
\begin{split}
(iv)&=\int 4\pi r^2 \dt v \left\{K\gamma \dx(\rho_0^\gamma) \frac{\sigma}{\rho_0} + K \gamma \rho_0^\gamma \dx\(\frac{\sigma}{\rho_0}\) \right\} dx\\
&+ \int 4\pi r^2 \dt v \left\{ \dx\(a_\ast\rho_0^\gamma \) \(\frac{\sigma}{\rho_0}\)^2 + 2 a_\ast \rho_0^\gamma \frac{\sigma}{\rho_0}\dx\(\sir\) \right\} dx\\
&:=(iv)_1+(iv)_2+(iv)_3+(iv)_4
\end{split}
\end{equation}
and then estimate
\begin{equation}
|(iv)_1+(iv)_3|\le \frac{1}{4} \int |\dt v|^2 dx +{ C(1+\theta_1^2)\int  \abs{\sir}^2 dx} \le \frac{1}{4} \mathcal{D}^{1,v}+C\mathcal{E}^{0,\sigma}_0
\end{equation}
and
\begin{equation}
\begin{split}
 |(iv)_2+(iv)_4|&\le \int 8\pi^2 K\gamma r^4 \rho_0^{\gamma} \abs{\dx\(\sir\)}^2 dx \\ 
&+ \int 8\pi^2\(K\gamma +\frac{4}{K\gamma}  a_\ast^2\(\sir\)^2\) \rho_0^\gamma \abs{\dt v}^2 dx \\
& \le \frac{1}{2}\mathcal{D}^{1,\sigma} + q_1 \E^{2,v}.
\end{split}
\end{equation}

For $(v)$, we get 
\begin{equation}
|(v)|\le \frac{1}{4} \int \abs{\dt v}^2 dx+\int \abs{\frac{x}{r^2r_0^4}(r^2+r_0^2)(r+r_0)}^2 \abs{r-r_0}^2 dx \le \frac{1}{4} \mathcal{D}^{1,v} + C\mathcal{E}^{0,r}.
\end{equation}
The term $(vi)$ forms the energy  $\mathcal{E}^{1,v}$ and nonlinear commutators:  
\begin{equation}
\begin{split}
(vi)&=-\hal\frac{d}{dt} \left[ \delta \int 16\pi^2 \rho |\dx(r^2 v)|^2dx + \frac{4\ep}{3}\int 16\pi^2 \rho r^6 \abs{\dx\(\frac{v}{r}\) }^2 dx \right] \\
&\quad + \hal \left[ \delta \int 16\pi^2 \dt \sigma \abs{\dx(r^2 v)}^2dx +\frac{4\ep}{3}\int 16\pi^2 \dt(\rho r^6) \abs{\dx\(\frac{v}{r}\)}^2dx \right]\\
&\quad+\delta\int 16\pi^2 \dx(v\cdot 2rv) \rho\dx(r^2 v) dx 
+ \frac{4\ep}{3}\int 16\pi^2 \rho r^6  \dx\(v\cdot \(-\frac{v}{r^2}\)\) \dx \(\frac{v}{r}\) dx. 
\end{split}
\end{equation}
Using \eqref{assumption2} and the fact that
\begin{equation}
 \begin{split}
  & \dx(v\cdot 2rv) = 2\( \frac{v}{r}\dx(r^2v)+r^2v\dx(\frac{v}{r}) \) \\
  & \dx\(v\cdot \(-\frac{v}{r^2}\)\) \dx\(\frac{v}{r}\) = -2\frac{v}{r} \abs{\dx\(\frac{v}{r}\)}^2,
 \end{split}
\end{equation}
the absolute values of the second and third lines may be bounded by $C\theta_1\mathcal{E}^{1,v}$.

We may now combine the above to deduce that
\begin{equation}\label{l42_1}
 \frac{d}{dt} \E^{1,v} + \D^{1,v} \le \hal \mathcal{D}^{1} + C\theta_1\mathcal{E}^{1,v} + C\mathcal{E}^{0,r}   + C\mathcal{E}^{0,\sigma}_0 + q_1 \E^{2,v}.
\end{equation}

\textbf{Step 2} -- $\E^{1,\sigma}$ and $\D^{1,\sigma}$

For the estimate of $\dx(\sir)$, we first rewrite \eqref{momE} by replacing $\rho\dx(r^2v)$ in $\mathcal{V}$  by $\dt(\sir)$ through the continuity equation \eqref{contE}: 
\begin{equation}\label{dxSig}
\begin{split}
&\(\delta + \frac{4\ep}{3}\) 4\pi r^2 \left\{ \frac{\rho_0}{\rho} \dt\dx \(\sir\) + \dx \(\frac{\rho_0}{\rho}\) \dt\(\sir\) \right\} + 
\dt v+ \frac{x(r_0^4-r^4)}{r^2r_0^4} \\&
+ 4\pi r^2 \left\{ K\gamma\rho_0^\gamma \dx \(\frac{\sigma}{\rho_0} \) + K \gamma \dx \(\rho_0^\gamma\) \frac{\sigma}{\rho_0} + \dx (a_\ast\rho_0^\gamma) \(\frac{\sigma}{\rho_0}\)^2 +
2a_\ast \rho_0^\gamma \frac{\sigma}{\rho_0} \dx\(\sir\) \right\} =0.
\end{split}
\end{equation}
Note that $\dx(\frac{\rho_0}{\rho})=-(1+\sir)^{-2}\dx(\sir)$. 
Multiplying \eqref{dxSig} by $4\pi r^2\dx(\sir)$ and integrating, we are led to the estimate 
\begin{equation}\label{l42_2}
\begin{split}
\hal\frac{d}{dt} \int \(\delta+\frac{4\ep}{3}\) 16 \pi^2 r^4 \frac{1}{1+\sir}  \abs{ \dx\( \frac{\sigma}{\rho_0} \)}^2 dx  + \int 16\pi^2 K\gamma r^4 \rho_0^\gamma \abs{\dx\(\sir\)}^2 dx \\
 \le  (\eta+C\theta_1)\mathcal{E}^{1,\sigma}+C(\mathcal{E}^{0,r}+ \mathcal{E}^{0,\sigma}_0)+\int \frac{(1+\sir)}{2\eta(\delta+\frac{4\ep}{3})} |\dt v|^2 dx.
\end{split}
\end{equation}
Note that the last term in \eqref{l42_2} may be bounded by $q_2 \E^{2,v}$.  We then obtain \eqref{E^1} by combining \eqref{l42_1} and \eqref{l42_2}.
\end{proof}

The estimate \eqref{E^1} is not of a closed form by itself. Its use will come by coupling to the result of the following lemma.  

\begin{lem}\label{lem4.3}
\begin{equation}\label{E^2}
\frac{d}{dt}\mathcal{E}^2+\mathcal{D}^2\le (\eta+C\theta_1)\mathcal{E}^2+C\theta_1\mathcal{D}^0 +(\frac14+C\theta_1)\mathcal{D}^2+ C(\theta_1\mathcal{E}^{0,\sigma}_0+\mathcal{E}^0 + \E^1)
\end{equation}
\end{lem}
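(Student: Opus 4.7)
The estimate for $\mathcal{E}^2$ is the time-differentiated analog of Lemma \ref{lem4.1}. The plan is to apply $\dt$ to the momentum equation \eqref{momE}, test against $\dt v$, integrate over $[0,M]$, and use the $\dt$-differentiated continuity equation \eqref{contE} to generate $\frac{d}{dt}\mathcal{E}^{2,\sigma}$ in the same way that \eqref{contE} itself generated $\frac{d}{dt}\mathcal{E}^{0,\sigma}$ in Lemma \ref{lem4.1}. The leading term is immediate: $\int \dt^2 v \cdot \dt v\, dx = \frac{d}{dt}\mathcal{E}^{2,v}$.

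For the pressure term $\int \dt v \cdot \dt[4\pi r^2 \dx\{K\gamma \rho_0^\gamma \sir + a_\ast \rho_0^\gamma(\sir)^2\}]\, dx$, I split using the product rule. The piece where $\dt$ falls on $r^2$ produces a factor $2rv$, hence a factor of order $\theta_1$ via \eqref{assumption2}; after Cauchy-Schwarz it is controlled by $C\theta_1(\mathcal{E}^1 + \mathcal{D}^1)$ plus terms absorbable into $\frac14 \mathcal{D}^2$. The piece where $\dt$ falls on the pressure bracket is handled by integration by parts in $x$: the boundary contributions vanish since $r(0)=0$ and $\rho_0(M)=\sigma(M,t)=0$, yielding $-\int 4\pi \dx(r^2 \dt v)\,\dt\{K\gamma \rho_0^\gamma \sir + a_\ast \rho_0^\gamma (\sir)^2\}\,dx$. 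Differentiating \eqref{contE} in $t$ gives the identity
\begin{equation*}
\frac{\rho_0}{\rho}\dt^2\!\Bigl(\sir\Bigr) + 4\pi\rho\,\dx(r^2 \dt v) = \frac{\rho_0^2}{\rho^2}\Bigl|\dt\!\Bigl(\sir\Bigr)\Bigr|^2 - 4\pi \dt\rho\,\dx(r^2 v) - 4\pi\rho\,\dx(2rv^2),
\end{equation*}
which, substituted into the leading pressure piece $K\gamma \rho_0^\gamma \dt(\sir)$, produces exactly $\frac{d}{dt}\mathcal{E}^{2,\sigma}$ after recognizing $\rho_0^{\gamma+1}/\rho^2 = \rho_0^{\gamma-1}/(1+\sir)^2$; the commutators on the RHS of the displayed identity all contain a $\theta_1$-small prefactor or are paired against $\dx(r^2 v)$, landing in $C\theta_1(\mathcal{E}^{0,\sigma}_0 + \mathcal{D}^0 + \mathcal{E}^2)$. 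The $a_\ast$-terms are cubic in $\sir$ and contribute only $\theta_1$-small corrections to $\mathcal{E}^2$ and $\mathcal{E}^{0,\sigma}_0$.

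For the viscous term $\int \dt v\,\dt\mathcal{V}\,dx$, I use the structural decomposition \eqref{visc}; since the boundary identity $\W(M,t)=0$ is linear and holds for all $t$, it remains valid after $\dt$, and $\dt v(0,t)=0$ as well. Mirroring the computation of $(iii)$ in Lemma \ref{lem4.1} gives $-\mathcal{D}^2$ as the principal contribution, while commutators from $\dt r^2 = 2rv$, $\dt\rho = \rho_0 \dt(\sir)$, etc., pair against $\dx(r^2 \dt v)$ or $r^3\dx(\dt v/r)$ via Cauchy-Schwarz, landing in $(\tfrac14+C\theta_1)\mathcal{D}^2 + C\theta_1 \mathcal{D}^0 + C\theta_1 \mathcal{E}^2$. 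The gravity term $\int \dt v\,\dt[x(1/r^2 - r^2/r_0^4)]\,dx$ is handled by Cauchy-Schwarz, bounding it by $\eta \mathcal{D}^{1,v} + C\mathcal{E}^0$ after writing $\dt r = v$ and using \eqref{assumption2} to dominate the remainder. Collecting all contributions yields \eqref{E^2}.

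\textbf{Main obstacle.} The bookkeeping of commutators generated by $\dt$ hitting the quantities $r$, $\rho$, $\sir$, and $\rho_0^\gamma$ is the key technical difficulty; each such factor must be recognized as bounded by $\theta_1$ (via \eqref{assumption2}) and paired appropriately so that the result either absorbs into the LHS or fits into one of $C\theta_1 \mathcal{E}^{0,\sigma}_0$, $C(\mathcal{E}^0+\mathcal{E}^1)$, $C\theta_1 \mathcal{D}^0$, or $(\tfrac14+C\theta_1)\mathcal{D}^2$. The most delicate pairing is the viscous commutator involving $\dt\rho\,\dx(r^2 v)$, which must be Cauchy-Schwarzed against $\dx(r^2 \dt v)$ to produce the $\theta_1$-controlled $\mathcal{D}^0$ and $\mathcal{D}^2$ contributions rather than an uncontrolled $\mathcal{E}^{2,\sigma}$ or higher-order term.
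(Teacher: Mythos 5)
Your proposal is correct and follows essentially the same route as the paper: apply $\dt$ to the momentum equation to obtain \eqref{dt2v}, test against $\dt v$, use the time-differentiated continuity equation \eqref{dtSig} to produce $\frac{d}{dt}\mathcal{E}^{2,\sigma}$, exploit $\dt\W(M,t)=0$ for the viscous boundary term, and track the commutators via \eqref{assumption2}. The identity you display after differentiating \eqref{contE} is exactly the paper's \eqref{dtSig} (with $\dt\rho=\dt\sigma$ and $\dx(2rv^2)=2\dx(rv^2)$), and your bookkeeping of the error terms, including the note that $\mathcal{D}^{1,v}=2\mathcal{E}^{2,v}$ and $\mathcal{D}^{1,\sigma}\lesssim\mathcal{E}^{1,\sigma}$ are absorbed into the claimed right-hand side, matches what the paper's sketch intends.
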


\begin{proof} 
We take $\dt$ of \eqref{momE} to see that 
\begin{equation}\label{dt2v}
\begin{split}
\dt^2 v + 4\pi r^2 \dx \left\{ K\gamma \rho_0^\gamma \dt\( \frac{\sigma}{\rho_0}\) + 2 a_\ast \rho_0^\gamma \sir \dt\( \frac{\sigma}{\rho_0} \)+ \dt a_\ast \rho_0^\gamma \(\frac{\sigma}{\rho_0}\)^2 \right\} \\
+8 \pi rv \left\{ K\gamma \rho_0^\gamma \dx\( \frac{\sigma}{\rho_0} \) +K\gamma \dx\( \rho_0^\gamma \) \frac{\sigma}{\rho_0}+ \dx (a_\ast\rho_0^\gamma) \(\frac{\sigma}{\rho_0} \)^2 + 
2 a_\ast\rho_0^\gamma \frac{\sigma}{\rho_0} \dx\(\sir\) \right\} \\
-2 \frac{xv(r_0^4-r^4)}{r^3r_0^4} - 4\frac{xrv}{r_0^4}
= \dt \V.
\end{split}
\end{equation}
The energy estimate \eqref{E^2} may be derived from \eqref{dt2v} as in Lemma \ref{lem4.1}: we multiply \eqref{dt2v} by $\dt v$, integrate over $x \in  [0,M]$, and integrate various terms by parts in order to identify  $ d\mathcal{E}^2/dt$, $\mathcal{D}^2$, and some error (lower order or commutator) terms, the latter of which may be estimated by the right side of \eqref{E^2}.  Since the argument is essentially the same as that of Lemma \ref{lem4.1}, we present only a sketch.

First observe that the product of $\dt v$ with the first two terms in the first line in \eqref{dt2v} forms the energy term $\dt \mathcal{E}^2$ and some error terms:
\begin{multline}
 \int \dt v \left[ \dt^2 v + 4\pi r^2 \dx\left(K\gamma \rho_0^\gamma \dt\left(\frac{\sigma}{\rho_0}\right)\right)  \right]dx \\ 
= 
\hal\frac{d}{dt}\left\{\int |\dt v|^2 dx + \int \frac{K\gamma \rho_0^{\gamma-1}}{(1+\sir)^2}\abs{\dt\(\sir\)}^2dx  \right\} 
+ \Z,
\end{multline}
where $\Z$ is a term whose absolute value may be estimated by the right side of \eqref{E^2}.  Here we have used the continuity equation \eqref{contE} and an integration by parts on the second term.

Next, we compute
\begin{equation}\label{dtV_comp}
 \dt \V = 16 \pi^2 r^2 \dx \dt \W + 16 \pi^2 (2 r v) \dx \W + \frac{4\ep}{3} 12 \pi r^2 \dx \left( \frac{\dt v}{r} \right)
\end{equation}
and note that the boundary condition $\W(M,t) = 0$ implies that $\dt \W(M,t) =0$ as well.  This allows us to integrate by parts without introducing boundary terms:
\begin{equation}
 \int 16 \pi^2 r^2 \dx \dt \W \dt v dx = - \int 16 \pi^2 (r^2 \dt v)  \dt \W dx.
\end{equation}
Using this, we find that
\begin{equation}
 \int \dt \V \dt v dx = -\delta \int 16\pi^2 \rho |\dx(r^2\dt v)|^2dx + \frac{4\ep}{3}\int 16\pi^2 \rho r^6 \abs{\dx\(\frac{\dt v}{r}\)}^2dx + \Z,
\end{equation}
where again $\Z$ is an error term with the property that $\abs{\Z}$ is bounded by the right side of \eqref{E^2}.

Finally, all of the remaining terms that arise when we multiply \eqref{dt2v} by $\dt v$ can also be estimated by the right side of \eqref{E^2}.  For example, the second term in the third line can be estimated by noting that ${xr}/{r_0^4}$ is bounded, which means that 
\begin{equation}
-\int 4 \frac{xrv}{r_0^4} \dt v dx\le \eta\int |\dt v|^2 dx+ C\int |v|^2 dx\le \eta \mathcal{E}^2+ C\mathcal{E}^0.
\end{equation}
Combining all of this, we find that
\begin{equation}
\begin{split}
&\hal\frac{d}{dt} \left\{\int |\dt v|^2 dx + \int \frac{K\gamma \rho_0^{\gamma-1}}{(1+\sir)^2} \abs{\dt\(\sir\)}^2 dx  \right\} \\
&\quad+   \delta \int 16\pi^2 \rho |\dx(r^2\dt v)|^2dx +\frac{4\ep}{3}\int 16\pi^2 \rho r^6 \abs{\dx\(\frac{\dt v}{r}\)}^2 dx\\
&\le  \( \frac{1}{4} +C \theta_1 \) \mathcal{D}^2 + \eta \mathcal{E}^2 + C\theta_1 (\mathcal{E}^2 +\mathcal{D}^0) +C \theta_1 \mathcal{E}^{0,\sigma}_0 + C (\mathcal{E}^0 + \E^1),
\end{split} 
\end{equation}
which yields \eqref{E^2}. 
\end{proof}

We now derive bootstrapped estimates for  $\dt(\sir)$.   We take $\dt$ of \eqref{contE} to get 
\begin{equation}\label{dtSig}
\frac{\rho_0}{\rho} \dt^2\(\sir\) = - 4 \pi \rho \dx(r^2 \dt v ) - 8 \pi \rho \dx (r v^2) - 4 \pi \dt \sigma \dx(r^2v) + \frac{\rho_0^2}{\rho^2}\(\dt\(\sir\)\)^2.
\end{equation}
Next, we multiply \eqref{dtSig} by $\rho_0^\beta \dt( \sir)$ and integrate to see that 
\begin{multline}
\hal\frac{d}{dt}\int \frac{\rho_0^{\beta+1}}{\rho} \abs{\dt\(\sir\)}^2 dx \le 
\frac{C}{\eta}\int 16\pi^2\rho \abs{\dx(r^2\dt v)}^2 dx 
+ \eta \int \rho\rho_0^{2\beta} \abs{\dt\(\sir\)}^2 dx \\
-\int 8\pi \frac{v}{r} \left\{ \rho \dx(r^2v)+ \rho r^3 \dx\(\frac{v}{r}\) \right\} \rho_0^\beta \dt\(\sir\)dx  + \frac{3}{2} \int \frac{\rho_0^{\beta+2}}{\rho^2}\( \dt \(\sir\)\)^3 dx \\
-4\pi \int \dt \sigma \dx(r^2 v) \rho_0^\beta \dt\( \sir\) dx.
\end{multline}
Then we estimate
\begin{multline}
 -\int 8\pi \frac{v}{r} \left\{ \rho \dx(r^2v)+ \rho r^3 \dx\(\frac{v}{r}\) \right\} \rho_0^\beta \dt\(\sir\)dx \\
-4\pi \int \dt \sigma \dx(r^2 v) \rho_0^\beta \dt\( \sir\) dx  \le C\theta_1\mathcal{D}^0 + C \theta_1 \E^{2,\sigma}_{2\beta+1}   
\end{multline}
to obtain 
\begin{equation}\label{E^2b}
\frac{d}{dt}\mathcal{E}^{2,\sigma}_\beta \le \frac14\mathcal{D}^2+C\theta_1\mathcal{D}^0+(\eta + C\theta_1) \mathcal{E}^{2,\sigma}_{2\beta+1} + C\theta_1\mathcal{E}^{2,\sigma}_\beta.
\end{equation}

Next we estimate $\mathcal{E}^3$ and $\mathcal{D}^3$. 

\begin{lem}\label{lem4.4}
There exists an energy $\F^3$ so that
\begin{equation}\label{E^3}
\frac{d}{dt}[\mathcal{E}^3+\F^3]+\mathcal{D}^3\le C\theta_1\mathcal{E}^3 +\(\frac{3}{8} + \frac{\theta_1}{4}\) \mathcal{D}^3 + C(\mathcal{E}^{2,\sigma}_0+\mathcal{E}^2 + \mathcal{E}^1 + \mathcal{E}^0).
\end{equation}
Moreover, we have the estimate $\abs{\F^3} \le C\theta_1(\mathcal{E}^3+\mathcal{E}^1)$.
\end{lem}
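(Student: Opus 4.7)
\textbf{Plan for the proof of Lemma~\ref{lem4.4}.} The strategy parallels Lemma~\ref{lem4.3} one time-derivative higher: I will test the equation \eqref{dt2v} (derived there by differentiating \eqref{momE} in time) against the multiplier $\dt^2 v$ and integrate over $x\in[0,M]$. The self-pairing immediately produces the dissipation $\mathcal{D}^3 = \int |\dt^2 v|^2\,dx$.

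For the viscous contribution $\int \dt\V \cdot \dt^2 v\,dx$, I apply the decomposition \eqref{dtV_comp}. Since $\W(M,t)=0$ forces $\dt\W(M,t)=0$, the spatial integration by parts on $\int 16\pi^2 r^2 \dx(\dt\W)\,\dt^2 v\,dx$ is boundary-free and yields $-\int 16\pi^2 \dx(r^2 \dt^2 v)\,\dt\W\,dx$. The commutator identity $\dx(r^2 \dt^2 v) = \dt[\dx(r^2\dt v)] - \dx(2rv\,\dt v)$, together with the product-rule expansion of $\dt\W$ using $\dt r = v$ and $\dt\rho = \rho_0\dt(\sir)$, identifies the leading time-derivative piece as $-\tfrac{d}{dt}\mathcal{E}^3$. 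The remaining nonlinear commutators --- terms of the form $\dt\rho\,|\dx(r^2\dt v)|^2$, $(v/r)\rho r^3\dx(v/r)\dx(r^2\dt v)$, and similar --- each carry at least one factor controlled by \eqref{assumption2}, and are absorbed into $C\theta_1\mathcal{E}^3$, into fractions of $\mathcal{D}^3$ via Cauchy--Schwarz, and into the lower-order energies $\mathcal{E}^1$, $\mathcal{E}^0$.

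For the pressure-type terms in \eqref{dt2v}, I integrate by parts in $x$; the boundary term vanishes because $\rho_0^\gamma(M)=0$. Applying again $\dx(r^2\dt^2 v) = \dt[\dx(r^2\dt v)] - \dx(2rv\,\dt v)$, I extract a time derivative whose integrand becomes the correction
\begin{equation*}
\F^3 \;=\; -\int 4\pi K\gamma\,\rho_0^{\gamma}\,\dt(\sir)\,\dx(r^2\dt v)\,dx \;+\; (\text{lower-order analogues from the $a_\ast$ pieces}).
\end{equation*}
The residual, after invoking the continuity equation \eqref{contE} to trade $\dx(r^2\dt v)$ for $\dt^2(\sir)$, is controlled by $C\bigl(\mathcal{E}^{2,\sigma}_0 + \mathcal{E}^2 + \mathcal{E}^1 + \mathcal{E}^0\bigr)$. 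The gravity-type residuals ($v\,\dx P_0$ and $xv/r_0^4$-type terms in \eqref{dt2v}) are treated as in Lemma~\ref{lem4.3}, contributing further lower-order terms. The several Cauchy--Schwarz applications made along the way collectively account for the $(3/8 + \theta_1/4)\mathcal{D}^3$ appearing on the right-hand side.

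The bound $|\F^3| \le C\theta_1(\mathcal{E}^3+\mathcal{E}^1)$ follows from Cauchy--Schwarz together with the $L^\infty$ smallness $\pnorm{\dt(\sir)}{\infty} \le \theta_1$ from \eqref{assumption2}, after observing that $\rho_0^{2\gamma-1} \le C\rho_0$ for $\gamma>1$ allows the weight $\rho_0^\gamma\dt(\sir)$ to be split between an $\mathcal{E}^1$-like factor and an $\mathcal{E}^3$-like factor. The main obstacle is the combinatorial bookkeeping of commutators generated by the non-commutativity of $\dx$ and $\dt$ in the presence of the moving-in-time geometry $r(x,t)$ and the density $\rho(x,t)$: every integration by parts and every product-rule expansion produces nonlinear cross-terms that must be sorted, via \eqref{assumption2} and the structural identities \eqref{ID}, \eqref{rr}, into either the $\F^3$ correction or one of the controlled right-hand-side terms, all the while keeping $|\F^3|$ small enough to satisfy the stated estimate.
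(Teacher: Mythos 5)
You place the corrector $\F^3$ in the pressure terms, but the paper places it in the viscosity commutators, and this distinction is not cosmetic: your candidate $\F^3$ cannot satisfy the bound $\abs{\F^3}\le C\theta_1(\E^3+\E^1)$ required by the lemma. After integrating the pressure by parts and applying the commutator identity, your leading piece is $\F^3 = -\int 4\pi K\gamma\,\rho_0^{\gamma}\dt(\sir)\,\dx(r^2\dt v)\,dx$, which is bilinear in the perturbation. Cauchy--Schwarz combined with $\pnorm{\dt(\sir)}{\infty}\le\theta_1$ yields at best $\abs{\F^3}\le C\theta_1\sqrt{\E^3}$ (or $\abs{\F^3}\le C\sqrt{\E^{2,\sigma}_0\,\E^3}$ if you do not spend the $L^\infty$ factor); neither is of the claimed form $C\theta_1(\E^3+\E^1)$, which is cubic in the perturbation. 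That cubic smallness is precisely what makes the Gronwall bootstrap in Proposition~\ref{prop} work, since one must recover $\E^3$ up to a small perturbation from $\E^3+\F^3$; a bilinear $\F^3$ comparable to $\sqrt{\E^3}$ can dominate $\E^3$ entirely when the energy is small. The paper never lets the pressure generate an $\F^3$: instead it substitutes the momentum equation \eqref{dxSig} to eliminate $\dt\dx(\sir)$ in favor of $\dt v$ (divided by the viscosity) plus quantities already controlled by $\E^0$, $\E^1$, $\E^{2,\sigma}_0$, and then closes with Cauchy--Schwarz against $\dt^2 v$. This is a structural use of the viscous term $(\delta+\tfrac{4\ep}{3})\,4\pi r^2\tfrac{\rho_0}{\rho}\dt\dx(\sir)$ in \eqref{dxSig} that your scheme does not exploit.

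The second gap concerns the viscosity commutators. After expanding $\dt\W$ and using the commutator identity, the leading piece does produce $-\tfrac{d}{dt}\E^3$ as you say, but the residual is not only of the harmless form $\dt\rho\,\abs{\dx(r^2\dt v)}^2$. The $Y_1$, $Y_2$ commutators in the paper's proof pair a quadratic factor $Q$ (built from $\dt\sigma\,\dx(r^2 v)$, $\rho\,\dx(2rv^2)$, $\dt(\rho r^3)\,\dx(v/r)$, $\rho r^3\,\dx(v^2/r^2)$, and $rv\,\dx\W$) against $\dx(r^2\dt^2 v)$. These cannot be absorbed into $C\theta_1\E^3$ or fractions of $\D^3$ by Cauchy--Schwarz, because $\D^3 = \int\abs{\dt^2 v}^2\,dx$ carries no spatial derivative and therefore does not control $\dx(r^2\dt^2 v)$. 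These terms must also undergo the time-derivative peeling you describe for the main part of $\dt\W$, and they are exactly what generates the actual $\F^3 = \F^3_1 + \F^3_2 = 16\pi^2\int \dx(r^2\dt v)\,Q\,dx + \dotsb$. Since $Q$ there is quadratic with at least one factor controlled in $L^\infty$ by $\theta_1$, the resulting $\F^3$ is cubic and genuinely satisfies $\abs{\F^3}\le C\theta_1(\E^3+\E^1)$. In short: the $\F^3$ must come from the viscosity commutators, not the pressure, and the pressure must be treated by substitution via \eqref{dxSig} rather than by peeling a time derivative.
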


\begin{proof}

First recall \eqref{dt2v} and rewrite it as 
\begin{equation}\label{l44_1}
\begin{split}
\dt^2 v &+ \underbrace{4\pi r^2 \left\{ K\gamma \rho_0^\gamma \dt\dx\(\sir\) + K \gamma \dx(\rho_0^\gamma) \dt\(\sir\)  \right\} }_{(a_1)}\\
&+\underbrace{4\pi r^2 \left\{ \dt \left[ \dx (a_\ast \rho_0^\gamma)\(\sir\)^2 + 2(a_\ast\rho_0^\gamma) \sir \dx\(\sir\) \right] \right\} }_{(a_2)}\\
&+\underbrace{8\pi rv \left\{ K\gamma\rho_0^\gamma \dx(\frac{\sigma}{\rho_0}) +K\gamma \dx(\rho_0^\gamma) \frac{\sigma}{\rho_0}+\dx(a_\ast\rho_0^\gamma)(\frac{\sigma}{\rho_0})^2 + 
2a_\ast\rho_0^\gamma\frac{\sigma}{\rho_0}\dx(\sir) \right\}}_{(b)} \\
&\underbrace{-2\frac{xv(r_0^4-r^4)}{r^3r_0^4} - 4\frac{xrv}{r_0^4}}_{(c)}
= \dt \mathcal{V}
\end{split}
\end{equation}
where $\dt \mathcal{V}$ is given in  \eqref{dtV_comp}.  To derive \eqref{E^3} we will multiply by $\dt^2 v$ and integrate over $x$.  We divide the estimates into the following steps.

\textbf{Step 1} -- $(a_1)$, $(a_2)$, $(b)$, and $(c)$

We begin with an estimate of the product of $\dt^2 v$ with the terms  $(a_1),$ $(a_2),$ $(b),$ and $(c)$.   First, we use \eqref{dxSig} to replace $ \dt\dx(\sir)$ by lower order terms: 
\begin{equation}
\begin{split}
(a_1) +(a_2)  &= 4\pi r^2 
\left\{ K\gamma\rho_0^\gamma \dt\dx\( \sir \)+ K \gamma \dx(\rho_0^\gamma) \dt\(\sir\)  \right \} \\
& + 4\pi r^2 \left\{ \dt \left[ \dx \(a_\ast\rho_0^\gamma\)\(\sir\)^2 + 2(a_\ast\rho_0^\gamma) \sir \dx\(\sir\) \right] \right\} \\
&= -4\pi r^2\(K\gamma +2 a_\ast \sir\) \rho_0^\gamma \frac{\rho}{\rho_0} \dx\( \frac{\rho_0}{\rho} \) \dt\(\sir\) 
\\
&\quad-\( K\gamma + 2 a_\ast \sir\) \rho_0^\gamma \frac{\rho}{\rho_0(\delta+\frac{4\ep}{3})}
\left\{ \dt v + \frac{x(r_0^4-r^4)}{r^2r_0^4} + 4\pi r^2 \left[ K\gamma \rho_0^\gamma \dx\(\frac{\sigma}{\rho_0}\) \right. \right.\\
&\quad  \left. \left.  + K\gamma \dx(\rho_0^\gamma) \frac{\sigma}{\rho_0} + \dx(a_\ast \rho_0^\gamma) \(\frac{\sigma}{\rho_0}\)^2 +2 a_\ast \rho_0^\gamma \frac{\sigma}{\rho_0} 
\dx\(\sir\) \right]  \right\} \\
&\quad+  4\pi r^2 \left \{ K \gamma \dx(\rho_0^\gamma) \dt\(\sir\) + 
\dt \left[ \dx (a_\ast\rho_0^\gamma) \(\sir\)^2 \right] \right.   \\
&\quad+   \left. 2 \rho_0^\gamma \dt \(a_\ast \sir\)\dx\(\sir\) \right\} \\
&:= (A_1)+(A_2)+(A_3).
\end{split}
\end{equation}
Then $\int \dt^2v \cdot [(a_1) +(a_2)] dx$ can be estimated as follows: 
\begin{equation}
\begin{split}
\int \dt^2v \cdot (A_1)dx &\le \frac{\theta_1}{8} \int \abs{\dt^2 v}^2 dx + C\theta_1 \int \frac{\rho_0^{2\gamma+2}}{\rho^2} r^4 \abs{\dx\(\sir\)}^2 dx \le \frac{\theta_1}{8}\mathcal{D}^3+C \theta_1\mathcal{E}^{1,\sigma}\\
\int \dt^2 v\cdot (A_2)dx
&\le \frac{3}{32} \int \abs{\dt^2 v}^2 dx + C[\mathcal{E}^{2,v} + \mathcal{E}^{0,r} + (1+\theta_1) (\mathcal{E}^{1,\sigma} + \mathcal{E}^{0,\sigma}) ]\\
\int \dt^2 v\cdot (A_3)dx &\le \frac{3}{32}\int \abs{\dt^2 v}^2 dx + C\theta_1\mathcal{E}^{1,\sigma} + C(1+\theta_1)\mathcal{E}^{2,\sigma}_0.
\end{split}
\end{equation}
Next for $(b)$ and $(c)$, we may estimate 
\begin{equation}
\begin{split}
\int \dt^2 v\cdot (b)\,dx&\le \frac{\theta_1}{8}\int |\dt^2v|^2 dx + C\theta_1(\mathcal{E}^{1,\sigma}+\mathcal{E}^{0,\sigma}_0)\\
\int \dt^2 v\cdot (c)\,dx&\le \frac{3}{32}\int  |\dt^2v|^2 dx +C(\mathcal{E}^{0,v}+\mathcal{E}^{0,r}).
\end{split}
\end{equation}
Combining the above, we arrive at an estimate for $\int \dt^2 v \cdot [(a_1)+(a_2)+(b)+(c)]dx$.

\textbf{Step 2} -- The viscosity term

Now we consider the viscosity term, $\dt \V$.   We claim that there exist  $\F^3, G$ so that
\begin{equation}\label{claim4.5}
\int \dt^2 v\cdot \dt \mathcal{V} dx = -\frac{d}{dt}\mathcal{E}^3 -\frac{d}{dt}\F^3 + G, 
\end{equation}
where 
\begin{equation}\label{claim4.5_2}
 \begin{split}
 \abs{\F^3} & \le C\theta_1(\mathcal{E}^3+\mathcal{E}^1) \text{ and }\\
 \abs{G} &\le \frac{3}{32} \D^3 + C\theta_1(\mathcal{E}^3+\mathcal{E}^{2,\sigma}_0+\mathcal{E}^1).
 \end{split}
\end{equation}

Recall that $\dt \V$ may be computed as in \eqref{dtV_comp}, and that $\dt \W(M,t)=0$.  Then a simple but lengthy computation, using integration by parts, reveals that
\begin{equation}
\begin{split}
 \int \dt^2 v\cdot \dt \mathcal{V} dx & =-\hal\frac{d}{dt}\int  \delta 16\pi^2 \rho \abs{\dx(r^2 \dt v)}^2 dx + \frac{4\ep}{3} 16\pi^2  \rho r^6 \abs{\dx\(\frac{\dt v}{r}\)}^2  dx \\
& + G_0 + Y, 
\end{split}
\end{equation}
where
\begin{equation}
\begin{split}
 G_0 & = 
\int \(\frac{\dt\sigma}{2\rho} + \frac{2v}{r}\) \left\{ \delta 16\pi^2 \rho \abs{\dx(r^2 \dt v)}^2 + \frac{4\ep}{3} 16\pi^2   \rho r^6 \abs{\dx\(\frac{\dt v}{r}\)}^2 \right\} dx \\
& + \int \rho r^3 \dx\(\frac{v}{r}\) \left\{ \delta 32\pi^2 \frac{\dt v}{r} \dx(r^2 \dt v) -\frac{4\ep}{3} 16\pi^2 \frac{ \dt v}{r} r^3 \dx\( \frac{\dt v}{r} \) \right\} dx,
\end{split}
\end{equation}
and $Y = Y_1 + Y_2$ with 
\begin{equation}
\begin{split}
 Y_1 & = -16 \pi^2 \int \left[ \delta \dt \sigma \dx(r^2 v)  + \delta \rho \dx(2rv^2) \right]\dx(r^2 \dt^2 v)  dx\\
&  -16 \pi^2 \int \left[\frac{4\ep}{3} \dt(\rho r^3) \dx \left(\frac{v}{r}\right) - \frac{4\ep}{3} \rho r^3 \dx \left(\frac{v^2}{r^2} \right) \right]\dx(r^2 \dt^2 v) dx, \\
Y_2 & =   32 \pi^2 \int rv \dt^2 v \dx \left[ \delta \rho \dx(r^2 v) + \frac{4\ep}{3} \rho r^3 \dx \left( \frac{v}{r}\right)  \right]dx.
\end{split}
\end{equation}
Let us define the quantity $Q$ so that $Y_1 =  -16 \pi^2 \int Q \dx(r^2 \dt^2 v) dx$, i.e. $Q$ is the sum of the bracketed terms in the $Y_1$ integrand.  Then we may compute
\begin{equation}
\begin{split}
 Y_1 &= \frac{d}{dt} \int -16 \pi^2 \dx(r^2 \dt v) Q dx + \int 16 \pi^2 \left(\dx(2rv \dt v)Q + \dx(r^2 \dt v) \dt Q \right)  dx \\
 & := -\frac{d}{dt} \F^3_1 + G_1.
\end{split}
\end{equation}
Similarly, we have that
\begin{equation}
\begin{split}
 Y_2 &= \frac{d}{dt}\int -16 \pi^2 \dx(r^2 \dt v) \frac{2v}{r} \left[ \delta \rho \dx(r^2 v) + \frac{4\ep}{3} \rho r^3 \dx\left(\frac{v}{r}\right)\right] dx \\
 & + 16\pi^2 \int \left[\dx(2 r v \dt v) \frac{2 v}{r} \W + \dx(r^2 \dt v) \dt\left(  \frac{2 v}{r}\right) \W + \dx(r^2 \dt v)  \frac{2 v}{r} \dt \W\right] dx  \\
& -16 \pi^2 \int r^2 \dt^2 v \W \dx\left(  \frac{2 v}{r}\right)  dx \\
& =  -\frac{d}{dt} \F^3_2 + G_2.
\end{split}
\end{equation}

Combining the above, we find that \eqref{claim4.5} holds with $\F^3 = \F^3_1 + \F^3_2$ and $G = G_0 + G_1 + G_2$.  To complete the proof of the claim, we note that the estimates \eqref{claim4.5_2} follow from the definition of $\F^3$ and $G$, using  \eqref{dtSig} to replace $\dt^2\sigma$ by other terms.

\textbf{Step 3} -- Conclusion

The only term that remains is 
\begin{equation}
 \int \dt^2 v \dt^2 v dx = \D^3.
\end{equation}
With this, all of the terms in \eqref{l44_1} are accounted for.  We may then combine the analysis of Steps 1 and 2 to deduce the estimate \eqref{E^3}.

\end{proof}

We now bootstrap more estimates. First, we multiply \eqref{rr} by $\frac{1}{\rho}(1-\frac{r_0}{r})$ and integrate to get 
\begin{equation}\label{E^0rb}
\frac{d}{dt}\mathcal{E}^{0,r}_b\le (\eta+C\theta_1)\mathcal{E}^{0,r}_b+ C\mathcal{E}^{1,v},
\end{equation}
where we have used \eqref{ID} to control $\int \abs{v}^2/(r^2 \rho) dx \le C \E^{1,v}$.  Next, by multiplying the equation \eqref{dxSig} by $\frac{1}{\rho_0}\dx(\sir)$ and integrating, we get 
\begin{equation}
\begin{split}\label{E^1b}
\frac{d}{dt}\mathcal{E}^{1,\sigma}_b+\mathcal{D}^{1,\sigma}_b\le(\eta+C\theta_1)\mathcal{E}^{1,\sigma}_b + C(\mathcal{E}^3+\mathcal{E}^{0,\sigma}_{-1}+\mathcal{E}^{0,r}_b).
\end{split}
\end{equation}
Note that here we have again used \eqref{ID} to control $\int \abs{\dt v}^2/(r^2\rho) dx$, which is possible since \eqref{ID} is valid for any choice of $v$, not just solutions.  From the equation \eqref{dxSig} we also see that 
\begin{equation}\label{E^3Sig}
\mathcal{E}^{3,\sigma}_a = \int \(\delta+\frac{4\ep}{3}\)^2 16\pi^2 \frac{r^2}{\rho} \abs{\dx\dt\( \frac{\sigma}{\rho_0} \)}^2 dx \le C ( \mathcal{E}^3 + \mathcal{E}^{1,\sigma}_b + \mathcal{E}^{0,\sigma}_{-1} + \mathcal{E}^0).
\end{equation}
Next, by applying $\dx$ to equation \eqref{contE}, we find that
\begin{equation}\label{dxcontE}
 \frac{\rho_0}{\rho} \dx \dt\(\sir\) +\dx\( \frac{\rho_0}{\rho} \)\dt\(\sir\) +4\pi \dx\( \rho \dx (r^2v)\)=0. 
 \end{equation}
We then use this to get 
\begin{equation}\label{E^3b}
\mathcal{E}^{3,v}_{a_1} = \int  \frac{r^2}{\rho} \abs{\dx( \rho\dx(r^2v))}^2 dx \le C(\mathcal{E}^{3,\sigma}_a + \mathcal{E}^{1,\sigma}_b).
\end{equation}
Since $\int\rho r^6|\dx(\frac{v}{r})|^2 dx\le C\mathcal{E}^{1}$, the equation \eqref{momE} implies that 
\begin{equation}\label{B1}
\mathcal{E}^{3,v}_{a_2} = \int  \rho r^6 \abs{\dx\(\rho r^3 \dx\(\frac{v}{r}\)\)}^2 dx \le C (\mathcal{E}^2 + \mathcal{E}^1 + \mathcal{E}^0).
\end{equation}

We now illustrate how the higher order energy estimates of spatial derivatives of $\dx(\sir)$ and $\dx(\rho \dx(r^2v))$ work. The following lemma concerns the estimate of $\dx(r^4\dx(\sir))$.

\begin{lem} 
\begin{equation}
\begin{split}\label{E^4Sig}
\frac{d}{dt}\mathcal{E}^4+\mathcal{D}^4 &\le({\eta}+C\theta_1)\mathcal{E}^4 + C(\mathcal{E}^3+\mathcal{E}^{0,r}+\mathcal{E}^{0,r}_b+\mathcal{E}^{0,\sigma}_{-1})+C\theta_1(\mathcal{E}^{1,\sigma}_b+\mathcal{E}^{3,\sigma}_a)
\end{split}
\end{equation}
\end{lem}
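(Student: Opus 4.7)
The goal is to produce a higher-order spatial estimate for $\sir$ by controlling $\dx(r^4 \dx(\sir))$ in the weighted $L^2$ norm defining $\mathcal{E}^4$. The natural starting point is equation \eqref{dxSig}, obtained from the momentum equation after replacing $\rho\dx(r^2 v)$ via continuity. I would first multiply \eqref{dxSig} by $r^2$ and use the identity $r^4 \dt\dx(\sir) = \dt[r^4\dx(\sir)] - 4 r^3 v \dx(\sir)$, setting $Y := r^4 \dx(\sir)$, to convert it into an equation of the schematic form
\begin{equation*}
(\delta + \tfrac{4\ep}{3}) 4\pi \tfrac{\rho_0}{\rho} \dt Y + 4\pi K\gamma \rho_0^\gamma Y = \mathcal{R},
\end{equation*}
where $\mathcal{R}$ collects $-r^2 \dt v$, the gravitational correction $-x(r_0^4-r^4)/r_0^4$, lower-order pressure contributions involving $\sir$ and $(\sir)^2$, and the commutator $4(\delta+\tfrac{4\ep}{3}) 4\pi (\rho_0/\rho) r^3 v \dx(\sir)$.

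Applying $\dx$ to this equation yields
\begin{equation*}
(\delta + \tfrac{4\ep}{3}) 4\pi \tfrac{\rho_0}{\rho} \dt \dx Y + 4\pi K\gamma \rho_0^\gamma \dx Y = \dx \mathcal{R} - (\delta+\tfrac{4\ep}{3})4\pi \dx\bigl(\tfrac{\rho_0}{\rho}\bigr) \dt Y - 4\pi K\gamma \dx(\rho_0^\gamma) Y.
\end{equation*}
I would test this equation against the multiplier $r^2 \rho \dx Y$ and integrate over $[0,M]$. The first term on the left becomes $\tfrac{d}{dt}$ of a weighted $L^2$ norm of $\dx Y$ equivalent to $\mathcal{E}^4$, up to a commutator of size $C\theta_1\mathcal{E}^4$ coming from $\dt(r^2)=2rv$ together with \eqref{assumption2}; the pressure term on the left produces exactly $\mathcal{D}^4$. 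Boundary contributions from any integration by parts vanish, since $\rho$ vanishes at $x=M$ by \eqref{LEL} and $r$ vanishes at $x=0$.

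Each term in $\dx\mathcal{R}$ and each commutator must be matched against one of the energies on the right-hand side. The contribution involving $\dx(r^2 \dt v)$ is integrated by parts once more, shifting $\dx$ off $\dt v$ onto $r^2\rho\dx Y$, so that only $\dt v$ itself appears; this is then bounded by Cauchy's inequality as $\eta\mathcal{E}^4 + C\mathcal{E}^3$. The gravitational term $\dx[x(r_0^4-r^4)/r_0^4]$ is expanded using \eqref{r_0r} and controlled by $C(\mathcal{E}^{0,r} + \mathcal{E}^{0,r}_b)$. The differentiated lower-order pressure pieces and $\dx(\rho_0^\gamma) Y$ are handled using the boundary asymptotics \eqref{LEL}, which keeps all $\rho_0$-weighted integrals integrable, and yield $C\mathcal{E}^{0,\sigma}_{-1} + C\theta_1\mathcal{E}^{1,\sigma}_b$. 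The commutators involving $\dx(\rho_0/\rho)\dt Y$ and $r^3 v \dx(\sir)\cdot (\rho_0/\rho)$ are bounded by $C\theta_1\mathcal{E}^{3,\sigma}_a$ after writing $\dt Y = 4r^3 v \dx(\sir) + r^4 \dt\dx(\sir)$ and using \eqref{assumption2} on the pointwise bounds for $\dx(\sir)$ and $v/r$.

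The main obstacle is the careful bookkeeping: each of the many error terms must land in one of the allowed buckets, while every integration by parts must be legitimate (no dropped boundary contributions, no singular weights near $x=0$ or $x=M$). A secondary delicate issue is the $\dx(r^2\dt v)$ term, which naively demands one more derivative of $\dt v$ than $\mathcal{E}^3$ controls; the integration by parts trick above, together with the fact that $r^2\rho\dx Y$ has a bounded weighted $L^2$ norm, is what circumvents this loss-of-derivatives obstruction. Once all errors are placed, Cauchy's inequality with small parameter $\eta$ absorbs the $\eta\mathcal{E}^4$ contributions into the left-hand side and yields the stated estimate.
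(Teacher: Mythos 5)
Your overall strategy --- multiply \eqref{dxSig} by $r^2$, differentiate in $x$, and test against a $\rho$-weighted copy of $\dx(r^4\dx(\sigma/\rho_0))$ --- is the same as the paper's, and your reorganization via $Y := r^4\dx(\sigma/\rho_0)$ is a legitimate repackaging of the paper's commutator bookkeeping. However, your multiplier $r^2\rho\dx Y$ carries an extra factor of $r^2$ relative to the paper's multiplier $\rho\dx(r^4\dx(\sigma/\rho_0))$, and this matters. With your multiplier the leading time-derivative contribution is $\tfrac{d}{dt}$ of $\hal\int(\delta+\tfrac{4\ep}{3})4\pi\, r^2\rho_0\abs{\dx Y}^2\,dx$, which is \emph{not} equivalent to $\E^4=\hal\int(\delta+\tfrac{4\ep}{3})4\pi\,\rho_0\abs{\dx Y}^2\,dx$: since $r(x)\sim x^{1/3}\to 0$ as $x\to 0$, the $r^2$-weighted quantity is strictly weaker and cannot control $\E^4$. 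You evidently chose $r^2\rho\dx Y$ so that the pressure term reproduces $\D^4$ exactly, but in the bootstrap it is $\tfrac{d}{dt}\E^4$ that must appear on the nose; the dissipation only needs to be nonnegative, and the quantity $\int 4\pi K\gamma\rho\rho_0^\gamma\abs{\dx Y}^2\,dx$ produced by the paper's multiplier already dominates a constant times $\D^4$ because $r^2$ is bounded above. So the paper's multiplier is the right one, and your derived inequality would bound the wrong energy.

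A second issue: your ``loss-of-derivatives'' worry about $\dx(r^2\dt v)$ is unfounded, and the integration by parts you propose to circumvent it would actually \emph{introduce} a derivative loss. The energy $\E^3$ already contains $\int 16\pi^2\rho\abs{\dx(r^2\dt v)}^2\,dx$, so the paper controls this contribution by a direct Cauchy--Schwarz estimate of the form $\eta\E^4 + C\E^3$, with no integration by parts at all. If you integrate $\dx$ off $r^2\dt v$ onto the multiplier, the resulting factor $\dx(r^2\rho\dx Y)$ contains $\dx^2 Y = \dx^2(r^4\dx(\sigma/\rho_0))$, a second spatial derivative that nothing in $\E^4$ controls. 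The direct Cauchy--Schwarz used in the paper is both simpler and correct.
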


\begin{proof}
 First, we multiply \eqref{dxSig} by $r^2$ and apply $\dx$ to get
\begin{multline}\label{dxSigx}
\(\delta + \frac{4\ep}{3}\) 4\pi \left\{\underbrace{\frac{\rho_0}{\rho}\dx\( r^4 \dt \dx \( \sir \) \)}_{(i)} + 2\underbrace{\dx\(\frac{\rho_0}{\rho}\) r^4 \dt \dx \(\sir\)}_{(ii)} \right. \\
\left. 
+ \underbrace{ \dx \(r^4\dx \(\frac{\rho_0}{\rho} \) \) \dt\(\sir\)}_{(iii)} \right\} 
+ \underbrace{\dx(r^2\dt v)}_{(iv)}
+ \underbrace{\frac{(r_0^4-r^4)}{r_0^4}}_{(v)} 
- \underbrace{ \frac{xr^4}{\pi r_0^7 \rho} \( 1-\(\frac{r_0}{r}\)^3 + \frac{\sigma}{\rho_0} \)}_{(vi)} \\
+ 4\pi  \underbrace{K\gamma\rho_0^\gamma  \dx\( r^4 \dx\( \frac{\sigma}{\rho_0} \) \) }_{(vii)} 
+  4\pi \left\{ 2K\gamma r^4 \dx( \rho_0^\gamma) \dx\( \frac{\sigma}{\rho_0} \) +K\gamma \dx(r^4\dx(\rho_0^\gamma)) \sir \right\}\\
+4\pi \left\{ 2 r^4 \dx(a_\ast \rho_0^\gamma) \sir \dx\(\frac{\sigma}{\rho_0} \) +  
 \dx(r^4\dx(a_\ast\rho_0^\gamma)) \(\frac{\sigma}{\rho_0}\)^2 \right\} \\
+ 4\pi \left\{  2 a_\ast \rho_0^\gamma \frac{\sigma}{\rho_0} \dx\(r^4 \dx\(\sir\)\) 
+ 2r^4 \dx\(a_\ast\rho_0^\gamma \frac{\sigma}{\rho_0} \)\dx\(\sir\) \right\} =0.
\end{multline}
The energy inequality \eqref{E^4Sig} can be derived as in  Step 2 of Lemma \ref{lem4.2} by multiplying \eqref{dxSigx} by $\rho \dx(r^4\dx(\sir))$ and integrating over $x$.   We provide the details on how $(i)-(vii)$ can be treated; other terms can be estimated similarly. 
\begin{multline}
\int \( \delta + \frac{4\ep}{3} \)4\pi\,(i)\cdot \rho \dx\(r^4\dx\(\sir\)\) dx \\
= \hal \frac{d}{dt} \int \(\delta + \frac{4\ep}{3}\) 4\pi \rho_0 \abs{\dx\(r^4\dx\(\sir\)\)}^2 dx \\ - \(\delta + \frac{4\ep}{3} \) 4 \pi \underbrace{\int \rho_0  \dx\(\dt(r^4) \dx\(\sir\)\)  \dx\(r^4\dx\(\sir\)\)dx}_{(\ast)}.
\end{multline}
Since $\dt(r^4)=4\frac{v}{r}r^4$, 
\begin{equation}
(\ast) = \int 4 \frac{v}{r}\rho_0 \abs{\dx\(r^4 \dx \(\sir\)\)}^2 dx + \int 4  \dx\(\frac{v}{r}\) r^4 \dx\(\sir\) \rho_0  \dx \(r^4\dx\(\sir\)\)dx,
\end{equation}
and since $\abs{\frac{v}{r}}$ and $\abs{\rho r^3\dx(\frac{v}{r})}$ are bounded by $\theta_1$, 
\begin{equation}
|(\ast)|\le C\theta_1 (\mathcal{E}^4+\mathcal{E}^{1,\sigma}_b).
\end{equation}
For $(ii)$ we write  
\begin{equation}
\int (ii)\cdot \rho \dx\(r^4\dx\(\sir\)\) dx = -\int  \frac{\rho_0 r^3\dx\(\frac{\sigma}{\rho_0}\) }{1+\sir} \cdot \frac{r}{\sqrt{\rho}} \dt \dx\(\sir\) \cdot \sqrt{\rho} \dx\(r^4\dx\(\sir\)\) dx
\end{equation}
and therefore,
\begin{equation} 
\abs{\int (ii)\cdot \rho \dx(r^4\dx(\sir)) dx } \le C\theta_1({\mathcal{E}^{3,\sigma}_a}+{\mathcal{E}^4}).
\end{equation}
It is easy to see that 
\begin{equation}
\begin{split}
\abs{ \int [(iii)+(iv)+(v)-(vi)]\cdot \rho \dx\(r^4\dx\(\sir\)\) dx} \\
\le (\frac{\eta}{2}+C\theta_1)\mathcal{E}^4 + C(\mathcal{E}^3+\mathcal{E}^{0,r}+\mathcal{E}^{0,r}_b+\mathcal{E}^{0,\sigma}_{-1}).
\end{split}
\end{equation}
Finally, $(vii)$ forms the dissipation $\mathcal{D}^4$. 
\end{proof}

We also get an estimate for $\dx(r^4\dt\dx(\frac{\sigma}{\rho_0}))$ from \eqref{dxSigx}: 
\begin{equation}
\begin{split}\label{E^5}
\mathcal{E}^4_{a} &= \int \(\delta+\frac{4\ep}{3}\) 4\pi\rho_0  \abs{ \dx \( r^4 \dt\dx\( \frac{\sigma}{\rho_0} \)\)}^2dx\\
& \le 
\theta_1^2\mathcal{E}^{3,\sigma}_{a_1} +\mathcal{E}^4+\mathcal{E}^3+\mathcal{E}^{1}+\mathcal{E}^{0,r}_b+\mathcal{E}^{0,\sigma}_{-1}.
\end{split}
\end{equation}

To derive an estimate of the third spatial derivatives of $v$, we first multiply 
\eqref{dxcontE} by $r^4$ and then apply $\dx$: 
\begin{equation}
\begin{split}
 \frac{\rho_0}{\rho} \dx\( r^4\dx \dt\(\sir\)\) + 2r^4 \dx\(\frac{\rho_0}{\rho}\) \dx\dt\(\sir\) +\dx\( r^4 \dx\(\frac{\rho_0}{\rho}\)\) \dt\(\sir\) \\ 
+4\pi \dx\(r^4\dx\(\rho\dx (r^2v)\)\)=0. 
 \end{split}
 \end{equation}
Thus, we obtain 
\begin{equation}\label{E^6}
\mathcal{E}^4_{a_2} = \int 16\pi^2\rho_0 \abs{\dx(r^4\dx(\rho\dx (r^2v)))}^2 dx\le \mathcal{E}^{4}_{a_1}+\theta_1^2(\mathcal{E}^{3,\sigma}_a+\mathcal{E}^4).
\end{equation}

We now present the higher order energy estimates. We start with $\mathcal{E}^5$ and $\mathcal{E}^5_a$.

\begin{lem}\label{lem4.7}
\begin{equation}\label{E^7}
\begin{split}
\frac{d}{dt}\mathcal{E}^5+\mathcal{D}^5\le \,& (\eta+C\theta_1)\mathcal{E}^5 + (\hal+C\theta_1)\mathcal{D}^5 + C\theta_1^2\mathcal{E}^3
\\&+ C\theta_1(\mathcal{E}^{3,\sigma}_a+\mathcal{E}^3+\mathcal{E}^2+\mathcal{E}^1)+ C\mathcal{E}^2
\end{split}
\end{equation}
\end{lem}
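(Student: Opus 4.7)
The strategy is to mirror the derivation of Lemma \ref{lem4.3} at one temporal order higher. Apply $\dt^2$ to the momentum equation \eqref{momE} (equivalently, apply $\dt$ to \eqref{dt2v}) to obtain an evolution equation for $\dt^3 v$, then multiply by $\dt^2 v$ and integrate over $x \in [0,M]$. The terms that arise split into four groups, analogous to those treated in Lemmas \ref{lem4.1}--\ref{lem4.3}: a pure time-derivative contribution, a principal pressure coupling, the viscous term, and a collection of gravity variations and nonlinear commutators.

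For the first two groups, the product $\dt^3 v \cdot \dt^2 v$ integrates at once to $\hal \dt \int |\dt^2 v|^2\,dx$, furnishing half of $\dt\mathcal{E}^5$. The other half arises from the leading pressure contribution $4\pi r^2 \dx\{K\gamma \rho_0^\gamma \dt^2(\sir)\}$: integration by parts in $x$ (boundary contributions vanish since $\rho_0^\gamma(M)=0$ and $r^2(0)=0$) yields $-\int 4\pi K\gamma \rho_0^\gamma \dt^2(\sir)\,\dx(r^2 \dt^2 v)\,dx$. Differentiating \eqref{dtSig} in $t$ produces the identity $4\pi \rho\,\dx(r^2 \dt^2 v) = -\frac{\rho_0}{\rho}\dt^3(\sir) + (\text{lower order})$; substituting and using $\rho_0^{\gamma+1}/\rho^2 = \rho_0^{\gamma-1}/(1+\sir)^2$ produces $\hal \dt \int \frac{K\gamma \rho_0^{\gamma-1}}{(1+\sir)^2}|\dt^2(\sir)|^2 dx$, up to a $\theta_1$-small commutator that absorbs into $C\theta_1\mathcal{E}^5$.

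For the viscosity, \eqref{visc} gives
\[
 \dt^2 \V = 16\pi^2 r^2 \dx(\dt^2 \W) + \tfrac{4\ep}{3}\cdot 12\pi\, r^2 \dx\!\left(\tfrac{\dt^2 v}{r}\right) + (\text{commutators where } \dt \text{ hits } r).
\]
Because $\W(M,t)=0$ for all $t$, we have $\dt^2 \W(M,t)=0$, so integrating $16\pi^2 r^2 \dx(\dt^2\W)\,\dt^2 v$ by parts in $x$ produces no boundary term. The top-order piece of $\dt^2 \W$, namely $\delta\rho\,\dx(r^2 \dt^2 v) + \frac{4\ep}{3}\rho r^3\,\dx(\dt^2 v/r)$, yields exactly $-\mathcal{D}^5$; the remaining pieces of $\dt^2 \W$ (arising when $\dt$'s fall on $\rho$, $r^2$, $r^3$, or $1/r$) are estimated by Cauchy-Schwarz against the $L^\infty$ smallness in \eqref{assumption2}, contributing the $(\hal + C\theta_1)\mathcal{D}^5$, $C\theta_1^2 \mathcal{E}^3$, and $C\theta_1\mathcal{E}^{3,\sigma}_a$ terms on the right-hand side of the claim.

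The remaining pressure commutators (from $\dt^2$ hitting the external $r^2$ factor and from the nonlinear piece $a_\ast \rho_0^\gamma (\sir)^2$) and the gravity variation $\dt^2[x(r_0^4-r^4)/(r^2 r_0^4)]$ expand into finitely many products of $v, \dt v, \dt^2 v$ with bounded or $\theta_1$-small prefactors; Cauchy-Schwarz absorbs them into $(\eta + C\theta_1)\mathcal{E}^5 + C\mathcal{E}^2 + C\theta_1(\mathcal{E}^3 + \mathcal{E}^2 + \mathcal{E}^1)$, invoking \eqref{dxSig} to trade $\dt\dx(\sir)$ for $\mathcal{E}^{3,\sigma}_a$-level quantities when it appears. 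The main obstacle is the systematic commutator bookkeeping at this higher temporal order: several terms couple $\dt^2 v$ to $\dt\dx(\sir)$, which is controlled only through $\mathcal{E}^{3,\sigma}_a$, and preserving the favorable form $C\theta_1^2\mathcal{E}^3$ rather than $C\theta_1\mathcal{E}^3$ in the worst commutators requires splitting $\theta_1$-smallness between two pointwise factors of \eqref{assumption2} via Cauchy-Schwarz instead of extracting a single $\theta_1$ at the first opportunity.
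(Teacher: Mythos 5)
Your proposal is correct and follows the same route the paper takes. The paper's own proof of Lemma~\ref{lem4.7} is a two-sentence sketch: apply $\dt$ to \eqref{dt2v} to obtain \eqref{dt3v}, multiply by $\dt^2 v$, and integrate over $x$, ``proceeding as in the proofs of Lemmas~\ref{lem4.1} and \ref{lem4.3}.'' You identify exactly this mechanism — the $\dt^3 v \cdot \dt^2 v$ pairing giving half of $\dt\mathcal{E}^5$, the integrated-by-parts pressure term combined with the time-differentiated continuity equation \eqref{dtSig} giving the other half, and $\dt^2\W(M,t)=0$ allowing the boundary-clean viscous integration by parts that produces $-\mathcal{D}^5$ — and your accounting of which error terms land in which slot of the right-hand side is consistent with the stated estimate.
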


\begin{proof}
We apply $\dt$ to \eqref{dt2v} to see that
\begin{multline}\label{dt3v}
\dt^3 v + 4\pi r^2 \dx \left\{ \(K\gamma +2 a_\ast \sir\) \rho_0^\gamma \dt^2\(\frac{\sigma}{\rho_0}\) + \rho_0^\gamma \left[a_\ast \(\dt\(\frac{\sigma}{\rho_0}\)\)^2 
\right. \right. \\ \left. \left.
+ 4\dt a_\ast \frac{\sigma}{\rho_0}\dt\(\sir\) + \dt^2 a_\ast \(\frac{\sigma}{\rho_0}\)^2 \right] \right\} 
+ 16\pi v r \left \{ K\gamma\rho_0^\gamma \dt\dx\(\sir\) + K\gamma \dx(\rho_0^\gamma) \dt\(\sir\)  \right\}  \\
+ 16\pi v r    \dt \left[ \dx(a_\ast\rho_0^\gamma)\(\sir\)^2 + 2(a_\ast\rho_0^\gamma) \sir \dx\(\sir\) \right]  \\
+8 \pi (r\dt v+ v^2) \left\{ K\gamma\rho_0^\gamma \dx\(\frac{\sigma}{\rho_0}\) + K \gamma \dx(\rho_0^\gamma) \frac{\sigma}{\rho_0} + \dx(a_\ast\rho_0^\gamma) \(\frac{\sigma}{\rho_0}\)^2 + 
2 a_\ast \rho_0^\gamma \frac{\sigma}{\rho_0} \dx\(\sir\) \right\} \\
-\frac{2x(r_0^4-r^4)\dt v}{r^3r_0^4} -\frac{4x r\dt v}{r_0^4}- \frac{12xv^2}{r_0^4}+\frac{6x(r_0^4-r^4)v^2}{r^4r_0^4} 
= \dt^2 \V. 
\end{multline} 
We derive the energy estimate of \eqref{E^7} from \eqref{dt2v} by proceeding as  in the proofs of  Lemmas \ref{lem4.1} and \ref{lem4.3}.  That is, we multiply the resulting equation  by $\dt^2 v$ and integrate over $x$, integrating by parts in some terms to recover  $d \E^5/dt$, $\D^5$, and some error terms that can be estimated by the right side of \eqref{E^7}.  Since the method of proof is already recorded in Lemmas \ref{lem4.1} and \ref{lem4.3}, we omit further details.

\end{proof}

An estimate of $\dx(\rho \dx(\dt[r^2 v])) $ can be obtained through the equation \eqref{dt2v}: 
\begin{equation}\label{E^8}
\mathcal{E}^{5,v}_a\le C(\mathcal{E}^5+ \mathcal{E}^{3,\sigma}_{a}+\mathcal{E}^1) +C\theta_1 ( \mathcal{E}^{3,v}_{a_1}+\mathcal{E}^{3,\sigma}_a+\mathcal{E}^1+\mathcal{E}^0).
\end{equation}
We now bootstrap to control $\dt^2(\sir)$.  We apply $\dt$ to \eqref{dtSig} to get 
\begin{equation}\label{dt3Sig}
\begin{split}
\dt^3\(\sir\)=&-4\pi\frac{\rho}{\rho_0}\rho\dx(r^2\dt^2v)- 24\pi\frac{\rho}{\rho_0}\underbrace{\rho\dx(rv\dt v)}_{(a)}-8\pi
\frac{\rho}{\rho_0}\underbrace{\rho\dx(v^3)}_{(b)} \\
&+ 6\frac{\dt\sigma}{\rho}\dt^2\(\sir\)-6\frac{(\dt\sigma)^3}{\rho_0\rho^2}.
\end{split}
\end{equation}
Note that $(a)=\frac{v}{r}\rho\dx(r^2\dt v) +\rho r^3\dx(\frac{v}{r}) \frac{\dt v}{r}$ and $(b)=3 r^3 \rho (\frac{v}{r})^2\dx(\frac{v}{r}) +\frac{3}{4\pi}(\frac{v}{r})^3 $ and thus by multiplying \eqref{dt3Sig} by $\frac{1}{\rho_0}\dt^2(\sir)$ and integrating, we obtain 
\begin{equation}\label{E^7Sig}
\frac{d}{dt}\mathcal{E}^{5,\sigma}_{-1} \le (\eta+C\theta_1)\mathcal{E}^{5,\sigma}_{-1}+ C\mathcal{D}^5 + C\theta_1^2(\mathcal{E}^3+\mathcal{E}^1+\mathcal{E}^{2,\sigma}_{-1}).
\end{equation}

Next, we take $\dt$ of \eqref{dxSig} to see that 
\begin{multline}\label{dxdtSig}
\(\delta + \frac{4\ep}{3}\) 4\pi r^2 \left\{ \frac{\rho_0}{\rho} \dt^2 \dx \(\sir\) +\dx\( \frac{\rho_0}{\rho} \) \dt^2\(\sir\)  +\dt\( \frac{\rho_0}{\rho} \) \dt\dx \( \sir \) 
\right. \\ \left.
+ \dt\dx \( \frac{\rho_0}{\rho} \) \dt\(\sir\) \right\} 
+\( \delta + \frac{4\ep}{3}\)  \frac{v}{r} 8 \pi r^2 \left\{ \frac{\rho_0}{\rho} \dt\dx\(\sir\) + \dx\(\frac{\rho_0}{\rho}\) \dt\(\sir\) \right\} \\
+ \dt^2 v-2\frac{v}{r} \(\frac{x}{r^2}+\frac{xr^2}{r_0^4}\)
+ 4\pi r^2 \left \{ K\gamma\rho_0^\gamma \dt\dx \(\frac{\sigma}{\rho_0}\) + K\gamma \dx(\rho_0^\gamma) \dt\( \frac{\sigma}{\rho_0} \) \right\} \\  
+ 4\pi r^2 \dt\left[ \dx(a_\ast\rho_0^\gamma) \(\frac{\sigma}{\rho_0}\)^2 
+ 2a_\ast \rho_0^\gamma \frac{\sigma}{\rho_0} \dx\(\sir\) \right] \\
+\frac{v}{r} 8\pi r^2 \left\{ K\gamma \rho_0^\gamma \dx\(\frac{\sigma}{\rho_0}\) + K \gamma \dx\( \rho_0^\gamma \) \frac{\sigma}{\rho_0} + \dx(a_\ast\rho_0^\gamma) \(\frac{\sigma}{\rho_0} \)^2 +
2 a_\ast \rho_0^\gamma \frac{\sigma}{\rho_0} \dx\(\sir\) \right\} =0.
\end{multline}
Therefore, by squaring \eqref{dxdtSig} and integrating, we find that
\begin{equation}\label{4.44}
\begin{split}
\int \(\delta + \frac{4\ep}{3}\)^2 16\pi^2 r^4 \abs{\dt^2 \dx\(\sir\)}^2 dx \le C(\mathcal{E}^5+\mathcal{E}^1) + C\theta_1^2(\mathcal{E}^{5,\sigma}_{-1} + \mathcal{E}^{3,\sigma}_a + \mathcal{E}^1+\mathcal{E}^{0,\sigma}_0 ). 
\end{split}
\end{equation}
Also, by first dividing  \eqref{dxdtSig} by $r$ and then squaring, we obtain 
\begin{equation}\label{E^9b}
\begin{split}
\mathcal{E}^{6,\sigma}_a = \int \(\delta + \frac{4\ep}{3}\)^2 16\pi^2 r^2 \abs{\dt^2\dx\(\sir\)}^2 dx & \le C(\mathcal{E}^6+\mathcal{E}^{3,\sigma}_a+\mathcal{E}^{2,\sigma}_{-1}+\mathcal{E}^1)\\
 &\;+C\theta_1^2(\mathcal{E}^{1,\sigma}_{b}+\mathcal{E}^{3,\sigma}_a+\mathcal{E}^1+\mathcal{E}^{0,\sigma}_{-1}).
\end{split}
\end{equation}

Now we record an estimate of $\mathcal{E}^6$.   

\begin{lem}\label{lem4.8}
There exists an $\F^6$ so that
\begin{equation}\label{E^9}
\begin{split}
\frac{d}{dt}[\mathcal{E}^6+&\widetilde{F}] +\mathcal{D}^6\le (\eta+C\theta_1)\mathcal{E}^6 +(\frac{1}{2}+C\theta_1)\mathcal{D}^6 \\
&+ C(\mathcal{E}^5+\mathcal{E}^{5,\sigma}_{-1}+\mathcal{E}^{1,\sigma}_b+\mathcal{E}^{2,\sigma}_0+\mathcal{E}^{0,\sigma}_0+\mathcal{E}^3+\mathcal{E}^2+\mathcal{E}^1).
\end{split}
\end{equation}
Moreover,  $\abs{\F^6} \le C\theta_1(\mathcal{E}^6+\mathcal{E}^3+\mathcal{E}^1)$. 
\end{lem}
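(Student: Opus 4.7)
The plan is to mirror the structure of Lemma \ref{lem4.4} at one higher temporal order. I would apply $\dt^2$ to the momentum equation \eqref{momE}, which yields equation \eqref{dt3v}, having the schematic form $\dt^3 v + (\text{pressure terms in } \dt^2(\sir)) + (\text{gravity-type terms in } v, \dt v, \sir) = \dt^2\V$. I then multiply by $\dt^3 v$ and integrate over $[0,M]$; the top-order term immediately produces $\int |\dt^3 v|^2 dx = \mathcal{D}^6$ on the left.

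For the pressure-type contributions I would use the time-differentiated version of \eqref{dxSig}, namely \eqref{dxdtSig}, to replace $\dt^2 \dx(\sir)$ by expressions involving $\dt^2 v$, lower-order time derivatives of $\sir$, spatial derivatives of $\sir$, and velocity nonlinearities. After Cauchy-Schwarz, the factor involving $\dt^2 v$ can be absorbed into $\tfrac14 \mathcal{D}^6$, while the remaining pieces are bounded in absolute value by $\eta \mathcal{E}^6 + C\theta_1 \mathcal{E}^6$ plus a linear combination of $\mathcal{E}^{5,\sigma}_{-1}, \mathcal{E}^{2,\sigma}_0, \mathcal{E}^{1,\sigma}_b, \mathcal{E}^{0,\sigma}_0, \mathcal{E}^3, \mathcal{E}^2, \mathcal{E}^1$, which matches the right-hand side of \eqref{E^9}. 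The gravity-type terms in \eqref{dt3v} (the $x/r_0^4$ contributions and their $v$-weighted variants) are controlled by $C(\mathcal{E}^5 + \mathcal{E}^3 + \mathcal{E}^1)$ after Cauchy-Schwarz combined with \eqref{assumption2} and \eqref{ID}, just as in Lemma \ref{lem4.3}.

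For the viscous contribution I would expand $\dt^2\V = 16\pi^2 r^2 \dx(\dt^2\W) + \frac{4\ep}{3} 12\pi r^2 \dx(\dt^2 v/r) + \text{commutators}$, where two differentiations of the boundary identity $\W(M,t)=0$ give $\dt^2\W(M,t)=0$, so integration by parts against $\dt^3 v$ introduces no boundary contributions. The principal piece yields $-\tfrac{d}{dt}\mathcal{E}^6$ after reorganizing the time derivative of the weights, and the error generated by moving $\dt$ through $\rho, r^6$ is $\theta_1$-small relative to $\mathcal{E}^6$. The commutator pieces involve quadratic products of $\dx(r^2 \dt v), \dx(\dt v/r), \W, \dt\W$ against $\dt^3 v$; each such term is rewritten as $\frac{d}{dt}$ of an integral (a summand of $\F^6$) of the form $\int -16\pi^2 \dx(r^2 \dt^2 v)\, Q \, dx$ (where $Q$ packages the commutator coefficient) plus a residual $G$ containing no $\dt^3$ factors. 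The residual is estimated exactly as in Step 2 of Lemma \ref{lem4.4}, providing the $C\theta_1\mathcal{D}^6$ and the $C(\mathcal{E}^5+\mathcal{E}^{5,\sigma}_{-1}+\cdots)$ contributions. The bound $|\F^6| \le C\theta_1(\mathcal{E}^6+\mathcal{E}^3+\mathcal{E}^1)$ follows immediately from this construction since each factor in $\F^6$ is controllable by $\sqrt{\mathcal{E}^6},\sqrt{\mathcal{E}^3},\sqrt{\mathcal{E}^1}$, weighted by a small factor drawn from \eqref{assumption2}.

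The main obstacle will be the combinatorial bookkeeping of commutators: two time derivatives landing on the quadratic viscous structure $\rho\dx(r^2 v)$ and $\rho r^3 \dx(v/r)$ produce substantially more cross terms than in Lemma \ref{lem4.4}, and each must be correctly identified as either part of $\frac{d}{dt}\F^6$, part of $\mathcal{D}^6$-absorbable error, or part of the right-hand side of \eqref{E^9}. A minor but essential technical subtlety is that the replacements of $\dt^2\sigma$ arising in commutators must be done through \eqref{dt3Sig} or \eqref{dtSig} (rather than differentiating $\sigma$ directly), so as to avoid generating spatial derivatives of $v$ of inadmissibly high order; once this is done, the auxiliary quantities $\mathcal{E}^{5,\sigma}_{-1}, \mathcal{E}^{6,\sigma}_a$ and the bootstraps already derived earlier in the section are exactly what is needed to close the estimate.
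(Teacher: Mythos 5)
Your proposal takes exactly the route the paper indicates: multiply \eqref{dt3v} by $\dt^3 v$, integrate in $x$, and reproduce the Step 1/Step 2 structure of Lemma \ref{lem4.4} one temporal order higher, using \eqref{dxdtSig} for the pressure block, the vanishing of $\dt^2\W$ at $x=M$ for the boundary-free integration by parts, and a $\frac{d}{dt}\F^6$ extraction of the cubic commutator terms. The only imprecision is the phrase that ``the factor involving $\dt^2 v$ can be absorbed into $\tfrac14\mathcal{D}^6$'': after Cauchy-Schwarz it is the $\dt^3 v$ factor that is absorbed into $\mathcal{D}^6$, while the accompanying $\dt^2 v$ contribution lands in $C\mathcal{E}^5$ on the right of \eqref{E^9} — but since your stated right-hand side does contain $C\mathcal{E}^5$, this is a wording slip rather than a gap.
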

\begin{proof} The energy inequality \eqref{E^9} can be obtained by multiplying \eqref{dt3v} by $\dt^3v$ and integrating over $x$ as done in Lemma \ref{lem4.4}.  We omit further details.
\end{proof}

As seen in the previous estimates in Lemmas \ref{lem4.3}, \ref{lem4.7},  \ref{lem4.4}, and \ref{lem4.8}, the time differentiation of the equation keeps the main structure of the highest order terms as well as  the boundary condition.  Using  the time differentiated equations  \eqref{dt3v} and \eqref{dt3Sig}, we can follow the line of analysis presented in these four lemmas to derive energy inequalities for $\mathcal{E}^7$, $ \mathcal{E}^7_a$,  $\mathcal{E}^{7,\sigma}_{-1}$, $\mathcal{E}^8$ and $\mathcal{E}^{8,\sigma}_a$.  We record these in the following lemma but omit a proof.

\begin{lem}
Let $\E$ be given by \eqref{total_energy}.  We have the following estimates.
\begin{align}\label{E^77}
&\frac{d}{dt}\mathcal{E}^7+\mathcal{D}^7\le \, (\eta+C\theta_1)\mathcal{E}^7 + (\hal+C\theta_1)\mathcal{D}^7 
\\&\quad\quad\quad+ C\theta_1(\mathcal{E}^{6,\sigma}_a+\mathcal{E}^6+\mathcal{E}^5+\mathcal{E}^3+\mathcal{E}^2+\mathcal{E}^1)+ C\mathcal{E}^5\notag\\
\
\label{E^7a}
&\;\mathcal{E}^7_a\le C(\mathcal{E}^7+\mathcal{E}^{6,\sigma}_a+\mathcal{E}^2)+ C\theta_1(\mathcal{E}^{6,\sigma}_a+\mathcal{E}^{3,\sigma}_a+\mathcal{E}^1+\mathcal{E}^{2}+\mathcal{E}^{3,v}_{a_1}+ \mathcal{E}^{5}_a)\\
\label{E^77Sig}
&\frac{d}{dt}\mathcal{E}^{7,\sigma}_{-1} \le (\eta+C\theta_1)\mathcal{E}^{7,\sigma}_{-1}+ C\mathcal{D}^{7} + C\theta_1^2(\mathcal{E}^6+\mathcal{E}^3+\mathcal{E}^{5,\sigma}_{-1})\\
\label{E^88}
&\frac{d}{dt}[\mathcal{E}^8+{\F^8 }] +\mathcal{D}^8\le (\eta+C\theta_1)\mathcal{E}^8 +(\frac{1}{2}+C\theta_1)\mathcal{D}^8 +C(\mathcal{E}-\mathcal{E}^8-\mathcal{E}^{8,\sigma}_a)+C|\mathcal{E}|^2\\
&\quad\quad\text{ where }|{\F^8}|\le C\theta_1\mathcal{E} +C|\mathcal{E}|^2\notag\\
\label{E^8b}
&\;\mathcal{E}^{8,\sigma}_a \le C(\mathcal{E}^8+\mathcal{E}^{6,\sigma}_a+\mathcal{E}^{5,\sigma}_{-1}+\mathcal{E}^3)+C\theta_1^2(\mathcal{E}^{1,\sigma}_{b}+\mathcal{E}^{3,\sigma}_a+\mathcal{E}^1+\mathcal{E}^{0,\sigma}_{-1}) 
\end{align}

\end{lem}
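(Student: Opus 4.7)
The plan is to mirror, at one higher order in time, the arguments already carried out in Lemmas \ref{lem4.3}, \ref{lem4.7}, \ref{lem4.4}, and \ref{lem4.8}. The crucial structural observation is that the boundary condition $\W(M,t)=0$ is preserved under any number of $\dt$'s, so every additional time differentiation still permits integration by parts in the viscous term without producing boundary contributions. Moreover, the top-order structure of \eqref{dt2v} and \eqref{dtSig} is inherited by their time derivatives: $\dt^{i}v$ satisfies an equation of the same form as $v$ with pressure and viscosity operators acting on $\dt^{i}(\sigma/\rho_0)$ and $\dt^{i}v$, plus commutator-type lower-order terms that have already been schematically recorded in \eqref{dt3v} and \eqref{dt3Sig}.

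For \eqref{E^77}, I would apply $\dt$ to \eqref{dt3v} to produce an equation for $\dt^4 v$, multiply it by $\dt^3 v$, and integrate over $[0,M]$. As in the proof of Lemma \ref{lem4.7}, the pressure term reproduces $\hal\dt\int \gamma K\rho_0^{\gamma-1}|\dt^3(\sigma/\rho_0)|^2/(1+\sigma/\rho_0)^2\,dx$ after eliminating $\dx(r^2\dt^3 v)$ through the $\dt^3$-differentiated continuity equation, while the viscous term reproduces exactly $\D^7$ using $\dt^k\W(M,t)=0$. The remaining nonlinear commutators involve products of lower-order time derivatives of $\sigma/\rho_0$, $v/r$, $\rho r^3\dx(v/r)$, and $\dt v/r$, all bounded in $L^\infty$ by $\theta_1$ via \eqref{assumption2}, so they can be absorbed into the terms on the right side of \eqref{E^77}. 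The auxiliary estimate \eqref{E^7a} is then obtained by solving the $\dt^2$-differentiated momentum equation \eqref{dt3v} algebraically for $\dx(\rho\dx(\dt^2[r^2 v]))$, multiplying by $r^2$, squaring, and integrating, exactly mirroring \eqref{E^8}. Similarly, \eqref{E^77Sig} follows by applying $\dt$ to \eqref{dt3Sig}, using the factorizations of $\rho\dx(rv^2)$ and $\rho\dx(v^3)$ provided by \eqref{ID}, then multiplying by $\dt^3(\sigma/\rho_0)/\rho_0$ and integrating; the top-order $\dx(r^2\dt^3v)$-contribution is thrown into $\D^7$.

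For \eqref{E^88} I would duplicate the three-step procedure of Lemma \ref{lem4.4} at the next time level. Step 1 handles the pressure-like terms and the transport-type terms from $\dt^4$ of the momentum equation, using \eqref{dxSig} (and its $\dt$-derivative) to trade $\dt^3\dx(\sigma/\rho_0)$ for $\dt^3 v$ plus lower-order pieces. Step 2 deals with $\dt^3\V$: the top-order part $16\pi^2 r^2\dx(\dt^3\W)$ integrates by parts cleanly into $d\E^8/dt+\D^8$, while the cross-terms of the form $rv\,\dt^3v\,\dx(\ldots)$ or $\rho\dx(2rv\,\dt^2 v)\,\dx(r^2\dt^3 v)$ are not in pure divergence form; I would absorb them by writing them as $\dt(\text{quadratic}) + G$, which defines the correction $\F^8$ and produces a remainder $G$ controlled by the right side of \eqref{E^88}. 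Step 3 simply identifies $\int|\dt^4 v|^2\,dx=\D^8$. Finally, \eqref{E^8b} is the squared-and-integrated version of the $\dt^3$-analog of \eqref{dxdtSig}, with weight $r^2$, exactly paralleling the derivation of \eqref{E^9b} from \eqref{dxdtSig}.

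The main obstacle I anticipate is the bookkeeping in \eqref{E^88}: at fourth order in time, Taylor expansions of $P-P_0$ and $r_0/r$ in $\sigma/\rho_0$ generate quartic nonlinearities of the form $(\dt(\sigma/\rho_0))^2\,\dt^2(\sigma/\rho_0)$, $\dt(\sigma/\rho_0)\,(\dt a_\ast)^2$, and analogous combinations from $\dt^4(r^{-2})$. These cannot all be absorbed by $\theta_1$ alone, which is exactly why the right side of \eqref{E^88} must carry the extra $C|\E|^2$ term, and why $|\F^8|\le C\theta_1\E+C|\E|^2$. Controlling them requires splitting each such product by Cauchy--Schwarz so that one $L^2$ factor is embedded in a lower-order piece of $\E$ while the other is absorbed into either the small prefactor $(\tfrac12+C\theta_1)\D^8$ or into $\E\cdot\E$; the $L^\infty$ bounds furnished by \eqref{assumption2} make every such splitting admissible. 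Once this careful accounting is in place, the five inequalities follow in the same spirit as the earlier lemmas of the section.
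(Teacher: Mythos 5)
Your proposal is correct and follows exactly the route the paper indicates: the paper actually omits the proof of this lemma, stating only that the five inequalities follow ``the line of analysis presented in these four lemmas'' (Lemmas \ref{lem4.3}, \ref{lem4.7}, \ref{lem4.4}, \ref{lem4.8}) using the time-differentiated equations \eqref{dt3v} and \eqref{dt3Sig}. Your sketch supplies those omitted details --- including the key observations that $\dt^k\W(M,t)=0$ preserves the integration-by-parts structure, that the correction $\F^8$ and the extra $C|\E|^2$ terms arise from the non-divergence commutators and quartic Taylor remainders at fourth order in time, and that the auxiliary bounds come from solving the differentiated momentum equations algebraically --- all consistent with what the four preceding lemmas establish.
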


The next lemma ensures that the assumption \eqref{assumption2} is valid within our energy ${\mathcal{E}}$.

\begin{lem} \label{lem4.6}
There exists a constant $\kappa >0$ so that if $\E \le \kappa$, then
\begin{multline}\label{l46_0}
\pnorm{\sir}{\infty} + \pnorm{ \dt \left(\sir\right)}{\infty} +\pnorm{ \dt^2 \left(\sir\right)}{\infty} + \pnorm{ \dt^3 \left(\sir\right)}{\infty} \\
+ \pnorm{1- \frac{r_0}{r}}{\infty} + \pnorm{\rho r^3\dx\left(\sir\right)}{\infty}+ \pnorm{\frac{v}{r}}{\infty} + \pnorm{\frac{\dt v}{r}}{\infty} +\pnorm{\frac{\dt^2 v}{r}}{\infty} \le C \sqrt{{\mathcal{E}}}
\end{multline}
for some constant $C>0$.  Here $\E$ is given by \eqref{total_energy}.
\end{lem}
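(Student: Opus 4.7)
The plan is to reduce each $L^\infty$ estimate in \eqref{l46_0} to a one-dimensional Sobolev-type embedding, exploiting the weighted $L^2$ control that the various pieces of $\E$ provide. The basic template is: for a scalar function $f$ on $[0,M]$,
\begin{equation*}
\pnorm{f}{\infty} \le \frac{1}{\sqrt{M}}\pnorm{f}{2} + \left(\int_0^M W(z)^{-1} dz\right)^{1/2}\left(\int_0^M W(z)\abs{\dx f(z)}^2 dz\right)^{1/2},
\end{equation*}
where $W$ is a positive weight with $W^{-1}$ integrable on $(0,M)$. The two natural choices are $W = r_0^2/\rho_0$ (whose reciprocal $\rho_0/r_0^2$ is integrable, behaving like $x^{-2/3}$ near $0$ and like $(M-x)^{1/\gamma}$ near $M$) and $W = \rho_0$ (whose reciprocal $\rho_0^{-1} \sim (M-x)^{-1/\gamma}$ is integrable since $\gamma > 1$). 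Throughout, \eqref{assumption1} lets us interchange $\rho$ with $\rho_0$ and $r$ with $r_0$ up to constants.

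For the density terms $\pnorm{\dt^k(\sir)}{\infty}$ with $k=0,1,2,3$, the $L^2$ norm is controlled by $\sqrt{\E^{0,\sigma}_{-1}}$, $\sqrt{\E^{2,\sigma}_{-1}}$, $\sqrt{\E^{5,\sigma}_{-1}}$, and $\sqrt{\E^{7,\sigma}_{-1}}$ respectively, using that $\rho_0^{-1}$ is bounded below; the weighted derivative control comes from $\E^{1,\sigma}_b$, $\E^{3,\sigma}_a$, $\E^{6,\sigma}_a$, and $\E^{8,\sigma}_a$, each of which dominates $\int (r_0^2/\rho_0)\abs{\dx\dt^k(\sir)}^2 dx$. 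The Sobolev template with $W = r_0^2/\rho_0$ then yields $\pnorm{\dt^k(\sir)}{\infty} \le C\sqrt{\E}$. The term $\pnorm{1-r_0/r}{\infty}$ follows from the identity \eqref{r_0r}: its leading contribution $(4\pi r_0^3)^{-1}\int_0^x \sigma/\rho_0^2\, dy$ is bounded by $C\pnorm{\sir}{\infty}$ because $r_0^{-3}\int_0^x \rho_0^{-1} dy$ is uniformly bounded on $[0,M]$ (equal to $4\pi/3$ at $x = 0$ by L'Hospital), while the remaining terms are quadratic in $\sir$ and hence of order $O(\E)$.

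The velocity quantities $\pnorm{\dt^k v/r}{\infty}$ for $k=0,1,2$ are handled via the identity \eqref{ID}, which combined with \eqref{contE} in the form $\rho\dx(r^2v) = -(\rho_0/(4\pi\rho))\dt(\sir)$ (and its $\dt$-differentiated analogues) reduces $\dt^k v/r$ to a linear combination of $\dt^{k+1}(\sir)$ (already controlled) and $\dt^k[\rho r^3 \dx(v/r)]$. The latter is bounded in $L^\infty$ by running the Sobolev template on itself: the $L^2$ norm is controlled by $\sqrt{\E^1}$ and $\sqrt{\E^3}$ from the $\rho r^6$-weighted derivative estimates these energies provide, while the spatial derivative is controlled in a weighted $L^2$ via the auxiliary energies $\E^{3,v}_{a_2}$ and $\E^{5,v}_a$; where the natural weight reciprocal fails to be integrable at $x=0$, one re-expresses $\dx(\rho r^3 \dx(v/r))$ through \eqref{momE} and \eqref{contE} into a combination of terms already controlled in the previous paragraph. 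Finally, $\pnorm{\rho r^3 \dx(\sir)}{\infty} \le C\sqrt{\E}$ is obtained by applying the template with $W = \rho_0$ to $r^4 \dx(\sir)$ (which vanishes at $x=0$), using $\sqrt{\E^4}$ as the derivative control; the bound on $\rho r^3 \dx(\sir)$ then follows by writing it as $(\rho/r) \cdot r^4 \dx(\sir)$ and exploiting that $\rho r^3 \sim x \sim (4\pi/3)\rho_0(0) r_0^3$ near $0$ to show that this ratio extends continuously across the coordinate singularity.

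The main technical obstacle is the careful tracking of the weight behavior at both the interior coordinate degeneracy $x = 0$ (where $r_0 \sim x^{1/3}$ and $\rho_0$ stays bounded) and the vacuum boundary $x = M$ (where $\rho_0 \sim (M-x)^{1/\gamma}$): for quantities like $\rho r^3 \dx(v/r)$, the natural weight coming from $\E^{3,v}_{a_2}$ does not directly yield plain $L^2$ control of the spatial derivative on all of $(0,M)$, and one must first use \eqref{contE}, \eqref{momE}, or \eqref{ID} to re-express the quantity in a form whose weighted derivative estimate has an integrable reciprocal weight. The other apparent difficulty — closing the bootstrap with the smallness of $\E$ rather than $\theta_1$ — is resolved automatically once the above estimates are in hand, since each bound is linear in $\sqrt{\E}$ with no $\theta_1$ factor on the right.
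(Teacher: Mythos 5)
Your treatment of the density terms $\dt^k(\sir)$, $k=0,\dots,3$, and of $1-r_0/r$ is essentially the paper's: the paper applies the $W^{1,1}\hookrightarrow C^0$ embedding, computes each $L^1$ norm by Cauchy--Schwarz with a weight whose reciprocal is integrable, and uses the same collection of energies ($\E^{0,\sigma}_{-1}, \E^{1,\sigma}_b, \E^{2,\sigma}_{-1}, \E^{3,\sigma}_a$, etc.); your ``template'' is just the result of composing those two steps. One small misstatement: you claim $\E^{6,\sigma}_a$ and $\E^{8,\sigma}_a$ dominate $\int (r_0^2/\rho_0)\abs{\dx\dt^k(\sir)}^2\,dx$, but their weight is $r^2$, not $r^2/\rho$, so they do not dominate that integral; the template still goes through with $W=r^2$ since $1/r^2$ is integrable, so this is a typo-level error, not a gap. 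Your $\rho r^3\dx(\sir)$ argument via $r^4\dx(\sir)$ (vanishing at $x=0$, $\E^4$ as derivative control, then factor out $\rho/r$ and exploit $\int_0^x dy/\rho_0 \sim r^3$ near $0$) is a correct and legitimately different route from the paper's direct $W^{1,1}$ bound on $\rho_0 r^3\dx(\sir)$ via \eqref{l46_11}--\eqref{l46_12}.

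The genuine gap is in the velocity terms $v/r$, $\dt v/r$, $\dt^2 v/r$. Your route through \eqref{ID} forces you to prove $\pnorm{\rho r^3\dx(v/r)}{\infty}\le C\sqrt\E$, which the lemma does not ask for, and the Sobolev template applied to this quantity via $\E^{3,v}_{a_2}$ has weight $\rho r^6\sim x^2$ near $x=0$ with non-integrable reciprocal. You acknowledge this and propose to ``re-express $\dx(\rho r^3\dx(v/r))$ through \eqref{momE} and \eqref{contE}'', but this is circular: from \eqref{ID}, $\dx(\rho r^3\dx(v/r)) = \dx(\rho\dx(r^2 v)) - \tfrac{3}{4\pi}\dx(v/r)$, and $\dx(v/r) = (\rho r^3)^{-1}\cdot\rho r^3\dx(v/r)$ brings back exactly the quantity you are trying to control, with an additional $x^{-1}$ singularity. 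The identity \eqref{ID} expresses $v/r$ and $\rho r^3\dx(v/r)$ in terms of each other and $\rho\dx(r^2 v)$ (which is controlled by $\dt(\sir)$ through \eqref{contE}), but it provides only \emph{one} equation for \emph{two} unknowns; you need a second piece of information to break the tie. The paper supplies it by integrating \eqref{contE} spatially to get the explicit formula $(r^2 v)(x) = -\tfrac{1}{4\pi}\int_0^x \rho_0^{-1}(1+\sir)^{-2}\,\dt(\sir)\,dy$, whence $\pnorm{v/r}{\infty} \le C\pnorm{\dt(\sir)}{\infty}\sup_x r^{-3}\int_0^x dy/\rho_0 \le C\sqrt\E$, the supremum being finite by L'Hospital at $x=0$ and the asymptotics \eqref{LEL} at $x=M$. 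The analogues for $\dt v/r$, $\dt^2 v/r$ are obtained by applying $\dt$ to \eqref{contE} and iterating. Without this integral formula your argument does not close, and if you supply it, the $\rho r^3\dx(v/r)$ bound becomes unnecessary anyway since $v/r$ follows directly.
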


\begin{proof} 

The proof proceeds through several steps.

\textbf{Step 1} -- $\dt^k (\sigma/\rho_0)$ estimates

We begin by estimating $\sigma/\rho_0$ in $W^{1,1}((0,M))$.  First, we use H\"{o}lder's inequality to estimate
\begin{equation}
 \int \abs{\frac{\sigma}{\rho_0}} dx \le \sqrt{M} \left( \int \abs{\frac{\sigma}{\rho_0}} dx \right)^{1/2} \le C \sqrt{\E^{0,\sigma}_{-1}} \le C \sqrt{\E}.
\end{equation}
On the other hand, we may estimate
\begin{equation}
 \int \abs{\dx\left( \frac{\sigma}{\rho_0} \right)} dx \le 
\left( \int \frac{r^2}{\rho} \abs{\dx\left( \frac{\sigma}{\rho_0} \right)}^2 dx  \right)^{1/2}
\left( \int \frac{\rho}{r^2} dx \right)^{1/2} \le C \left( \E^{1,\sigma}_b \right)^{1/2} \le C \sqrt{\E}.
\end{equation}
Here we have used the fact that $r^2(x) \sim x^{2/3}$ for $x \sim 0$, which follows from the definition of $r(x)$ and L'Hospital's rule, to see that $\int (\rho/r^2)dx < \infty$.  Combining these estimates with the usual $1-D$ Sobolev embedding $W^{1,1}((0,M)) \hookrightarrow C^0((0,M))$, we find that $\sigma/\rho_0 \in C^0$ and
\begin{equation}\label{l46_1}
 \pnorm{\frac{\sigma}{\rho_0}}{\infty} \le C \sqrt{\E}.
\end{equation}

Now to control $\dt \sigma/\rho_0$ we argue similarly to estimate
\begin{equation}
 \int   \abs{\frac{\dt \sigma}{\rho_0}} + \abs{\dx \left(\frac{ \dt \sigma}{\rho_0}\right)} dx \le  C \sqrt{\E^{2,\sigma}_{-1}} + \sqrt{\E^{3,\sigma}_a} \le C \sqrt{\E}.
\end{equation}
Then $\dt \sigma/\rho_0 \in C^0$ and 
\begin{equation}\label{l46_2}
 \pnorm{\frac{\dt \sigma}{\rho_0}}{\infty} \le C \sqrt{\E}.
\end{equation}
A similar argument, employing $\E^{1+2i,\sigma}_{-1}$ and $\E^{2+2i,\sigma}_a$ for $i=1,2$, then implies that 
\begin{equation}\label{l46_3}
\dt^2 \sigma/\rho_0, \dt^3 \sigma/\rho_0 \in C^0 \text{ and }  \pnorm{\frac{\dt^2 \sigma}{\rho_0}}{\infty} + \pnorm{\frac{\dt^3 \sigma}{\rho_0}}{\infty}\le C \sqrt{\E}.
\end{equation}
We thus deduce from \eqref{l46_1} and \eqref{l46_2}--\eqref{l46_3} that 
\begin{equation}\label{l46_4}
 \pnorm{\sir}{\infty} + \pnorm{ \dt \left(\sir\right)}{\infty} +\pnorm{ \dt^2 \left(\sir\right)}{\infty} + \pnorm{ \dt^3 \left(\sir\right)}{\infty} \le C \sqrt{\E}
\end{equation}

\textbf{Step 2}  -- $1-r_0/r$ estimate

Let us now suppose that $\E \le \kappa$ with $\kappa$ small enough so that $C \sqrt{\E}\le \hal$, where $C>0$ is  the constant appearing on the right side of \eqref{l46_4}.  In particular, this implies that $\pnorm{\sigma/\rho_0}{\infty} \le 1/2 < 1$.  With this estimate  in hand, we can derive an estimate for $r_0/r$.  Indeed, the Taylor expansion \eqref{r_0r} easily implies the estimate
\begin{equation}\label{l46_5}
 \pnorm{1- \frac{r_0}{r}}{\infty} \le C \pnorm{\sir}{\infty}^{1+k} \le  C \sqrt{\E}
\end{equation}
for some $k \ge 0$. This is the $1-r_0/r$ estimate in \eqref{l46_0}.

\textbf{Step 3} -- $\dt^k v/r$ estimates

We now turn to estimates for $\dt^k v/r$, $k=0,1,2$.  From Step 1, we know that $\sir$ and $\dt \sir$ are continuous and bounded, while from Step 2 we know that $\pnorm{\sir}{\infty} \le 1/2$ so that $1 + \sir$ is also continuous and bounded.  From the boundary conditions at $x=0$, we also have that $r^2 v (0,t) =0$. Hence we may spatially integrate the continuity equation \eqref{contE} to see that
\begin{equation}
 (r^2 v)(x,t) = \frac{-1}{4\pi} \int_0^x \frac{1}{\rho_0(y) \left(1+ \frac{\sigma(y,t)}{\rho_0(y)} \right)^2} \frac{\dt \sigma(y,t)}{\rho_0(y)} dy.
\end{equation}

Notice now that, due to the asymptotics \eqref{LEL}, we have that
\begin{equation}
 \int_0^M \frac{dy}{\rho_0(y)} < \infty.
\end{equation}
This and the estimates \eqref{l46_4} then imply that $v/r \in C^0$ and
\begin{equation}\label{l46_6}
 \pnorm{r^2 v}{\infty} \le C \pnorm{\dt \sir}{\infty}  \le C \sqrt{\E}. 
\end{equation}
On the other hand, due to L'Hospital, we have that
\begin{equation}
 \frac{1}{r^3(x,t)} \int_0^x \frac{dy}{\rho_0(y)} \sim \frac{4\pi \rho(x,t)}{3\rho_0(x)} = \frac{4\pi}{3} \left( 1+ \frac{\sigma(x,t)}{\rho_0(y)} \right)< \infty \text{ for }x \sim 0,
\end{equation}
which means that
\begin{equation}
 \sup_{x\in(0,M)} \frac{1}{r^3(x,t)} \int_0^x \frac{dy}{\rho_0(y)} <\infty.
\end{equation}
We may then deduce that $v/r \in C^0$ and
\begin{equation}\label{l46_7}
 \pnorm{\frac{v}{r}}{\infty} \le C \pnorm{\dt \sir}{\infty} \sup_{x\in(0,M)} \frac{1}{r^3(x,t)} \int_0^x \frac{dy}{\rho_0(y)}  \le C \sqrt{\E}.
\end{equation}

Now we apply $\dt$ to \eqref{contE} and argue as above to see that
\begin{multline}
 (r^2 \dt v)(x,t) =  -\int_0^x \frac{1}{4\pi \rho_0(y) \left(1+ \frac{\sigma(y,t)}{\rho_0(y)} \right)^2} \frac{\dt^2 \sigma(y,t)}{\rho_0(y)}dy  \\
+ \int_0^x \frac{1}{2\pi \rho_0(y) \left(1+ \frac{\sigma(y,t)}{\rho_0(y)} \right)^3} \abs{\frac{\dt \sigma(y,t)}{\rho_0(y)}}^2 dy  
- \int_0^x 2 (r^2 v)(y,t) \frac{v(y,t)}{r(y,t)} dy.
\end{multline}
Using this, we may argue as above (using the estimates \eqref{l46_6} and \eqref{l46_7}) to deduce that $r^2 \dt v, \dt v/r \in C^0$ and 
\begin{equation}\label{l46_8}
 \pnorm{r^2 \dt v}{\infty} + \pnorm{\frac{\dt v}{r}}{\infty} \le C \sqrt{\E}.
\end{equation}
An iterative argument, using $\dt^2$ applied to \eqref{contE} in conjunction with the estimates \eqref{l46_8}, then allows us to see that $r^2 \dt^2 v, \dt^2 v / r \in C^0$ with
\begin{equation}\label{l46_9}
 \pnorm{r^2 \dt^2 v}{\infty} + \pnorm{\frac{\dt^2 v}{r}}{\infty} \le C \sqrt{\E}.
\end{equation}
Then \eqref{l46_6}, \eqref{l46_7}, and \eqref{l46_8}--\eqref{l46_9} may be combined to derive the $\dt^k v/r$ estimates recorded in \eqref{l46_0}.

\textbf{Step 4} -- $\rho r^3 \dx(\sigma/\rho_0)$ estimate

Since $\pnorm{\sigma/\rho_0}{\infty} \le 1/2$, to prove the $\rho r^3 \dx(\sigma/\rho_0)$ estimate listed in \eqref{l46_0}, it suffices to estimate this term with $\rho$ replaced by $\rho_0$.  We claim that
\begin{equation}\label{l46_10}
\pnorm{\rho_0 r^3 \dx(\sir)}{\infty} \le C \left( \sqrt{\mathcal{E}^{1,\sigma}_b} +  \sqrt{\mathcal{E}^4} \right) \le C \sqrt{\E}.
\end{equation}
To prove \eqref{l46_10}, we will use the $1-D$ Sobolev embedding $W^{1,1}\hookrightarrow C^0$.  First note that
\begin{equation}
 \int \rho_0 r^3 \abs{\dx \left(\sir\right)} dx  \le \left(\int \rho \rho_0^2 r^2 dx \right)^{1/2}  \left(\int \frac{r^2}{\rho}\abs{\dx \left(\sir\right)}^2 dx\right)^{1/2} 
\le C \sqrt{\E^{1,\sigma}_b}.
\end{equation}

On the other hand, we may compute
\begin{equation}\label{l46_11}
\begin{split}
\dx\left( \rho_0 r^3 \dx\left( \sir \right) \right) &= \frac{\rho_0}{r} \dx \left( r^4\dx\left(\sir\right)\right) + \dx \rho_0 r^3\dx\left(\sir\right )- \frac{\rho_0}{4\pi\rho} \dx \left( \sir \right)\\
\text{ and }\; \dx \rho_0 &=-\frac{x}{4\pi K\gamma \rho_0^{\gamma-1}r_0^4}.
\end{split}
\end{equation}
Then since  $\frac{\rho_0}{\rho_0^{2\gamma-2}}\leq \frac{C}{\rho_0}$ as long as $\gamma<2$, we may estimate 
\begin{multline}
 \int \rho_0 r_0^2 \abs{ \dx\left( \rho_0 r^3 \dx \left( \sir \right)\right)  }^2 dx \\
\le C \int \rho_0 r_0^2 \left[ \frac{\rho_0^2}{r^2} \abs{\dx \left( r^4 \dx \left( \sir \right)  \right)}^2  + \frac{\rho_0^2}{\rho^2} \abs{\dx \left( \sir\right)}^2 +    \frac{x^2 r^6}{\rho_0^{2\gamma-2} r_0^8}  \abs{\dx \left( \sir \right)}^2    \right] dx \\
\le C \E^4 + \E^{1,\sigma}_b.
\end{multline}
Then from this and H\"{o}lder's inequality we get
\begin{multline}\label{l46_12}
 \int \abs{ \dx\left( \rho_0 r^3 \dx \left( \sir \right)\right)  } dx \le \left(\int \frac{dx}{\rho_0 r_0^2}  \right)^{1/2} \left( \int \rho_0 r_0^2 \abs{ \dx\left( \rho_0 r^3 \dx \left( \sir \right)\right)  }^2 dx \right)^{1/2} \\
\le C \sqrt{\E^4} + \sqrt{\E^{1,\sigma}_b}.
\end{multline}
Together, the estimates \eqref{l46_11} and \eqref{l46_12} constitute a $W^{1,1}$ estimate for 
$\rho_0 r^3 \dx ( \sigma/\rho_0 )$, so we then  obtain \eqref{l46_10} via the Sobolev embedding. 
\end{proof}

\section{Nonlinear instability}\label{5}

\subsection{The bootstrap argument}

Based on the nonlinear estimates in the previous section, we now establish a bootstrap argument that allows us to control the growth of $\E$ in terms of the linear growth rate $\lambda$, constructed in Theorem \ref{growing_mode}.  The idea is to assume small data and that the lowest order energy, $\E^0$, grows no faster than the linear growth rate; then the inequalities in the last section allow for a bootstrap argument that shows that all of $\E$ grows no faster than the linear growth rate.

\begin{prop}\label{prop} 
Let $\sigma$ and $v$ be a solution to the Navier-Stokes-Poisson system \eqref{PNSP}. Assume that 
\begin{equation}\label{pr_01}
\sqrt{{\mathcal{E}}}(0)\le C_0\iota\; \text{ and }\; \sqrt{\mathcal{E}^0}(t)\le C_0\iota e^{\lambda t}\; \text{ for } \;0\le t\le T,
\end{equation}
where $\E^0$ and $\E$ are as defined in \eqref{energies_1} and \eqref{total_energy}.  Then there exist $C_\star$ and  $\theta_\star>0$ such that  if  $0\le t\le \min \{T,T(\iota,\theta_\star)\}$, then
\begin{equation} 
\sqrt{{\mathcal{E}}}(t) \le C_\star\iota e^{\lambda t}\le C_\star \theta_\star,
\end{equation}
where we have written $T(\iota,\theta_\star) = \frac{1}{\lambda}\ln\frac{\theta_\star}{\iota}$.
\end{prop}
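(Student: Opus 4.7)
The plan is a continuity/bootstrap argument driven by the nonlinear energy estimates of Section~\ref{4}, which transmute the $\mathcal{E}^0$ growth hypothesis into a matching growth bound for the full energy $\mathcal{E}$. Specifically, I would define
\begin{equation*}
T^{\ast} := \sup \Bigl\{\, t \in \bigl[0, \min\{T, T(\iota,\theta_\star)\}\bigr] : \sqrt{\mathcal{E}}(s) \le C_\star \iota e^{\lambda s} \text{ for all } s \in [0,t] \,\Bigr\},
\end{equation*}
and work to show $T^\ast = \min\{T, T(\iota,\theta_\star)\}$ by establishing a strictly improved bound $\sqrt{\mathcal{E}}(t) \le \hal C_\star \iota e^{\lambda t}$ on $[0,T^\ast]$. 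On this interval $\sqrt{\mathcal{E}}(t) \le C_\star \theta_\star$, so choosing $\theta_\star$ with $C_\star \theta_\star \le \sqrt{\kappa}$ (for $\kappa$ from Lemma~\ref{lem4.6}) validates the standing hypothesis \eqref{assumption2} with $\theta_1 \le C C_\star \theta_\star$, making the constants in the Section~\ref{4} estimates quantitatively controllable.

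The core step is to assemble the individual energy inequalities into a single master differential inequality. I would form a linear combination
\begin{equation*}
\tilde{\mathcal{E}} := \sum_{i=0}^{8} c_i \mathcal{E}^i + \sum_j c'_j \mathcal{E}^{\mathrm{aux}}_j + \mathcal{F}^3 + \mathcal{F}^6 + \mathcal{F}^8
\end{equation*}
(running over all the auxiliary and bootstrap energies that enter the definition of $\mathcal{E}$ in \eqref{total_energy}), with coefficients chosen so that: (a) $\tilde{\mathcal{E}} \sim \mathcal{E}$ uniformly on $[0,T^\ast]$, using $|\mathcal{F}^i| \le C \theta_1 \mathcal{E}$ to ensure the correctors do not destroy the equivalence when $\theta_\star$ is small; (b) the fractional dissipation terms $\alpha \mathcal{D}^i$ (with $\alpha<1$) on the right of \eqref{E^0}--\eqref{E^88} are absorbed into the left-side $c_i \mathcal{D}^i$; and (c) each lower-order cross-term $C \mathcal{E}^{j}$ with $j<i$ appearing on the right of the inequality for $\mathcal{E}^i$ is dominated by the term $c_j \mathcal{E}^j$ already present in $\tilde{\mathcal{E}}$. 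The auxiliary and bootstrap energies close via the chain \eqref{E^0b}, \eqref{E^0rb}, \eqref{E^1b}, \eqref{E^2b}, \eqref{E^3Sig}--\eqref{B1}, \eqref{E^5}--\eqref{E^6}, \eqref{E^7Sig}--\eqref{E^8b}; the exponent-doubling $\beta \mapsto 2\beta+1$ in the $\mathcal{E}^{0,\sigma}_\beta$ and $\mathcal{E}^{2,\sigma}_\beta$ hierarchies terminates after finitely many iterations because $\mathcal{E}^{0,\sigma}_\beta \le C \mathcal{E}^0$ for $\beta$ sufficiently large, using the $L^\infty$ bound on $\sir$ provided by Lemma~\ref{lem4.6}. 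The outcome is a master inequality
\begin{equation*}
\frac{d}{dt}\tilde{\mathcal{E}}(t) \le C_1 \mathcal{E}^0(t) + (\eta + C_2 \theta_1) \tilde{\mathcal{E}}(t), \qquad t \in [0,T^\ast].
\end{equation*}

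To conclude, fix $\eta$ small and then $\theta_\star$ small so that $\eta + C_2 C C_\star \theta_\star < 2\lambda$. Using the hypothesis $\mathcal{E}^0(t) \le C_0^2 \iota^2 e^{2\lambda t}$ together with $\tilde{\mathcal{E}}(0) \le C \mathcal{E}(0) \le C C_0^2 \iota^2$, Gronwall's inequality yields
\begin{equation*}
\tilde{\mathcal{E}}(t) \le C' C_0^2 \iota^2 e^{2\lambda t}
\end{equation*}
with $C'$ depending on $C_1, C_2, \eta, \lambda$. Taking $C_\star \ge 4 C' C_0$ then produces the strict improvement $\sqrt{\mathcal{E}}(t) \le \hal C_\star \iota e^{\lambda t}$ and closes the continuity argument, giving $T^\ast = \min\{T, T(\iota,\theta_\star)\}$. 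The main obstacle I anticipate lies in step~(c): the estimates couple bi-directionally (e.g.\ \eqref{E^1} contains $q \mathcal{E}^{2,v}$ while \eqref{E^2} contains $\mathcal{E}^1$ on its right side, and analogous couplings recur at each higher level via the time-differentiated equations \eqref{dt2v} and \eqref{dt3v}), so a hierarchical weight assignment $c_0 \gg c_1 \gg \cdots \gg c_8$ must be carefully engineered to simultaneously beat every cross-term while remaining compatible with the bootstrap-energy closure and with the smallness of the correctors $\mathcal{F}^i$ relative to the positive part of $\tilde{\mathcal{E}}$. This weight-selection step is the technical heart of the proof.
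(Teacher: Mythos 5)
Your proposal captures the right high-level scaffolding: a continuity argument via a definition of $T^\ast$, smallness of $\theta_\star$ to validate the standing hypothesis \eqref{assumption2} through Lemma~\ref{lem4.6}, and Gronwall driven by the assumed bound on $\mathcal{E}^0$. But the route you propose through the body of the argument — assembling a single master linear combination $\tilde{\mathcal{E}}$ and running one Gronwall — diverges from the paper's scheme and, as sketched, has a genuine gap.

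The paper does not form one master differential inequality. It proceeds sequentially: it first integrates \eqref{E^0} against the assumed $\mathcal{E}^0$ bound to obtain $\int_0^t e^{\lambda(t-s)/2}\mathcal{D}^0(s)\,ds \le \tilde{C}\iota^2 e^{2\lambda t}$; this integrated-dissipation bound then serves as known forcing for the $\mathcal{E}^{0,\sigma}_{-1}$ Gronwall; then the combination $k\mathcal{E}^2 + k\mathcal{E}^0 + \mathcal{E}^1$ with the \emph{specific} ratio $k = 4q/\lambda$ is closed using the already-established $\mathcal{E}^0,\,\mathcal{E}^{0,\sigma}_0$ bounds as inhomogeneous forcing $\tilde{C}\iota^2 e^{2\lambda t}$; then the integrated $\mathcal{D}^2$ bound feeds the next level, and so on up the hierarchy. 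At each stage the Gronwall rate stays small ($\le \lambda/2$) precisely because the lower-order energies appear only as known right-hand forcing, not as unknowns in the differential operator.

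Your proposed resolution of the bidirectional $\mathcal{E}^1\leftrightarrow\mathcal{E}^2$ coupling — a strictly decreasing hierarchy $c_0 \gg c_1 \gg \cdots \gg c_8$ — does not work. If $\mathcal{E}^2$ enters the RHS of the $\mathcal{E}^1$ estimate with constant $q$ and $\mathcal{E}^1$ enters the RHS of the $\mathcal{E}^2$ estimate with constant $C$, then absorbing both into $\alpha(c_1\mathcal{E}^1 + c_2\mathcal{E}^2)$ requires $c_1 q \le \alpha c_2$ and $c_2 C \le \alpha c_1$; multiplying these forces $\alpha \ge \sqrt{qC}$, a fixed constant unrelated to $\eta,\theta_1$, that may far exceed $2\lambda$ — in which case your master Gronwall fails to reproduce the $e^{2\lambda t}$ rate. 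The condition pins $c_1/c_2$ to a balanced ratio (what the paper encodes as $k = 4q/\lambda$), the opposite of a free hierarchy; and even with that balance the rate obstruction remains if $\sqrt{qC} > 2\lambda$. The paper avoids this by never demanding a global ODE with small rate: the sequential structure lets lower-order terms enter only as already-bounded forcing.

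Two smaller inaccuracies: you remark that "analogous couplings recur at each higher level," but inspecting \eqref{E^3}, \eqref{E^7}, \eqref{E^9}, \eqref{E^77}, \eqref{E^88} shows only strictly lower-order energies on their right sides — the bidirectional coupling is unique to $\mathcal{E}^1\leftrightarrow\mathcal{E}^2$. And the concern about the exponent-doubling iteration $\beta\mapsto 2\beta+1$ terminating is moot: the paper applies \eqref{E^0b} and \eqref{E^2b} only at $\beta = -1$, which is a fixed point of $\beta\mapsto 2\beta+1$, so there is no iteration at all.
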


\begin{proof} 
To prove the result we will employ a bootstrap argument using all of the nonlinear energy estimates derived in the previous section.  We now choose  $\theta_1$ and $\eta$ sufficiently small in all of these estimates so that $C\theta_1 + \eta\le \frac{\lambda}{2}$ and $C \theta_1 \le 1/8$ in all of the energy inequalities.  Throughout this proof we will write $\tilde{C}$ for a generic constant; we write this in place of $C$ to distinguish the constants from those appearing in the nonlinear energy estimates.

To begin the bootstrapping, we show that the estimate \eqref{pr_01} allows us to control an integral of the $\D^0$ dissipation.  Indeed, we use \eqref{E^0} and \eqref{pr_01} along with Gronwall's inequality to see that for $0 \le t \le T$, 
\begin{equation}\label{pr_1}
 \frac{d}{dt} \E^0 + \hal \D^0 \le C \E^0 \le C C_0^2 (\iota e^{\lambda t})^2 + \frac{\lambda}{2} \E^0 \Rightarrow \hal \int_0^t e^{\lambda/2(t-s)} \D^0(s) ds \le  \tilde{C} \iota^2 e^{2\lambda t}.
\end{equation}
Then we employ \eqref{E^0b} with $\beta=-1$ in conjunction with \eqref{pr_1} to see that
\begin{multline}
 \frac{d}{dt}\( e^{-t\lambda/2} \E^{0,\sigma}_{-1}(t)  \) \le C e^{-t \lambda/2} \D^0(t) \\
\Rightarrow \E^{0,\sigma}_{-1}(t) \le \E^{0,\sigma}_{-1}(0) e^{t \lambda/2} + C \int_0^t e^{\lambda/2(t-s)} \D^0(s) ds \\
\Rightarrow \E^{0,\sigma}_{-1}(t) \le \tilde{C} \iota^2 e^{2\lambda t} \Rightarrow \E^{0,\sigma}_0(t) \le \tilde{C} \E^{0,\sigma}_{-1} (t) \le \tilde{C} \iota^2 e^{2\lambda t}.
\end{multline}

Next, let $q>0$ be the constant from estimate \eqref{E^1} and choose $k = (4q)/\lambda$.  Then \eqref{E^1} and \eqref{E^2}, together with \eqref{E^0} and the above estimates, imply that $0\le t\le T$
\begin{multline}
\frac{d}{dt}[k\mathcal{E}^2 + k\mathcal{E}^0+\mathcal{E}^1]  + \frac{k}{2} \D^2 \le (\eta+C\theta_1)(\mathcal{E}^1+k\mathcal{E}^2) + q\mathcal{E}^2+ C(\mathcal{E}^0+\mathcal{E}^{0,\sigma}_0) \\
\le \frac{\lambda}{2} (\mathcal{E}^1+k\mathcal{E}^2 + k\E^0) + \tilde{C} \iota^2 e^{2 \lambda t}.
\end{multline}
Using Gronwall's inequality again, we obtain from this that for $0\le t\le T$
\begin{equation}
 \mathcal{E}^2(t) + \mathcal{E}^1(t) + \hal \int_0^t e^{\lambda/2(t-s)} \D^2(s)ds \le \tilde{C} \iota^2 e^{2\lambda t}.
\end{equation}
Here we have used the fact that $k$ is bounded and non-zero to absorb it into the constant $\tilde{C}$.  We then employ \eqref{E^2b} with $\beta=-1$  to see that
\begin{multline}
 \frac{d}{dt}\( e^{-t\lambda/2} \E^{2,\sigma}_{-1}(t)  \) \le C e^{-t \lambda/2} (\D^2(t) +\D^0(t)) \\
\Rightarrow \E^{2,\sigma}_{-1}(t) \le \E^{2,\sigma}_{-1}(0) e^{t \lambda/2} + C \int_0^t e^{\lambda/2(t-s)} (\D^2(s) + \D^0(s)) ds \\
\Rightarrow \E^{2,\sigma}_{-1}(t) \le \tilde{C} \iota^2 e^{2\lambda t} \Rightarrow \E^{2,\sigma}_0(t) \le \tilde{C} \E^{2,\sigma}_{-1} (t) \le \tilde{C} \iota^2 e^{2\lambda t}.
\end{multline}

Bootstrapping further, \eqref{E^3} gives rise to 
\begin{equation}
\mathcal{E}^3(t) \le  \tilde{C} \iota^2 e^{2\lambda t}.
\end{equation}
Similarly, from \eqref{E^0rb}, \eqref{E^1b}, \eqref{E^3Sig},  \eqref{E^3b} and \eqref{B1}, we also obtain for $0\le t\le T$
\begin{equation}
\mathcal{E}^{0,r}_b(t) + \mathcal{E}^{1,\sigma}_b (t) + \mathcal{E}^{3,\sigma}_a(t) + \mathcal{E}^{3,v}_{a_1}(t) + \mathcal{E}^{3,v}_{a_2}(t) \le \tilde{C} \iota^2 e^{2\lambda t}.
\end{equation}
Next, from \eqref{E^4Sig} we get 
\begin{equation}
\frac{d}{dt}\mathcal{E}^4\le (\eta+C\theta_1)\mathcal{E}^4 +C\iota^2e^{2\lambda t} \Rightarrow 
\mathcal{E}^4(t) \le  \tilde{C} \iota^2 e^{2\lambda t}.
\end{equation}
In turn, from \eqref{E^5} and \eqref{E^6}, we find that 
\begin{equation}
\mathcal{E}^4_{a_1}(t) + \mathcal{E}^4_{a_2}(t) \le  \tilde{C} \iota^2 e^{2\lambda t}.
\end{equation}
Similarly, the energy inequalities \eqref{E^7}, \eqref{E^8}, \eqref{E^7Sig} yield  
\begin{equation}
\mathcal{E}^5(t) +\mathcal{E}^5_a(t) + \mathcal{E}^{5,\sigma}_{-1}(t) \le  \tilde{C} \iota^2 e^{2\lambda t},
\end{equation}
and \eqref{E^9} and \eqref{E^9b} yield 
\begin{equation}
\mathcal{E}^6(t) + \mathcal{E}^{6,\sigma}_a(t) \le  \tilde{C} \iota^2 e^{2\lambda t}.
\end{equation}
Successively, \eqref{E^77}, \eqref{E^7a}, and \eqref{E^77Sig} imply 
\begin{equation}
\mathcal{E}^7(t) + \mathcal{E}^7_a(t) +\mathcal{E}^{7,\sigma}_{-1}(t) \le  \tilde{C} \iota^2 e^{2\lambda t}.
\end{equation}
To get the bound of $\mathcal{E}^8$, we first note that  $\mathcal{E}^8$ satisfies the following inequality from  \eqref{E^88} and \eqref{E^8b}
\begin{equation}\label{E^92}
\frac{d}{dt}[\mathcal{E}^8+\widetilde{F}_\ast] \le (\eta+C\theta_1)(\mathcal{E}^8+\widetilde{F}_\ast  )+C_1|{\mathcal{E}}|^2+\widetilde{C}\iota^2e^{2\lambda t}
\end{equation}
for some constants $C_1>0$ and $\widetilde{C}>0$. 
We now define $T^\ast$ by 
\begin{equation}
T^\ast:=\sup \left \{ t \;\left\vert\; \mathcal{E}(s) \le \min \left\{\theta_1, \frac{\lambda}{4C_1} \right\} \text{ for } s\in [0,t] \right\}\right. .
\end{equation}
Let $0\le t\le \min\{T,T^\ast\}$. Then by the Gronwall inequality, \eqref{E^92} implies that 
\begin{equation}
\mathcal{E}^8(t) + \widetilde{F}_\ast(t) \le  \tilde{C} \iota^2 e^{2\lambda t}  \Rightarrow 
\mathcal{E}^8(t) + \mathcal{E}^{8,\sigma}_a(t) \le  \tilde{C} \iota^2 e^{2\lambda t} \text{ for } 0\le t\le \min\{T,T^\ast\}.
\end{equation}
Thus, combining all of the above analysis, we finally obtain 
\begin{equation}\label{5.1}
\mathcal{E}(t)  \le C_2 \iota^2 e^{2\lambda t} \text{ for } 0\le t\le \min\{T,T^\ast\}
\end{equation}
for a constant $C_2>0$ independent of $\iota$. 

We now choose $\theta_\star$ such that $C_2 (\theta_\star)^2< \min\{\theta_1, \frac{\lambda}{4C_1}  \}$.   We consider the following two cases. 

(i) $T(\iota,\theta_\star) \le  \min\{T,T^\ast\}$.  In this case, the conclusion follows without any additional work. 

(ii) $T(\iota,\theta_\star) >  \min\{T,T^\ast\}$.  We claim that it must hold that $T\le T^\ast< T(\iota,\theta_*)$, in which case  the conclusion directly follows.   To prove the claim, we note that otherwise we would have $T^\ast < T < T(\iota,\theta_\star)$. Letting $t=T^\ast$, from \eqref{5.1}, we get 
\begin{equation}
{\mathcal{E}}(T^\ast) \le C_2 \iota^2 e^{2\lambda T^\ast} <  C_2 \iota^2 e^{2\lambda T^\iota} = C_2(\theta_\star)^2 \text{ by the definition of }T(\iota,\theta_\star),
\end{equation}
but this is impossible due to our choice of $\theta_\star$ since it would then contradict the definition of $T^\ast$.  Since we then find our desired estimate in both cases, this concludes the proof of the proposition. 
\end{proof}

\subsection{Further nonlinear estimates}

As a preparation of the proof of our main theorem, we recall the Navier-Stokes-Poisson system \eqref{PNSP} can be written in perturbed form  as in \eqref{mild_form_in_1} and \eqref{mild_form_in_2} in terms of $\sigma$ and $w:=r^2v$:
\begin{equation}\label{N_1}
\begin{split}
&\dt
\begin{pmatrix}
 \sigma \\ w
\end{pmatrix}
 + \mathcal{L}
\begin{pmatrix}
 \sigma \\ w
\end{pmatrix}
= 
\begin{pmatrix}
 N_1 \\ N_2.
\end{pmatrix}
\end{split}
\end{equation}
with the boundary conditions  
\begin{equation}\label{N_2} 
\(\frac{w}{r^2}\)(0,t)=0\text{ and }\sigma(M,t)=0\,,\;\mathcal{B}(w)=N_\mathcal{B} \text{ at }x=M,
\end{equation}
where the boundary operator $\mathcal{B}(w)$ is defined by \eqref{bndry_op},  $N_\mathcal{B}$ is given as 
\begin{equation}\label{NB_def}
N_\mathcal{B}=\left\{  \(\delta+\frac{4\ep}{3}\) 4\pi\sigma\dx w-4\ep \left.\left[ \(\frac{r_0}{r}\)^3-1 \right] \frac{w}{r_0^3} \right\} \right\vert_{x=M},
\end{equation}
and $N_1$ and $N_2$ read as follows:
\begin{equation}\label{N12_def}
\begin{split}
N_1&=-4\pi(2\rho_0+\sigma)\sigma\dx w \\
N_2&=\frac{2w^2}{r^3}-4\pi(r^4-r_0^4)\dx\(K\gamma \rho_0^\gamma \sir\) -4\pi r^4\dx\(\rho_0^\gamma a_\ast\(\sir\)^2\)-M_1-M_2,
\end{split}
\end{equation}
where 
\begin{equation}
\begin{split}
M_1&= \frac{x}{r_0^4} \(r_0^4-r^4-\frac{r_0}{\pi } \int_0^x \frac{\sigma}{\rho_0^2} dy\)\\
&=x
\left\{ \frac{c_1}{ r_0^3} \int_0^x \frac{1}{\rho_\ast}\(\frac{\sigma}{\rho_0}\)^2dy + \frac{c_2}{r_0^6}\(\int_0^x \frac{\sigma}{\rho_0^2}dy\)^2\right\} \text{ by Taylor expansion}\\
&\quad\text{where } \rho_\ast/\rho_0\sim 1\text{ is a bounded smooth function of } \sir 
\end{split}
\end{equation} 
and 
\begin{equation}
\begin{split}
M_2&= 16\pi^2\(\delta+\frac{4\ep}{3}\) \{ r_0^4\dx(\rho_0\dx w)-r^4\dx(\rho\dx w)\}\\
&= 16\pi^2\(\delta+\frac{4\ep}{3}\) \{(r_0^4-r^4)\dx(\rho_0\dx w)-r^4\dx(\sigma\dx w)
\}.
\end{split}
\end{equation}

It is possible to estimate these nonlinearities in terms of the energy $\E$ given by \eqref{total_energy}.  We present these estimates now.

\begin{lem}\label{lem5.3} 
For each $t$,
\begin{equation}
\mathfrak{E}(N_1,N_2) \le C|\mathcal{E}|^2\; \text{ and }\; |N_\mathcal{B}|+ |\dt N_\mathcal{B}|+|\dt^2N_\mathcal{B}|\le C|\mathcal{E}|^2
\end{equation}
where $\mathfrak{E}$ is defined in \eqref{frak_E_def}.
\end{lem}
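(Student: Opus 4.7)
The plan is to observe that every summand in $\mathfrak{E}(N_1,N_2)$ and in $|\dt^j N_{\mathcal{B}}|$ ($j=0,1,2$) is a product of two factors built from the small perturbation quantities $\sir$, $v/r$, $\dt^k(\sir)$, $1-r_0/r$, $\rho_0\dx w$, and their derivatives. For each such product I would place one factor in $L^{\infty}$ via Lemma \ref{lem4.6} (yielding a $\sqrt{\mathcal{E}}$ bound) and the other in a weighted $L^{2}$-norm that is one of the summands of the total energy \eqref{total_energy} (also bounded by $\sqrt{\mathcal{E}}$). Since $\mathfrak{E}$ is itself quadratic and each $N_{i}$ is at least quadratic in the perturbation, the result for $\mathfrak{E}(N_1,N_2)$ is quartic in perturbation, i.e.\ of order $|\mathcal{E}|^{2}$. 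A prototype computation: from $N_1=-4\pi(2\rho_0+\sigma)\sigma\dx w$,
\begin{equation*}
\int \gamma K \rho_0^{\gamma-1}\bigl|N_1/\rho_0\bigr|^2\,dx\le C\pnorm{\sir}{\infty}^2\int\rho_0^{\gamma+1}|\dx(r^2v)|^2\,dx\le C\mathcal{E}\cdot\mathcal{E}^{1,v}\le C|\mathcal{E}|^2,
\end{equation*}
using Lemma \ref{lem4.6} and the definition of $\mathcal{E}^{1,v}$.

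The individual summands of $N_2$ are treated the same way: $2w^2/r^3=2r(v/r)^2$ pairs $\pnorm{v/r}{\infty}$ with $\mathcal{E}^{1,v}$-controlled quantities; $(r^4-r_0^4)\dx(K\gamma\rho_0^\gamma\sir)$ pairs $\pnorm{1-r_0/r}{\infty}$ with $\mathcal{E}^{1,\sigma}$; $M_1$ is already quadratic in $\sir$ by its explicit Taylor form; and $M_2=(r_0^4-r^4)\dx(\rho_0\dx w)-r^4\dx(\sigma\dx w)$ uses $\pnorm{1-r_0/r}{\infty}$ with $\mathcal{E}^{3,v}_{a_1}$ for the first piece, and $\pnorm{\sir}{\infty}$ (or its spatial derivative after applying Leibniz) with $\mathcal{E}^4_{a_2}$ for the second. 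The $\dt N_i$ contributions to $\mathfrak{E}$, and the dissipative part of $\|\cdot\|_0$ that involves one further $\dx$ applied to $N_2$, are handled identically, with the $\dt$- or $\dx$-hit factor absorbed by one of the higher-order energies $\mathcal{E}^{2},\mathcal{E}^{3},\mathcal{E}^{2,\sigma}_{-1},\mathcal{E}^{5,\sigma}_{-1},\mathcal{E}^4,\mathcal{E}^{4}_{a_1},\mathcal{E}^{4}_{a_2},\mathcal{E}^{3,\sigma}_a,\mathcal{E}^{5,v}_a$ already counted in $\mathcal{E}$.

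For the boundary quantity $N_{\mathcal{B}}$, the key step is the rewriting
\begin{equation*}
\sigma\dx w=(\sir)(\rho_0\dx w),
\end{equation*}
which displays the trace at $x=M$ as a product of factors each of which, by Theorem \ref{growing_mode} and the auxiliary energies $\mathcal{E}^{1,\sigma}_b,\mathcal{E}^4,\mathcal{E}^{3,v}_{a_1},\mathcal{E}^{5,v}_a,\mathcal{E}^{0,r}_b$, has a well-defined boundary value bounded by an appropriate square root of a piece of $\mathcal{E}$ via a weighted Sobolev embedding. The second term $[(r_0/r)^3-1]w/r_0^3$ likewise splits into $|1-r_0/r|_{L^\infty}$ (which is $O(\sqrt{\mathcal{E}})$ by Lemma \ref{lem4.6}) multiplied by a trace of $w/r_0^3$ controlled in the same manner. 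The product of the two trace bounds gives the estimate on $|N_{\mathcal{B}}|$. The time derivatives $\dt N_{\mathcal{B}},\dt^2 N_{\mathcal{B}}$ reduce, via Leibniz, to analogous traces involving $\dt(\sir)$, $\dt^2(\sir)$, $\dt(\rho_0\dx w)$, $\dt(1-r_0/r)$, $\dt(w/r_0^3)$, and so on, controlled by $\mathcal{E}^{2,\sigma}_{-1},\mathcal{E}^{5,\sigma}_{-1},\mathcal{E}^{7,\sigma}_{-1},\mathcal{E}^{6,\sigma}_a,\mathcal{E}^{8,\sigma}_a,\mathcal{E}^{5,v}_a,\mathcal{E}^{7,v}_a$; this is precisely why the total energy was extended up to eighth order in \eqref{total_energy}.

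\textbf{Main obstacle.} The principal difficulty is the boundary trace analysis at $x=M$, where $\rho_0\sim(M-x)^{1/\gamma}$ degenerates and naive trace estimates are unavailable. Success hinges on choosing factorizations---such as $\sigma\dx w=(\sir)(\rho_0\dx w)$ rather than $\sigma\cdot\dx w$ directly---so that every factor separately lies in a weighted Sobolev space whose norm is part of $\mathcal{E}$. Identifying these factorizations, together with verifying the required weighted-Sobolev trace embeddings, is what dictates the precise list of auxiliary and bootstrapped energies in \eqref{total_energy} and forces the total energy to extend to $\mathcal{E}^8$: the two time derivatives of $N_{\mathcal{B}}$ demand boundary traces of third-time-differentiated quantities, whose weighted $H^1$ norms live in $\mathcal{E}^7,\mathcal{E}^8$, and their auxiliaries.
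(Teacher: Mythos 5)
Your overall strategy for the volume part $\mathfrak{E}(N_1,N_2)\le C|\E|^2$ — factor each nonlinearity into an $L^\infty$ piece controlled by Lemma~\ref{lem4.6} times a weighted $L^2$ piece that is a summand of $\E$ — is precisely what the paper does, and your $M_2$ computation, including the rewriting $(r_0^4-r^4)\dx(\rho_0\dx w) - r^4\dx(\sigma\dx w)$ into terms involving $[(r_0/r)^4-1]$, $\rho\dx w$, $\rho_0/\rho$, and $\sigma/\rho$, matches the paper's treatment of that term (which is indeed the only term the paper works out in detail).

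For the boundary estimate $|N_\B|+|\dt N_\B|+|\dt^2 N_\B|\le C|\E|^2$, however, your proposed route — weighted Sobolev trace embeddings for $\rho_0\dx w$, $w/r_0^3$, and their time derivatives, using the auxiliary energies $\E^{1,\sigma}_b,\E^4,\E^{3,v}_{a_1},\E^{5,v}_a,\dotsc$ — is both more complicated than necessary and not actually carried out. You flag ``verifying the required weighted-Sobolev trace embeddings'' as the main obstacle, but the paper side-steps this entirely: the observation is that $N_\B$ is already a product of quantities that Lemma~\ref{lem4.6} controls pointwise, so no trace machinery is needed. The key identity you are missing is the continuity equation $\dt\sigma = -4\pi\rho^2\dx w$, i.e.\ $\rho\dx w = -\tfrac{1}{4\pi}\tfrac{\rho_0}{\rho}\dt(\sir)$, which gives
\begin{equation*}
\sigma\dx w = -\frac{\rho_0^2}{4\pi\rho^2}\,\Bigl(\frac{\sigma}{\rho_0}\Bigr)\,\dt\Bigl(\frac{\sigma}{\rho_0}\Bigr),
\qquad
\Bigl[\Bigl(\frac{r_0}{r}\Bigr)^3-1\Bigr]\frac{w}{r_0^3}
=\Bigl[\Bigl(\frac{r_0}{r}\Bigr)^3-1\Bigr]\frac{v}{r}\cdot\frac{r^3}{r_0^3},
\end{equation*}
so $\pnorm{N_\B}{\infty}\le C\E$ follows directly from the $L^\infty$ bounds on $\sir$, $\dt(\sir)$, $1-r_0/r$, $v/r$ in Lemma~\ref{lem4.6} (with $\rho_0/\rho$ bounded by \eqref{assumption1}). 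For $\dt N_\B$ and $\dt^2 N_\B$ one differentiates these identities in time and uses \eqref{dtSig}, \eqref{ID}, and the $L^\infty$ bounds on $\dt^k(\sir)$ ($k\le 3$) and $\dt^k v/r$ ($k\le 2$), again with no reference to boundary traces of energy-controlled quantities. (Note also that ``Theorem~\ref{growing_mode}'' is a misreference in your boundary discussion — it concerns the growing mode, not the nonlinear solution — and $\E^{7,v}_a$ does not appear in the total energy \eqref{total_energy}.) So while your plan for the bulk nonlinearities is on target, the boundary argument as written has a genuine gap: the needed trace embeddings at the degenerate endpoint are asserted rather than established, and the much simpler mechanism the paper actually uses is not identified.
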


\begin{proof} The second inequality  follows directly from Lemma \ref{lem4.6}. For the first inequality, we only provide the details for the highest-order nonlinear term $M_2$ in $N_2$. Lower order terms may be estimated similarly.  Throughout the  proof we will write $\theta_1$ to denote the left side of estimate \eqref{l46_0}; Lemma \ref{lem4.6} them implies that $\theta_1 \le C \sqrt{\E}$.

By rewriting  $M_2$ as 
\begin{equation}
\begin{split}
\frac{ M_2}{16\pi^2(\delta+\frac{4\ep}{3})}&= 
(r_0^4-r^4)\dx(\rho_0 \dx w)-r^4\dx(\sigma\dx w)\\
&=\left[\(\frac{r_0}{r}\)^4-1\right] \left\{ \frac{\rho_0}{\rho}r^4\dx(\rho \dx w)+r^4\dx\( \frac{\rho_0}{\rho} \)\rho\dx w \right\} \\
&\quad- \frac{\sigma}{\rho} r^4\dx(\rho \dx w)- r^4 \dx\(\frac\sigma\rho\) \rho\dx w
\end{split}
\end{equation}
it is easy to see that 
\begin{equation}
\int  \frac{|N_2|^2}{r_0^4}dx\le C\theta_1^2\mathcal{E}\le C|\mathcal{E}|^2.
\end{equation}
Next, 
\begin{multline}
\frac{\dx M_2}{16\pi^2(\delta+\frac{4\ep}{3})} = \(\frac{r_0}{r}\)^3 \frac{1-(\frac{r_0}{r})^3 + \frac{\sigma}{\rho_0}}{\pi\rho r_0^2r} \left\{ \frac{\rho_0}{\rho}r^4\dx(\rho \dx w)+r^4\dx\(\frac{\rho_0}{\rho}\) \rho\dx w \right\} \\
 + \left[ \(\frac{r_0}{r}\)^4 - 1\right] \left\{ \frac{\rho_0}{\rho}\dx(r^4\dx(\rho \dx w)) + 2r^4\dx\(\frac{\rho_0}{\rho}\) \dx(\rho \dx w) + \dx\(r^4\dx\(\frac{\rho_0}{\rho}\)\)\rho\dx w \right\} \\
 - \frac{\sigma}{\rho} \dx(r^4\dx(\rho \dx w))- 2r^4 \dx\(\frac\sigma\rho\) \dx(\rho\dx w)-\dx\( r^4 \dx\( \frac\sigma\rho \)\) \rho\dx w.
\end{multline}
Hence from the definition of the energies and from the estimates in the previous section, 
\begin{equation}
\begin{split}
\int \rho_0|\dx M_2|^2dx  \le C\theta_1^2 (\mathcal{E}^4_{a_2} +\mathcal{E}^{3,v}_{a_1}+\mathcal{E}^{1,\sigma}_b)+C\theta_1^4(\mathcal{E}^4 +\mathcal{E}^{1,\sigma}_b)+ C(\mathcal{E}^{1,\sigma}_b+\mathcal{E}^4)\mathcal{E}^{3,v}_{a_1}\le C|{\mathcal{E}}|^2.
\end{split}
\end{equation}
On the other hand, $\dt M_2$ reads as 
\begin{multline}
\frac{\dt M_2}{16\pi^2(\delta+\frac{4\ep}{3})} = -4r^3v\dx(\rho\dx w) -r^4\dx(\dt\sigma \dx w)-r^4\dx( \sigma \dx\dt w)+(r_0^4-r^4)\dx(\rho_0\dx\dt w)\\
= -4 \frac{v}{r}r^4\dx (\rho\dx w) -\frac{\dt\sigma}{\rho} r^4\dx(\rho \dx w) -r^4 \dx\(\frac{\dt\sigma}{\rho}\) \rho \dx w-r^4 \frac{\sigma}{\rho} \dx(\rho\dx\dt w) \\
-r^4\dx\(\frac{\sigma}{\rho}\) \rho \dx\dt w +(r_0^4-r^4)\frac{\rho_0}{\rho}\dx(\rho\dx\dt w) + (r_0^4-r^4)\dx\(\frac{\rho_0}{\rho}\) \rho\dx\dt w.
\end{multline}
Thus 
\begin{equation}
\int \frac{|\dt M_2|^2}{r_0^4}dx \le C\theta_1^2(\mathcal{E}^{3,v}_{a_1}+\mathcal{E}^{3,\sigma}_a+\mathcal{E}^5_a) +C(\mathcal{E}^4+\mathcal{E}^{1,\sigma}_b)\mathcal{E}^{6,\sigma}_a\le C|{\mathcal{E}}|^2.
\end{equation}
\end{proof}

\subsection{Data analysis}

In order to prove our nonlinear instability result, we want to use the linear growing mode solutions constructed in Theorem \ref{growing_mode} to construct small initial data for the nonlinear problem, written in the perturbation formulation \eqref{N_1}.  Small data in the perturbation formulation correspond to initial data for  \eqref{lagrangian_1}--\eqref{bc} that are close to the stationary solutions $\rho = \rho_0$, $v =0$, $r=r_0$.  Unfortunately, due to the regularity framework (given by $\E$ as in \eqref{total_energy}) in which we have proved our nonlinear estimates, we cannot simply set the initial data for the nonlinear problem \eqref{N_1} to be a small constant times the linear growing modes.  The reason for this is that the initial data for the nonlinear problem must satisfy certain nonlinear compatibility conditions in order for us to guarantee local existence in the energy space defined by $\E$.  Until now we have taken the local well-posedness theory for the nonlinear problem for granted, but we must now say a few words about the compatibility conditions in order to construct our desired initial data.

Recall that we can rewrite the nonlinear problem \eqref{lagrangian_1}--\eqref{bc} in the form \eqref{N_1}--\eqref{N_2} with nonlinearities given by \eqref{NB_def}--\eqref{N12_def}.  Let us concisely rewrite \eqref{N_1} as
\begin{equation}\label{da_1}
 \dt \SX + \mathcal{L} \SX = \SN(\SX) \text{ for } 
\SX =
\begin{pmatrix}
 \sigma \\ \bar{w}
\end{pmatrix},
\end{equation}
where $\SN(\SX)$ is the nonlinearity given in terms of $N_1$ and $N_2$ by the right side of \eqref{N_1}.  We will also rewrite the  boundary conditions \eqref{N_2} as
\begin{equation}\label{da_2}
  \SC(\SX) := 
\begin{pmatrix}
 (w/r_0^2)\vert_{x=0} \\
 \sigma \vert_{x=M} \\
 \mathcal{B}( w )\vert_{x=M}
\end{pmatrix} 
= 
\begin{pmatrix}
 w(r_0^{-2} - r^{-2})\vert_{x=0} \\
 0 \\
 N_{\B}
\end{pmatrix}
:=
\SN_{\B}(\SX)
.
\end{equation}
Here $r$ is determined as a nonlinear function of $\sigma$ as usual.

Rewriting the nonlinear problem as \eqref{da_1}--\eqref{da_2} now allows us to easily describe the compatibility conditions for the initial data.  Given $\SX(0)$ as initial data for $\SX$ at $t=0$, we can use \eqref{da_1} to iteratively solve for $\dt^j \SX(0)$ for $j\ge 1$:
\begin{equation}\label{da_3}
\begin{split}
 \dt \SX(0) &=  - \mathcal{L} \SX(0) + \SN(\SX(0)) \\
 \dt^2 \SX(0) &= - \mathcal{L} \dt \SX(0) + D\SN(\SX(0)) \cdot \dt \SX(0) \\
 &=  -\mathcal{L} (- \mathcal{L} \SX(0) + \SN(\SX(0))) + D\SN(\SX(0)) \cdot (- \mathcal{L} \SX(0) + \SN(\SX(0))),
\end{split}
\end{equation}
and so on for higher derivatives, where here $D$ is the derivative of the nonlinearity.  We may similarly compute $\dt^j \SN_{\B}(\SX)(0)$:
\begin{equation}\label{da_3_5}
\dt \SN_{\B}(\SX)(0) = D \SN_{\B}(\SX(0)) \cdot \dt X(0) = D \SN_{\B}(\SX(0)) \cdot [-\mathcal{L} \SX(0) + \SN(\SX(0))]\vert_{x=M},
\end{equation}
continuing as above for higher derivatives.  This procedure may be carried out indefinitely as long as $\SX(0)$ is sufficiently smooth.  However, we may also differentiate the boundary condition \eqref{da_2} with respect to time and then set $t=0$ to see that the data must satisfy the boundary conditions
\begin{equation}\label{da_4}
 \SC(\dt^j \SX(0)) = \dt^j \SN_{\B}(\SX)(0)  \text{ for } j\ge 0.
\end{equation}
Since the terms $\dt^j \SX(0)$ and  $\dt \SN_{\B}(\SX)(0)$ constructed in \eqref{da_3}--\eqref{da_3_5} are determined entirely by $\SX(0)$, we then find that the data $\SX(0)$ must satisfy the nonlinear compatibility conditions given by substituting \eqref{da_3}--\eqref{da_3_5} into \eqref{da_4}.  

For completely smooth solutions to the nonlinear problem, the compatibility conditions would have to hold for all $j\ge 0$.  In our case, we only require solutions to remain in the  energy space defined by $\E$, and as such we must only solve for $\dt^j \SX(0)$ for $j=1,2,3$, given $\SX(0)$.  This then requires the compatibility condition from \eqref{da_4} only for $0\le j\le 3$.  Of course, in order to guarantee that $\E(0)$ is finite, we must have that $\dt^j \SX(0)$, $0\le j\le 3$, satisfy the integrability conditions in the definition of $\E(0)$.  This in turn gives us a natural Hilbert function space $\mathbb{H}$ with the following three properties.  First, if $\SX(0) \in \mathbb{H}$, then we have the trace estimates needed to make sense of the boundary conditions in \eqref{da_4} for $0\le j \le 3$.  Second, if $\norm{\SX(0)}_{\mathbb{H}}$ is sufficiently small, then $\E(0) \le C \norm{\SX(0)}_{\mathbb{H}}^2$ for some $C>0$.  Here the smallness assumption is needed to deal with the nonlinearities in \eqref{da_3}--\eqref{da_3_5} and the $r$ terms in $\E$.  Third, the linear growing modes produced in Theorem \ref{growing_mode} are in $\mathbb{H}$. It is straightforward to extract the proper definition of $\mathbb{H}$ from $\E$ and to work out the details of the estimate of $\E(0)$; as such, for the sake of brevity we omit these.   With $\mathbb{H}$ defined in this way, it is then easy to use estimate \eqref{gm_03} of Theorem \ref{growing_mode} in conjunction with \eqref{growing_1}--\eqref{growing_3} to see that the growing modes are in $\mathbb{H}$.

Now that we have stated the nonlinear compatibility conditions, we see why we cannot simply set $\SX(0) = \iota \SX_0$ with
\begin{equation}
 \SX_0 = 
\begin{pmatrix}
 \sigma_\star \\ \bar{w}_\star
\end{pmatrix}
\end{equation}
for $\sigma_\star$ and $v_\star = \bar{w}_\star / r_0^2$ the growing mode solution constructed in Theorem \ref{growing_mode} and $\iota >0$ a small parameter.  Indeed, these solve
\begin{equation}\label{da_5}
 \lambda \SX_0 + \mathcal{L} \SX_0 =0 \text{ and } \SC(\SX_0)=0 \Rightarrow \SC(\mathcal{L}^j \SX_0) =0 \text{ for all } j\ge 0,
\end{equation}
which in particular means that $\SX(0) = \iota \SX_0$ does not satisfy the nonlinear compatibility condition \eqref{da_4} for $j\ge 1$.   

To get around this obstacle, we will use the implicit function theorem to produce a curve of initial data satisfying the compatibility conditions, close to the linear growing modes.  To this end, let us define the map  $F:\mathbb{H} \to \Rn{12}$ via
\begin{equation}
 F(\SX) = 
\begin{pmatrix}
 \SC(\SX) \\ \SC(\dt \SX) \\ \SC(\dt^2 \SX) \\ \SC(\dt^3 \SX)
\end{pmatrix}
-
\begin{pmatrix}
 \SN_{\B}(\SX) \\ \dt \SN_{\B}(\SX) \\ \dt^2 \SN_{\B}(\SX) \\ \dt^3 \SN_{\B}(\SX)
\end{pmatrix},
\end{equation}
where we understand that $\dt^j \SX$ and $\dt^j \SN_{\B}(\SX)$ for $j=1,2,3$ are computed in terms of $\SX$ as in \eqref{da_3}--\eqref{da_3_5}.  Let $\SX_0$ be the linear growing modes as above and let $\SX_i\in \mathbb{H}$,  $i=1,\dotsc,12$, be arbitrary for now, with exact values to be chosen later.  We then define $f:\Rn{1+12} \to \Rn{12}$ via
\begin{equation}
f(t,\tau) = F\(t \SX_0 + \sum_{i=1}^{12} \tau_i  \SX_i\) \text{ for } t \in \Rn{} \text{ and } \tau \in \Rn{12}.
\end{equation}
It is straightforward to see, given the structure of the nonlinearities $\SN(\cdot)$ and $\SN_{\B}(\cdot)$, that $f \in C^2(\Rn{1+12}; \Rn{12})$.  Also, $f(0,0) = 0$ and 
\begin{equation}\label{da_6}
\frac{\partial f}{\partial t}(0,0) = 
\begin{pmatrix}
  \SC(\SX_0) \\ \SC(\lambda  \SX_0) \\ \SC(\lambda^2 \SX_0) \\ \SC(\lambda^3  \SX_0)
\end{pmatrix}
=0
\text{ and }
\frac{\partial f}{\partial \tau_i}(0,0) = 
\begin{pmatrix}
  \SC(\SX_i) \\ \SC(-\mathcal{L}  \SX_i) \\ \SC(\mathcal{L}^2 \SX_i) \\ \SC(-\mathcal{L}^3  \SX_i)
\end{pmatrix}.
\end{equation}
From this it is then straightforward to choose the  $\SX_i$ for $i=1,\dotsc,12$ so that the $12 \times 12$ matrix $\partial f/\partial \tau(0,0)$ is invertible.   The implicit function theorem then provides a small constant $\iota_0>0$ and a function  $\xi: (-\iota_0, \iota_0) \to \Rn{12}$ so that $f(t,\xi(t)) = 0$ for all $t \in (-\iota_0,\iota_0)$ and so that $\xi \in C^2$ and $\xi(0) =0$.  We may then differentiate the equation $f(t,\xi(t)) = 0$ with respect to $t$, set $t=0$, and use the first equation in \eqref{da_6} to see that
\begin{equation}
0 = \frac{\partial f}{\partial t}(0,0) + \frac{\partial f}{\partial \tau}(0,0) \frac{d\xi(0)}{dt} = \frac{\partial f}{\partial \tau}(0,0) \frac{d\xi}{dt}(0) \Rightarrow \frac{d\xi}{dt}(0) =0
\end{equation}
since the matrix $\partial f / \partial \tau (0,0)$ is invertible.  Then $\xi \in C^2$ with $\xi(0) = \dot{\xi}(0) =0$ so that $\xi(t)/t^2$ is well-defined and continuous on $(-\iota_0,\iota_0)$.   Using this, we may then deduce the existence of a small parameter $\iota_0>0$ and a curve $\mathfrak{Y}: (-\iota_0,\iota_0) \to \mathbb{H}$ given by
\begin{equation}
\mathfrak{Y}(\iota) = \iota \SX_0 + \iota^2 \sum_{i=1}^{12} \SX_i \frac{\xi_i(\iota)}{\iota^2} :=  \iota \SX_0 + \iota^2 \bar{\mathfrak{Y}}(\iota)
\end{equation}
so that for all $\iota \in (-\iota_0,\iota_0),$
\begin{equation}\begin{split}
 F(\mathfrak{Y}(\iota)) &=0, \text{ i.e. } \mathfrak{Y}(\iota) \text{ satisfies the nonlinear compatibility conditions} \\
\sqrt{\E(\mathfrak{Y}(\iota))} & \le C \norm{ \mathfrak{Y}(\iota) }_{\mathbb{H}} \le C \iota, \text{ and }\\
 \mathfrak{E}(\bar{\mathfrak{Y}}(\iota)_1, \bar{\mathfrak{Y}}(\iota)_2)   &\le C,
\end{split}
\end{equation}
where here the norm  $\norm{\cdot}_0 \le \norm{\cdot}_{\mathbb{H}}$ is given by \eqref{norm_0_def}, the term $\mathfrak{E}$ is defined by \eqref{frak_E_def}, and in the second line we have written $\E(\mathfrak{Y}(\iota))$ for $\E(0)$ computed from the initial data $\SX(0) = \mathfrak{Y}(\iota)$.

We now recast the above discussion as a lemma.

\begin{lem}\label{data_analysis}
Let $\sigma_\star, v_\star$  be the growing mode solution constructed in Theorem \ref{growing_mode}, write and $ \bar{w}_\star =r_0^2 v_\star$, and assume the normalization
\begin{equation}\label{data_0}
 \norm{
\begin{pmatrix}
 \sigma_\star \\ \bar{w}_\star 
\end{pmatrix}
}_0 = 1,
\end{equation}
for $\norm{\cdot}_0$ the norm defined by \eqref{norm_0_def}.  Then there exists a number $\iota_0 >0$ and a family of initial data
\begin{equation}
\begin{pmatrix}
 \sigma^\iota(0) \\  w^\iota(0)
\end{pmatrix}
=
 \SX(\iota) = 
\iota 
\begin{pmatrix}
 \sigma_\star \\ \bar{w}_\star
\end{pmatrix}
+ \iota^2
\begin{pmatrix}
 \sigma_0(\iota) \\ w_0(\iota)
\end{pmatrix}
\end{equation}
for $\iota \in [0,\iota_0)$ so that the following hold.
\begin{enumerate}
 \item $\SX(\iota)$ satisfies the nonlinear compatibility conditions required for a solution to the nonlinear problem \eqref{da_1} to exist in the energy space defined by $\E$.
 \item If $\E(0)$ denotes the value of $\E$ determined at $t=0$ from the data $\SX(\iota)$, then $\E(0)  \le C \iota^2$ for a constant $C>0$.
 \item For all $\iota \in [0,\iota_0)$, it holds that
\begin{equation}\label{data_1}
  \norm{
\begin{pmatrix}
 \sigma_0(\iota) \\ w_0(\iota) 
\end{pmatrix}
}_0^2 \le 
\mathfrak{E}(\sigma_0(\iota), w_0(\iota))
\le  C
\end{equation}
for a constant $C >0$ independent of $\iota$, where $\mathfrak{E}$ is given by \eqref{frak_E_def}.

\item Let $\psi^\iota$ denote the function given by \eqref{psi} with $N_{\B} = N_{\B}(\SX(\iota))$ determined by the data $\SX(\iota)$ at $t=0$.  Then $\bar{w}^\iota(0) = w^\iota(0) - \psi^\iota$ satisfies the homogeneous boundary condition $\B(\bar{w}^\iota(0)) = 0$ and 
\begin{equation}\label{data_2}
  \norm{
\begin{pmatrix}
 0 \\ \psi^\iota 
\end{pmatrix}
}_0^2 \le 
\mathfrak{E}(0, \psi^\iota)
\le  C \iota^4
\end{equation}
for a constant $C>0$ independent of $\iota$.
\end{enumerate}

\end{lem}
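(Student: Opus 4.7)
The plan is to formalize the implicit function theorem construction sketched in the discussion immediately preceding the lemma. I define the compatibility map $F:\mathbb{H}\to\Rn{12}$ by
\begin{equation*}
F(\SX)=\bigl(\SC(\SX),\SC(\dt\SX),\SC(\dt^2\SX),\SC(\dt^3\SX)\bigr)-\bigl(\SN_\B(\SX),\dt\SN_\B(\SX),\dt^2\SN_\B(\SX),\dt^3\SN_\B(\SX)\bigr),
\end{equation*}
where the time derivatives are understood as polynomial expressions in $\SX$ given by iterating \eqref{da_3} and \eqref{da_3_5}. Setting $\SX_0=(\sigma_\star,\bar w_\star)$ and choosing auxiliary vectors $\SX_1,\dotsc,\SX_{12}\in\mathbb{H}$ to be determined, I let $f(t,\tau)=F\bigl(t\SX_0+\sum_i\tau_i\SX_i\bigr)$ and apply the implicit function theorem at $(0,0)$. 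The smoothness of $f$ (guaranteed by the polynomial form of $\SN$ and $\SN_\B$ and the structure of $\mathbb{H}$) makes the IFT applicable; the computation of the partials $\partial f/\partial t(0,0)$ and $\partial f/\partial\tau_i(0,0)$ in \eqref{da_6} already carried out in the discussion then provides the setup.

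The main obstacle, and the only point requiring genuine verification, is to choose the $\SX_i$ so that the $12\times 12$ Jacobian $\partial f/\partial\tau(0,0)$ is invertible. Since each $\SC(\cdot)$ contributes three boundary values (at $x=0$ and twice at $x=M$), the twelve rows correspond to prescribing these three traces on $\SX_i$, $\mathcal{L}\SX_i$, $\mathcal{L}^2\SX_i$, and $\mathcal{L}^3\SX_i$. I would construct the $\SX_i$ concretely as linear combinations of smooth bump profiles localized near the two endpoints of $[0,M]$; the key check is that each such bump can be cut off with weights compatible with the vacuum asymptotics \eqref{LEL} so that it lies in $\mathbb{H}$, while independently realizing the prescribed boundary traces. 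A straightforward degree-of-freedom count shows that the boundary and viscous trace functionals $\SC\circ\mathcal{L}^k$ for $k=0,1,2,3$ are independent as linear functionals on $\mathbb{H}$, since each additional $\mathcal{L}$ introduces two more spatial derivatives whose endpoint values can be chosen freely by adjusting higher-order Taylor coefficients of the bump. Once an appropriate set $\{\SX_i\}$ with $\partial f/\partial\tau(0,0)$ nonsingular is fixed, the IFT produces $\iota_0>0$ and a $C^2$ curve $\xi:(-\iota_0,\iota_0)\to\Rn{12}$ with $\xi(0)=0$.

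Since $\mathcal{L}^j\SX_0=(-\lambda)^j\SX_0$ and $\SC(\SX_0)=0$ by the homogeneous linearized boundary conditions \eqref{growing_3}, we have $\partial f/\partial t(0,0)=0$; differentiating $f(t,\xi(t))=0$ at $t=0$ then forces $\dot\xi(0)=0$, so $\xi(\iota)=O(\iota^2)$ and $\bar{\mathfrak{Y}}(\iota):=\sum_i\SX_i\xi_i(\iota)/\iota^2$ stays bounded in $\mathbb{H}$ as $\iota\to 0$. Setting $\SX(\iota)=\iota\SX_0+\iota^2\bar{\mathfrak{Y}}(\iota)$ immediately gives item (1) by construction. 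For items (2) and (3), the Hilbert space $\mathbb{H}$ is designed precisely so that $\sqrt{\E(0)}\le C\|\SX(0)\|_\mathbb{H}$ and $\sqrt{\mathfrak{E}(\cdot)}\le C\|\cdot\|_\mathbb{H}$ for sufficiently small data (the nonlinear $r$-terms in $\E$ are handled by the Taylor expansion \eqref{r_0r}, which is valid under the smallness condition \eqref{assumption1}); applying these to $\SX(\iota)$ and to $\bar{\mathfrak{Y}}(\iota)$ gives the stated bounds.

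Item (4) is then the easiest part. Inspection of the defining formula \eqref{NB_def} shows that $N_\B$ is at least quadratic in the perturbation $(\sigma,w)$ near the stationary state, so $N_\B(\SX(\iota))=O(\iota^2)$; the same argument applied after time differentiation (using the expressions for $\dt^j\SX(0)$ recorded in \eqref{da_3}, which depend analytically on $\SX(0)$ and annihilate the linear growing mode at lowest order due to $\SC(\mathcal{L}^j\SX_0)=0$) yields $\dt^j N_\B|_{t=0}=O(\iota^2)$ for $j=1,2$. With $\psi^\iota=-N_\B r_0^3/(3\delta)$ supplied by Lemma \ref{boundary_transform}, the bound $\mathfrak{E}(0,\psi^\iota)\le C\bigl(|N_\B|^2+|\dt N_\B|^2+|\dt^2 N_\B|^2\bigr)$, analogous to \eqref{me_3}, immediately gives the $O(\iota^4)$ estimate \eqref{data_2} and completes the proof.
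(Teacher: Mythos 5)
Your proposal reproduces the paper's own argument: the paper also constructs the data by applying the implicit function theorem to $f(t,\tau)=F\bigl(t\SX_0+\sum_i\tau_i\SX_i\bigr)$, uses $\partial f/\partial t(0,0)=0$ to force $\dot\xi(0)=0$ so that the correction is $O(\iota^2)$, gets items (2)--(3) from the design of $\mathbb{H}$, and deduces item (4) from Lemma \ref{boundary_transform} together with the quadratic structure of $N_{\B}$. The only departures are cosmetic: you sketch a concrete bump-function choice for the $\SX_i$ where the paper simply asserts invertibility of $\partial f/\partial\tau(0,0)$ is ``straightforward,'' and your parenthetical appeal to $\SC(\mathcal{L}^j\SX_0)=0$ in item (4) is superfluous --- the $O(\iota^2)$ bound for $\dt^j N_{\B}|_{t=0}$ already follows from $N_{\B}$ being at least quadratic together with $\dt^j\SX(0)=O(\iota)$ from \eqref{da_3}.
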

\begin{proof}
Everything except for the last item is proved above.  The last item follows from Lemma \ref{boundary_transform} and the fact that $N_{\B}$ is at least a quadratic nonlinearity.
\end{proof}

\subsection{Instability}

We are now ready to prove our main result.

\begin{thm}\label{thm}
There exist $\theta_0 > 0$, $C>0$, and $0< \iota_0 < \theta_0$ such that for any  $0 < \iota \le \iota_0$, there exists a family of solutions $\sigma^\iota(t)$ and $v^\iota(t)$ to the Navier-Stokes-Poisson system \eqref{PNSP} so that 
\begin{equation}\label{th_00}
\sqrt{{\mathcal{E}}}(0)\le C\iota\; \text{ but }\; \sup_{0\le t\le T^\iota}\sqrt{\mathcal{E}^0}(t)\ge \theta_0.
\end{equation}
Here $T^\iota$ is given by $T^\iota = \frac{1}{\lambda} \ln \frac{\theta_0}{\iota}$.
\end{thm}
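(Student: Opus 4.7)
The plan is to combine the linear growing-mode solution of Theorem \ref{growing_mode}, the Duhamel representation of Theorem \ref{mild_estimates}, the nonlinear bootstrap of Proposition \ref{prop}, the quadratic nonlinear bounds of Lemma \ref{lem5.3}, and the compatible initial data of Lemma \ref{data_analysis} into the standard Guo--Strauss escape-from-equilibrium argument. To begin, normalize the growing mode $(\sigma_\star, v_\star)$ from Theorem \ref{growing_mode} according to \eqref{data_0} and fix $c_0 > 0$ so that $\pnormspace{v_\star}{2}{(0,M)}^2 + \int_0^M K\gamma \rho_0^{\gamma-1}\abs{\sigma_\star/\rho_0}^2 dx \ge 16 c_0^2$, which is strictly positive because the growing mode is nontrivial. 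Lemma \ref{data_analysis} then supplies, for $\iota \in (0,\iota_0)$, compatible initial data $\SX^\iota(0) = \iota(\sigma_\star, \bar{w}_\star)^T + \iota^2 \SX_1^\iota$ with $\sqrt{\mathcal{E}(0)} \le C_{\mathrm{I}}\iota$, and local well-posedness (following the methodology of \cite{J2}) produces a strong solution on a maximal interval $[0, T_{\max}^\iota)$.

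Next, introduce the stopping time
\begin{equation*}
T_\ast := \sup\bigl\{ t < T_{\max}^\iota : \mathcal{E}(s) \le \kappa_\ast \text{ and } \sqrt{\mathcal{E}^0(s)} \le 2 C_{\mathrm{I}}\, \iota e^{\lambda s} \text{ for all } s \in [0,t]\bigr\},
\end{equation*}
where $\kappa_\ast$ is chosen small enough so that Lemma \ref{lem4.6} validates \eqref{assumption2} and that Proposition \ref{prop} applies with $C_0 = 2C_{\mathrm{I}}$. On $[0, \min\{T_\ast, T^\iota\}]$ the bootstrap of Proposition \ref{prop} yields the upper bound $\sqrt{\mathcal{E}(t)} \le C_\star\, \iota e^{\lambda t}$, and Lemma \ref{lem5.3} then gives
\begin{equation*}
\sqrt{\mathfrak{E}(N_1(s), N_2(s))} + \abs{N_{\mathcal{B}}(s)} + \abs{\dt N_{\mathcal{B}}(s)} + \abs{\dt^2 N_{\mathcal{B}}(s)} \le \tilde{C}\, \iota^2 e^{2\lambda s},
\end{equation*}
with $\mathfrak{E}$ as in \eqref{frak_E_def}. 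The third step is to invoke \eqref{me_01} of Theorem \ref{mild_estimates}. Because $(\sigma_\star, \bar{w}_\star)^T$ is a $(-\lambda)$-eigenvector of $\mathcal{L}$ (the equations \eqref{growing_1}--\eqref{growing_3} rewritten), $e^{t\mathcal{L}}$ acts on it by the scalar $e^{\lambda t}$; the homogeneous semigroup estimate \eqref{mild_recast} combined with \eqref{data_1}--\eqref{data_2} controls the contribution of the $\iota^2 \SX_1^\iota$ data remainder and the boundary corrector $\psi^\iota$ by $\tilde{C} \iota^2 e^{\lambda t}$; and the Duhamel integrals contribute $\int_0^t e^{\lambda(t-s)}\tilde{C} \iota^2 e^{2\lambda s}\,ds \le (\tilde{C}/\lambda)\iota^2 e^{2\lambda t}$. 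Collecting these yields
\begin{equation*}
\norm{(\sigma^\iota(t), w^\iota(t))^T - \iota e^{\lambda t} (\sigma_\star, \bar{w}_\star)^T}_0 \le C_{\mathrm{D}}\, \iota^2 e^{2\lambda t} \quad \text{on } [0, \min\{T_\ast, T^\iota\}].
\end{equation*}

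Because \eqref{assumption1} is enforced by the $\kappa_\ast$-bound in $T_\ast$, the weighted $L^2$ integrals appearing in $\mathcal{E}^0$ are equivalent (up to factors of $1 + \sigma/\rho_0$ and $r/r_0$ that are bounded between $1/2$ and $2$) to the corresponding terms in \eqref{norm_0_def}, so the reverse triangle inequality applied componentwise delivers $\sqrt{\mathcal{E}^0(t)} \ge 2 c_0\, \iota e^{\lambda t} - C_{\mathrm{D}}'\, \iota^2 e^{2\lambda t}$. Finally, choose $\theta_0$ small enough that $C_\star \theta_0 \le \sqrt{\kappa_\ast}$, $C_{\mathrm{D}}' \theta_0 \le c_0 - 2C_{\mathrm{I}}$, and $\theta_0$ lies below Proposition \ref{prop}'s threshold $\theta_\star$ (shrinking $\iota_0$ if necessary to reach the regime $2 C_{\mathrm{I}} < c_0$). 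A continuity argument then precludes $T_\ast \le T^\iota$: both stopping alternatives would imply equalities that are contradicted by the strict inequalities $\mathcal{E}(T_\ast) \le (C_\star \theta_0)^2 < \kappa_\ast$ and $\sqrt{\mathcal{E}^0(T_\ast)} \ge (2c_0 - C_{\mathrm{D}}'\theta_0)\iota e^{\lambda T_\ast} > 2C_{\mathrm{I}}\iota e^{\lambda T_\ast}$. Hence $T^\iota < T_\ast \le T_{\max}^\iota$, and evaluating at $t = T^\iota$, where $\iota e^{\lambda T^\iota} = \theta_0$, gives $\sqrt{\mathcal{E}^0(T^\iota)} \ge 2 c_0 \theta_0 - C_{\mathrm{D}}' \theta_0^2 \ge c_0 \theta_0$; relabeling $\theta_0 \mapsto c_0 \theta_0$ yields the conclusion. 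The main obstacle is the interlocking of the four smallness constants $\theta_0$, $\kappa_\ast$, $\theta_1$, and the ratio of $C_\star$ (which depends on $C_0 = 2C_{\mathrm{I}}$) to $c_0$ so that the stopping-time argument closes without circularity; in particular, one must verify that the nonlinear compatibility correction $\iota^2 \SX_1^\iota$ and the boundary corrector $\psi^\iota$ remain of size $O(\iota^2)$ in the Duhamel-compatible norm $\mathfrak{E}$, so that the $O(\iota^2 e^{2\lambda t})$ nonlinear error is strictly dominated by the $O(\iota e^{\lambda t})$ linear growth throughout the escape interval.
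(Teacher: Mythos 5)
Your overall architecture is the right one — it is essentially the paper's proof: compatible data via Lemma \ref{data_analysis}, bootstrap via Proposition \ref{prop}, Duhamel with Theorem \ref{mild_estimates} and Lemma \ref{lem5.3}, reverse triangle inequality for escape. However, the step that rules out the stopping time $T_\ast$ being reached before $T^\iota$ is logically flawed, and this gap cannot be patched by ``shrinking $\iota_0$.''

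You want to rule out the stopping alternative $\sqrt{\E^0(T_\ast)} = 2C_{\mathrm{I}}\iota e^{\lambda T_\ast}$ by showing the \emph{lower} bound $\sqrt{\E^0(T_\ast)} \ge (2c_0 - C_{\mathrm{D}}'\theta_0)\iota e^{\lambda T_\ast} > 2C_{\mathrm{I}}\iota e^{\lambda T_\ast}$, which requires $c_0 > C_{\mathrm{I}}$. But this is impossible: with the normalization \eqref{data_0}, the initial data satisfies $\E(0) \ge \E^{0,\sigma^\iota}(0) + \E^{0,v^\iota}(0) \gtrsim \iota^2 \cdot 16c_0^2$, so $C_{\mathrm{I}} \gtrsim 4c_0$ always. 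The constants $c_0$ and $C_{\mathrm{I}}$ are determined by the normalization and by Lemma \ref{data_analysis}; neither depends on $\iota_0$, so no amount of shrinking $\iota_0$ can reach the regime $2C_{\mathrm{I}} < c_0$. The correct mechanism — and what the paper does in Step~5 — is to rule out early stopping via the Duhamel \emph{upper} bound: the barrier constant must be chosen large enough (the paper takes $4+B_0$ with $B_0$ as in \eqref{th_B}) that the Duhamel estimate \eqref{th_4}, together with a separate estimate for the $r$-energy, puts the lowest-order energies strictly below the barrier, say at $(3+B_0)\iota e^{\lambda t}$. The lower bound from the reverse triangle inequality is then used only in the \emph{final} step to conclude the instability, and it is never compared against the barrier constant.

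A second, related omission: $\E^0$ contains the term $\E^{0,r} = \frac{\nu}{2}\int |1-r_0/r|^2\,dx$, which has \emph{no} counterpart in $\norm{\cdot}_0$ from \eqref{norm_0_def}, so your claim that the terms in $\E^0$ are ``equivalent up to bounded factors'' to those in $\norm{\cdot}_0$ is wrong for this piece. For the upper-bound/stopping-time argument you must control $\E^{0,r^\iota}$ separately; the paper does this in Step~4 by integrating a differential inequality for $\sqrt{\E^{0,r^\iota}}$ obtained from the identity $\dt(r_0/r) = -(v/r)(r_0/r)$, which produces the constant $B_0$ that enters the barrier. The lower-bound step is unaffected since $\E^0 \ge \E^{0,\sigma} + \E^{0,v}$, which \emph{is} comparable to the $\norm{\cdot}_{00}$ part of $\norm{\cdot}_0$ under \eqref{assumption1}, so that part of your argument is fine.
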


\begin{proof} 

We divide the proof into steps.  At several points in the proof we will restrict the size of $\theta$.  Whenever we do so, we assume that $\iota$ is also restricted so that $0<\iota \le \iota_0 \le \theta$.  We will choose the value of $\theta_0$ in the final step of the proof.

\textbf{Step 1} -- Data and the solutions

Let us assume that $\iota_0$ is as small as the $\iota_0$ appearing in Lemma \ref{data_analysis} and then let $\SX(\iota)$ for $\iota \le \iota_0$ be the family of initial data for the nonlinear problem \eqref{da_1}--\eqref{da_2} given in the lemma.    For $0 < \iota \le \iota_0$, we now let  $\binom{\sigma^\iota}{w^\iota}$ be solutions  to the Navier-Stokes-Poisson system \eqref{da_1}--\eqref{da_2} with a family of initial data 
\begin{equation}
\left.
\begin{pmatrix}
 \sigma^\iota \\ w^\iota 
\end{pmatrix}
\right\vert_{t=0} 
=
\begin{pmatrix}
 \sigma^\iota(0) \\ w^\iota(0) 
\end{pmatrix}
= \SX(\iota)  =
\iota 
\begin{pmatrix}
 \sigma_\star \\ \bar{w}_\star
\end{pmatrix}
+ \iota^2
\begin{pmatrix}
 \sigma_0(\iota) \\ w_0(\iota)
\end{pmatrix}.
\end{equation}
The solution satisfies $\sqrt{\E}(0) \le C \iota$.

Note that since
\begin{equation}
 r^\iota(x,0) = \left(\frac{3}{4\pi} \int_0^x \frac{dy}{\rho_0(y) + \iota \sigma_*(y) + \iota^2 \sigma_0(\iota)(y)} \right)^{1/3},
\end{equation}
a Taylor expansion and  item 2 of Lemma \ref{data_analysis} allow us to estimate
\begin{equation}
 \pnorm{1 - \frac{r_0(x)}{r^\iota(x,0)}}{\infty}^2 +  \frac{\nu}{2} \pnorm{1 - \frac{r_0(x)}{r^\iota(x,0)}}{2}^2  \le A_1 \iota^2
\end{equation}  
for a constant $A_1>0$, independent of $\iota$.  From this, the normalization \eqref{data_0}, and the estimate \eqref{data_1},  we may assume that $\iota < \iota_0$ with $\iota_0$ small enough so that
\begin{equation}\label{th_1}
\frac{\iota}{2} \le \sqrt{ \E^{0,\sigma^\iota}(0) +\E^{0,v^\iota}(0) + \E^{1,v^\iota}(0) }  
+ \sqrt{ \E^{0,r^\iota}(0)} \le 2\iota. 
\end{equation}

Throughout the rest of the proof we will let $\E(t)$ denote the total energy, defined by \eqref{total_energy}, associated to the solutions $\sigma^\iota$ and $w^\iota$ at time $t$.

\textbf{Step 2} -- Control of the energy

Let us define the constant
\begin{equation}\label{th_B}
B_0 := \left(2 + \frac{27}{8\sqrt{2} \lambda} \pnorm{\rho_0}{\infty}^{1/2}  \right).
\end{equation}
It will be useful in determining the time-scale in which instability begins.  Indeed, we define $T$ by 
\begin{equation}\label{th_2}
T:=\sup\left\{s \left \vert \sqrt{ \E^{0,\sigma^\iota}(t) +\E^{0,v^\iota}(t) + \E^{1,v^\iota}(t) }
+ \sqrt{ \E^{0,r^\iota}(t)} \le (4+B_0) \iota e^{\lambda t} \text{ for }0\le t\le s \right\} \right..
\end{equation}
The estimate \eqref{th_1} guarantees that $T>0$.  Then by Proposition \ref{prop} and \eqref{th_1}, there exist $C_\star$ and $\theta_\star>0$ such that for $0\le t\le \min\{T,T(\iota,\theta_\star) \}$ (with $T(\iota,\theta_\star)$ given in the Proposition), 
\begin{equation}\label{th_3}
\sqrt{\mathcal{E}} (t)\le C_\star \iota e^{\lambda t}.
\end{equation}

Let us assume that $\theta \le \theta_\star$, which means that $T^\iota \le T(\iota,\theta_\star)$, and hence that the estimate \eqref{th_3} also holds for $0 \le t \le \min\{T,T^\iota \}$.    Let us further assume that $\theta$ is small enough so that $\sqrt{\mathcal{E}} (t)\le C_\star \theta$ is small enough so that the right side of the estimate in Lemma \ref{lem4.6} is smaller than $1/2$.  In particular, this implies that
\begin{equation}\label{th_5}
\pnorm{\frac{\sigma^\iota(t) }{\rho_0}}{\infty} + \pnorm{1-\frac{r_0}{r^\iota(t) }}{\infty} \le \frac{1}{2}
\end{equation}
for all $0 \le t \le \min\{T,T^\iota\}$.  By further restricting $\theta$ to decrease the bound of the terms in \eqref{th_5}, and using the identities in \eqref{rr} and \eqref{ID}, we can also bound
\begin{equation}\label{th_9}
 \frac{1}{4}  \left( \E^{0,\sigma^\iota}(t) +\E^{0,v^\iota}(t) + \E^{1,v^\iota}(t)  \right) \le
\norm{
 \begin{pmatrix}
 \sigma^\iota(t) \\ w^\iota(t)
\end{pmatrix}
}_0^2  \le 2  \left( \E^{0,\sigma^\iota}(t) +\E^{0,v^\iota}(t) + \E^{1,v^\iota}(t)  \right) 
\end{equation}
for $0 \le t \le T^\iota = \min\{T,T^\iota\}$.

\textbf{Step 3} -- Linear estimates for $\sigma^\iota$ and $w^\iota$

Notice that because of the estimate \eqref{th_5},  the boundary condition $w^\iota/(r^\iota)^2 \vert_{x=0}$ is equivalent to $w^\iota/r_0^2 \vert_{x=0}$.  We can then modify the problem  \eqref{da_1}--\eqref{da_2} to have the form  \eqref{mild_form_1}--\eqref{mild_form_2}, the latter of which has the homogeneous boundary conditions \eqref{mild_form_2}.  This leads us to consider $\bar{w}^\iota(0) = w^\iota(0) - \psi^\iota$ as in Lemma \ref{data_analysis}, which satisfies $\B(\bar{w}^\iota(0))=0$ at $x=M$.  We then have that
\begin{equation}
 e^{t \mathcal{L}} 
\begin{pmatrix}
\sigma^\iota(0) \\  \bar{w}^\iota(0)
\end{pmatrix}
 = 
\iota e^{\lambda t} 
\begin{pmatrix}
 \sigma_\star \\ \bar{w}_\star
\end{pmatrix}
+
\iota^2
e^{t \mathcal{L}}
\begin{pmatrix}
 \sigma_0(\iota) \\ w_0(\iota)
\end{pmatrix}
-
e^{t \mathcal{L}}
\begin{pmatrix}
 0 \\ \psi^\iota
\end{pmatrix}
.
\end{equation}
Then the solutions $\binom{\sigma^\iota}{w^\iota}$ to \eqref{da_1} can be written as given in \eqref{duhamel}:  
\begin{multline}\label{duhamel_2}
 \begin{pmatrix}
 \sigma^\iota(t) \\ w^\iota(t)
\end{pmatrix} 
=  \iota e^{\lambda t} 
\begin{pmatrix}
 \sigma_\star \\ \bar{w}_\star
\end{pmatrix}
+
\iota^2
e^{t \mathcal{L}}
\begin{pmatrix}
 \sigma_0(\iota) \\ w_0(\iota)
\end{pmatrix}
-
e^{t \mathcal{L}}
\begin{pmatrix}
 0 \\ \psi^\iota
\end{pmatrix}
- 
\frac{1}{\delta}
\begin{pmatrix}
 0 \\ N_{\mathcal{B}}^\iota(t)  r_0^3 /3
\end{pmatrix}
\\
+ \int_0^t e^{(t-s) \mathcal{L}} 
\begin{pmatrix}
 N_1^\iota(s) \\ N_2^\iota(s)
\end{pmatrix}ds
+ \frac{1}{\delta} \int_0^t e^{(t-s) \mathcal{L}} 
\begin{pmatrix}
N_{\mathcal{B}}^\iota(s) \rho_0 \\  \dt N_{\mathcal{B}}^\iota(s) r_0^3/3
\end{pmatrix}ds.
\end{multline}
Here the nonlinear terms $N_\B^\iota$, $N_1^\iota$, and $N_2^\iota$ are defined in terms of $w^\iota$ and $\sigma^\iota$ via \eqref{NB_def} and \eqref{N12_def}.

Theorem \ref{mild_estimates}, together with the  nonlinear estimates of Lemma \ref{lem5.3}, imply that if $t\le \min\{T,T(\iota,\theta_\star) \}$, then
\begin{equation}\label{5.9}
\begin{split}
 \norm{
\begin{pmatrix}
 \sigma^\iota(t) \\ w^\iota(t)
\end{pmatrix}
- \iota e^{t \lambda}
\begin{pmatrix}
 \sigma_\star \\ \bar{w}_\star
\end{pmatrix}
-
\iota^2
e^{t \mathcal{L}}
\begin{pmatrix}
 \sigma_0(\iota) \\ w_0(\iota)
\end{pmatrix}
+
e^{t \mathcal{L}}
\begin{pmatrix}
 0 \\ \psi^\iota
\end{pmatrix}
}_0 &\le   C\abs{ \mathcal{E}(t)}^2 + C\int_0^t e^{\lambda(t-s)} \mathcal{E}(s) ds  \\
&\le   C(\iota e^{\lambda t})^2 + C \int_0^t e^{\lambda(t-s)} \iota^2 e^{2\lambda s} ds \\
& \le  A_2(\iota e^{\lambda t})^2
\end{split}
\end{equation}
for a constant  $A_2>0$ independent of $\iota$.  On the other hand, because of the estimates \eqref{mild_recast} and \eqref{data_1}--\eqref{data_2}, we may estimate
\begin{multline}\label{th_16}
\norm{\iota^2
e^{t \mathcal{L}}
\begin{pmatrix}
 \sigma_0(\iota) \\ w_0(\iota)
\end{pmatrix}}_0
+
\norm{
e^{t \mathcal{L}}
\begin{pmatrix}
 0 \\ \psi^\iota
\end{pmatrix}}_0
\\
\le 
\iota^2 C e^{\lambda t} \sqrt{ \mathfrak{E}(\sigma_0(\iota), w_0(\iota)) }
+ C e^{\lambda t} \sqrt{\mathfrak{E}(0, \psi^\iota) }
\le \iota^2 A_3 e^{\lambda t}
\end{multline}
for a constant $A_3 >0$ independent of $\iota$.   Then we may then deduce from \eqref{5.9}, \eqref{th_16}, and the normalization \eqref{data_0} that
\begin{equation}\label{th_4}
\norm{
 \begin{pmatrix}
 \sigma^\iota(t) \\ w^\iota(t)
\end{pmatrix}
}_0 
\le  \iota e^{\lambda t} +\iota^2 A_3 e^{\lambda t} + A_2(\iota e^{\lambda t})^2.
\end{equation}

\textbf{Step 4} -- Control of the $r$ energy

We now turn to control of the term $\E^{0,r^\iota}$.  First note that
\begin{equation}
 \frac{d}{dt} \int \frac{\nu}{2} \abs{1 - \frac{r_0}{r^\iota}}^2 dx \le \left(\int \nu \abs{1 - \frac{r_0}{r^\iota}}^2 dx \right)^{1/2} \left(  \int \nu \abs{\frac{w^\iota}{r_0^3}}^2 dx \right)^{1/2} \pnorm{\frac{r_0}{r^\iota}}{\infty}^4,
\end{equation}
which together with \eqref{th_5} implies that
\begin{equation}\label{th_6}
 \frac{d}{dt} \sqrt{\E^{0,r^\iota}(t)} \le \frac{81}{16\sqrt{2}}  \left(  \int \nu \abs{\frac{w^\iota } {r_0^3 } }^2 dx \right)^{1/2}.
\end{equation}
We may then argue as in \eqref{ID} to see that
\begin{equation}\label{th_7}
\int \frac{\nu}{\rho_0} \abs{\frac{w^\iota}{r_0^3}}^2 dx \le \int \frac{32 \pi^2}{9} \left( \delta \rho_0 \abs{\dx w^\iota}^2 + \frac{4\ep}{3} \rho_0 r_0^6 \abs{\dx \left(\frac{w^\iota}{r_0^3} \right)}^2  \right) dx \le \frac{4}{9} 
\norm{
 \begin{pmatrix}
 \sigma^\iota(t) \\ w^\iota(t)
\end{pmatrix}
}_0^2.
\end{equation}
Combining \eqref{th_6} and \eqref{th_7} with \eqref{th_4}, we find that
\begin{equation}
 \frac{d}{dt} \sqrt{\E^{0,r^\iota}(t)} \le \frac{27}{8\sqrt{2}} \pnorm{\rho_0}{\infty}^{1/2}
\norm{
 \begin{pmatrix}
 \sigma^\iota(t) \\ w^\iota(t)
\end{pmatrix}
}_0
\le 
\frac{27}{8\sqrt{2}} \pnorm{\rho_0}{\infty}^{1/2} \left(
 \iota e^{\lambda t} + A_3 \iota^2 e^{\lambda t} + A_2(\iota e^{\lambda t})^2 \right)
\end{equation}
for $0 \le t \le \min\{T,T^\iota\}$.  Integrating this from $0$ to $t \le \min\{T,T^\iota\}$ and employing \eqref{th_1} then yields the estimate 
\begin{multline}\label{th_8}
 \sqrt{\E^{0,r^\iota}(t)} \le \left(2 + \frac{27}{8\sqrt{2} \lambda} \pnorm{\rho_0}{\infty}^{1/2}  \right)(\iota e^{\lambda t} +  A_3 \iota^2 e^{\lambda t})  
+ \frac{27 A_2}{16\sqrt{2} \lambda} \pnorm{\rho_0}{\infty}^{1/2}(\iota e^{\lambda t})^2 \\
 = B_0 (\iota e^{\lambda t}) + (\iota A_4) (\iota e^{\lambda t})  + A_5 (\iota e^{\lambda t})^2
\end{multline}
for $0 \le t \le \min\{T,T^\iota\}$, where $B_0$ is the constant defined above in \eqref{th_B}, and $A_4,A_5$ are  constants independent of $\iota$.

\textbf{Step 5} -- The bound $T^\iota \le T$

We now claim that if $\theta$ is taken to be small enough, then $T^\iota = \frac{1}{\lambda} \ln \frac{\theta}{\iota}\le T$.  Suppose by way of contradiction that $T^\iota > T$.  Then the first bound in \eqref{th_9}, \eqref{th_4}, and \eqref{th_8}  imply that
\begin{multline}
 \sqrt{\E^{0,\sigma^\iota}(t) +\E^{0,v^\iota}(t) + \E^{1,v^\iota}(t) } + \sqrt{\E^{0,r^\iota}(t)} \le (2 + B_0 + \iota A_4) (\iota e^{\lambda t}) + ( 2A_2 + A_5) (\iota e^{\lambda t})^2 \\
\le [2 + B_0  + \iota A_4 + ( 2A_2 + A_5) \theta] (\iota e^{\lambda t})  
\le [3 + B_0 ] (\iota e^{\lambda t}) 
\end{multline}
for $t \le T^\iota$, if we assume that $\theta$ is small enough so that $\theta(2 A_2 + A_5) \le 1/2$ and $\iota_0 A_4 \le 1/2$.  For this choice of $\theta$, we then find from the definition of $T$ that $T \ge T^\iota$, a contradiction.  Hence $T^\iota \le T$ for $\theta$ sufficiently small.

\textbf{Step 6} -- Conclusion: instability

We now define the $L^2$ part of the norm $\norm{\cdot}_{0}$ by
\begin{equation}
\norm{
\begin{pmatrix}
 \sigma \\ w
\end{pmatrix}
}_{00}^2
:=
 \hal\int K\gamma\rho_0^{\gamma-1} \abs{\frac{{\sigma}}{\rho_0}}^2 dx  + \hal\int \abs{\frac{w}{r_0^2}}^2 dx.   
\end{equation}
Note that $\norm{\cdot}_{00} \le \norm{\cdot}_0$ and that by the normalization  \eqref{data_0}, we have that the data satisfy 
\begin{equation}\label{th_10}
\norm{
\begin{pmatrix}
 \sigma_\star \\ \bar{w}_\star
\end{pmatrix}
}_{00}^2
:= C_{00} \in (0,1). 
\end{equation}
Also, we may argue as in the derivation of \eqref{th_9} to see that
\begin{equation}\label{th_11}
 \sqrt{ \E^{0,\sigma^\iota}(t) + \E^{0,v^\iota}(t)  }  \ge \frac{1}{\sqrt{2}} \norm{
\begin{pmatrix}
 \sigma^\iota \\ w^\iota
\end{pmatrix}
}_{00}
\end{equation}
for $0 \le t \le T^\iota = \min\{T,T^\iota\}$.

Let us now further assume that $\theta$ is small enough so that $A_2 \theta \le C_{00} /4$ and  $\iota_0 A_3 \le C_{00}/4$.  We can then combine \eqref{th_10}, \eqref{th_11},  \eqref{5.9}, and \eqref{th_16} to deduce that
\begin{multline}
  \sqrt{ \E^{0,\sigma^\iota}(T^\iota) + \E^{0,v^\iota}(T^\iota) }  
 \ge 
\frac{1}{\sqrt{2}}
\norm{
 \begin{pmatrix}
 \sigma^\iota(T^\iota) \\ w^\iota(T^\iota)
\end{pmatrix}
}_{00} 
 \ge 
\frac{1}{\sqrt{2}}  
\iota e^{\lambda T^\iota}
\norm{
 \begin{pmatrix}
 \sigma_\star \\ \bar{w}_\star 
\end{pmatrix}
}_{00}  
\\
- \frac{1}{\sqrt{2}}
\norm{
 \begin{pmatrix}
 \sigma^\iota(T^\iota) \\ w^\iota(T^\iota)
\end{pmatrix}
-
\iota e^{\lambda T^\iota}
 \begin{pmatrix}
 \sigma_\star \\ \bar{w}_\star 
\end{pmatrix}
-
\iota^2 e^{T^\iota \mathcal{L}}
\begin{pmatrix}
 \sigma_0(\iota) \\ w_0(\iota)
\end{pmatrix}
+
e^{T^\iota \mathcal{L}}
\begin{pmatrix}
 0 \\ \psi^\iota
\end{pmatrix}
}_{00}
\\
-
\frac{1}{\sqrt{2}}
\norm{
\iota^2 e^{T^\iota \mathcal{L}}
\begin{pmatrix}
 \sigma_0(\iota) \\ w_0(\iota)
\end{pmatrix}
}_{00}
-
\frac{1}{\sqrt{2}}
\norm{
 e^{T^\iota \mathcal{L}}
\begin{pmatrix}
 0 \\ \psi^\iota
\end{pmatrix}
}_{00}
\\
\ge \frac{1}{\sqrt{2}}
\iota e^{\lambda T^\iota} C_{00}
-
\frac{1}{\sqrt{2}}
\norm{
\iota^2 e^{T^\iota \mathcal{L}}
\begin{pmatrix}
 \sigma_0(\iota) \\ w_0(\iota)
\end{pmatrix}
}_{0}
-
\frac{1}{\sqrt{2}}
\norm{
 e^{T^\iota \mathcal{L}}
\begin{pmatrix}
 0 \\ \psi^\iota
\end{pmatrix}
}_{0}
\\
- \frac{1}{\sqrt{2}}
\norm{
 \begin{pmatrix}
 \sigma^\iota(T^\iota) \\ w^\iota(T^\iota)
\end{pmatrix}
-
\iota e^{\lambda T^\iota}
 \begin{pmatrix}
 \sigma_\star \\ \bar{w}_\star 
\end{pmatrix}
-
\iota^2 e^{T^\iota \mathcal{L}}
\begin{pmatrix}
 \sigma_0(\iota) \\ w_0(\iota)
\end{pmatrix}
+
e^{T^\iota \mathcal{L}}
\begin{pmatrix}
 0 \\ \psi^\iota
\end{pmatrix}
}_{0}
\\
 \ge \frac{1}{\sqrt{2}} \left( 
\iota e^{\lambda T^\iota} C_{00} 
- A_3 \iota^2 e^{\lambda T^\iota} - 
A_2 ( \iota e^{\lambda T^\iota} )^2
\right)
\\ = 
\frac{1}{\sqrt{2}} \left( \theta C_{00} - \theta \iota A_3 - A_2 \theta^2 \right)
\ge \frac{C_{00}}{2\sqrt{2}} \theta.
\end{multline}
Setting $\theta_0 = (\theta C_{00}) /(2\sqrt{2})$, we find that \eqref{th_00} holds.  This completes the proof of the theorem. 
\end{proof}


\vskip 1cm
\noindent
{\sc Juhi Jang}\\
Department of Mathematics\\
University of California, Riverside\\
Riverside, CA 92521, USA \\ 
{\tt juhijang@math.ucr.edu}
\vskip 1cm
\noindent
{\sc Ian Tice}\\
Division of Applied Mathematics\\
Brown University \\
182 George St., Providence, RI 02912, USA \\ 
{\tt tice@dam.brown.edu}


\begin{thebibliography}{}

\bibitem{Ch} S. Chandrasekhar. \textit{An Introduction to the Study of Stellar Structures.} University of Chicago Press, Chicago, 1938.

\bibitem{DLTY} Y. Deng, T.P. Liu, T. Yang, Z.Yao. Solutions of Euler-Poisson equations for gaseous stars.  \emph{Arch. Ration. Mech. Anal.} \textbf{164} (2002), no. 3, 261--285.

\bibitem{DZ} B. Ducomet, A. Zlotnik.  Stabilization and stability for the spherically symmetric Navier-Stokes-Poisson system. \emph{Appl. Math. Lett.} \textbf{18} (2005), no. 10, 1190--1198.

\bibitem{guo_strauss} Y. Guo, W. Strauss. Instability of periodic BGK equilibria. \emph{Comm. Pure Appl. Math.} \textbf{48} (1995), no. 8, 861--894.


\bibitem{guo_tice} Y. Guo, I. Tice.  Linear Rayleigh-Taylor instability for viscous, compressible fluids. \emph{SIAM J. Math. Anal.} \textbf{42} (2010), no. 4, 1688--1720. 

\bibitem{J1} J. Jang.  Nonlinear instability in gravitational Euler-Poisson system for $\gamma=6/5$.  \emph{Arch. Ration. Mech. Anal.} \textbf{188}  (2008), no. 2, 265--307.

\bibitem{J2} J. Jang.  Local well-posedness of dynamics of viscous gaseous stars. \emph{Arch. Ration. Mech. Anal.} \textbf{195} (2010), no. 3, 797--863.

\bibitem{JM1} J. Jang, N. Masmoudi.  Well-posedness for compressible Euler equations with physical vacuum singularity. \emph{Comm. Pure Appl. Math.} \textbf{62} (2009), no. 10, 1327--1385.

\bibitem{JM2} J. Jang, N. Masmoudi.  Well-posedness of compressible Euler equations in a physical vacuum.  Preprint (2010). 

\bibitem{lin} S.-S. Lin.  Stability of gaseous stars in spherically symmetric motions. \emph{SIAM J. Math. Anal.} \textbf{28} (1997), no. 3, 539--569. 

\bibitem{NOM} S. Matusu-Necasova, M. Okada, T. Makino.  Free boundary problem for the equation of spherically symmetric motion of viscous gas III. \emph{Jpn. J. Ind. Appl. Math.} \textbf{14} (1997), no. 2, 199--213. 

\bibitem{OM} M. Okada, T. Makino.  Free boundary problem for the equation of spherically symmetric motion of viscous gas. \emph{Jpn. J. Ind. Appl. Math.} \textbf{10} (1993), no. 2, 219--235.

\bibitem{Rein} G. Rein. Non-linear stability of gaseous stars. \emph{Arch. Ration. Mech. Anal.} \textbf{168}  (2003), no. 2, 115--130.

\bibitem{ZF} T. Zhang, D. Fang.  Global behavior of spherically symmetric Navier-Stokes-Poisson system with degenerate viscosity coefficients. \emph{Arch. Ration. Mech. Anal.} \textbf{191} (2009), no. 2, 195--243.


\end{thebibliography}
\end{document}